\documentclass[11pt,reqno]{amsart}

\usepackage[T1]{fontenc}
\usepackage{lmodern}
\usepackage{microtype}
\usepackage{amsmath,amssymb,mathtools}
\usepackage{amsthm}
\usepackage{aliascnt}
\usepackage{booktabs}
\usepackage{tabularx}
\usepackage{enumitem}
\usepackage{xcolor}
\usepackage{tikz}
\usetikzlibrary{arrows.meta,positioning,fit}
\usepackage[colorlinks=true,linkcolor=blue!55!black,
  citecolor=green!35!black,urlcolor=blue!60!black]{hyperref}

\allowdisplaybreaks

\newtheorem{theorem}{Theorem}[section]
\newaliascnt{proposition}{theorem}
\newtheorem{proposition}[proposition]{Proposition}
\aliascntresetthe{proposition}
\newaliascnt{lemma}{theorem}
\newtheorem{lemma}[lemma]{Lemma}
\aliascntresetthe{lemma}

\newaliascnt{corollary}{theorem}
\newtheorem{corollary}[corollary]{Corollary}
\aliascntresetthe{corollary}
\newaliascnt{claim}{theorem}

\aliascntresetthe{claim}
\theoremstyle{definition}
\newaliascnt{definition}{theorem}
\newtheorem{definition}[definition]{Definition}
\aliascntresetthe{definition}
\newaliascnt{example}{theorem}

\aliascntresetthe{example}
\theoremstyle{remark}
\newaliascnt{remark}{theorem}
\newtheorem{remark}[remark]{Remark}
\aliascntresetthe{remark}
\usepackage[nameinlink,noabbrev]{cleveref}

\newcommand{\R}{\mathbb R}
\newcommand{\T}{\mathbb T}
\newcommand{\B}{\mathbb B}
\newcommand{\W}{\mathbb W}
\newcommand{\Scal}{\mathbb S}
\newcommand{\Q}{\mathcal Q}
\newcommand{\Ccal}{\mathcal C}
\newcommand{\1}{\mathbf 1}
\newcommand{\dist}{\operatorname{dist}}
\newcommand{\diam}{\operatorname{diam}}
\newcommand{\conv}{\operatorname{conv}}
\newcommand{\ang}{\operatorname{angle}}

\newcommand{\labmass}{M_{\mathrm{lab}}}

\newcommand{\eps}{\varepsilon}
\newcommand{\lesslog}{\lesssim_{\log}}
\newcommand{\gtrlog}{\gtrsim_{\log}}

\title[Cellular maximal-density factoring]
{Cellular Maximal-Density Factoring:\\
 Joint Shadings, Stable Refinements, and a Conditional Kakeya Application}
\author{Zhixu Hua}
\address{Changkong College, Nanjing University of Aeronautics and
Astronautics, Nanjing, Jiangsu, China}
\email{hzx001@nuaa.edu.cn}

\author{Xiufan Yang}
\address{School of Science, Nanjing University of Posts and
Telecommunications, Nanjing, Jiangsu, China}
\email{B25100020@njupt.edu.cn}



\subjclass[2020]{Primary 42B25; Secondary 28A75}
\keywords{Kakeya conjecture, maximal-density factoring, incidence geometry,
shaded tubes, Furstenberg sets}
\date{August 27, 2026}
\hypersetup{
  pdftitle={Cellular Maximal-Density Factoring: Joint Shadings, Stable Refinements, and a Conditional Kakeya Application},
  pdfauthor={Zhixu Hua, Xiufan Yang},
  pdfsubject={A cellular factoring method with a conditional application to the three-dimensional Kakeya reduction},
  pdfkeywords={Kakeya conjecture, maximal-density factoring, incidence geometry, shaded tubes}
}

\begin{document}

\begin{abstract}
Maximal-density factoring must often make one parent object carry several
forms of information at once: child mass, two multiplicity levels, a density
estimate, and the ability to survive later geometric refinements.  These
properties are not stable under arbitrary deletion.  We introduce a cellular
factoring method that places positive child mass on a weighted bipartite graph
of labelled parents and half-open spatial cells.  After regularizing edge
weights and cell degrees, one edge set defines both the child shading and the
parent shading.  Compatibility and pointwise multiplicity bounds are then
exact, while any later selection of whole parent--cell incidences lifts
quantitatively back to the children.  The combinatorial core does not require
Euclidean geometry: it extends to finite measurable atomic partitions with
comparable atom measures and admits an explicit finite-iteration loss.

We combine this joint construction with a weighted planar incidence estimate,
a resolution-limited cellwise angular selection, a labelled slab estimate,
and a global synchronization of tubelet and fine-tube mass.  The resulting
rectangular parent estimate has shading exponent \(2-\sigma\), which is sharp
uniformly over the class considered here.  With the stated GWZ factoring data
and corresponding \(K_F(\beta)\) or \(K_{KT}(\beta)\), we obtain the two
small-middle interfaces used in the reduction.  For the
very-non-sticky branch, assuming both \(K_{KT}(\beta)\) and \(K_F(\beta)\),
the stated GWZ Lemma~9.1 hypotheses together with the source-uniformity and
refinement interface (I1)--(I4) yield the terminal gain
for the represented fine-tube input; this conditional application supplies
the corresponding input to the surrounding reduction, while the
sharp-convex-parent problem lies outside its scope.
\end{abstract}

\maketitle
\tableofcontents

\subsection*{Notation and conventions}
For a finite labelled family \(\mathcal F\) of measurable carriers of
positive finite volume, \(|\mathcal F|\) denotes its cardinality; two labels
may have the same geometric carrier.  A
\emph{shading} is an assignment of a measurable set \(Y(F)\subset F\) to
each label \(F\in\mathcal F\).  We use
\[
 \begin{aligned}
 U(\mathcal F,Y)&:=\bigcup_{F\in\mathcal F}Y(F),&
 M(Y)&:=\sum_{F\in\mathcal F}|Y(F)|,\\
 D(\mathcal F)&:=\sum_{F\in\mathcal F}|F|,&
 \lambda(\mathcal F,Y)&:=\frac{M(Y)}{D(\mathcal F)},&
 \mu(\mathcal F,Y)&:=\frac{M(Y)}{|U(\mathcal F,Y)|}.
 \end{aligned}
 \tag{1.1}
\]
The last two ratios are set to zero when their denominators vanish.  Thus
\(M\), \(D\), and \(\mu\) count coincident carriers with multiplicity, while
\(U\) is their physical union.  For a measurable set \(E\), let
\(N_a(E)=\{x:\dist(x,E)<a\}\).  Let \(\mathfrak K_n\) denote the class of
measurable convex sets \(K\subset\R^n\) with
\(0<|K|<\infty\).  For \(K\in\mathfrak K_n\), define
\[
 \Delta(\mathcal F,K)
 :=\frac{\sum_{F\in\mathcal F:\,F\subset K}|F|}{|K|},
 \qquad
 \Delta_{\max}(\mathcal F)
 :=\sup_{K\in\mathfrak K_n}\Delta(\mathcal F,K).
 \tag{1.2}
\]
If every member of \(\mathcal F\) lies in
\(K_0\in\mathfrak K_n\), define
\[
 C_F(\mathcal F,K_0)
 :=\sup_{\substack{K\in\mathfrak K_n\\K\subset K_0}}
   \frac{\Delta(\mathcal F,K)}{\Delta(\mathcal F,K_0)}.
 \tag{1.3}
\]
We set \(\Delta_{\max}(\varnothing)=0\), and set
\(C_F(\mathcal F,K_0)=0\) when
\(\Delta(\mathcal F,K_0)=0\) (in particular for the empty family).  Thus
when one ambient body \(K_0\) has been fixed by the surrounding statement,
we abbreviate \(C_F(\mathcal F):=C_F(\mathcal F,K_0)\); no ambient body is
suppressed when more than one is in play.  Thus
neither a null or lower-dimensional convex set nor an infinite-volume set is
ever an admissible test body, and no ratio in (1.2)--(1.3) is undefined.
All sums in (1.1)--(1.3) are over labels.  We call the closed
\(c_{\rm cap}w\)-neighbourhood of a carrier segment of length exactly one a
\emph{source-standard tube of width \(w\)}; both endpoint caps are included.  The unit-scale
tubes and their cross-scale ancestors in the direct source interfaces of
Sections~9--10 are source-standard.  A local \(w\times w\times r\) tubelet,
on the other hand, may have carrier length in
\([c_{\rm len}r,C_{\rm len}r]\); this is the only place where ``comparable
length'' is used.  The four constants
\(c_{\rm cap},C_{\rm cap},c_{\rm len},C_{\rm len}\) are fixed once and for
all.  Every containment statement includes the caps.  A \emph{fixed
enlargement} means a dilation by a constant depending only on dimension and
these four structural constants.  It may change the cap constant, but it does
not change or merge the labelled carrier.  Thus every use of \(O(1)\),
``comparable'', or a fixed enlargement in a tube, plank, or slab interface has
the same declared structural dependence.
We write \(A\lesssim_{\log}B\) when
\(A\le C[\log(2/\delta)]^{C}B\) for a structural exponent, and write
\(A=\delta^{o(1)}B\) only when a displayed uniform function
\(\omega(\delta)\to0\) gives
\(\delta^{\omega(\delta)}B\le A\le\delta^{-\omega(\delta)}B\).
For an explicitly named secondary scale \(s\in\{\alpha,\rho,r_1\}\), the
notation \(s^{o(1)}\) has the identical meaning with one displayed uniform
function \(\omega_s(s)\to0\); the polynomial scale windows in the surrounding
statement are used whenever it is converted to a power of \(\delta\).
For results that absorb source/refinement losses into terminal powers---in
particular \cref{prop:s6A,prop:s6B} and the terminal results in Section~10---a
\emph{loss profile}
\(\mathfrak p\) consists of the fixed polynomial-complexity, bounded-fibre,
and structural constants together with fixed moduli controlling every
source/refinement occurrence of \(o(1)\), including \(\omega_0\),
\(\omega_{\rm br}\), and the derived envelopes \(\omega_{\rm ref}\) and
\(\omega_{\rm src}\).  The
profile is fixed before any cutoff \(\delta_0\) is chosen.  Thus
\(\delta_0=\delta_0(\beta,\zeta,\mathfrak p)\) may depend on these fixed
convergence rates and constants, while the positive gain exponents below
remain functions only of the displayed geometric parameters.  Every fixed
\(\delta\)-indexed input class with common subpower bounds determines such a
profile by taking the finite maximum of its named moduli.  No cutoff uniform
over all possible rates \(\omega(\delta)\to0\) is asserted.
A \emph{child point}
always means a point of the currently displayed child union
\(U(\mathcal T,Y)\); an unrestricted ambient point is called simply a point.
We denote a structural dilation of a carrier by \(C_{\rm car}F\) and a
structural enlargement of an ambient ball by \(C_{\rm amb}B\).  Thus source
carrier separation is never conflated with overlap of tagged ambient balls.
When all tubes in a displayed family are congruent, \(|T|\) denotes their
common geometric volume; \(|\T|\) always denotes labelled cardinality.
Whenever an ambient ball is
attached to a label, the construction is performed first in the tagged
disjoint union (3.4b); physical recombination occurs only through
\cref{lem:tag-recombine}.

\section{Introduction}
\label{sec:introduction}

The central question of this paper is one of simultaneous control.  Suppose
short shaded tubes are organized through larger convex parents.  Can the
parent output retain the child mass, regularize both the inner and outer
multiplicities, satisfy the required density estimate, and remain stable
under the geometric selections that follow?  In the streamlined
three-dimensional Kakeya reduction of Guth--Wang--Zahl~\cite{GWZ2026}, built
on the convex-set framework of Wang--Zahl~
\cite{WZ2025,WZSticky2026,WZAssouad2025}, all four demands occur in the same
factorization.  Treating them separately loses the compatibility needed at
the next scale.

The difficulty is not merely that a refinement may lose mass.  Parent mass
and child mass live on different objects, and the usual conclusions are not
monotone under arbitrary deletion.  Nearby child support does not determine
the multiplicity of a clipped parent neighbourhood at a different point;
after a further child selection, constant multiplicity and exact induced
compatibility may both fail.  The finite configurations in
\cref{sec:obstructions} isolate these mechanisms.  Their role is to identify
the structure that a stable factorization must retain.

Our organizing principle is to make the incidence, rather than either
shading alone, the indivisible object.  Fix a half-open grid of side
\(h=c_0a\), where \(a\) is the shortest parent scale.  A labelled parent
\(B\) and a cell \(Q\) are joined precisely when the children assigned to
\(B\) carry positive shaded mass in \(Q\).  We regularize first the positive
edge weights and then the cell degrees.  The surviving edges define the
child shading by restriction and the parent shading by whole cells.  Thus
mass retention, inner multiplicity, parent multiplicity, compatibility, and
the pointwise product bound all come from one finite weighted graph.

The same representation makes subsequent refinement quantitative.  An
admissible parent refinement is a subcollection of complete parent--cell
edges.  Equal cell volumes convert retention of labelled parent mass into
retention of edge count, while comparable positive edge weights convert that
edge count back into child mass.  In particular, the typical-angle descent
from~\cite{GWZ2026} may be run once per cell, with deterministic tie-breaking,
so that its output is again an edge subset and requires only one lift.  At
the angular resolution \(a/b\), parallel labels are distinguished through
the resolution-floored angular separation
\[
 \ang_{a/b}(B,B')=\max\{a/b,\ang(TB,TB')\}.
\]
The lower cutoff records the geometric resolution of the parent family.

\begin{figure}[t]
\centering
\begin{tikzpicture}[
  node distance=5mm and 7mm,
  box/.style={draw,rounded corners,align=center,inner sep=5pt,
              text width=2.55cm,fill=blue!4},
  arr/.style={-{Latex[length=2mm]},thick}
]
\node[box] (child) {labelled child\\shadings};
\node[box,right=of child] (graph) {positive parent--cell\\weighted graph};
\node[box,right=of graph] (reg) {edge-mass and\\right-degree levels};
\node[box,below=of reg] (joint) {joint child/parent\\cellular output};
\node[box,left=of joint] (angle) {cellular resolved\\angle selection};
\node[box,left=of angle] (lift) {one quantitative\\child lift};
\node[box,below=of angle] (int) {Section 6/9\\Kakeya interfaces};
\draw[arr] (child)--(graph);
\draw[arr] (graph)--(reg);
\draw[arr] (reg)--(joint);
\draw[arr] (joint)--(angle);
\draw[arr] (angle)--(lift);
\draw[arr] (lift) |- (int);
\end{tikzpicture}
\caption{The cellular mechanism.  After regularization, every admissible
selection acts on complete positive parent--cell incidences.}
\label{fig:pipeline}
\end{figure}

Two further ideas connect the cellular theorem to the multiscale
application.  A tubelet-level refinement need not preserve mass at the level
of the represented fine tubes, so we choose one global fine-multiplicity
level after the coarse tubelet levels are fixed.  This makes cellular coarse
mass quantitatively equivalent to fine incidence mass.  The angular
selection may also lower parent degree by \(A_{\rm ang}^{-K}\); to meet the
next lemma's output-degree hypothesis, we split at the buffered
threshold
\[
 H_{\rm ang}:=4A_{\rm ang}^K,
 \qquad \alpha:=a/r_1.
\]
When \(d_0\ge H_{\rm ang}\alpha^{-\eta_{\rm pl}}\), the high branch enters
the plank-reduction lemma; the complementary range
\(d_0<H_{\rm ang}\alpha^{-\eta_{\rm pl}}\) falls within a direct low-degree
estimate.  The buffer is what prevents an uncovered middle band.

The density exponent comes from a separate analytic layer.  The planar
incidence theorem of~\cite{DOV2022}, transferred through a weighted
layer-cake argument, accommodates repeated projected parameters and
measurable shadings.  It yields the rectangular shading power
\(\lambda^{2-\sigma}\); the ordinary Frostman endpoint is supplied by the
quadratic C\'ordoba method~\cite{Cordoba1977}.  Our contribution is the joint
cellular interface in which these estimates remain compatible with mass
lifting, together with the two small-middle applications and the conditional
very-non-sticky closure under (I1)--(I4).

\subsection*{Main results and architecture}

The method has an abstract core and a Euclidean realization.  The atomic
theorem,
\cref{thm:abstract-cellular}, applies to an arbitrary finite measurable
partition and uses neither a metric nor convexity.  Its density-equipped
Euclidean companion, \cref{thm:mainfact}, starts from labelled children in
comparable rectangular parents of polynomially bounded complexity.  Both
theorems produce compatible child and parent shadings from a single positive
edge set.  Schematically,
\[
\begin{aligned}
 M(Y_1)&\ge \mathfrak L^{-1}M(Y),\\
 \mu_{\rm child}&\sim\mu_{\rm in},\qquad
 \mu_{\rm parent}\sim d_0,\\
 \mu_{\rm total}&\lesssim\mu_{\rm in}d_0.
\end{aligned}
\]
Every permitted later parent refinement then lifts quantitatively to the
children.  Under the biased complete-child hypothesis, the Euclidean output
also satisfies
\[
 \lambda(\widetilde\B',Z)
 \gtrsim C_0^{-(2-\sigma)}(a/b)^\eps
 \kappa_D^{-(2-\sigma)}\mathfrak L^{-(2-\sigma)}
 \lambda(\T,Y)^{2-\sigma}.
\]
The Kakeya application is Corollary~\ref{thm:section9}.  It assumes
\(K_{KT}(\beta)\), \(K_F(\beta)\), the uniform bounds in (10.33), and the
conditional source interface (I1)--(I4) of
\cref{prop:source-extraction}.  Once tubelet and outer-ball parameters have
been synchronized locally and across whole tags, these hypotheses close the
very-non-sticky branch on the represented original fine-tube input.  The
cellular theorems themselves are unconditional; only this multiscale
application depends on (I1)--(I4).

Sections~2--8 develop the reusable theorem.  Section~2 derives the necessary
stability requirements from finite obstructions.  Sections~3--5 build the
positive parent--cell graph, regularize its two multiplicity scales, prove
whole-incidence lifting, and isolate the atomic core.  Sections~6--7 add the
rectangular density estimate and the cellwise angular protocol; Section~8
assembles them.  Sections~9--10 then apply this machinery to the two
small-middle factorizations and to the very-non-sticky reduction.  Section~11
proves uniform sharpness of the density exponent, and Section~12 identifies
the sharp-convex boundary of the method.  Appendices A--D contain the weighted
planar transfer, the labelled slab estimate, the multiset filling lemma, and
the quantitative source map.

\subsection*{Relation to previous work and scope}

Grid cellularization, dyadic graph regularization, the DOV exponent, and the
ordinary typical-angle descent are established ingredients.  The point here
is their organization around a positive parent--cell edge set whose child and
parent outputs survive the same downstream selections.  Cohen's
notes~\cite{Cohen2026} develop a related but different
submodular/global decomposition, while Guth's survey~\cite{Guth2026Survey}
places the surrounding reduction in a broader context.  Both serve as
comparison sources for the present construction.

Zeraoulia Rafik's note~\cite{Zeraoulia2026} contains a broader set of
source-level bookkeeping observations and an independent C\'ordoba-type proof
of the shaded slab estimate.  The genuine overlap is the slab \(L^2\)
calculation in \cref{app:slab}.  The positive parent--cell construction,
whole-incidence lifting, and tagged synchronization are developed independently
here.

The local note~\cite{Zeraoulia2026Local} identifies that an unrestricted
\(C_F\) condition is stronger than the all-convex definition (1.3) supports.
Accordingly, the application uses the restricted source slab-test
condition required by GWZ Lemma~6.1; Proposition~9.7 and its conclusion are
unchanged.

The rectangular scope is structural.  A comparable outer box preserves the
labelled counting conditions needed in the WZ application, but its union lower
bound need not restrict uniformly to an arbitrary sharp convex parent.  We
therefore keep rectangular parents and verify the fixed-dilation downstream
lemmas in that category.

\section{Obstructions to non-cellular refinement}
\label{sec:obstructions}

Before constructing a stable factorization, we isolate two tests that any
candidate must pass.  The first asks whether nearby child support determines
the multiplicity of a clipped parent shading.  The second asks whether
compatibility and multiplicity survive a refinement made after the parent
conclusions have been established.  These are independent failures of
monotonicity, and the cellular construction will answer them by different
features: exact cell ownership for the first, and indivisible incidences for
the second.  For the first test, let
\[
 \mathbb V'=\bigsqcup_{W\in\mathbb W'}\mathbb V'_W
\]
and let \(Y\) be a child shading.  The induced parent shading in
\cite{GWZ2026} has the form
\[
 Z(W)=W\cap N_a\!\left(U(\mathbb V'_W,Y')\right).
\tag{2.1}
\]
Suppose one controls, on child points \(x\), the nearby-support quantity
\[
 \widetilde\mu(x)=
 \#\{W:B(x,a)\cap U(\mathbb V'_W,Y')\ne\varnothing\}.
\tag{2.2}
\]
The point of the example is that (2.2) and the multiplicity of (2.1) are
different functions: the former sees proximity to child support, while the
latter also sees membership in the individual parent.

\begin{proposition}[Nearby support does not regularize the induced parent]
\label{prop:F1}
For every \(M\ge4\), there are \(M\) congruent labelled rectangular parents,
one shaded child in each parent, and a common level on which
\(\widetilde\mu=M\), while the multiplicity of the induced parent shading
assumes both values \(1\) and \(M\).
\end{proposition}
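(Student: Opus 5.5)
The plan is an explicit finite construction in \(\R^n\); taking \(n=2\) suffices, and the same configuration can be thickened in the remaining coordinates. Fix \(a>0\). The \(M\) parents will be congruent rectangles \(W_1,\dots,W_M\) that all contain a common small region near the origin but point in \(M\) distinct directions, for instance the rotations of one long thin rectangle through angles \(\theta_j=j\pi/M\). Each \(W_j\) will contain exactly one child \(V_j\), a short rectangle of width at most \(a/10\) sitting along the axis of \(W_j\). The child \(V_j\) meets the common central region and also extends into the arm of \(W_j\) that lies outside every other \(W_k\). Its shading \(Y'(V_j)\) will be chosen so that:
\begin{itemize}[leftmargin=*]
\item it contains a tiny piece \(P_j\) inside \(B(0,a/4)\);
\item it contains a second piece \(R_j\) far out along the arm of \(W_j\).
\end{itemize}
The far piece \(R_j\) is placed at a distance from the centre at least \(L\gg a M\), so that the arm of \(W_j\) near \(R_j\) lies at distance greater than \(2a\) from every \(W_k\) with \(k\ne j\).

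Next I compute \(\widetilde\mu\) from (2.2). Take any child point \(x\in P_1\subset B(0,a/4)\). For each \(j\), the ball \(B(x,a)\) contains \(B(0,a/2)\supset P_j\), so it meets \(U(\mathbb V'_{W_j},Y')\). Hence \(\widetilde\mu(x)=M\). The "common level" in the statement is therefore the level set \(\{\widetilde\mu=M\}\), and this set contains child points.

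Then I evaluate the induced shading (2.1). By construction, \(Z(W_j)=W_j\cap N_a(P_j\cup R_j)\). At the origin, which lies in \(N_a(P_j)\cap W_j\) for every \(j\), the multiplicity \(\sum_j \1_{Z(W_j)}\) equals \(M\). At a point \(y\in R_1\), we have \(y\in Z(W_1)\). For \(k\neq1\), either \(y\notin W_k\), or \(y\notin N_a(P_k\cup R_k)\); the first holds by the separation choice, since \(W_k\) is far from the arm of \(W_1\), and the second holds because \(P_k\) is near the origin and \(R_k\) lies on a different arm. So the multiplicity at \(y\) is \(1\). To tie the value \(1\) to the same level, I would also arrange that \(\widetilde\mu=M\) on a child point whose parent multiplicity is \(1\). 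For this I place, inside \(B(0,a/2)\), a child point \(z\in Y'(V_1)\) lying in \(W_1\) but outside all other \(W_k\). This is possible if the rectangles have width \(w\ll a/M\), so that near the origin they separate within distance \(a/2\). Then \(\widetilde\mu(z)=M\), since every \(P_j\) lies within \(a\) of \(z\), while \(z\in Z(W_k)\) forces \(z\in W_k\), which happens only for \(k=1\). So on the level \(\{\widetilde\mu=M\}\) the parent multiplicity takes the value \(M\) at the origin (taken as a child point of \(P_1\)) and the value \(1\) at \(z\).

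The main obstacle is the geometric bookkeeping in this last step. The thin congruent rectangles through a common centre must all contain the origin, which gives multiplicity \(M\) there, while still having points within distance \(a/2\) of the origin that lie in only one of them. Choosing width \(w=a/(100M)\), angles spaced by \(\pi/M\), and length \(1\) should work. At radius \(a/2\), distinct rectangles are separated by roughly \((a/2)\sin(\pi/M)-w>0\), and the hypothesis \(M\ge4\) keeps the angular gaps harmless. I would verify this separation with an explicit elementary estimate and then check the constants \(a/4\), \(a/2\) and \(a\) in the three distance claims.
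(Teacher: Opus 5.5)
Your centred star of rotated thin rectangles does prove the statement, but only after one repair, and it is a different construction from the paper's. The repair is to delete the far pieces \(R_j\). As you noticed, a point \(y\in R_1\) has \(\widetilde\mu(y)=1\), so \(R_j\) never supplies a witness on the level \(\widetilde\mu=M\). Worse, \(R_j\) puts part of the child union at level \(1\), so \(\widetilde\mu\) is no longer at one common level on the child points. That constancy is exactly the control the proposition is meant to grant; the paper proves \(\widetilde\mu=M\) on the complete child union. With \(R_j\) present, your example therefore does not show that this control fails to regularize \(Z\).

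The \(R_j\) also contradict your description of \(V_j\) as short. Separately, \(V_j\) needs width at most \(w=a/(100M)\), not \(a/10\), to lie in \(W_j\). Without the \(R_j\), take \(Y'(V_j)=P_j\subset B(0,a/4)\) for \(j\ne1\), and \(Y'(V_1)=P_1\cup(V_1\cap B(z,\varepsilon))\) with \(\varepsilon\) small. Then:
\begin{itemize}
\item every child point is within \(3a/4+\varepsilon<a\) of every \(P_j\), so \(\widetilde\mu\equiv M\) on the whole child union;
\item the origin lies in every \(Z(W_j)\);
\item \(z\) lies in \(Z(W_1)\) but in no other \(W_k\), by your estimate \((a/2)\sin(\pi/M)\ge a/M>w/2\).
\end{itemize}

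The paper instead translates one \(a\times b\times r\) box along its shortest axis in steps \(d=a/(4M)\). It shades each concentric half-size child only on \(B((c_j,0,0),d/10)\). All supports are then within distance \(a\) of one another, and \(y_{\rm core}\) lies in every \(Z_j\). The point \(y_{\rm edge}\) lies in the strip of width \(d\) by which \(W_0\) protrudes beyond the other parents. That witness is a parent point, not a child point. The statement only asks for the parent multiplicity on the parent union, so your extra child piece at \(z\) (and the detour through \(R_j\)) is not needed. Your repaired version does realize both values at child points, which is slightly more.

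The substantive difference is the scale normalization. In the setting of (2.1), and in the paper's example, \(a\) is the shortest parent side; the paper's singleton child families are also \(8\)-Frostman in their parents. Your centred star relies on parent width \(w\ll a/M\). With width-\(a\) parents, a point of \(W_1\) lying in no other \(W_k\) is at distance more than \(a/(2\sin(\pi/M))\asymp aM\) from the centre. So for large \(M\) it cannot be within \(a\) of supports near the centre. The printed statement does not spell this normalization out, so your example is literally admissible. Still, the translation construction is the one that lives in the regime the proposition models.
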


\begin{proof}
Fix \(a>0\), put \(d=a/(4M)\), and let \(c_j=jd\) for \(0\le j<M\).
Choose congruent labelled parents
\[
 W_j=[c_j-a/2,c_j+a/2]\times[-b/2,b/2]\times[-r/2,r/2],
\qquad a\le b\le r.
\]
Inside \(W_j\), take the congruent convex child
\[
 V_j=[c_j-a/4,c_j+a/4]\times[-b/4,b/4]\times[-r/4,r/4]
\]
and shade it only on
\[
 S_j=B((c_j,0,0),d/10).
\]
The \(S_j\) are pairwise disjoint.  At every child point, the inner and outer
multiplicities equal one, while all \(M\) support balls lie within distance
\(a\).  Hence \(\widetilde\mu=M\) on the complete child union.

The singleton family \(\{V_j\}\) is \(8\)-Frostman in \(W_j\): a convex test
body either contains no child or has volume at least \(|V_j|=|W_j|/8\).
The exact induced shading is \(Z_j=W_j\cap N_a(S_j)\).  The point
\[
 y_{\rm core}=((c_0+c_{M-1})/2,0,0)
\]
belongs to all \(Z_j\).
In contrast,
\[
 y_{\rm edge}=(c_0-a/2+d/4,0,0)
\]
belongs to \(Z_0\) and lies strictly to the left of every \(W_j\), \(j\ge1\).
Thus the induced multiplicity is \(M\) at the core and \(1\) at the edge.
\end{proof}

\begin{remark}
The mismatch comes from clipping by \(W\).  Nearby child support gives
\(x\in N_a(U(\mathbb V'_W,Y'))\), whereas parent membership at \(y\) also
requires \(y\in W\).  An average multiplicity estimate cannot replace the
pointwise conclusion needed later.
\end{remark}

\begin{proposition}[A final child refinement is not automatically harmless]
\label{prop:F2}
Each of the following monotonicity assertions fails for a finite labelled
configuration:
\begin{enumerate}[label=\textup{(N\arabic*)},leftmargin=2.8em]
\item constant inner multiplicity survives an arbitrary child deletion;
\item the induced parent shading is unchanged after such a deletion;
\item constant parent multiplicity survives when the induced shadings are
recomputed;
\item the pointwise inner--outer multiplicity product survives; and
\item retaining a fixed fraction of labelled parent volume retains a fixed
fraction of child shading mass.
\end{enumerate}
In particular, none of these properties can be transferred through a final
``mass-preserving refinement'' without an additional structural hypothesis.
\end{proposition}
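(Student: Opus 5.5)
The plan is to exhibit explicit finite counterexamples, one small configuration per assertion, reusing the one-dimensional placement idea from \cref{prop:F1}. All parents will be congruent boxes \(W_j=[c_j-a/2,c_j+a/2]\times[-b/2,b/2]\times[-r/2,r/2]\), children will be boxes inside them, and shadings will be small balls or slabs. Since the configurations are finite, every multiplicity can be computed by hand. Because each assertion is a universal monotonicity claim, one configuration per item suffices. Where possible I will use a single configuration that defeats several items at once.

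For (N1), (N2) and (N4) I would take two parents \(W_0,W_1\) with overlapping shaded children. Child \(T_0\subset W_0\) is shaded on a set \(S\), and children \(T_1,T_2\subset W_1\) are both shaded on the same \(S\). The inner multiplicity within each parent is then constant on the child union: it equals \(1\) for \(W_0\) and \(2\) for \(W_1\). I will pick this so that the pointwise inner--outer product sits at the regularized level \(\mu_{\rm in}d_0\) with \(\mu_{\rm in}=2\) and \(d_0=2\). The deletion will remove \(T_1\) only on part of \(S\), by shrinking its shading to half of \(S\).
\begin{enumerate}[label=\textup{(\roman*)}]
\item On the other half the inner multiplicity drops to \(1\), so (N1) fails.
\item Deleting \(T_0\) entirely removes the support of \(W_0\). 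Then \(Z(W_0)=W_0\cap N_a(\varnothing)=\varnothing\) changes from its nonempty value, so (N2) fails.
\item For (N4), I will build the product bound into the original data and then check a point where the recomputed parent multiplicity and the inner multiplicity are mismatched with the product. As an alternative, I will note that (N4) concerns survival of an equality or comparability of the product with \(\mu_{\rm in}d_0\): after deletion the product becomes \(1\cdot 2\) at some points and \(2\cdot 2\) at others, so no single level survives.
\end{enumerate}

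For (N3) I would use the \cref{prop:F1} mechanism. I start with \(M\) parents arranged so that the induced shadings all coincide near the core, giving constant multiplicity \(M\). Then I delete the children of all but one parent on a small ball near the left edge, and recompute \(Z(W_j)=W_j\cap N_a(U)\). The recomputed shadings now cover the edge region with multiplicity \(1\) and the core with multiplicity \(M\), exactly as in the proof of \cref{prop:F1}. I will verify the two witness points \(y_{\rm core}\) and \(y_{\rm edge}\) in the same way.

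For (N5) I would take one parent \(W\) whose entire child mass lies in a tiny ball near one face, alongside a second parent of equal volume whose child mass is large. Retaining the first parent keeps half the labelled parent volume but only an \(\eps\) fraction of the child mass. Letting the ball radius tend to \(0\) sends that fraction to \(0\), so no fixed fraction is retained. The main obstacle is bookkeeping rather than ideas. I must fix precisely what ``constant multiplicity'' and the ``product'' mean relative to the regularized levels, so that each counterexample negates the intended assertion and not a weaker one. I would state these levels explicitly before giving each configuration.
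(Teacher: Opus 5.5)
Your (N1), (N2) and (N5) are fine and essentially coincide with the paper's examples. For (N2), the paper's witness is less degenerate: it keeps only one of two separated pieces of a child's shading, so the parent survives with a changed shading. Your whole-child deletion is nevertheless logically valid.

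The genuine gap is (N3). The mechanism of \cref{prop:F1} is clipping by non-coincident parents, and it contradicts your starting hypothesis. In that geometry $W_0$ protrudes from $W_1,\dots,W_{M-1}$ by the slab $W_0\cap\{x_1<c_1-a/2\}$ of width $d<a$. Every point of a nonempty child support $U_0\subset W_0$ lies within distance $a-d/2$ of that slab, so $W_0\cap N_a(U_0)$ meets it in positive measure, and the parent multiplicity there is $1$. Hence the induced multiplicity is already non-constant before any deletion; this is exactly what \cref{prop:F1} proves. You cannot ``start with constant multiplicity $M$'' there, and shadings that coincide only near the core do not give constancy on the whole union. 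Likewise $y_{\rm edge}$ lies outside every $W_j$ with $j\ge1$. Its multiplicity $1$ comes from clipping, not from recomputation after deletion, so the example does not test survival at all.

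To repair (N3), make all parents clip identically:
\begin{enumerate}
\item Take labelled parents (for instance copies of one box) that each contain the disjoint $a$-neighbourhoods of two separated pieces $A_1,A_2$.
\item Give each parent one child shaded on $A_1\cup A_2$. Every induced shading then equals $N_a(A_1)\cup N_a(A_2)$, so the multiplicity is constant.
\item Refine all but one child to $A_1$ and recompute. The multiplicity drops to $1$ on $N_a(A_2)$ and stays maximal on $N_a(A_1)$.
\end{enumerate}
With two parents this is the paper's table (2.3a).

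Your (N4) item also needs repair. In your configuration $W_0$ has inner multiplicity $1$ and $W_1$ has $2$, which lie in different dyadic classes. So the claimed levels $\mu_{\rm in}=d_0=2$ fail for $W_0$, whose inner--outer product is $1\cdot 2$, not $4$. Your first (N4) sentence is also not yet a construction. The alternative works once $W_0$ is removed or given a second child shaded on $S$. Then for $W_1$ the inner--outer product is $2\cdot2$ throughout $S$ before the deletion, and takes the values $1\cdot2$ and $2\cdot2$ after it.

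The paper uses the one-parent version: one parent with two identical children shaded on $A$, and delete the second on $A_0\subset A$. The outer multiplicity stays $1$ and the product takes the values $1$ and $2$. You are right that (N4) must be read as a level (two-sided) statement. The upper bound (P5) survives every deletion trivially, so no counterexample exists for that reading.
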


\begin{proof}
All examples use only two measurable pieces.

For inner multiplicity, take two identical child shadings on a set \(A\).
Delete the second child on a positive-measure subset \(A_0\subset A\).
The new inner multiplicity is one on \(A_0\) and two on \(A\setminus A_0\),
which proves (N1).  If the two children have one common parent whose shading
is all of \(A\), the outer multiplicity stays one and the inner--outer
product takes the values one and two.  This also proves (N4).

For exact parent shading, let one child shading have separated pieces
\(A_1\cup A_2\).  Keeping only \(A_1\) changes
\[
 W\cap N_a(A_1\cup A_2)
 \quad\text{to}\quad
 W\cap N_a(A_1).
\]
Retaining the old parent shading instead breaks the exact induced identity,
which proves (N2).  Here is a literal two-parent incidence table that also
proves (N3).  Take disjoint \(a\)-neighbourhoods of \(A_1,A_2\), and take two
labelled parents \(W_1,W_2\) containing both neighbourhoods.  Initially give
each parent one child shading \(A_1\cup A_2\).  Refine only the
\(W_1\)-child to \(A_1\), leave the \(W_2\)-child unchanged, and recompute
the induced parent shadings.  On the two positive-measure atoms the complete
before/after table is
\[
\begin{array}{c|cccc|cccc}
 &Y^{\rm in}_{W_1}&Y^{\rm in}_{W_2}&Z^{\rm in}(W_1)&Z^{\rm in}(W_2)
 &Y^{\rm out}_{W_1}&Y^{\rm out}_{W_2}&Z^{\rm out}(W_1)&Z^{\rm out}(W_2)\\ \hline
 A_1&1&1&1&1&1&1&1&1\\
 A_2&1&1&1&1&0&1&0&1
\end{array}
\tag{2.3a}
\]
where a \(Z\)-entry records membership in the exact recomputed set
\(W_j\cap N_a(Y_{W_j})\).  Thus the induced shading of \(W_1\) genuinely
changes, and labelled parent multiplicity changes from two on both atoms to
two on \(A_1\) and one on \(A_2\).  No informal transfer between different
pieces is being used.

Finally, give two parent pieces equal volume but child masses \(\eps\) and
\(1\).  A parent selection retaining the first piece keeps half the labelled
parent mass and only \(\eps/(1+\eps)\) of child mass.  This tends to zero.
This proves (N5).  A joint construction is therefore necessary.
\end{proof}

\begin{proposition}[Failure of parentwise restoration after angular refinement]
\label{prop:source-near}
Even the following finite matched-output interface does not imply
compatibility after the whole old child shading of each retained parent label
is restored: exact child--parent incidence inclusion, constant inner
multiplicity in each labelled parent, constant total parent multiplicity,
common labelled parent-mass and local-union-volume levels, and the pointwise
angle-class rule followed by a global labelled-parent-mass pigeonhole used in
\cite[Proposition~5.1, p.~12 and Lemma~6.11]{GWZ2026}.  These are precisely
the displayed properties used at the tangential lift.  Thus the implication
between source equations (95)--(96) and (100)--(101) is not derivable from
that displayed interface alone.
\end{proposition}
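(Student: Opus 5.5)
The plan is to build an explicit finite counterexample, in the style of \cref{prop:F1,prop:F2}: a small atomic configuration satisfying every listed interface property at output, which becomes incompatible once the whole old child shading of each retained parent is restored. Everything will be encoded on a few disjoint positive-measure atoms of equal volume, as in (2.3a), so that each property can be checked from a finite incidence table.

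\textbf{Step 1: the configuration.} Take two labelled parents \(W_1,W_2\) of the same congruent rectangular shape, both containing three separated atoms \(A_1,A_2,A_3\) with disjoint \(a\)-neighbourhoods. Give each parent two children. The old child shading of \(W_1\) occupies \(A_1\cup A_2\) and that of \(W_2\) occupies \(A_2\cup A_3\), each with both children present on every atom. On the refined output, the children of \(W_1\) are kept on \(A_1\) only and those of \(W_2\) on \(A_3\) only. The parent shadings are \(Z(W_1)=A_1\) and \(Z(W_2)=A_3\).

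\textbf{Step 2: the angle rule.} The children within each parent are assigned directions so that the pointwise angle-class rule followed by the global labelled-parent-mass pigeonhole selects exactly the output above. On \(A_2\), the two parents' children will lie in different angle classes and each class will be lighter than the dominant class elsewhere, so the pigeonhole discards \(A_2\) for both parents. Deterministic tie-breaking by label will be fixed explicitly.

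\textbf{Step 3: verify the interface.} Checking the output is a finite table computation.
\begin{itemize}
\item Child incidences lie in the parent shading, so exact inclusion holds.
\item Inner multiplicity equals two on each parent shading.
\item Total parent multiplicity equals one on \(A_1\cup A_3\).
\item Labelled parent masses are both \(|A|\) and local union volumes are equal.
\end{itemize}

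\textbf{Step 4: restoration.} Restore the whole old child shading of each retained parent. The children of \(W_1\) now cover \(A_2\), which is not in \(Z(W_1)\), so compatibility fails. Recomputing \(Z\) instead makes the parent multiplicity two on \(A_2\) and one elsewhere, breaking the constant-multiplicity level. I will record a before/after table analogous to (2.3a) and read off both failures. Since the table satisfies the displayed hypotheses but violates the conclusions playing the role of (100)--(101), the implication cannot follow from that interface alone.

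The main obstacle is Step 2. The angle-class rule and the pigeonhole must be made concrete enough that the discarding of \(A_2\) is forced rather than chosen, while every level condition still holds on the output. If three atoms do not give enough room for this, I would first try adding a decoy fourth atom to balance the masses in the pigeonhole.
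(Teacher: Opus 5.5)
\textbf{There is a genuine gap.} Your failure mechanism is the right one: a retained parent loses part of its shading, and restoring its whole old child shading reinstates children outside the new parent shading. But your configuration does not satisfy the interface on the data to which the angular rule is applied.

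The listed levels (constant total parent multiplicity, common labelled parent-mass level) are properties of the factoring output. In your example that output is the pre-angle shading $Z^{\rm old}(W_1)=A_1\cup A_2$, $Z^{\rm old}(W_2)=A_2\cup A_3$. There the parent multiplicity is $1,2,1$ on $A_1,A_2,A_3$, and the labelled parent mass per atom is $|A|,2|A|,|A|$. Your own Step~4 concedes this, since recomputing $Z$ from the restored children simply returns the pre-angle shading. Verifying the levels only after the selection reduces the example to the elementary failures of \cref{prop:F2}, and says nothing about the implication from (95)--(96).

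Two parents cannot repair this. Their multiplicities take values in $\{1,2\}$, and no window $[d_0,2d_0)$ contains both. So either every shaded point sees a single parent or every shaded point sees both. In either case the pointwise angle class is the same everywhere, and the pigeonhole deletes nothing. A decoy fourth atom does not change this.

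Step~2 also misplaces the angle. The rule assigns one class to each point, determined by the angles between the tangent planes $TW$ of the parents incident there. Child directions play no role, so ``the two parents' children lie in different angle classes'' on $A_2$ has no meaning.

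The missing ingredient is a third parent. You need one retained parent paired, at different atoms, with partners in different dyadic angle classes, while the multiplicity stays exactly two. The paper's construction is:
\begin{itemize}
\item $W_1$ meets both atoms $A$ and $B_*$;
\item $W_2$, at angle $16\vartheta$ from $W_1$, meets $A$;
\item $W_3$, at angle $\vartheta$ from $W_1$, meets $B_*$;
\item there is one child per incidence.
\end{itemize}
The legal pigeonhole keeping the $16\vartheta$-class deletes $B_*$ from $Z(W_1)$, while $W_1$ survives through $A$. Restoring $W_1$'s whole old child shading then brings back its $B_*$-child, which is disjoint from $Z_{\rm ang}(W_1)$.

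Your three-atom layout can be repaired the same way. Add $W_3$, shaded on $A_1\cup A_3$. Take $TW_1,TW_2,TW_3$ all containing $e_3$, with normals at angles $0,\vartheta,20\vartheta$ in the $e_1e_2$-plane. The multiplicity is then exactly two on every atom. The dyadic class $[16\vartheta,32\vartheta)$ contains $\ang(TW_1,TW_3)=20\vartheta$ and $\ang(TW_2,TW_3)=19\vartheta$, and it carries labelled parent mass $4|A|$. The class of $\ang(TW_1,TW_2)=\vartheta$ carries only $2|A|$. So the deletion of $A_2$ is forced by the pigeonhole, not chosen by a tie-break, which is exactly the concern you raised about Step~2.
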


\begin{proof}
Fix \(0<\vartheta<2^{-8}\), put
\(x_A=(0,-1/4,0)\), \(x_B=(0,1/4,0)\), and choose
\(0<r_0\ll\delta\).  Let \(A=B(x_A,r_0)\) and
\(B_*=B(x_B,r_0)\).  Take three labelled rectangular parents whose labelled
tangent planes are
\[
 P_1=\operatorname{span}(e_2,e_3),\quad
 P_2=\operatorname{span}(\cos(16\vartheta)e_2+
                  \sin(16\vartheta)e_1,e_3),
\]
\[
 P_3=\operatorname{span}(\cos(\vartheta)e_2+
                  \sin(\vartheta)e_1,e_3).
\tag{2.4a}
\]
The first parent is long enough in its \(e_2\)-direction to meet both balls;
the second and third are translated to meet \(A\) and \(B_*\), respectively.
Thus the separation of the two centres is parallel to the long
\(e_2\)-axis of the first parent.  This coordinate correction changes
neither the three planes in (2.4a), nor the incidence table below, nor any
of the pairwise angle values.
Choose all three shortest widths much larger than \(r_0\).  Vertical
\(\delta\)-tubes through the two centres then contain the indicated balls and
are contained in the corresponding parents.  Use one labelled child of
\(W_1\) shaded by \(A\), a second shaded by \(B_*\), one child of \(W_2\)
shaded by \(A\), and one child of \(W_3\) shaded by \(B_*\).  Empty portions
of the geometric tubes play no role.

On the two positive-measure atoms the complete incidence table is
\[
\begin{array}{c|ccc|ccc}
 &Y_{W_1}&Y_{W_2}&Y_{W_3}&Z(W_1)&Z(W_2)&Z(W_3)\\ \hline
 A   &1&1&0&1&1&0\\
 B_* &1&0&1&1&0&1
\end{array}
\tag{2.4b}
\]
where each entry means that the corresponding shading equals the entire
atom.  Thus the inner multiplicity in every labelled parent is one, the
parent multiplicity is exactly two on both atoms, the labelled parent masses
of the two atoms agree, and parent-scale local union volumes agree.  These
are the matched-output properties used at the later lift.  If a literal
neighbourhood model is desired, first replace \(A,B_*\) by still smaller
concentric balls and take the displayed sets as measurable subshadings of the
induced clipped neighbourhoods; the table is unchanged.

At \(A\) the only parent pair has angle \(16\vartheta\), whereas at \(B_*\)
the only pair has angle \(\vartheta\).  These lie in disjoint dyadic classes.
The legal global angle-class pigeonhole that keeps the
\(16\vartheta\)-class therefore retains precisely the two \(A\)-incidences,
half of the labelled parent mass, and constant parent multiplicity two.  It
deletes the \(B_*\)-part of \(Z(W_1)\).  Restoring the whole old child shading
for the retained label \(W_1\) restores its \(B_*\)-child, although
\(B_*\cap Z_{\rm ang}(W_1)=\varnothing\).  Hence the asserted incidence
inclusion fails on all of \(B_*\).

The construction has a precise logical scope: the displayed implication is
not a consequence of the listed interface alone.  It makes no assertion
about Proposition~5.1, the typical-angle lemma, or Main Lemma~2 of
\cite{GWZ2026}.  What it identifies is the missing invariant.  In
\cref{thm:lift}, parent and child shadings are restricted on the same cells,
so restoration is replaced by a quantitative lift along surviving edges.
\end{proof}

The three propositions point to one common design rule.  A parent shading
should not be reconstructed after a child selection, and a child shading
should not be restored after a parent selection.  Both must instead be read
from a shared incidence object.  Sections~3--5 build that object and prove
the exact stability statements suggested by these examples.

\section{Cellular parent shadings and tagged geometry}
\label{sec:cellular}

The shared object has four layers.  A child belongs to a sharp parent; the
parent is assigned a permanent fixed enlargement; space is partitioned into
half-open cells; and positive child mass turns a parent--cell pair into a
weighted edge.  The parent and child shadings will both be recovered from
those edges.  Half-open cells make boundary ownership unique, while tagged
copies prevent local constructions in different ambient balls from being
identified before their multiplicities have been compared.

Fix \(0<c_0\le10^{-3}\), set \(h=c_0a\), and let
\[
 \Q_h=\{h(k+[0,1)^3):k\in\mathbb Z^3\}
\]
be the global half-open grid.  Every point belongs to exactly one cell.
For \(E\subset\R^3\), define
\[
 \Ccal_h(E)=\bigcup_{\substack{Q\in\Q_h\\Q\cap E\ne\varnothing}}Q.
\tag{3.1}
\]

\begin{lemma}[Cellular neighbourhood comparison]
\label{lem:CELL1}
For every \(E\subset\R^3\),
\[
 E\subset\Ccal_h(E)\subset N_{\sqrt3h}(E).
\tag{3.2}
\]
If \(h\asymp a\), then
\[
 |N_a(E)|\lesssim|\Ccal_h(E)|\lesssim|N_{Ca}(E)|,
\tag{3.3}
\]
with constants depending only on the comparison between \(h\) and \(a\).
\end{lemma}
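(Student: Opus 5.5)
The plan is to prove (3.2) directly from the cell geometry and then deduce (3.3) by comparing volumes.

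For the first inclusion in (3.2), take \(x\in E\). Since \(\Q_h\) partitions \(\R^3\), \(x\) lies in a unique cell \(Q\). That cell meets \(E\) at \(x\), so \(x\in\Ccal_h(E)\). For the second inclusion, let \(y\in\Ccal_h(E)\). Then \(y\in Q\) for some cell \(Q\) containing a point \(x\in E\). The closure of \(Q\) has diameter \(\sqrt3h\), so \(|x-y|\le\sqrt3h\). This only gives \(y\) in the closed neighbourhood, whereas \(N_r(E)\) is defined with a strict inequality. The half-open structure rescues strictness. Write \(Q=h(k+[0,1)^3)\); both \(x\) and \(y\) lie in \(Q\), so each coordinate difference satisfies \(|x_i-y_i|<h\). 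Hence \(|x-y|<\sqrt3h\), and \(\dist(y,E)\le|x-y|<\sqrt3h\). This strict-inequality check is the only delicate point. If one preferred to avoid it, one could replace \(\sqrt3h\) by \(2h\), but the half-open argument gives the stated constant exactly.

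For (3.3), assume \(c_1a\le h\le c_2a\). The upper bound follows from (3.2). Put \(C=\sqrt3c_2\); then \(\Ccal_h(E)\subset N_{\sqrt3h}(E)\subset N_{Ca}(E)\), and monotonicity of Lebesgue measure gives \(|\Ccal_h(E)|\le|N_{Ca}(E)|\) (outer measure if \(E\) is arbitrary; \(\Ccal_h(E)\) is a countable union of cells, hence measurable).

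For the lower bound, cover \(N_a(E)\) by enlarged cells. Let \(\mathcal S=\{Q\in\Q_h:Q\cap E\neq\varnothing\}\), and for each cell \(Q\) let \(Q^{*}\) be the concentric cube of side \((2m+1)h\), where \(m=\lceil a/h\rceil\le\lceil1/c_1\rceil\). Take \(y\in N_a(E)\) and choose \(x\in E\) with \(|x-y|<a\). The point \(x\) lies in some \(Q\in\mathcal S\). Every coordinate of \(y\) differs from the corresponding coordinate of \(x\) by less than \(a\le mh\), so \(y\in Q^{*}\). Therefore
\[
|N_a(E)|\le\sum_{Q\in\mathcal S}|Q^{*}|=(2m+1)^3\sum_{Q\in\mathcal S}|Q|=(2m+1)^3|\Ccal_h(E)|.
\]
The last equality holds because the cells are disjoint. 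The factor \((2m+1)^3\) depends only on \(c_1\), which proves (3.3) with constants depending only on the comparison between \(h\) and \(a\).
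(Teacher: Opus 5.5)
Your proof is correct and follows essentially the same route as the paper. Both get (3.2) from the size of a cell; your half-open coordinate argument additionally supplies the strict inequality that the definition of \(N_{\sqrt3h}(E)\) requires. Both get the lower bound in (3.3) by covering \(N_a(E)\) with fixed enlargements of the active cells and summing via disjointness of the cells, where your explicit cubes \(Q^{*}\) with countable subadditivity simply replace the paper's finite exhaustion and continuity-from-below step.
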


\begin{proof}
The unique cell containing \(x\in E\) appears in (3.1).  If \(y\) lies
in an active cell, it is within that cell's diameter \(\sqrt3h\) of a point
of \(E\), proving (3.2).

For the nontrivial inequality in (3.3), let
\(\mathcal A=\{Q:Q\cap E\ne\varnothing\}\).  Then
\[
 N_a(E)\subset\bigcup_{Q\in\mathcal A}N_{a+\sqrt3h}(Q).
\]
Each last neighbourhood meets only \(O(1)\) grid cells when \(h\asymp a\).
Summing first over an increasing finite exhaustion
\(\mathcal A_1\subset\mathcal A_2\subset\cdots\) gives the same bound with
a constant independent of the exhaustion.  The corresponding unions increase
to the full union; continuity from below (equivalently, monotone convergence
applied to their indicators) therefore gives the result.  In every later
application \(\mathcal A\) is finite, but this last sentence makes the stated
arbitrary-set version literal.
\end{proof}

\begin{lemma}[Parent boundary]
\label{lem:CELL2}
Let \(B\) be an arbitrarily oriented \(a\times b\times r\) rectangular box,
\(a\le b\le r\), and let \(E\subset B\).  There is a fixed enlargement
\(\widetilde B\) with side lengths comparable to \(a,b,r\) such that
\[
 \Ccal_h(E)\subset\widetilde B,
 \qquad |\widetilde B|\asymp|B|.
\tag{3.4}
\]
\end{lemma}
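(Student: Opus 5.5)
We take for \(\widetilde B\) the concentric box with the same orientation and side lengths \(a+2\sqrt3h\), \(b+2\sqrt3h\), \(r+2\sqrt3h\). Concretely, if \(B=\{x_0+\sum_i t_iu_i:|t_1|\le a/2,\ |t_2|\le b/2,\ |t_3|\le r/2\}\) for an orthonormal frame \((u_1,u_2,u_3)\), we set
\[
 \widetilde B=\Bigl\{x_0+\sum_i t_iu_i:\ |t_1|\le a/2+\sqrt3h,\ |t_2|\le b/2+\sqrt3h,\ |t_3|\le r/2+\sqrt3h\Bigr\}.
\]
This is a dilation depending only on the structural constant \(c_0\), since \(h=c_0a\) and \(a\) is the shortest side. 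It changes neither the labelled carrier nor its orientation, so it is a fixed enlargement in the sense of the conventions.

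The containment follows from \cref{lem:CELL1}. By (3.2) we have \(\Ccal_h(E)\subset N_{\sqrt3h}(E)\subset N_{\sqrt3h}(B)\), because \(E\subset B\). Now suppose \(\dist(y,B)<\sqrt3h\), and let \(p\in B\) be a point with \(|y-p|<\sqrt3h\). Then each coordinate satisfies \(|\langle y-p,u_i\rangle|<\sqrt3h\). Hence every coordinate of \(y-x_0\) exceeds the corresponding half-side of \(B\) by less than \(\sqrt3h\), and so \(y\in\widetilde B\). This argument works for arbitrary orientation, since we only use the orthonormal frame of \(B\) and never the grid axes.

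For the side lengths, note that \(\sqrt3h=\sqrt3c_0a\le 2\cdot10^{-3}a\). Thus each new side \(s+2\sqrt3h\), with \(s\in\{a,b,r\}\), lies in \([s,(1+4\cdot10^{-3})s]\), because \(s\ge a\). So the sides are comparable to \(a,b,r\). Multiplying the three factors gives
\[
 |B|\le|\widetilde B|\le(1+4\cdot10^{-3})^3|B|,
\]
which is (3.4).

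No step here is a real obstacle. The one point needing care is to keep the enlargement uniform: it must be independent of \(E\) and of the orientation, which is why we use the coordinates of \(B\)'s own frame. It must also be comparable along every axis, and this uses \(h\lesssim a\le b\le r\), so that the thinnest direction is not inflated by more than a constant factor. If \(E\) is empty, \(\Ccal_h(E)\) is empty and the containment holds trivially.
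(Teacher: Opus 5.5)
Your proof is correct and is essentially the paper's argument: the same concentric box in \(B\)'s own frame with each half-side enlarged by \(\sqrt3h\), containment via (3.2) and \(E\subset B\), and comparability from \(h=c_0a\le c_0b\le c_0r\). Your version also writes out the coordinatewise containment check and gives the explicit volume constant \((1+4\cdot10^{-3})^3\), which the paper leaves implicit.
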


\begin{proof}
If \(e_1,e_2,e_3\) are the side directions and \(x_B\) is the centre, take
\[
 \widetilde B=
 \left\{x_B+\sum_{j=1}^3s_je_j:
 |s_j|\le\tfrac12(a,b,r)_j+\sqrt3h\right\}.
\]
Every active cell point is within \(\sqrt3h\) of \(B\), and
\(h=c_0a\le c_0b\le c_0r\).  This proves both assertions.
\end{proof}

\begin{lemma}[Convex parallel bodies]
\label{lem:convex-parallel-body}
Let \(K\subset\R^n\) be convex and suppose \(B(z,r)\subset K\), where
\(r>0\).  For every \(\lambda\ge0\),
\[
 K+B(0,\lambda r)
 \subset z+(1+\lambda)(K-z).
\tag{3.4p}
\]
Consequently
\[
 |K+B(0,\lambda r)|\le(1+\lambda)^n|K|.
\tag{3.4q}
\]
Both conclusions remain valid with closed or open balls and are insensitive
to boundary points.
\end{lemma}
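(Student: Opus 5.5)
The inclusion (3.4p) will follow from convexity applied pointwise, and (3.4q) will then follow from the scaling of Lebesgue measure under a homothety.

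\emph{Proof of (3.4p).} Take a point \(x=k+v\) with \(k\in K\) and \(|v|\le\lambda r\). If \(\lambda=0\), then \(x=k\in K\) and there is nothing to prove, so assume \(\lambda>0\). Set
\[
 y=z+\frac{v}{\lambda}.
\]
Then \(|y-z|\le r\), so \(y\) lies in the closed ball \(\bar B(z,r)\).

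To handle the open/closed issue, I will use that \(\bar B(z,r)\subset\overline K\). Moreover \(\overline K\) is convex, and the right-hand side of (3.4p) behaves well under closure. Two cases arise.

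\textbf{Open ball in the sum.} Then \(|v|<\lambda r\), so \(y\in B(z,r)\subset K\) directly.

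\textbf{Closed ball in the sum.} Then \(y\in\overline K\). I would either run the argument in \(\overline K\) and note at the end that boundary points are measure-insensitive, or use a limiting argument \(v_t=tv\) with \(t\uparrow1\). Since the statement says the conclusions are "insensitive to boundary points", I plan to prove the exact inclusion whenever \(y\in K\). For the closed-ball case I will instead prove it with \(\overline K\) on both sides, which suffices for the volume bound because \(|\partial K|=0\) for convex \(K\) with nonempty interior.

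\emph{The key identity.} Write
\[
 x=z+(1+\lambda)\left(\frac{1}{1+\lambda}(k-z)+\frac{\lambda}{1+\lambda}(y-z)\right).
\]
This holds because \((k-z)+\lambda(y-z)=k-z+v\). The inner point
\[
 \frac{1}{1+\lambda}k+\frac{\lambda}{1+\lambda}y
\]
is a convex combination of \(k\) and \(y\). Since both lie in \(K\) (or \(\overline K\)), this point lies in \(K\) (or \(\overline K\)). Hence \(x\in z+(1+\lambda)(K-z)\), which gives (3.4p).

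\emph{Proof of (3.4q).} The homothety \(w\mapsto z+(1+\lambda)(w-z)\) multiplies Lebesgue measure by \((1+\lambda)^n\). Measurability of \(K+B(0,\lambda r)\) follows because it is a union of open balls when the ball is open, hence open. When the ball is closed, it lies between the open sum and the set \(z+(1+\lambda)(\overline K-z)\). Taking volumes and using \(|\overline K|=|K|\) gives (3.4q).

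The main obstacle, mild as it is, is the closed/boundary bookkeeping. In particular, \(K\) need not be closed, while a closed ball inside an open \(K\) can still produce points of \(K+\bar B\) lying outside \(z+(1+\lambda)(K-z)\) only on a null boundary set. The actual convexity computation is immediate.
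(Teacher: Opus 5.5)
Your argument is correct and is essentially the paper's proof. Both use the same convex combination \(\tfrac{1}{1+\lambda}k+\tfrac{\lambda}{1+\lambda}\bigl(z+v/\lambda\bigr)\in K\), followed by the Jacobian \((1+\lambda)^n\) of the homothety \(w\mapsto z+(1+\lambda)(w-z)\).

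The only difference is boundary bookkeeping. The paper works with closed balls and says the open case follows by approximation. You instead pass to \(\overline K\) when the hypothesis ball is open but the summand ball is closed, and that is the right move: the exact inclusion (3.4p) can genuinely fail in that mixed case (e.g.\ \(K=(-1,1)^2\cup\{(1,\tfrac12)\}\), \(z=0\), \(r=\lambda=1\), \(x=(1,\tfrac12)+(1,0)\notin 2K\)).

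One small point: measurability of \(K+\bar B(0,\lambda r)\) follows from its convexity, not from your sandwich between the open sum and \(z+(1+\lambda)(\overline K-z)\).
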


\begin{proof}
The case \(\lambda=0\) is immediate.  If \(x\in K\) and
\(|y|\le\lambda r\), then \(z+y/\lambda\in B(z,r)\subset K\).  Convexity
gives
\[
 \frac{x-z}{1+\lambda}
 +\frac{\lambda}{1+\lambda}\frac{y}{\lambda}
 \in K-z.
\]
Multiplication by \(1+\lambda\) proves (3.4p), and the determinant of the
homothety gives (3.4q).  The open-ball case follows by approximation.  This
is the elementary parallel-body inclusion used below; for standard convex
body conventions and Minkowski addition see \cite[Chs.~1 and 3]{Schneider2014}.
\end{proof}

To make later angular statements unambiguous, every labelled rectangular
parent is henceforth supplied with an ordered
orthonormal side frame \((e_a(B),e_b(B),e_r(B))\), corresponding to side
lengths \(a\le b\le r\).  The frame is part of the label when side lengths
coincide.  We write
\[
 TB:=\operatorname{span}\{e_b(B),e_r(B)\},
\tag{3.4a}
\]
and use the following unoriented plane metric.  If \(P,P'\) are two-planes
with arbitrarily chosen unit normals \(n_P,n_{P'}\), set
\[
 d_{\rm pl}(P,P')
 :=\arccos|\langle n_P,n_{P'}\rangle|\in[0,\pi/2].
\tag{3.4a$'$}
\]
This is independent of the signs of the normals and is the standard metric
on the projective plane \(\mathbb{RP}^2\).  In particular it satisfies the
triangle inequality.  We use its capped version
\[
 \ang(P,P'):=\min\{1,d_{\rm pl}(P,P')\}.
\tag{3.4a$''$}
\]
The map \(t\mapsto\min\{1,t\}\) is increasing and subadditive, so
\(\ang\) is again a metric; it is comparable to \(d_{\rm pl}\) and takes
values in \([0,1]\).  All plane diameters and plane caps below, including
those in Appendix~C, use this metric.  Thus \(TB\) in
(7.1) is defined even for repeated geometric boxes or equal side lengths.

\begin{remark}[Labels and ambient balls]
Coincident geometric boxes with different labels remain different parents
and different summands.  A fixed enlargement may leave an ambient ball that
the original parent touches; every application below correspondingly works
in a fixed dilation of that ball.  We never restrict the enlarged conclusion
back to a sharp parent boundary.
\end{remark}

The next convention prevents the carrier from changing type during a
refinement.  A parent label used below is the triple
\[
 \mathbf B=(B^\sharp,\widetilde B,\mathfrak e_B),
\tag{3.4d}
\]
where \(B^\sharp\) is the sharp box to which the children were assigned,
\(\widetilde B\) is the fixed enlargement in \cref{lem:CELL2}, and
\(\mathfrak e_B\) is the ordered frame in (3.4a).  We continue to write \(B\)
for the label, but a parent shading \(Z(B)\) is, by definition, a measurable
subset of the permanent carrier \(\widetilde B\), never a shading clipped to
\(B^\sharp\).  Conditions concerning child containment or complete-child
density refer to \(B^\sharp\); parent volumes in the cellular output refer to
\(\widetilde B\).  Since their volumes are comparable, this convention costs
only the fixed constant in (3.4), while preventing a later change of type.

\begin{lemma}[Tagged-cell recombination]
\label{lem:tag-recombine}
Let \(\mathcal A\) be a finite set of tags.  For each \(A\in\mathcal A\),
fix a bounded measurable support region \(A^+\subset\mathbb R^3\), and assume
that the family \((A^+)_{A\in\mathcal A}\) has overlap at most
\(\kappa_{\rm amb}\).  Work first in the disjoint union
\[
 X^{\rm tag}=\bigsqcup_{A\in\mathcal A}\{A\}\times\mathbb R^3.
\tag{3.4b}
\]
Give each tagged component its own full copy \((A,Q)\) of every half-open
grid cell \(Q\); only the finitely many cells meeting \(A^+\) can be active.
Then every tagged point belongs to exactly one tagged cell, so all degree and
product identities of \cref{thm:joint} are literal in \(X^{\rm tag}\).  If
\(E_A\subset A^+\) are measurable and \(\pi(A,x)=x\),
then
\[
 |\pi(\bigsqcup_A\{A\}\times E_A)|
 \le \sum_A|E_A|
 \le \kappa_{\rm amb}
       |\pi(\bigsqcup_A\{A\}\times E_A)|.
\tag{3.4c}
\]
Moreover, if a tagged pointwise multiplicity is at most \(M\) in every
component, its recombined physical multiplicity is at most
\(\kappa_{\rm amb}M\).  Thus projection back to physical space costs only
the displayed overlap constant; it never turns a tagged identity into an
unrecorded identity in physical space.
\end{lemma}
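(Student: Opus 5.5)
The plan is to reduce everything to a pointwise counting statement for the projection \(\pi\).

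For \(x\in\R^3\), set
\[
 n(x):=\#\{A\in\mathcal A:x\in E_A\}=\sum_A\1_{E_A}(x).
\]
Then \(\pi(\bigsqcup_A\{A\}\times E_A)=\bigcup_A E_A=\{n\ge1\}\), and this set is measurable because \(\mathcal A\) is finite. Since \(E_A\subset A^+\), the overlap hypothesis gives \(n(x)\le\#\{A:x\in A^+\}\le\kappa_{\rm amb}\) at every point. Tonelli (a finite sum, so only linearity is needed) then gives
\[
 \sum_A|E_A|=\int n.
\]
The chain (3.4c) now follows from the pointwise bounds \(\1_{\{n\ge1\}}\le n\le\kappa_{\rm amb}\1_{\{n\ge1\}}\).

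The tagged-cell assertion is bookkeeping.

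1. A tagged point \((A,x)\) lies in \(\{A\}\times Q\) for the unique half-open cell \(Q\ni x\). Copies with distinct tags are disjoint by construction of the disjoint union, so each tagged point lies in exactly one tagged cell.
2. Only cells meeting the bounded set \(A^+\) can be active, and there are finitely many of these. The finite graph arguments of \cref{thm:joint} therefore run verbatim on the vertex set of tagged cells.
3. No identification across tags occurs before projection, so the degree and product identities hold literally in \(X^{\rm tag}\).

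For the multiplicity statement, let the tagged family be labels \(L\) with shadings \(S(L)\subset\{A_L\}\times A_L^+\), and assume that in each component every \((A,x)\) lies in at most \(M\) of the \(S(L)\) with \(A_L=A\). The physical multiplicity at \(x\) is then
\[
 \#\{L:x\in\pi S(L)\}=\sum_{A:\,x\in A^+}\#\{L:A_L=A,\ (A,x)\in S(L)\}\le\kappa_{\rm amb}M.
\]
Here \(\pi\) is injective on each component, so a label's shading projects bijectively. The sum has at most \(\kappa_{\rm amb}\) nonzero terms, and each term is at most \(M\).

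The main obstacle is notational rather than mathematical. One must make sure multiplicity counts labels, not geometric sets, so that coincident projected shadings from different tags are still counted separately. This is exactly what the sum over \(A\) above does, so I expect no real difficulty.
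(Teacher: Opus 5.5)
Your proposal is correct and follows essentially the same route as the paper. Both arguments obtain (3.4c) by integrating the counting function \(n(x)=\#\{A:x\in E_A\}\), which the overlap hypothesis bounds by \(\kappa_{\rm amb}\); both get uniqueness of tagged cells from uniqueness of the half-open cell in each copy; and both prove the multiplicity bound by doing the same count pointwise, where your use of injectivity of \(\pi\) on each component to count labels is a slightly more explicit version of the paper's one-line remark.
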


\begin{proof}
Uniqueness of \((A,Q)\) follows from uniqueness of \(Q\) inside each tagged
copy.  The first inequality in (3.4c) is subadditivity.  For the second,
write \(N(x)=\#\{A:x\in E_A\}\).  Then
\[
 \sum_A|E_A|=\int_{\cup_AE_A}N(x)\,dx
 \le\kappa_{\rm amb}|\cup_AE_A|.
\]
The multiplicity assertion is the same calculation pointwise: at most
\(\kappa_{\rm amb}\) tagged components contribute at one physical point.
\end{proof}

\subsection{The positive parent--cell graph}

We now pass from geometry to a finite weighted graph.  Let \(\B\) be a
finite nonempty labelled family of permanent rectangular
parents as in (3.4d), and let
\[
 \T=\bigsqcup_{B\in\B}\T_B
\]
be a finite indexed family.  Every child \(T\in\T_B\) has the unique labelled
parent \(B\), is contained in \(B^\sharp\), and has a measurable shading
\(Y(T)\subset T\).  Write
\[
 M(Y)=\sum_B\sum_{T\in\T_B}|Y(T)|,
 \qquad
 f_B^Y(x)=\sum_{T\in\T_B}\1_{Y(T)}(x).
\tag{3.5}
\]
Assume throughout Sections~3--5 that \(0<M(Y)<\infty\).  In particular
\(N_{\max}\ge1\), the selected inner mass is positive, and the active
positive-edge graph below is nonempty.
We also fix the global denominator and shading density notation
\[
 D(\T)=\sum_B\sum_{T\in\T_B}|T|,
 \qquad
 \lambda(\T,Y)=\frac{M(Y)}{D(\T)}.
\tag{3.5a}
\]
After the common inner-level selection of \cref{sec:joint}, define
\[
 \omega_{BQ}
 =\int_Qf_B^{Y_0}(x)\,dx
 =\sum_{T\in\T_B}|Y_0(T)\cap Q|.
\tag{3.6}
\]
The active graph is
\[
 E^+=\{(B,Q):\omega_{BQ}>0\}.
\tag{3.7}
\]
Only positive weights define edges.  This exclusion is structural: a
positive-measure shading in one cell together with an isolated point in
another would otherwise create a geometric touching edge carrying no child
mass, and such an edge could survive into the parent output without anything
to lift.

\section{Positive-incidence regularization}
\label{sec:joint}

The geometric input is now a finite weighted bipartite graph.  Its left
vertices are labelled parents, its right vertices are cells, and its edge
weights are child masses.  We select a single subgraph in three steps: one
common inner-multiplicity level, one positive edge-weight level, and one
saturated right-degree level.  Saturation means that every edge at a retained
right vertex is kept, so the degree selected before the output is still the
degree of the output.  All families remain indexed; coincident labels are
never merged.  Let
\[
 N_{\max}=\max_B|\T_B|.
\]

\subsection{The common inner level}

For \(0\le k\le\lceil\log_2N_{\max}\rceil\), set
\[
 X_{B,k}=\{x:2^k\le f_B^Y(x)<2^{k+1}\}.
\]
Since
\[
 M(Y)=\sum_k\sum_B\int_{X_{B,k}}f_B^Y,
\]
one value \(k_0\) carries at least the reciprocal of
\[
 L_{\rm in}=1+\lceil\log_2N_{\max}\rceil
\tag{4.1}
\]
of the mass.  Put \(\mu_{\rm in}=2^{k_0}\) and
\[
 Y_0(T)=Y(T)\cap X_{B,k_0},\qquad T\in\T_B.
\tag{4.2}
\]
At a fixed pair \((B,x)\), (4.2) keeps all or none of the
\(B\)-child incidences.  Hence
\[
 M(Y_0)\ge L_{\rm in}^{-1}M(Y),
\qquad
 \mu_{\rm in}\le f_B^{Y_0}(x)<2\mu_{\rm in}
\tag{4.3}
\]
on every nonempty retained inner union.

\subsection{Edge weights and right degrees}

\begin{lemma}[Positive edge-weight regularization]
\label{lem:REG1}
Let \(E\) be a finite nonempty edge set with weights \(\omega_e>0\), and
write \(\Omega=\sum_E\omega_e\), \(m=|E|\).  There are
\(E_1\subset E\) and \(\omega_0>0\) such that
\[
 \omega_0\le\omega_e<2\omega_0\quad(e\in E_1)
\tag{4.4}
\]
and
\[
 \sum_{e\in E_1}\omega_e
 \ge\frac{\Omega}{2(1+\lceil\log_2(2m)\rceil)}.
\tag{4.5}
\]
\end{lemma}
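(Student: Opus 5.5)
The plan is a standard dyadic pigeonhole, after first discarding the light edges so that only logarithmically many dyadic classes remain. Let \(\omega_{\max}=\max_E\omega_e>0\), which exists because \(E\) is finite and nonempty. Let \(E_{\rm light}=\{e:\omega_e<\omega_{\max}/(2m)\}\). Its total weight is less than \(m\cdot\omega_{\max}/(2m)=\omega_{\max}/2\le\Omega/2\). Hence the heavy edges \(E_{\rm heavy}=E\setminus E_{\rm light}\) carry total weight at least \(\Omega/2\). This step is what makes the number of classes depend only on \(m\) and not on the spread of the weights.

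Next we partition \(E_{\rm heavy}\) dyadically from the top. For \(j\ge0\), set
\[
 E^{(j)}=\{e\in E_{\rm heavy}:\ 2^{-j-1}\omega_{\max}<\omega_e\le2^{-j}\omega_{\max}\}.
\]
Every heavy edge satisfies \(\omega_e\ge\omega_{\max}/(2m)>2^{-J-1}\omega_{\max}\) with \(J=\lceil\log_2(2m)\rceil\), so only \(0\le j\le J\) occur. That gives at most \(1+\lceil\log_2(2m)\rceil\) nonempty classes. By pigeonhole, some \(j_0\) has
\[
 \sum_{E^{(j_0)}}\omega_e\ge\frac{\Omega}{2(1+\lceil\log_2(2m)\rceil)}.
\]
In particular \(E^{(j_0)}\) is nonempty.

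The half-open convention needs one adjustment: the classes above are open at the bottom and closed at the top, whereas (4.4) asks for \(\omega_0\le\omega_e<2\omega_0\). The cleanest fix is to use classes of the form \(2^{-j-1}\omega_{\max}\le\omega_e<2^{-j}\omega_{\max}\) for \(j\ge0\), and to place the edges with \(\omega_e=\omega_{\max}\) in the \(j=-1\) class \(\{\omega_{\max}\le\omega_e<2\omega_{\max}\}\). The indices then run over \(-1\le j\le J'\), where \(J'\) is the largest \(j\) with \(2^{-j-1}\omega_{\max}>\omega_{\max}/(2m)\) possibly met. This count must be checked, since it could add one class compared with the bound.

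The alternative is to keep the original classes and take \(\omega_0=\min_{E^{(j_0)}}\omega_e\). Then \(\omega_0>2^{-j_0-1}\omega_{\max}\) and \(\omega_e\le2^{-j_0}\omega_{\max}<2\omega_0\) for \(e\in E^{(j_0)}\). This yields (4.4) with \(E_1=E^{(j_0)}\) and \(\omega_0>0\), with no change in the class count. I expect this version to be the one used, so the main obstacle is just this off-by-one bookkeeping, not the pigeonhole itself.
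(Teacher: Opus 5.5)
Your proof is correct and is essentially the paper's argument: discard the edges lighter than a $1/(2m)$ fraction of a reference weight (losing less than $\Omega/2$), so that the surviving weight ratio is at most $2m$, and keep the heaviest of at most $1+\lceil\log_2(2m)\rceil$ dyadic classes. Your choice $\omega_0=\min_{E^{(j_0)}}\omega_e$ correctly yields the half-open window (4.4), and the bottom-closed variant in your third paragraph would not add a class either, since heavy edges satisfy $\omega_e\ge\omega_{\max}/(2m)\ge2^{-\lceil\log_2(2m)\rceil}\omega_{\max}$.

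The one real difference is the threshold. The paper deletes weights below $\Omega/(2m)$ rather than $\omega_{\max}/(2m)$, so every retained weight, and hence your $\omega_0$, is at least $\Omega/(2m)$. The paper reuses this by-product in the proofs of \cref{thm:tagged-sync} and \cref{cor:tagged-volume-levels}. Your threshold gives only $\omega_0\ge\omega_{\max}/(2m)\ge\Omega/(2m^2)$, which is harmless for the lemma as stated.
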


\begin{proof}
Delete weights below \(\Omega/(2m)\); their sum is below \(\Omega/2\).
The remaining ratio between the largest and smallest weights is at most
\(2m\).  The heaviest dyadic class proves both assertions.
\end{proof}

Apply \cref{lem:REG1} to the active graph \(E^+\) in (3.7).  For a
right cell incident to \(E_1\), write
\[
 d_1(Q)=\#\{B:(B,Q)\in E_1\}.
\]
Select a dyadic value \(d_0\) whose cell class carries the greatest
\(E_1\)-weight, and retain \emph{all} \(E_1\)-edges at those cells:
\[
 E_2=\{(B,Q)\in E_1:d_0\le d_1(Q)<2d_0\}.
\tag{4.6}
\]
Because every \(E_1\)-weight is comparable, the loss in (4.6) is at
most
\[
 L_{\rm deg}=2(1+\lceil\log_2|E_1|\rceil).
\tag{4.7}
\]
The saturation in (4.6) is important: the degree inside \(E_2\) is
still \(d_1(Q)\).

Define
\[
 L_{J1}:=L_{\rm in},\qquad
 L_{R1}:=2(1+\lceil\log_2(2|E^+|)\rceil),\qquad
 L_{R2}:=L_{\rm deg},
\tag{4.8a}
\]
and
\[
 L_{\rm reg}:=L_{J1}L_{R1}L_{R2},\qquad
 \mathfrak L:=L_{\rm reg}.
\tag{4.8}
\]
Thus (J1) means the whole-incidence inner multiplicity level, (R1) the
positive edge-weight level of \cref{lem:REG1}, and (R2) the saturated
right-degree level (4.6).  Their inputs, retained factors, and outputs are
exactly (4.1)--(4.7); the symbols are not placeholders for further
refinements.
Then
\[
 \sum_{(B,Q)\in E_2}\omega_{BQ}
 \ge \mathfrak L^{-1}M(Y).
\tag{4.9}
\]

\subsection{Reading both shadings from one graph}

Put
\[
 \B'=\{B:\text{there exists }Q\text{ with }(B,Q)\in E_2\}.
\tag{4.9a}
\]
For \(B\in\B'\) put
\[
 \Q_B=\{Q:(B,Q)\in E_2\},
\]
and define
\[
 Y_1(T)=Y_0(T)\cap\bigcup_{Q\in\Q_B}Q,\quad T\in\T_B,
\qquad
 Z(B)=\bigcup_{Q\in\Q_B}Q.
\tag{4.10}
\]
For \(B\notin\B'\), set \(\Q_B=\varnothing\), \(Z(B)=\varnothing\), and
\(Y_1(T)=\varnothing\) for \(T\in\T_B\).  Thus all output shadings are total
functions on the original labelled families.
Every selected edge has positive weight, so
\[
 \boxed{Z(B)=\Ccal_h(U(\T_B,Y_1)).}
\tag{4.11}
\]
This identity is the first payoff of the construction: the parent shading is
exactly the cellularization of its retained child union, with no separate
metric reconstruction.

\begin{theorem}[Cellular Joint Factoring]
\label{thm:joint}
The output in (4.10) has the following properties.
\begin{enumerate}[label=\textup{(P\arabic*)},leftmargin=2.8em]
\item \(M(Y_1)\ge\mathfrak L^{-1}M(Y)\).
\item If \(x\in Y_1(T)\) and \(T\in\T_B\), then \(x\in Z(B)\).
\item On \(U(\T_B,Y_1)\),
\[
 \mu_{\rm in}\le\sum_{T\in\T_B}\1_{Y_1(T)}(x)<2\mu_{\rm in}.
\]
\item On \(U(\B,Z)\),
\[
 d_0\le\sum_B\1_{Z(B)}(x)<2d_0.
\]
\item At every child point,
\[
 \sum_{T\in\T}\1_{Y_1(T)}(x)<4\mu_{\rm in}d_0.
\]
\end{enumerate}
Moreover \(Z(B)\) is contained in a labelled fixed enlargement
\(\widetilde B\) with the same side-length scales.
\end{theorem}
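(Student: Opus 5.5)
The plan is to verify the five properties directly from the construction (4.1)--(4.10). The one geometric input is that every active cell \(Q\) with \((B,Q)\in E^+\) meets \(U(\T_B,Y_0)\subset B^\sharp\).

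For (P1), note that the cells are pairwise disjoint. Hence \(M(Y_1)=\sum_B\sum_{Q\in\Q_B}\sum_{T\in\T_B}|Y_0(T)\cap Q|=\sum_{(B,Q)\in E_2}\omega_{BQ}\), and (4.9) gives the bound. I would make (4.9) explicit as the product of three losses. Step (4.3) costs \(L_{J1}\). \Cref{lem:REG1} applied to \(E^+\) with \(\Omega=M(Y_0)\) costs \(L_{R1}\). The saturated selection (4.6) costs \(L_{R2}\): there are at most \(1+\lceil\log_2|E_1|\rceil\) dyadic degree classes, so the heaviest class carries at least that reciprocal of the \(E_1\)-weight. (The weight comparability mentioned in the text is not even needed here.)

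(P2) is immediate, since \(Y_1(T)\subset\bigcup_{Q\in\Q_B}Q=Z(B)\).

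For (P3), take \(x\in U(\T_B,Y_1)\) and let \(Q\) be its unique cell, which lies in \(\Q_B\). Then \(\sum_T\1_{Y_1(T)}(x)=f_B^{Y_0}(x)\). This value lies in \([\mu_{\rm in},2\mu_{\rm in})\) by (4.3), because \(Y_0\) keeps all or none of the \(B\)-incidences at \(x\).

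For (P4), take \(x\in U(\B,Z)\) with cell \(Q\). Then \(\sum_B\1_{Z(B)}(x)=\#\{B:(B,Q)\in E_2\}\). By the saturation in (4.6), this equals \(d_1(Q)\in[d_0,2d_0)\), since every \(E_1\)-edge at a retained cell was kept.

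For (P5), take a child point \(x\) in the output union. Decompose by parent: \(\sum_{T\in\T}\1_{Y_1(T)}(x)=\sum_B\sum_{T\in\T_B}\1_{Y_1(T)}(x)\). Only parents with \(x\in Z(B)\) contribute, by (P2), and there are fewer than \(2d_0\) of them by (P4). Each contributes fewer than \(2\mu_{\rm in}\) by (P3). The product is below \(4\mu_{\rm in}d_0\). At child points outside the output union the sum is \(0\), which is trivially within the bound.

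The final containment follows from (4.11) together with \cref{lem:CELL2}, applied to \(E=U(\T_B,Y_1)\subset B^\sharp\): this gives \(Z(B)=\Ccal_h(E)\subset\widetilde B\). For (4.11) itself, each \(Q\in\Q_B\) has \(\omega_{BQ}>0\). The edge weight \(\omega_{BQ}\) equals \(\sum_{T\in\T_B}|Y_0(T)\cap Q|\), and \(Y_0(T)\cap Q=Y_1(T)\cap Q\) for \(Q\in\Q_B\). So \(Q\) meets \(U(\T_B,Y_1)\), and the reverse inclusion holds by definition.

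The main obstacle is bookkeeping rather than analysis. One has to ensure the multiplicities in (P3)--(P4) are pointwise identities, which uses the uniqueness of cells in the half-open grid and the saturation of right degrees. One also has to check that \(f_B^{Y_0}\) on retained cells is unaffected by the cell restriction. If tagged ambient balls are in play, I would run the argument in \(X^{\rm tag}\) as \cref{lem:tag-recombine} permits.
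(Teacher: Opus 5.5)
Your proof is correct and follows the paper's argument essentially line by line. Disjointness of the half-open cells gives $M(Y_1)=\sum_{(B,Q)\in E_2}\omega_{BQ}$, whole-incidence retention gives $f_B^{Y_1}=f_B^{Y_0}$ for (P3), saturation gives the pointwise degree identity for (P4), (P5) is the product of the two, and the enlargement is \cref{lem:CELL2}. Your additional steps are correct refinements rather than a different route: unpacking (4.9) into the three losses, observing that choosing the degree class by weight makes comparability of the $E_1$-weights unnecessary for the (R2) loss, and verifying (4.11) explicitly.
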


\begin{proof}
Disjointness of half-open cells and (3.6) give
\[
 M(Y_1)=\sum_{(B,Q)\in E_2}\omega_{BQ},
\]
so (P1) is (4.9).  The unique cell of a point in \(Y_1(T)\) is selected
for the parent \(B\), proving (P2).

For (P3), an edge \((B,Q)\) retains every \(B\)-child incidence inside
\(Q\).  Thus \(f_B^{Y_1}(x)=f_B^{Y_0}(x)\) on the final inner union, and
(4.3) applies.  For (P4), the unique cell \(Q(x)\) of a parent point
satisfies
\[
 \sum_B\1_{Z(B)}(x)=\#\{B:(B,Q(x))\in E_2\},
\]
which lies in \([d_0,2d_0)\) by (4.6).  Finally,
\[
\begin{aligned}
 \sum_{T\in\T}\1_{Y_1(T)}(x)
 &=\sum_{B:x\in U(\T_B,Y_1)}
       \sum_{T\in\T_B}\1_{Y_1(T)}(x)\\
 &<2\mu_{\rm in}\#\{B:x\in Z(B)\}
 <4\mu_{\rm in}d_0.
\end{aligned}
\]
This proves (P5).  The enlargement statement is \cref{lem:CELL2}.
\end{proof}

\begin{corollary}[Average multiplicity product]
\label{cor:product}
Let \(\B'\) be the retained parent family and put
\[
 \mu_{\rm child}:=\max_{B\in\B'}\mu(\T_B,Y_1).
\]
Then
\[
 \mu(\T,Y)
 \lesssim \mathfrak L\,
 \mu(\B',Z)\,\mu_{\rm child},
\tag{4.12}
\]
where \(\mu(\mathcal F,Y)=M(Y)/|U(\mathcal F,Y)|\).
\end{corollary}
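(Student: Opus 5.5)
The proof is a direct chaining of (P1), (P3), (P4) and (P5) of \cref{thm:joint}. I will compare numerators and denominators separately: I pass from \(Y\) to the retained shading \(Y_1\), then bound the retained average multiplicity by the pointwise product bound (P5). Finally I bound the two scalar levels \(\mu_{\rm in}\) and \(d_0\) by the averaged quantities \(\mu_{\rm child}\) and \(\mu(\B',Z)\). The implicit constant will be \(4\).

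\emph{Numerator and denominator.} By (P1), \(M(Y)\le\mathfrak L\,M(Y_1)\). Since \(Y_1(T)\subset Y_0(T)\subset Y(T)\) for every label, we have \(U(\T,Y_1)\subset U(\T,Y)\), so \(|U(\T,Y_1)|\le|U(\T,Y)|\). To avoid a vacuous or undefined ratio, I will check that \(|U(\T,Y_1)|>0\). This holds because \(E_2\ne\varnothing\): \(E^+\) is nonempty since \(M(Y)>0\), and each regularization step retains a positive fraction of the weight. Every edge of \(E_2\) has \(\omega_{BQ}>0\), and this is exactly the \(Y_1\)-mass of \(\T_B\) in \(Q\). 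Hence
\[
 \mu(\T,Y)=\frac{M(Y)}{|U(\T,Y)|}\le\mathfrak L\,\frac{M(Y_1)}{|U(\T,Y_1)|}.
\]

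\emph{Pointwise product bound.} Write \(M(Y_1)=\int_{U(\T,Y_1)}\sum_{T}\1_{Y_1(T)}\). Every point of \(U(\T,Y_1)\) is a child point, so (P5) gives \(M(Y_1)<4\mu_{\rm in}d_0\,|U(\T,Y_1)|\). Combining with the previous display,
\[
 \mu(\T,Y)\le4\mathfrak L\,\mu_{\rm in}d_0.
\]

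\emph{Replacing the levels by averages.} By (P4), integrating \(\sum_B\1_{Z(B)}\ge d_0\) over \(U(\B',Z)\) gives \(M(Z)\ge d_0|U(\B',Z)|\), so \(d_0\le\mu(\B',Z)\). Here \(Z(B)=\varnothing\) for \(B\notin\B'\), so restricting to \(\B'\) changes nothing, and \(|U(\B',Z)|>0\) because the cells have positive volume. Next fix any \(B\in\B'\). Its union \(U(\T_B,Y_1)\) has positive measure by the positivity of its edges. Integrating (P3) over this union gives \(\mu(\T_B,Y_1)\ge\mu_{\rm in}\), hence \(\mu_{\rm in}\le\mu_{\rm child}\). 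Substituting both bounds yields (4.12) with constant \(4\).

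The only delicate point is the bookkeeping of nonvanishing denominators, including nonemptiness of \(\B'\). This is exactly where the positive-edge convention (3.7) enters: every retained parent has a retained child union of positive measure, so each ratio above is a genuine average rather than the conventional zero.
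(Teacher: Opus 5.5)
Your proposal is correct and is essentially the paper's own proof: both pass from \(Y\) to \(Y_1\) via (P1) and the inclusion \(U(\T,Y_1)\subset U(\T,Y)\), bound \(\mu(\T,Y_1)\) by the pointwise product (P5), and integrate (P3)--(P4) to compare \(\mu_{\rm in}\) and \(d_0\) with the averaged multiplicities, with the positive-edge convention guaranteeing nonzero denominators. The only differences are that you use just the one-sided comparisons \(\mu_{\rm in}\le\mu_{\rm child}\) and \(d_0\le\mu(\B',Z)\) that are actually needed (the paper records two-sided comparabilities), and that you make the constant \(4\) explicit.
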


\begin{proof}
Every retained parent \(B\in\B'\) is incident to at least one edge of
\(E_2\).  That edge has positive weight, so
\(U(\T_B,Y_1)\ne\varnothing\); in particular the local average
multiplicity used below has a nonzero denominator.
Integrating (P3)--(P5) gives, uniformly for \(B\in\B'\),
\(\mu(\T_B,Y_1)\sim\mu_{\rm in}\), while
\(\mu(\B',Z)\sim d_0\), and
\(\mu(\T,Y_1)\lesssim\mu_{\rm in}d_0\).
Since \(M(Y_1)\ge\mathfrak L^{-1}M(Y)\) and
\(U(\T,Y_1)\subset U(\T,Y)\),
\(\mu(\T,Y)\le\mathfrak L\mu(\T,Y_1)\).
\end{proof}

\section{Refinement by whole parent--cell incidences}
\label{sec:lift}

Joint factoring would be of limited use if it were destroyed by the next
geometric selection.  The relevant stability statement is therefore a
lifting theorem.  A later step may discard parent incidences, but it must do
so edge by edge.  Equal cell volumes turn retained labelled parent mass into
retained edge count, and the regularized positive weights turn retained edge
count into retained child mass.  No geometric hypothesis beyond this atomic
structure is needed.

\begin{theorem}[Whole-incidence lifting]
\label{thm:lift}
Let \(0<c\le1\), let \(E^\star\subset E_2\), and define
\[
 Z^\star(B)=\bigcup_{Q:(B,Q)\in E^\star}Q.
\tag{5.1}
\]
Suppose the labelled parent mass satisfies
\[
 \sum_B|Z^\star(B)|\ge c\sum_B|Z(B)|.
\tag{5.2}
\]
For \(T\in\T_B\), set
\[
 Y^\star(T)=Y_1(T)\cap
 \bigcup_{Q:(B,Q)\in E^\star}Q.
\tag{5.3}
\]
Then
\[
 M(Y^\star)\ge\frac c2M(Y_1).
\tag{5.4}
\]
The common inner multiplicity, exact cellular compatibility, and the
original pointwise product upper bound survive.
\end{theorem}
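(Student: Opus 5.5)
The plan is to reduce everything to counting edges of \(E_2\), using that all cells have the same volume \(h^3\) and all \(E_2\)-weights lie in \([\omega_0,2\omega_0)\) by (4.4).

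\textbf{Parent mass to edge count.} Since distinct cells are disjoint, for each label \(B\) the union in (5.1) is a disjoint union. Hence
\[
 |Z^\star(B)|=h^3\,\#\{Q:(B,Q)\in E^\star\}.
\]
Summing over labels gives \(\sum_B|Z^\star(B)|=h^3|E^\star|\). In the same way \(\sum_B|Z(B)|=h^3|E_2|\). Hypothesis (5.2) therefore becomes \(|E^\star|\ge c|E_2|\).

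\textbf{Edge count to child mass.} Again by disjointness of cells,
\[
 M(Y^\star)=\sum_{(B,Q)\in E^\star}\sum_{T\in\T_B}|Y_1(T)\cap Q|.
\]
For \((B,Q)\in E_2\), definition (4.10) gives \(Y_1(T)\cap Q=Y_0(T)\cap Q\), since \(Q\in\Q_B\). So the inner sum is exactly \(\omega_{BQ}\), and
\[
 M(Y^\star)=\sum_{E^\star}\omega_{BQ},\qquad M(Y_1)=\sum_{E_2}\omega_{BQ}.
\]
The first identity was already used in the proof of \cref{thm:joint}. By (4.4), and because \(E_2\subset E_1\),
\[
 M(Y^\star)\ge \omega_0|E^\star|\ge c\,\omega_0|E_2|>\tfrac c2\sum_{E_2}\omega_{BQ}=\tfrac c2M(Y_1),
\]
which is (5.4).

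\textbf{Surviving properties.* } These are checked by repeating the proof of \cref{thm:joint} with \(E^\star\) in place of \(E_2\).

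\emph{Compatibility.* } If \(x\in Y^\star(T)\) with \(T\in\T_B\), its unique cell \(Q(x)\) satisfies \((B,Q(x))\in E^\star\), so \(x\in Z^\star(B)\). Moreover \(Z^\star(B)=\Ccal_h(U(\T_B,Y^\star))\), because every edge of \(E^\star\) has positive weight, as in (4.11).

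\emph{Inner multiplicity.* } Each retained edge keeps all \(B\)-child incidences inside its cell. Hence \(f_B^{Y^\star}=f_B^{Y_1}=f_B^{Y_0}\) on \(U(\T_B,Y^\star)\), and (P3) transfers.

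\emph{Product bound.* } Here \(\#\{B:x\in Z^\star(B)\}\le\#\{B:(B,Q(x))\in E_2\}<2d_0\). Combining this with the inner bound gives \(\sum_T\1_{Y^\star(T)}(x)<4\mu_{\rm in}d_0\), as in (P5).

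Only the upper parent multiplicity is asserted. The lower bound \(d_0\) in (P4) may genuinely fail after deletion, so the statement correctly omits it.

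\textbf{Main obstacle.* } There is no real obstacle beyond bookkeeping. The one subtle point is to make sure the constant \(\tfrac12\) comes only from the weight window (4.4). This requires \(E^\star\subset E_2\subset E_1\), and it requires that \(Y_1\) restricted to an \(E_2\)-cell recovers the full weight \(\omega_{BQ}\). That in turn uses the saturation in (4.10): \(Y_1\) is cut only by whole cells.
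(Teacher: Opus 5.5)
Your proof is correct and follows the paper's argument essentially line by line. Equal cell volume \(h^3\) turns (5.2) into \(|E^\star|\ge c|E_2|\), the weight window \([\omega_0,2\omega_0)\) turns edge count into child mass, and keeping whole cells gives compatibility, the unchanged inner level, and the product bound through the old right degree \(<2d_0\); your remark that only the upper degree bound survives, with the lower bound restored separately as in \cref{cor:rer}, also matches the paper.
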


\begin{proof}
Each labelled parent--cell edge contributes exactly \(h^3\) to the parent
sum, so (5.2) implies \(|E^\star|\ge c|E_2|\).  Also
\[
 M(Y^\star)=\sum_{e\in E^\star}\omega_e
 \ge\omega_0|E^\star|,
\qquad
 M(Y_1)<2\omega_0|E_2|,
\]
which proves (5.4).  A selected edge keeps every child incidence of its
parent in that cell.  Therefore
\[
 Z^\star(B)=\Ccal_h(U(\T_B,Y^\star))
\]
and the inner multiplicity is unchanged on the new union.  If
\(d^\star(Q)\) is the new parent degree, then
\[
 \sum_T\1_{Y^\star(T)}(x)
 \le2\mu_{\rm in}d^\star(Q(x))
 \le4\mu_{\rm in}d_0.
\]
\end{proof}

\begin{corollary}[Restoring a constant refined parent degree]
\label{cor:rer}
Choose a dyadic right-degree class of \(E^\star\) having maximal edge count,
keep every edge at each cell in that class, let \(Y^{\star\star}\) be the
resulting child shading, and put
\[
 L_\star:=1+\lceil\log_2|E^\star|\rceil.
\]
Then the output again has constant parent degree and the complete comparison
chain is
\[
 M(Y^{\star\star})
 \ge \frac{1}{2L_\star}M(Y^\star)
 \ge \frac{c}{4L_\star}M(Y_1)
 \ge \frac{c}{4L_\star\mathfrak L}M(Y).
\tag{5.5}
\]
Here \(Y\) is the input to \cref{thm:joint}, \(Y_1\) is its output,
\(Y^\star\) is the immediate output of \cref{thm:lift}, and \(c\) is
exactly the parent-mass fraction in (5.2), measured relative to \(Z\).
Thus the lift supplies \(c/2\), the subsequent degree regularization supplies
\((2L_\star)^{-1}\), and the earlier joint construction supplies
\(\mathfrak L^{-1}\); none of these baselines is implicit.
\end{corollary}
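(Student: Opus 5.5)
The proof repeats, on the subgraph $E^\star$, the saturated right-degree selection (4.6) and the dyadic pigeonhole of \cref{lem:REG1}, and then reads off the comparison chain. For a cell $Q$ incident to $E^\star$, write $d^\star(Q)=\#\{B:(B,Q)\in E^\star\}$. The $E^\star$-cells split into at most $L_\star=1+\lceil\log_2|E^\star|\rceil$ dyadic classes $\{2^j\le d^\star(Q)<2^{j+1}\}$, because $1\le d^\star(Q)\le|E^\star|$. Each edge of $E^\star$ lies at exactly one cell and so in exactly one class. The class with maximal edge count therefore contains at least $|E^\star|/L_\star$ edges.

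Let $E^{\star\star}$ be the set of all $E^\star$-edges at cells of that class, and define $Y^{\star\star}(T)=Y_1(T)\cap\bigcup_{Q:(B,Q)\in E^{\star\star}}Q$. Saturation means every $E^\star$-edge at a retained cell is kept, so the degree of such a cell in $E^{\star\star}$ is still $d^\star(Q)\in[2^j,2^{j+1})$. Restricting the parent shading to $E^{\star\star}$ as in (5.1) thus gives constant parent degree on its union, with the pointwise argument of (P4). The outputs are again of the form (5.1), (5.3), so exact cellular compatibility and the common inner level persist by the proof of \cref{thm:lift}.

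For the mass chain, use that every edge of $E_2\supset E^\star\supset E^{\star\star}$ has weight in $[\omega_0,2\omega_0)$ by (4.4). Disjointness of the half-open cells gives $M(Y^{\star\star})=\sum_{e\in E^{\star\star}}\omega_e$, and the same formula holds for $Y^\star$. Combining these,
\[
 M(Y^{\star\star})\ge\omega_0|E^{\star\star}|\ge\omega_0|E^\star|/L_\star
 >\frac{1}{2L_\star}M(Y^\star).
\]
This is the first inequality in (5.5). The second inequality is (5.4) from \cref{thm:lift}, divided by $2L_\star$. The third is (P1) of \cref{thm:joint}.

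There is no real obstacle. The only points needing care are to pigeonhole by edge count rather than by mass, and to keep the degree class saturated so that the selected degree is the output degree. Comparable edge weights then make edge count and mass interchangeable up to the factor $2$.
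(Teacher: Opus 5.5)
Your proposal is correct and matches the paper's proof: you pigeonhole the dyadic classes of $d^\star(Q)$ by edge count, use saturation so that the selected degree is the output degree, convert edge count to mass through the weight window $[\omega_0,2\omega_0)$, and then chain (5.4) and (P1). Your extra checks, namely $M(Y^\star)=\sum_{E^\star}\omega_e$ and at most $L_\star$ nonempty classes, only make explicit what the paper's three-line argument uses implicitly.
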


\begin{proof}
Pigeonhole the positive values of
\(d^\star(Q)=\#\{B:(B,Q)\in E^\star\}\).  The selected degree class retains
the reciprocal of the \(L_\star\) classes in edge count.  Edge weights remain
in \([\omega_0,2\omega_0)\), so this converts to
\(M(Y^{\star\star})\ge(2L_\star)^{-1}M(Y^\star)\).  Combine
\cref{thm:lift} with (P1) of \cref{thm:joint} to obtain the remaining two
inequalities in (5.5).
\end{proof}

\begin{remark}[Three refinement types]
Deleting whole parent labels, deleting whole right cells, and deleting
individual parent--cell incidences are all instances of
\(E^\star\subset E_2\).  In the third case one usually applies
\cref{cor:rer}.  No operation may delete only selected child labels inside a
retained \((B,Q)\) edge.
\end{remark}

\begin{proposition}[The cellular restriction is sharp]
\label{prop:noncell}
A positive-volume measurable parent refinement that cuts through cells need
not retain any child mass.
\end{proposition}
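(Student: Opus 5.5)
The statement asks only for a counterexample, so the plan is to build one explicit finite configuration. Take a single labelled parent \(B\), take a single retained edge \((B,Q)\in E_2\), and let that edge carry one child. The example has to sit in the output of \cref{thm:joint} itself, so that the failure is visibly a failure of non-cellular refinement and not a defect of the input.

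Concretely, fix \(a\) and \(h=c_0a\), and let \(B^\sharp\) be an \(a\times a\times a\) box that contains the cell \(Q=h([0,1)^3)\). Choose one child \(T\in\T_B\) with \(T\subset B^\sharp\), and shade it by
\[
 Y(T)=h\bigl([0,\tfrac12)\times[0,1)^2\bigr)\subset Q,
\]
so that it occupies the left half of \(Q\). Then \(f_B^Y=\1_{Y(T)}\), the inner level is \(\mu_{\rm in}=1\), and \(E^+=\{(B,Q)\}\) is a single edge. Every regularization step of \cref{sec:joint} therefore keeps this edge, which gives \(E_2=\{(B,Q)\}\), \(Y_1=Y\), \(Z(B)=Q\), and \(d_0=1\).

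Next, define the non-cellular parent refinement \(Z'(B)=h\bigl([\tfrac12,1)\times[0,1)^2\bigr)\), the complementary half of \(Q\). It is measurable and has volume \(|Z(B)|/2>0\), so it retains half of the labelled parent mass; this is exactly the fraction \(c=1/2\) that (5.2) would require. Now restrict the children in the only way available without cells, namely \(Y'(T)=Y_1(T)\cap Z'(B)\). This set is empty, so \(M(Y')=0\) while \(M(Y_1)=h^3/2>0\). Thus the conclusion (5.4) fails, and compatibility can only be kept by deleting all child mass. If desired, the pieces can be replaced by any two disjoint positive-measure halves. The contrast with \cref{thm:lift} is then explicit: every refinement of \(Z(B)\) of the form (5.1) keeps either all of \(Q\) or none of it.

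There is no real obstacle here; the construction is routine. The one point to check is that the example is internally consistent with \cref{thm:joint}: the edge must have positive weight, and the trivial regularization must leave the configuration unchanged. Both hold because there is exactly one positive edge. This is also the same mass-mismatch mechanism as in (N5) of \cref{prop:F2}, now placed inside a single cell.
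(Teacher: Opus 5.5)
Your proposal is correct and is essentially the paper's own argument: you put all child mass on one measurable half of a single cell \(Q\) and take the complementary half as the parent refinement, which keeps half the parent volume and none of the child mass. Your extra check that this configuration is a genuine output of \cref{thm:joint} (a single positive edge, so \(\mu_{\rm in}=d_0=1\) and \(Z(B)=Q\)) is a harmless elaboration; it needs only that the child carrier be chosen to contain its shading, e.g.\ \(T=B^\sharp\).
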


\begin{proof}
In one cell \(Q\), support all child mass on a measurable half \(Q_+\).
Retain the disjoint half \(Q_-\) as the parent refinement.  It keeps half the
parent volume and zero child mass.
\end{proof}

\begin{remark}[Iteration budget]
For \(m\) successive whole-incidence lifts with parent fractions \(c_j\),
\[
 M(Y^{(m)})\ge2^{-m}\prod_{j=1}^mc_j\,M(Y^{(0)}),
\tag{5.5a}
\]
before any degree re-regularizations.  Thus a fixed number of controlled
lifts is safe.  Arbitrarily many harmless lifts are not: retaining half
the edges at each stage leaves \(2^{-m}\) of the mass.
\end{remark}

\subsection{The geometry-free core}

The proof above only used Euclidean cubes to obtain equal-volume atoms and to
reconstruct a geometric carrier.  The regularization and lifting themselves
see only a finite measurable partition, positive edge weights, and comparable
atom measures.  The following theorem records this geometry-free core and
makes the finite-iteration loss explicit.

\begin{theorem}[Atomic cellular factoring and lifting]
\label{thm:abstract-cellular}
Let \((X,\mathfrak m)\) be a measure space.  Let \(X_0\subset X\) be
measurable, let \(\mathcal Q\) be a finite measurable partition of
\(X_0\), and let
\(\mathcal B\) be a finite nonempty labelled parent set.  For each
\(B\in\mathcal B\), let \(\mathcal T_B\) be a finite labelled child set with measurable shadings
\(Y(T)\subset X_0\), and assume
\[
 0<M(Y):=\sum_B\sum_{T\in\mathcal T_B}\mathfrak m(Y(T))<\infty,
 \qquad N_{\max}:=\max_B|\mathcal T_B|<\infty.
\tag{5.6}
\]
Repeated geometric realizations, if any, are counted by labels.  Put
\[
 f_B(x)=\sum_{T\in\mathcal T_B}\1_{Y(T)}(x).
\]
Then there are
\(k_0\in\{0,\ldots,\lceil\log_2N_{\max}\rceil\}\), the common inner level
\(\mu_{\rm in}=2^{k_0}\), and the restricted shading
\[
 X_{B,k_0}:=\{x:2^{k_0}\le f_B(x)<2^{k_0+1}\},
 \qquad Y_0(T):=Y(T)\cap X_{B,k_0}\quad(T\in\mathcal T_B).
\tag{5.6a}
\]
There are also a positive edge set
\(E_2\subset\mathcal B\times\mathcal Q\), and numbers
\(\omega_0,d_0>0\) with the following properties.  If
\[
 \begin{aligned}
 Y_1(T)&=Y_0(T)\cap\bigcup_{Q:(B,Q)\in E_2}Q
       &&(T\in\mathcal T_B),\\
 Z(B)&=\bigcup_{Q:(B,Q)\in E_2}Q,
 \end{aligned}
\tag{5.7}
\]
then
\[
 M(Y_1)\ge L_{\rm at}^{-1}M(Y),
 \qquad
 L_{\rm at}\le
 4(1+\lceil\log_2N_{\max}\rceil)
 (1+\lceil\log_2(2e_+)\rceil)
 (1+\lceil\log_2e_+\rceil),
\tag{5.8}
\]
where \(e_+\) is the number of positive parent--atom incidences after the
inner-level selection.  Moreover:
\begin{enumerate}[label=\textup{(A\arabic*)},leftmargin=2.8em]
\item \(Y_1(T)\subset Z(B)\) for \(T\in\mathcal T_B\), and

\[
 Z(B)=\bigcup\{Q:\mathfrak m(Q\cap
       U(\mathcal T_B,Y_1))>0\};
\tag{5.9}
\]
\item on \(U(\mathcal T_B,Y_1)\), the \(B\)-child multiplicity lies in
\([\mu_{\rm in},2\mu_{\rm in})\);
\item on \(\bigcup_BZ(B)\), the labelled parent multiplicity lies in
\([d_0,2d_0)\);
\item everywhere, the total child multiplicity is below
\(4\mu_{\rm in}d_0\).
\end{enumerate}

Assume additionally that every atom occurring in \(E_2\) has measure in
\([q_0,\kappa_Qq_0]\), with \(q_0>0\).  Let
\[
 E^{(m)}\subset\cdots\subset E^{(1)}\subset E^{(0)}=E_2
\]
be whole-incidence refinements, and define
\[
 Y^{(j)}(T):=Y_0(T)\cap
   \bigcup_{Q:(B,Q)\in E^{(j)}}Q,
 \qquad
 Z^{(j)}(B):=\bigcup_{Q:(B,Q)\in E^{(j)}}Q.
\tag{5.9a}
\]
Suppose
\[
 \sum_B\mathfrak m(Z^{(j)}(B))
 \ge c_j\sum_B\mathfrak m(Z^{(j-1)}(B)),
 \qquad 0<c_j\le1.
\tag{5.10}
\]
Then
\[
 M(Y^{(m)})\ge
 L_{\rm at}^{-1}\prod_{j=1}^m\frac{c_j}{2\kappa_Q}\,M(Y).
\tag{5.11}
\]
At every stage of (5.9a), the common inner multiplicity, and the original total
multiplicity upper bound remain valid.  A constant refined parent degree can
be restored by one saturated dyadic right-degree selection, exactly as in
\cref{cor:rer}.  No topology, metric, convexity, or equal-volume assumption is
used beyond the stated comparability of atom measures in the lifting part.
\end{theorem}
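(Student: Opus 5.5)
The plan is to rerun the proofs of \cref{thm:joint}, \cref{thm:lift} and \cref{cor:rer} verbatim, with the grid \(\Q_h\) replaced by the finite partition \(\mathcal Q\). We also replace Lebesgue measure by \(\mathfrak m\), and exact cell volume \(h^3\) by the comparability window \([q_0,\kappa_Qq_0]\).

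\textbf{Step 1: the common inner level.} Since \(f_B\le N_{\max}\), the sets \(X_{B,k}\), \(0\le k\le\lceil\log_2N_{\max}\rceil\), partition \(\{f_B\ge1\}\), which contains \(U(\mathcal T_B,Y)\). Hence
\[
M(Y)=\sum_k\sum_B\int_{X_{B,k}}f_B\,d\mathfrak m .
\]
Pigeonholing gives \(k_0\) with \(M(Y_0)\ge(1+\lceil\log_2N_{\max}\rceil)^{-1}M(Y)\). Because membership of \(x\) in \(X_{B,k_0}\) depends only on \((B,x)\), the \(B\)-multiplicity of \(Y_0\) lies in \([\mu_{\rm in},2\mu_{\rm in})\) wherever it is positive.

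\textbf{Step 2: the graph and its regularization.} Set
\[
\omega_{BQ}=\sum_{T\in\mathcal T_B}\mathfrak m(Y_0(T)\cap Q),\qquad E^+=\{(B,Q):\omega_{BQ}>0\},\qquad e_+=|E^+|.
\]
This set is finite and nonempty, since \(\mathcal Q\) partitions \(X_0\supset Y(T)\) and \(M(Y_0)>0\). Apply \cref{lem:REG1} to obtain \(E_1\) and \(\omega_0\). Then select the heaviest dyadic class of \(d_1(Q)\) saturatedly to obtain \(E_2\) and \(d_0\).

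The losses are as follows. The weight step costs \(2(1+\lceil\log_2(2e_+)\rceil)\). The degree step, since weights are comparable within a factor \(2\), costs at most \(2(1+\lceil\log_2|E_1|\rceil)\le2(1+\lceil\log_2e_+\rceil)\). Using \(M(Y_1)=\sum_{E_2}\omega_{BQ}\), which holds because the atoms are disjoint, these combine to give (5.8) with the constant \(4\).

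\textbf{Step 3: properties (A1)--(A4).} These follow exactly as (P2)--(P5), using only that each point of \(X_0\) lies in a unique atom \(Q(x)\).

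For (5.9), note that every \(Q\) with \((B,Q)\in E_2\) has \(\mathfrak m(Q\cap U(\mathcal T_B,Y_1))\ge\omega_{BQ}/N_{\max}>0\). Conversely, positive measure of that intersection forces \((B,Q)\in E_2\) by the definition of \(Y_1\). I use the measure-positive version rather than nonempty intersection precisely because \(Y_1\) is only defined up to the atoms; this is the one point that genuinely differs from (4.11).

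For (A3) and (A4), multiplicity at \(x\) equals \(\#\{B:(B,Q(x))\in E_2\}\in[d_0,2d_0)\), by saturation. Then the product bound follows as in (P5). Outside \(X_0\) every multiplicity vanishes.

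\textbf{Step 4: lifting.} This is the only place where the loss changes, and I expect it to be the main bookkeeping point. Let \(\mathcal M^{(j)}=\sum_B\mathfrak m(Z^{(j)}(B))\). Since each edge contributes between \(q_0\) and \(\kappa_Qq_0\), we have
\[
q_0|E^{(j)}|\le\mathcal M^{(j)}\le\kappa_Qq_0|E^{(j)}|.
\]
A direct telescoping would compare \(|E^{(j)}|\) with \(|E^{(j-1)}|\) at a cost of \(\kappa_Q\) per step, which is what (5.11) allows. Combined with (5.10), this gives
\[
|E^{(j)}|\ge\frac{c_j}{\kappa_Q}|E^{(j-1)}| ,
\]
hence \(|E^{(m)}|\ge\prod_j(c_j/\kappa_Q)\,|E_2|\). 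Edge weights in \([\omega_0,2\omega_0)\) then yield
\[
M(Y^{(m)})\ge\omega_0|E^{(m)}|\quad\text{and}\quad M(Y_1)<2\omega_0|E_2| .
\]
This gives a single factor of \(2\), which is at least as good as \(\prod_j(c_j/2\kappa_Q)\) because each \(c_j/\kappa_Q\le1\). So (5.11) follows, in the stated weaker form \(2^{-m}\) that mirrors (5.5a).

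Finally, each retained edge keeps all \(B\)-incidences in its atom. So the inner multiplicity is unchanged on the new union, and the total multiplicity is at most \(2\mu_{\rm in}d^{(m)}(Q)<4\mu_{\rm in}d_0\). Degree restoration is the pigeonhole of \cref{cor:rer} verbatim.
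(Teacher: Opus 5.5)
Your proposal is correct and follows the paper's proof essentially step for step. Both arguments pigeonhole the common inner level, apply \cref{lem:REG1} and a saturated right-degree class to the positive atom graph, read (A1)--(A4) off the disjointness of the atoms, and lift using atom-measure comparability together with the weight window \([\omega_0,2\omega_0)\).

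The only variation is in the lifting step: you telescope the edge counts first and convert to mass once, obtaining the stronger factor \(\tfrac12\prod_j(c_j/\kappa_Q)\), whereas the paper converts at every step and gets \(2^{-m}\). Your factor dominates (5.11) because \(2^{-1}\ge2^{-m}\) for \(m\ge1\), not because \(c_j/\kappa_Q\le1\) as you wrote. The case \(m=0\) is trivial since \(Y^{(0)}=Y_1\).
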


\begin{proof}
Because \(\mathcal Q\) partitions \(X_0\) and every input shading is
contained in \(X_0\), the atoms cover all of the mass in (5.6).  In
particular, after the inner-level selection,
\(\sum_{B,Q}\omega_{BQ}\) is exactly the retained labelled mass rather than
the mass of only an unspecified part of \(X\).
Select the level \(k_0\) of \(f_B\) simultaneously over all parents, keeping at
least \((1+\lceil\log_2N_{\max}\rceil)^{-1}\) of \(M(Y)\).  For the restricted
shading \(Y_0\) in (5.6a), define
\[
 \omega_{BQ}:=\sum_{T\in\mathcal T_B}
       \mathfrak m(Y_0(T)\cap Q),
 \qquad E^+:=\{(B,Q):\omega_{BQ}>0\}.
\]
Apply \cref{lem:REG1} to these positive weights and then retain a saturated
dyadic right-degree class.  The calculation (4.4)--(4.9) is purely a weighted
bipartite-graph calculation and gives (5.8), with
\(\omega_e\in[\omega_0,2\omega_0)\) on \(E_2\).  Disjointness of the atoms
replaces disjointness of half-open cubes in (4.10).  Keeping every child
incidence of \(B\) in a retained atom proves (5.9) and preserves the selected
inner level.  The right-degree class proves (A3), and summing at most
\(2\mu_{\rm in}\) children over fewer than \(2d_0\) retained parents proves
(A4).

For the lifting assertion, (5.10) and atom comparability imply
\[
 |E^{(j)}|\ge(c_j/\kappa_Q)|E^{(j-1)}|.
\]
Since every surviving edge still has its original weight in
\([\omega_0,2\omega_0)\),
\[
 M(Y^{(j)})\ge\omega_0|E^{(j)}|
 \ge\frac{c_j}{2\kappa_Q}M(Y^{(j-1)}).
\]
Iteration and (5.8) give (5.11).  All remaining assertions follow because an
edge is always kept or deleted together with every child incidence it
represents.
\end{proof}

\section{Rectangular density estimates from planar incidences}
\label{sec:density}

The graph mechanism controls survival and compatibility, not the size of the
parent union.  The latter is an analytic question.  We obtain the required
rectangular density exponent by projecting the children to a planar line
parameter space, applying a weighted form of the incidence estimate in
\cite{DOV2022}, and reconstructing volume inside the parent.  Appendix~A
proves the weighted formulation needed for repeated parameters and arbitrary
measurable shadings.

\subsection{Weighted planar input}

Let \(1<t\le2\), set \(q=3-t\), and let \(0<h\le1\) and \(L\ge1\).  In a bounded
slope--intercept chart, suppose
\[
 \nu(B(p,\rho))\le L\rho^t,\qquad h\le\rho\le1,
\tag{6.1}
\]
for an atomic probability measure
\(\nu=\sum_i\alpha_i\delta_{p_i}\).  Repetitions among the \(p_i\) are
allowed.  If \(Y_i\) is an arbitrary measurable shading of the corresponding
\(h\)-tube and
\[
 \Lambda=h^{-1}\sum_i\alpha_i|Y_i|,
\]
then the weighted layer-cake corollary of DOV is
\[
 \left|\bigcup_iY_i\right|
 \gtrsim_{\eps,t}L^{-q}h^\eps\Lambda^q.
\tag{6.2}
\]
The proof of (6.2) in \cref{app:weighted} uses two layer-cake reductions:
the first removes atomic weights and the second removes spatial occupancy
weights.  The logarithmic costs are absorbed into \(h^\eps\).  In
particular, neither a minimum atom mass nor a bound on projection-fibre size
is part of the input.

\subsection{From planar parameters back to a rectangular parent}

Let \(B\) be a labelled \(a\times b\times r\) parent,
\(\delta\le a\le b\le r\), containing \(N\ge1\) labelled
\(\delta\times\delta\times r\) children.  Suppose that for every convex
\(K\subset B\),
\[
 \frac{\#\{T:T\subset K\}}{N}
 \le C_0\left(\frac{|K|}{|B|}\right)^{1+\sigma},
\qquad 0<\sigma\le1.
\tag{6.3}
\]
Write \(s=a/b\), \(q=2-\sigma\), and let
\[
 \lambda_B=\frac{\sum_{T\in\T_B}|Y(T)|}
                  {\sum_{T\in\T_B}|T|}.
\]

\begin{theorem}[Rectangular biased shading bound]
\label{thm:rect}
For every \(\eps>0\),
\[
 |B\cap N_a(U(\T_B,Y))|
 \gtrsim_{\eps,\sigma}
 C_0^{-q}s^\eps\lambda_B^q|B|.
\tag{6.4}
\]
For every \(\eta>0\), before epsilon absorption the scale factor may be
written (with an implicit constant also depending on \(\eta\))
\[
 C_0^{-q}[\log(2/s)]^{-q}s^{q\eta}\lambda_B^q.
\tag{6.5}
\]
\end{theorem}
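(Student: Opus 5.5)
Rescale the parent: apply the affine map $A_B$ sending $B$ to $[0,1]^3$, so the $a$-direction becomes unit length and the children become $(\delta/a)\times(\delta/b)\times(\delta/r)$ slabs. Volume ratios, and hence (6.3) and $\lambda_B$, are invariant. The $a$-neighbourhood of the union maps into a comparable unit-scale neighbourhood in the first coordinate together with an $a/b=s$ neighbourhood in the second coordinate. The effective planar scale is therefore $h=s$. Concretely, thicken every child to a plank $\widetilde T$ of dimensions $a\times a\times r$ inside $N_a(T)\cap(\text{fixed enlargement of }B)$. Every point of $N_a(Y(T))$ within $B$ lies in $\widetilde T$, and I will lower-bound $|B\cap N_a(U)|$ by a union of planar shadings.

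Slice $B$ by planes orthogonal to $e_a(B)$. On each slice $\{x_a=u\}$, the set $N_a(Y(T))$ contains the full $b\times r$-plane shadow of $Y(T)$ thickened by $a$ whenever $|u-u_T|\lesssim a$. Projecting along $e_a$ turns each child into an $a\times r$ rectangle in the $b\times r$ face. After rescaling by $b$ in the second coordinate and $r$ in the third, this is an $s\times 1$ tube, with projected shading $Y_T'$ of normalized area at least $\gtrsim |Y(T)|/(\delta^2 r)\cdot(\text{width }s)$ in the correct units. Because $a\ge\delta$, the projection can only enlarge relative shading, so $\Lambda\ge\lambda_B$ with weights $\alpha_T=1/N$. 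Thus $|B\cap N_a(U)|\gtrsim |B|\cdot|\bigcup Y'_T|$ in normalized units. Equivalently, a fixed $a$-thick fibre in the $e_a$ direction contributes at least the projected planar area.

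Next, verify (6.1) for $\nu=N^{-1}\sum_T\delta_{p_T}$, where $p_T$ is the slope–intercept parameter of the projected tube. An $s\le\rho\le1$ ball in parameter space corresponds to a convex set $K\subset B$ consisting of the tubes within angle $\rho$ and offset $\rho$. This $K$ has dimensions $a\times\rho b\times r$ up to a constant, so $|K|/|B|\sim\rho$. That gives only $\nu(B(p,\rho))\le C_0\rho^{1+\sigma}$, so $t=1+\sigma$ and $q=3-t=2-\sigma$, matching the exponent. Repetitions are allowed, and repeated parameters from distinct $a$-positions are harmless. Apply (6.2) with $L=C_0$, $h=s$, and $t=1+\sigma\in(1,2]$ to obtain $|\bigcup Y'_T|\gtrsim_{\eps,\sigma} C_0^{-q}s^\eps\Lambda^q\ge C_0^{-q}s^\eps\lambda_B^q$. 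Undoing the normalization gives (6.4).

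For (6.5), rerun the layer-cake reductions of Appendix A without absorbing the logarithm. The two dyadic pigeonholes cost $[\log(2/s)]^{-1}$ in $\Lambda$, hence $[\log(2/s)]^{-q}$ after raising to the power $q$. The DOV loss is taken as $s^{q\eta}$ by applying DOV with $\eps=q\eta$. The main obstacle is the geometric verification that a parameter ball maps into a single convex $K\subset B$ with $|K|\lesssim\rho|B|$. This needs care near the boundary of $B$ and when $\rho$ is close to 1, where the fixed enlargement must still lie in $B$; I will handle it by intersecting the natural plank with $B$, which keeps $K$ convex and only decreases $|K|$. The $\sigma>1$ range is excluded, so $t\le2$ holds automatically.
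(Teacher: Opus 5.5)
Your proposal follows the paper's route. You project along the short axis and normalize so that the children become $h\asymp a/b$ tubes whose thickened projected shadings have planar density $\gtrsim\lambda_B$. You lift each parameter ball to one convex slab of relative volume $O(\rho)$ to get the $(1+\sigma)$-Frostman bound, apply (6.2), and recover volume from $a$-fibres in the $e_a(B)$ direction.

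The one step that fails as written is the single slope--intercept chart over the long coordinate. When $b\asymp r$, a child may run along $e_b(B)$. Its projection is then not a bounded-slope graph over $x_r$, so (6.2) does not apply to your measure $\nu$. When $a\asymp r$, a child may even point along $e_a(B)$, and projecting along $e_a$ collapses it. The paper repairs this in \cref{lem:finitecharts} in three steps:
\begin{enumerate}
\item it splits the labels into at most three dominant-direction charts, in each forgetting a coordinate $x_\ell$ with $s_\ell\asymp a$;
\item it keeps a chart carrying at least one third of the shaded mass;
\item it observes that the chart fraction $\vartheta=N_c/N$ cancels exactly.
\end{enumerate}
Indeed, the chart measure has Frostman constant $\lesssim C_0/\vartheta$ and planar density $\gtrsim\lambda_B/(3\vartheta)$, so $(C_0/\vartheta)^{-q}(\lambda_B/(3\vartheta))^q=3^{-q}C_0^{-q}\lambda_B^q$. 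Selecting the chart by shaded mass rather than by label count is what makes this step lossless.

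There are also two smaller points. First, your symmetric $a$-thickening of the projected shading can leave the projection of $B$ when $Y(T)$ lies within $a$ of an $e_b$-face. Likewise, an $e_a$-fibre of length $a$ can leave $B$ near an $e_a$-face. Since (6.4) measures $B\cap N_a(U)$ and not a fixed enlargement of $B$, both must be taken one-sidedly inside $B$; this is the parent clipping of \cref{lem:clipped1d,lem:projectedmass,lem:fibrereconstruct}. Second, after normalization an aligned child is a $(\delta/a)\times(\delta/b)\times1$ slab, not $(\delta/a)\times(\delta/b)\times(\delta/r)$.
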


The proof has five geometric bridges.  We choose finitely many normalized
line charts, convert parameter balls to convex slabs, transfer the
complete-child hypothesis to a Frostman estimate on parameters, thicken the
projected shading without losing boundary mass, and finally reconstruct
three-dimensional volume inside the parent.  The intervening capped-carrier
lemmas ensure that the same charts remain valid across the coarse ancestry
used later.  Once the structural tube and parent conventions are fixed, all
constants are uniform.

\begin{lemma}[Finite line-parameter charts]
\label{lem:finitecharts}
After a rigid motion, write the side coordinates of \(B\) as
\((x_1,x_2,x_3)\), with side lengths \(a\le b\le r\).  Let
\(0<w\le a\), and let the children be labelled
\(w\times w\times r\) tubes contained in \(B\).  They are the disjoint union
of at most three labelled subfamilies.  On each nonempty
subfamily there are distinct indices \(j,k,\ell\) such that the carrier is a
bounded graph over \(x_j\).  The capped carrier length is in
\([c_{\rm len}r,C_{\rm len}r]\), with fixed structural constants, and the
graph contains a relevant \(x_j\)-interval of length at least \(c r\) and at
most \(Cr\).  If a cap meets a parent face, this interval is chosen
one-sided from that cap.  More precisely, the relevant interval is the
\(x_j\)-projection of a carrier subsegment fixed inside one structural
enlargement of \(B\); the enlargement and the constants \(c,C\) are the
same for all three charts and all labels.  Every resulting affine graph line
is extended to one common fixed bounded normalized \(u\)-interval; the
physical carrier and its shading use only their relevant subinterval.
Moreover,
\[
 s_j\asymp r,\qquad s_k\asymp b,\qquad s_\ell\asymp a.
\]
Under
\[
 S_{jk}(x_k,x_j)=\left(x_k/s_k,x_j/s_j\right)=:(v,u),
\]
the projected carrier has equation \(v=\beta_T+m_Tu\), with
\(p(T)=(\beta_T,m_T)\) in one fixed bounded slope--intercept chart.  Its
normalized projected solid lies in a fixed-constant \(h_0\)-tube about that
line, where
\[
 h_0=w/s_k,\qquad h=a/s_k\asymp a/b,\qquad h_0\le h\le1.
\]
\end{lemma}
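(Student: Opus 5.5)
The proof classifies each child tube by which side coordinate its carrier direction most nearly follows. It then exploits the fact that a long thin tube inside a box of sides \(a\le b\le r\) must have its direction nearly aligned with the long side \(e_r\) whenever \(r\) dominates, and otherwise stays controlled by the box shape.

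\emph{Step 1: partition.} Let \(v_T\) be the unit direction of the carrier of \(T\), with coordinates \((v_1,v_2,v_3)\) in the side frame of \(B\). Assign \(T\) to the subfamily indexed by the smallest \(j\) maximizing \(|v_j|/s_j\). Here \(s_1,s_2,s_3\) are the side lengths \(a,b,r\), matched to coordinates after the rigid motion; the tie-break makes the three subfamilies disjoint. Since the carrier segment of length \(\asymp r\) lies in a fixed enlargement of \(B\), its \(x_i\)-extent is \(\lesssim s_i+w\lesssim s_i\). Hence \(|v_i|\lesssim s_i/r\) for every \(i\). This is the key geometric fact: a subfamily whose maximizing index corresponds to a short side can be nonempty only when that side is comparable to \(r\).

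\emph{Step 2: choose the chart indices.} Choose \(j\) to be the maximizing index, \(\ell\) the index of the shortest remaining side, and \(k\) the remaining index. Then \(|v_j|\gtrsim s_j/r\cdot\max_i(|v_i| r/s_i)\gtrsim s_j/r\), because \(\sum v_i^2=1\). The carrier is therefore a graph over \(x_j\), and its \(x_j\)-projection has length \(\asymp r|v_j|\).

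\emph{Step 3: verify the size claims.} We need \(s_j\asymp r\). Since \(|v_j|\le1\) and \(|v_j|\gtrsim s_j/r\), we want \(|v_j|\asymp1\); this holds because the unit vector forces some coordinate \(|v_i|\ge 1/\sqrt3\), and that index then satisfies \(s_i\gtrsim r\). I expect the main obstacle to be making this step uniform with the \(\max |v_i|/s_i\) rule. If the rule is awkward, I would first try the simpler rule \(j=\arg\max|v_j|\): this gives \(|v_j|\ge1/\sqrt3\), and Step 1 then forces \(s_j\gtrsim r\). The sides \(s_k,s_\ell\) are the two remaining side lengths, ordered so that \(s_k\ge s_\ell\), which yields \(s_k\asymp b\) and \(s_\ell\asymp a\). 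The only exception is when two sides are comparable to \(r\), in which case the relabeling costs only constants.

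\emph{Step 4: relevant interval and normalization.} The relevant interval is the \(x_j\)-projection of the subsegment of the carrier lying in the fixed enlargement. When a cap meets a face, we take the one-sided piece starting at that cap. Its length is \(\asymp r\) by Step 3, with constants depending only on \(c_{\rm len}\), \(C_{\rm len}\) and the cap constants. In the coordinates \((v,u)=(x_k/s_k,x_j/s_j)\), the projected carrier is the line \(v=\beta_T+m_Tu\). Here \(|m_T|=|v_k|s_j/(|v_j|s_k)\lesssim1\) by Step 1, and \(|\beta_T|\lesssim1\) because the box is normalized. Both parameters therefore lie in one bounded chart, and we extend every line to a common bounded \(u\)-interval. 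The projected \(w\)-tube has width \(w/s_k=h_0\) in \(v\), up to a factor \((1+m_T^2)^{1/2}\lesssim1\). Finally, \(h_0\le h=a/s_k\le1\) follows from \(w\le a\le s_k\), and \(h\asymp a/b\) follows from \(s_k\asymp b\).
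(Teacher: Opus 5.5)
The partition rule you lead with, $j$ maximizing $|v_i|/s_i$, does not work. The justification in Step~3 does not follow: the coordinate with $|v_i|\ge 3^{-1/2}$ is the maximizer of $|v_i|$, not of $|v_i|/s_i$. Your own Step~1 bound $|v_i|\lesssim s_i/r$ shows that all three ratios $|v_i|/s_i$ can be of the same order $1/r$. So the ratio cannot identify the long direction up to constants, and the ``key geometric fact'' stated in Step~1 is false for this rule.

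Concretely, take $w\ll a\ll r$ and a child whose carrier has length $L=c_{\rm len}r<r$ and direction $(v_1,0,v_3)$. Tilt it across the thin side so that $L|v_1|$ is close to $a$. Then $|v_1|/a\approx 1/(c_{\rm len}r)>1/r\ge |v_3|/r$, so your rule places this tube in the chart $j=1$. Its $x_j$-interval then has length at most $a$ rather than $\gtrsim r$, and $s_j=a\not\asymp r$. Your choice of $k,\ell$ gives $s_k=r$ and $s_\ell=b$, so the size claims fail; all three fail when also $b\ll r$. A tilt across the middle side breaks the rule in the same way whenever $b\ll r$.

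The fallback you mention, $j=\arg\max_i|v_i|$ with least-index tie-breaking, is exactly the paper's rule, and you must commit to it. It gives $|v_j|\ge 3^{-1/2}$, hence $s_j\gtrsim r$ from the extent bound, and an $x_j$-interval of length $\asymp r$. With this rule the slope bound no longer follows from maximality of $|v_i|/s_i$. Instead $|m_T|=s_j|v_k|/(s_k|v_j|)\le\sqrt3\,(r/s_k)|v_k|\lesssim 1$, using $s_j\le r$ and the extent bound $|v_k|\lesssim s_k/r$, which is the paper's (6.3c).

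Once that is fixed, the rest agrees with the paper's proof. This covers your choice of $k,\ell$ (including $j=2$, where $b\asymp r$, and $j=1$, where all sides are comparable), the one-sided relevant interval, the intercept bound, and the $h_0$-width computation.
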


\begin{proof}
Let \(d(T)=\sum_i d_i e_i\) be the carrier direction and assign \(T\) to the
least index \(j\) for which \(|d_j|=\max_i|d_i|\).  Then
\(|d_j|\ge3^{-1/2}\).  By the capped-tube convention, the carrier has length
comparable to \(r\) and lies in the tube; hence a segment of length at least
\(cr\) lies in a fixed enlargement of \(B\).  If a cap meets a face of
\(B\), take this segment one-sided from that endcap.  Projecting its two
endpoints to the \(i\)-th
side gives
\[
 cr|d_i|\le C s_i+Cw\le C's_i,
\tag{6.3c}
\]
because \(w\le a\le s_i\).  Hence \(s_j\ge c'r\), while \(s_j\le r\), so
\(s_j\asymp r\).  If \(j=3\), choose \(k=2,\ell=1\).  If \(j=2\), then
\(b=s_j\asymp r\), and choose \(k=3,\ell=1\).  If \(j=1\), then
\(a=s_j\asymp r\), hence all three side lengths are comparable and either
remaining order has the asserted sizes.  This proves uniformly that
\(s_j\asymp r,s_k\asymp b,s_\ell\asymp a\); it is the coordinate
permutation omitted by the informal dominant-direction argument.

Since \(|d_j|\ge3^{-1/2}\), the carrier is a graph over the full relevant
\(x_j\)-interval.  The normalized slope
\[
 m_T=\frac{s_jd_k}{s_kd_j}
\]
is bounded by (6.3c), and the normalized intercept is bounded because the
carrier meets \(B\).  Normalized endpoint ordinates and
\((\beta_T,m_T)\) differ by a fixed invertible linear map.  The orthogonal
solid radius contributes \(O(w/s_k)\) after projection.  A cap is contained
in an \(O(w)\)-ball about its carrier endpoint and contributes the same
amount, so neither changes the chart nor its width.  Labels are never merged.  The
original fine-child application is \(w=\delta\); the coarse application
below is \(w=\rho\).
\end{proof}

\begin{lemma}[A parameter ball lifts to one convex slab]
\label{lem:paramslab}
Fix a width \(w\) and a chart from \cref{lem:finitecharts}.  For
\(h\le u\le1\) and every parameter ball \(D(p_0,u)\), there is a convex set
\(K_u\subset B\)
which contains every complete child in the chart whose parameter lies in
that ball, and
\[
 \frac{|K_u|}{|B|}\lesssim u.
\]
\end{lemma}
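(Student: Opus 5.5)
The plan is to build \(K_u\) explicitly in the side coordinates of \(B\) given by \cref{lem:finitecharts}, as the intersection of \(B\) with a thickened slab around one affine line family, and then check volume and containment separately.

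\emph{Construction.} Fix the chart indices \(j,k,\ell\) and write \(p_0=(\beta_0,m_0)\). A child \(T\) with \(p(T)\in D(p_0,u)\) has projected carrier \(v=\beta_T+m_Tu'\). On the common bounded normalized \(u'\)-interval this differs from the line \(v=\beta_0+m_0u'\) by at most \(|\beta_T-\beta_0|+|m_T-m_0|\,O(1)\lesssim u\). By \cref{lem:finitecharts}, the normalized projected solid lies in an \(O(h_0)\)-tube about its line, and \(h_0\le h\le u\). Hence the whole projected solid, caps included, lies in the planar strip
\[
 \Sigma_u=\{(v,u'):|v-\beta_0-m_0u'|\le Cu\}.
\]
Undoing the normalization \(S_{jk}\), I define
\[
 K_u:=B\cap\{x:(x_k/s_k,x_j/s_j)\in\Sigma_u\}.
\]
This is the intersection of \(B\) with a slab bounded by two parallel planes, each containing the \(x_\ell\)-direction, so it is convex. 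Every complete child in the chart with parameter in the ball is contained in \(B\) and in the slab, hence in \(K_u\).

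\emph{Volume.} I compute by Fubini in the coordinates \((x_j,x_k,x_\ell)\). For each fixed \((x_j,x_\ell)\), the admissible \(x_k\) form an interval of length at most \(2Cus_k\). Integrating over \(x_j\in[-s_j/2,s_j/2]\) and \(x_\ell\in[-s_\ell/2,s_\ell/2]\) gives
\[
 |K_u|\le 2Cu\,s_ks_js_\ell=2Cu|B|.
\]
This is the claimed bound \(|K_u|/|B|\lesssim u\).

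\emph{Main obstacle: the normalized \(u'\)-interval versus \(B\).} The interval on which the lines were extended must cover the \(x_j\)-range of \(B\), namely \([-s_j/2,s_j/2]\) rescaled, so that the strip estimate holds wherever \(K_u\) has points. \cref{lem:finitecharts} fixes one common bounded interval and takes \(s_j\asymp r\). I will therefore take that interval to contain \([-1/2,1/2]\), enlarging by a structural constant if needed. The slope and intercept bounds from (6.3c) keep the deviation \(O(u)\) uniformly on it.

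A second point concerns the caps and the one-sided relevant intervals. The physical tube may poke slightly outside its relevant subinterval. Since the child is contained in \(B\), every point of it has \(x_j\) in the \(B\)-range, and the \(O(w)\) cap contribution is absorbed into \(Cu\) because \(w/s_k=h_0\le u\). So intersecting with \(B\) loses no child. The case \(j=1\), where all sides are comparable, is covered by the same computation.
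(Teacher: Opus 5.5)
Your proof is correct and follows the paper's own argument. The paper takes the same convex slab \(B\cap\{x:|x_k-s_kL_0(x_j/s_j)|\le C(us_k+w)\}\), obtains containment from bounded slope--intercept distance on the common normalized interval, and bounds \(|K_u|\lesssim s_\ell s_j(us_k+w)\lesssim u|B|\) using \(w/s_k\le h\le u\); the only cosmetic difference is that you absorb the width term \(w=h_0s_k\le us_k\) into the strip when you construct it, whereas the paper absorbs it in the volume bound.
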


\begin{proof}
Let \(v=L_0(u)\) represent \(p_0\).  In physical coordinates take
\[
 K_u=B\cap
 \left\{x:\left|x_k-s_kL_0(x_j/s_j)\right|
             \le C(us_k+w)\right\}.
\]
This is the intersection of a box with two affine half-spaces.  Bounded
slope--intercept distance controls the extended graph on the common normalized
interval fixed in \cref{lem:finitecharts}, hence also on every relevant
carrier subinterval; \(Cw\) includes the solid radius and caps.  Thus
\[
 |K_u|\lesssim s_\ell s_j(us_k+w).
\]
Because \(w/s_k\le h\le u\) and
\(|B|\asymp s_\ell s_js_k\), the result follows.  The restriction is one
slab, so the relative volume is \(O(u)\), not \(O(u^2)\).
\end{proof}

\begin{lemma}[Weighted marginal Frostman estimate]
\label{lem:marginalfrostman}
Let a nonempty chart contain \(N_c=\vartheta N\) labelled children, counted
with repetitions, and put
\[
 \nu_c=\frac1{N_c}\sum_{T\ {\rm in\ the\ chart}}\delta_{p(T)}.
\]
Then, for \(h\le\rho\le1\),
\[
 \nu_c(D(p,\rho))
 \lesssim C_0\vartheta^{-1}\rho^{1+\sigma}.
\]
If instead the parent family obeys the ordinary complete-child condition
\[
 \frac{\#\{T:T\subset K\}}{N}
 \le C_F\frac{|K|}{|B|}
 \quad(K\subset B\ \hbox{convex}),
\tag{6.3a}
\]
then the same argument gives
\[
 \nu_c(D(p,\rho))
 \lesssim C_F\vartheta^{-1}\rho.
\tag{6.3b}
\]
No bound on the number of labels with the same parameter is required.
\end{lemma}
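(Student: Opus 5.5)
The estimate follows by feeding the slab of \cref{lem:paramslab} into the complete-child hypothesis (6.3), then normalizing by the chart proportion \(\vartheta\).

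\emph{Counting step.} Fix a chart, a parameter point \(p\), and \(h\le\rho\le1\). By \cref{lem:paramslab}, applied with \(u=\rho\) (admissible since \(h\le\rho\le1\)), there is a convex set \(K_\rho\subset B\) with \(|K_\rho|/|B|\lesssim\rho\). This set contains every complete child of the chart whose parameter \(p(T)\) lies in \(D(p,\rho)\). Since \(\nu_c\) counts labels with their repetitions,
\[
 N_c\,\nu_c(D(p,\rho))
 =\#\{T\text{ in the chart}:p(T)\in D(p,\rho)\}
 \le\#\{T:T\subset K_\rho\}.
\]
Coincident parameters are simply several labels contained in the same \(K_\rho\). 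So no fibre-size bound is needed: the right side already counts labels with multiplicity.

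\emph{Applying the hypothesis.} By (6.3),
\[
 \#\{T:T\subset K_\rho\}\le C_0N\Bigl(\frac{|K_\rho|}{|B|}\Bigr)^{1+\sigma}\lesssim C_0N\rho^{1+\sigma}.
\]
The implicit constant from \(|K_\rho|\lesssim\rho|B|\) is raised to the power \(1+\sigma\le2\), so it stays structural. Dividing by \(N_c=\vartheta N\) gives \(\nu_c(D(p,\rho))\lesssim C_0\vartheta^{-1}\rho^{1+\sigma}\).

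\emph{The \(C_F\) case.} Under (6.3a) the same chain applies with the exponent \(1+\sigma\) replaced by \(1\). This yields (6.3b).

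\emph{Main points to check.} The only step needing care is that \(K_\rho\) is a genuine admissible test body, that is, convex, contained in \(B\), and of positive volume. It is the intersection of \(B\) with a slab of width \(\ge Cw>0\) around a line through the chart region. If it had zero volume, it would contain no complete child and the bound would be trivial. The other point is that membership \(T\subset K_\rho\) concerns the whole capped child and not just its relevant subinterval. This is exactly what \cref{lem:paramslab} asserts, since \(Cw\) absorbs the solid radius and the caps.
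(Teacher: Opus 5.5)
Your proposal is correct and matches the paper's proof. You apply \cref{lem:paramslab} with \(u=\rho\), bound the labelled count in \(D(p,\rho)\) by the number of complete children of \(K_\rho\) via (6.3) or (6.3a), and divide by \(N_c=\vartheta N\). Your remarks on repeated parameters and on the admissibility of \(K_\rho\) as a test body only make explicit what the paper's short argument leaves implicit.
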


\begin{proof}
Every label counted on the left is a complete child of \(K_\rho\).  Hence
(6.3) and \cref{lem:paramslab} give
\[
 \nu_c(D(p,\rho))
 \le \frac{N}{N_c}C_0
       \left(\frac{|K_\rho|}{|B|}\right)^{1+\sigma}
 \lesssim C_0\vartheta^{-1}\rho^{1+\sigma}.
\]
This estimates total atomic weight and is unchanged by repeated parameters
or unbounded projection fibres.  Under (6.3a), replace the power
\(1+\sigma\) in the same calculation by one to obtain (6.3b).
\end{proof}

\begin{lemma}[Clipped one-dimensional thickening]
\label{lem:clipped1d}
Let \(I\) be an interval with \(c_I\le |I|\le C_I\),
\(0<h_0\le h\le1\), and let \(E\subset I\) be measurable with
\(\diam(E)\le C_0h_0\).  Fix \(0<c<c_I/4\).  Then
\[
 |(E+[-ch,ch])\cap I|
 \ge c_1(c,C_0)\frac h{h_0}|E|.
\tag{6.3d}
\]
The assertion remains true for almost every section of a Borel measurable
planar representative.
\end{lemma}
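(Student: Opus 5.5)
The idea is to compare both sides with the single length scale \(h\). A single point of \(E\) already forces the clipped thickening to contain an interval of length \(ch\). On the other hand, the diameter hypothesis caps \(|E|\) by \(C_0h_0\), so the right-hand side of (6.3d) is at most a constant times \(h\). No finer structure of \(E\) is needed, so I would take \(c_1=c/C_0\).

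First, the trivial cases. If \(|E|=0\) (in particular if \(E=\varnothing\)) there is nothing to prove. Otherwise \(E\) lies in a closed interval of length \(\diam(E)\), so
\[
 |E|\le\diam(E)\le C_0h_0,
 \qquad\text{hence}\qquad
 \frac h{h_0}|E|\le C_0h .
\]
Next, the lower bound. Fix \(x\in E\subset I\) and write \(I\) with endpoints \(\alpha<\beta\). Since \(x\in I\), one of the two pieces \([\alpha,x]\), \([x,\beta]\) has length at least \(|I|/2\ge c_I/2\). Because \(h\le1\) and \(c<c_I/4\), we have \(ch\le c<c_I/4<|I|/2\). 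So one of \([x-ch,x]\) or \([x,x+ch]\) lies in \(I\) up to an endpoint. That interval is contained in \((E+[-ch,ch])\cap I\), which therefore has measure at least \(ch\ge (c/C_0)(h/h_0)|E|\). This is exactly the one-sidedness used in \cref{lem:finitecharts} when a cap meets a face. The separation \(c<c_I/4\) is what prevents the thickening from being clipped on both sides.

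The main point needing care is measurability, including for the section statement. The set \(E+[-ch,ch]\) is a union of nondegenerate closed intervals of length \(2ch\). Such a union differs from its interior (a countable union of open intervals) by at most countably many endpoints, so it is Borel and the left side of (6.3d) is defined. In any case the lower bound only uses one explicit subinterval, so it would also hold for inner measure. For a Borel planar representative \(\mathcal E\subset\R\times I\), every section \(\mathcal E_v\) is Borel. Apply the one-dimensional statement to each section with \(\diam(\mathcal E_v)\le C_0h_0\); this gives (6.3d) for every section for which that hypothesis holds, in particular almost every one. Measurability of \(v\mapsto|(\mathcal E_v+[-ch,ch])\cap I|\), needed when (6.3d) is later integrated, follows because the thickened set is the projection-free Minkowski sum with a compact interval, which preserves analytic and hence universally measurable sets, so Fubini–Tonelli applies.
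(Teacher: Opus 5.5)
Your proof is correct and follows essentially the same argument as the paper: any point of \(E\) gives a one-sided interval of length \(ch\) inside \(I\), and \(|E|\le\diam(E)\le C_0h_0\) gives \(c_1=c/C_0\). The section statement likewise follows by applying the one-dimensional bound slice by slice; the paper's choice of a density point is unnecessary, and any point of \(E\) suffices, as you use.
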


\begin{proof}
The statement is trivial when \(|E|=0\).  Otherwise choose a density point
\(x\in E\).  Since \(ch\le c<c_I/4\le |I|/4\), at least one of the two
one-sided intervals of length \(ch\) from \(x\) lies in \(I\).  Thus the left
side is at least \(ch\).  On the other hand
\(|E|\le\diam(E)\le C_0h_0\), so
\(ch\ge(c/C_0)(h/h_0)|E|\).  Choose a Borel representative of a planar
measurable set.  Fubini gives measurable sections for almost every transverse
parameter; applying the one-dimensional estimate to those sections and then
Cavalieri proves the final assertion.  Completion of the measure handles a
general Lebesgue-measurable representative.
\end{proof}

\begin{lemma}[Projection of an arbitrary measurable shading]
\label{lem:projectedmass}
For a child \(T\) in a fixed chart, let
\[
 A_T=S_{jk}(\pi_{jk}Y(T)),
\]
where \(\pi_{jk}\) forgets \(x_\ell\).  There is a measurable shading
\(Z_T\) of a fixed-constant \(h\)-tube about \(p(T)\), contained in the
normalized parent projection \(R_{jk}:=S_{jk}(\pi_{jk}B)\), such that
\[
 \frac{|Z_T|}{h}\gtrsim\frac{|Y(T)|}{|T|}.
\]
Consequently, if a chart carries at least \(J^{-1}\) of the total shaded
mass and \(N_c=\vartheta N\), then
\[
 h^{-1}N_c^{-1}\sum_T|Z_T|
 \gtrsim \frac{\lambda_B}{J\vartheta}.
\]
\end{lemma}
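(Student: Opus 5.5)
The plan is to build \(Z_T\) in two stages: first project, then thicken in the \(v\)-direction using \cref{lem:clipped1d}. Work in the normalized coordinates \((v,u)\) of the chart from \cref{lem:finitecharts}. For each fixed \(u\) in the relevant interval, the section of \(T\) projected by \(\pi_{jk}\) and normalized by \(S_{jk}\) is a set of \(v\)-diameter \(\lesssim h_0=w/s_k\). It lies in the fixed-constant \(h_0\)-tube about the line \(v=\beta_T+m_Tu\).

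\emph{Step 1 (projection).} Apply Fubini in \(x_\ell\). The fibre of \(T\) over a point of \(\pi_{jk}T\) has \(x_\ell\)-length \(\lesssim w\), so
\[
 |\pi_{jk}Y(T)|\gtrsim |Y(T)|/w .
\]
The normalization multiplies area by \((s_ks_j)^{-1}\), giving
\[
 |A_T|\gtrsim \frac{|Y(T)|}{w\,s_k s_j}.
\]
Since \(|T|\asymp w^2 r\asymp w^2 s_j\), this reads
\[
 |A_T|\gtrsim h_0\,\frac{|Y(T)|}{|T|}.
\]
The ratio \(|Y(T)|/|T|\) is the quantity we want to keep.

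\emph{Step 2 (thickening).} Set
\[
 Z_T=\bigl(A_T+[-ch,ch]\,e_v\bigr)\cap R_{jk},
\]
with \(c\) small. This set is measurable, since it is a Minkowski sum of a Borel representative with an interval, or an open thickening. It lies in a fixed-constant \(h\)-tube about \(p(T)\) because \(h_0\le h\). For almost every \(u\), the \(v\)-section of \(A_T\) lies in the \(v\)-section \(I\) of \(R_{jk}\). That section is an interval of normalized length \(\asymp1\), since \(s_k\) is the side in the \(v\)-direction and \(R_{jk}\) is a normalized rectangle. The section also has diameter \(\le C_0h_0\). \cref{lem:clipped1d} therefore gives section measure \(\gtrsim (h/h_0)\) times the section of \(A_T\). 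Integrating in \(u\) (Cavalieri, as in the lemma's final assertion) gives
\[
 |Z_T|\gtrsim (h/h_0)|A_T|\gtrsim h\,|Y(T)|/|T| .
\]

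\emph{Step 3 (the consequence).} All children have the common volume \(|T|\), so
\[
 \sum_{T\text{ in chart}}\frac{|Y(T)|}{|T|}
 =\frac{\sum_{\rm chart}|Y(T)|}{|T|}
 \ge \frac{J^{-1}\sum_{T\in\T_B}|Y(T)|}{|T|}
 =J^{-1}N\lambda_B .
\]
Dividing by \(N_c=\vartheta N\) gives the displayed bound.

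The main obstacle is the geometric bookkeeping in Step 2 near the parent boundary. A child's projected section may sit at the edge of the section of \(R_{jk}\), and the section interval must have normalized length bounded below uniformly in \(u\). This is where the one-sided clipping in \cref{lem:clipped1d} and the cap conventions of \cref{lem:finitecharts} enter. If the section of \(R_{jk}\) degenerates near corners for tilted charts, the first fix I would try is to restrict \(u\) to the relevant interval of \cref{lem:finitecharts}, where the carrier lies inside a structural enlargement of \(B\), and take \(I\) there. A second check is that the \(x_\ell\)-fibres of \(T\) have length \(\lesssim w\) even though \(T\) is tilted. This holds because the \(x_\ell\)-component of the direction is controlled by (6.3c) after the chart choice.
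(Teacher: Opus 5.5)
Your proposal is correct and is essentially the paper's proof. It uses Fubini across the $O(w)$ fibres of $\pi_{jk}$ (these are transverse because $|d_j|\ge3^{-1/2}$ from the chart choice, which is what is needed when $a\asymp r$, rather than (6.3c) alone), the parent-clipped $v$-thickening with \cref{lem:clipped1d} upgrading $h_0$ to $h$, and summation via $\sum_T|Y(T)|/|T|\asymp N\lambda_B$. The boundary degeneracy you worry about cannot occur, because $R_{jk}$ is the normalized projection of $B$ along its own side axis $e_\ell$, so every $u$-section is an interval of length $\asymp1$.

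The one point the paper handles more carefully is measurability. Your justification via a Borel representative is imprecise: the projection of a Borel set is only analytic, and the representative must lie inside $Y(T)$ for the later reconstruction in \cref{lem:fibrereconstruct}. The paper instead runs the construction on a compact $K_T\subset Y(T)$ with $|K_T|\ge\tfrac12|Y(T)|$, so every set involved is compact or Borel.
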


\begin{proof}
The normalized parent projection \(R_{jk}\) is a fixed-comparable rectangle,
so its transverse sections have one structural length
\(c_I\le |I|\le C_I\).  Fix once and for all a structural
\(c\in(0,c_I/4)\), sufficiently small for
the fibre reconstruction in \cref{lem:fibrereconstruct}; every \(Z_T\) below
uses this same \(c\).
If \(|Y(T)|=0\), take \(Z_T=\varnothing\).  Otherwise, after replacing the
shading by a Borel representative modulo a null set, inner regularity gives a
compact \(K_T\subset Y(T)\) with \(|K_T|\ge\tfrac12|Y(T)|\).  We perform the
construction on \(K_T\).  Its projection is compact, its transverse
Minkowski sum is compact, and its intersection with the parent rectangle is
Borel; hence the resulting \(Z_T\) is measurable.  The factor \(1/2\) is
absorbed in the structural implicit constant.  Since \(|d_j|\gtrsim1\), every
fibre of \(\pi_{jk}\) meets \(T\) in an \(x_\ell\)-interval of length
\(O(\delta)\).  Fubini and the planar Jacobian give
\[
 |A_T|\gtrsim\frac{|Y(T)|}{\delta s_js_k}.
\]
Every \(u\)-slice of \(A_T\) has diameter \(O(h_0)\).  Define the
\emph{parent-clipped} transverse thickening
\[
 Z_T=\bigl(A_T+[-ch,ch]e_v\bigr)\cap R_{jk}.
\]
By \cref{lem:clipped1d}, including slices meeting either transverse face,
and \(h/h_0=a/\delta\),
\[
 |Z_T|\gtrsim(h/h_0)|A_T|,
 \qquad
 \frac{|Z_T|}{h}
 \gtrsim\frac{|Y(T)|}{\delta^2s_j}
 \asymp\frac{|Y(T)|}{|T|}.
\]
Summing proves the second assertion.  The clipping is essential for the
in-parent fibre reconstruction below.
\end{proof}

\begin{lemma}[Two-point comparison for capped carriers]
\label{lem:capped-two-point}
Let \(T=N_{c_T w}(I_T)\) and \(S=N_{c_S\rho}(I_S)\) be capped solid
tubes whose carrier lengths lie in fixed intervals comparable to \(r\).
Assume \(T\subset S\) and \(0<w\le\rho\le c_{\rm sep}r\), where
\(c_{\rm sep}>0\) is sufficiently small in terms of the fixed tube
constants.  Then there are \(x_1,x_2\in I_T\) and
\(y_1,y_2\in I_S\) such that
\[
 |x_1-x_2|\ge cr,\qquad |x_i-y_i|\le C\rho,\qquad
 |y_1-y_2|\ge c'r.
\tag{6.6C}
\]
Consequently the unoriented carrier directions differ by at most
\(C\rho/r\), and a point of \(I_T\) has distance \(O(\rho)\) from
\(I_S\).  The assertion includes containment through either endcap.
\end{lemma}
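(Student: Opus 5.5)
The plan is to work directly with the carrier segments. Write \(I_T=[p,p+\ell_T e]\) and \(I_S=[q,q+\ell_S f]\) with unit vectors \(e,f\) and \(\ell_T,\ell_S\in[c_{\rm len}r,C_{\rm len}r]\). The key observation is that every point of \(T\) lies within \(c_Tw\le c_T\rho\) of \(I_T\), hence in \(S\), and therefore within \(c_S\rho\) of \(I_S\). This includes points in the endcaps, since the caps are part of the closed neighbourhoods. In particular every point \(x\in I_T\subset T\) has a nearest point \(y(x)\in I_S\) with \(|x-y(x)|\le c_S\rho\le C\rho\). So I choose \(x_1=p\) and \(x_2=p+\ell_Te\), and let \(y_i=y(x_i)\) be the nearest-point projections onto the closed segment \(I_S\).

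The three inequalities in (6.6C) then follow quickly. First, \(|x_1-x_2|=\ell_T\ge c_{\rm len}r=:cr\). Second, \(|x_i-y_i|\le c_S\rho\) by construction. Third, the triangle inequality gives
\[
 |y_1-y_2|\ge \ell_T-2c_S\rho\ge c_{\rm len}r-2c_Sc_{\rm sep}r\ge\tfrac12c_{\rm len}r
\]
once \(c_{\rm sep}\le c_{\rm len}/(4c_S)\). This is the only place the smallness of \(c_{\rm sep}\) is used: it keeps the two projected points from collapsing.

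For the direction consequence, note that \(y_2-y_1=\pm|y_2-y_1|f\) because both points lie on \(I_S\). The vector \(x_2-x_1=\ell_Te\) differs from \(y_2-y_1\) by at most \(2c_S\rho\). Normalizing two vectors of length at least \(\tfrac12c_{\rm len}r\) that differ by at most \(2c_S\rho\) gives unit vectors differing by \(O(\rho/r)\). Hence the unoriented angle between \(e\) and \(f\) is at most \(C\rho/r\). The distance statement, that any point of \(I_T\) lies within \(O(\rho)\) of \(I_S\), was already established in the first step.

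The main obstacle is the phrase \emph{including containment through either endcap}. Here I need to make sure that no argument secretly uses a cylinder model in which only the lateral surface matters. Because I use the full closed neighbourhood \(N_{c_S\rho}(I_S)\) and the Euclidean distance to the closed segment, the nearest point \(y_i\) may be an endpoint of \(I_S\), and the argument is unaffected. The one remaining check is that the lower bound on \(|y_1-y_2|\) cannot fail when both \(x_i\) project to the same endpoint. This is excluded by the displayed inequality, since \(\ell_T\) is comparable to \(r\) while the projection error is only \(O(\rho)\), which is much smaller than \(r\). So the constants \(c,c',C\) depend only on \(c_T,c_S,c_{\rm len},C_{\rm len}\).
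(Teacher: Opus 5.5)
Your proof is correct and is essentially the paper's argument: you take nearest-point projections onto the closed segment \(I_S\) of two well-separated carrier points of \(I_T\), use the triangle inequality with \(c_{\rm sep}\) small, and compare the normalized secants. Choosing the endpoints of \(I_T\) themselves, instead of the paper's ``two points separated by a fixed fraction, one-sided if necessary,'' is a harmless simplification, and measuring distance to the closed segment handles the endcap case exactly as the paper does.
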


\begin{proof}
Choose two carrier points of \(I_T\), one-sided if necessary, separated by
a fixed fraction of its length.  Since every carrier point of \(T\) belongs
to the solid capped tube \(S\), for each \(x_i\) there is a nearest
\(y_i\in I_S\) with \(|x_i-y_i|\le C\rho\).  The triangle inequality gives
\[
 |y_1-y_2|
 \ge |x_1-x_2|-|x_1-y_1|-|x_2-y_2|
 \ge cr-2C\rho\ge c'r.
\]
Thus \(y_2-y_1\) is a nonzero multiple of the coarse carrier direction,
whereas \(x_2-x_1\) is a nonzero multiple of the fine direction.  Comparing
the two normalized secants gives the \(O(\rho/r)\) angular error.  If a
nearest point is a carrier endpoint, the same calculation is one-sided and
unchanged; no continuation beyond a cap is used.
\end{proof}

\begin{lemma}[Hausdorff control for standardized capped ancestors]
\label{lem:standard-capped-ancestry}
Let
\[
 T=N_{c_Tw}(I),\qquad S=N_{c_S\rho}(J),
 \qquad 0<w\le\rho\le L,
\]
where the two carrier segments have the same length
\(|I|=|J|=L\), and suppose \(T\subset S\).  Then
\[
 d_{\rm H}(I,J)\le C\rho,
 \qquad S\subset N_{C\rho}(T).
\tag{6.6D}
\]
The constant depends only on the fixed cap constants.  In particular the
conclusion applies to the source-standard fine/coarse ancestry in
\cref{prop:s6B}; it is not asserted for arbitrary comparable-length local
tubelets.
\end{lemma}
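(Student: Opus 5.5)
The plan is to push the geometry down to the two carrier segments and then use the fact that the caps are round. Since \(T=N_{c_Tw}(I)\subset S=N_{c_S\rho}(J)\), every point of \(I\) lies in \(S\). Hence
\[
 \sup_{x\in I}\dist(x,J)\le c_S\rho .
\]
This is one half of the Hausdorff bound. For the other half, \(\sup_{y\in J}\dist(y,I)\), I will use the equal-length hypothesis \(|I|=|J|=L\) together with the cap containment. The tube \(T\) includes its endpoint caps, so each endpoint ball \(B(x_\pm,c_Tw)\) of \(I\) lies in \(S\). Write the endpoints of \(I\) as \(x_-,x_+\) and those of \(J\) as \(y_-,y_+\), and let \(\pi\) denote the (unclamped) affine coordinate along the line of \(J\). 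The point \(x_\pm\) lies within \(c_S\rho\) of \(J\). The main claim is that the endpoints are matched:
\[
 |x_\pm-y_\pm|\le C\rho
\]
after relabelling.

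To prove the endpoint matching I will separate two regimes. First suppose \(\rho\le c_{\rm sep}L\). Then \cref{lem:capped-two-point} applies with \(r=L\) and shows that the directions of \(I\) and \(J\) differ by \(O(\rho/L)\). Hence the projections \(\pi(x_-),\pi(x_+)\) differ by at least
\[
 L\cos\theta\ge L-C\rho^2/L\ge L-C\rho .
\]
Both projections lie in \([-C\rho,\,L+C\rho]\), because \(x_\pm\) is within \(c_S\rho\) of the segment \(J\) (the nearest point may be an endpoint, which only adds \(c_S\rho\)). Since the segment \([\pi(x_-),\pi(x_+)]\) has length at least \(L-C\rho\) inside an interval of length \(L+2C\rho\), each end lies within \(O(\rho)\) of the corresponding end of \([0,L]\). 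Combined with the transverse distance \(O(\rho)\), this gives \(|x_\pm-y_\pm|\le C\rho\). Every \(y\in J\) is then a convex combination of \(y_-,y_+\), so the corresponding convex combination of \(x_-,x_+\) lies in \(I\) within \(C\rho\) of \(y\). This proves \(d_{\rm H}(I,J)\le C\rho\). In the remaining regime \(c_{\rm sep}L<\rho\le L\), the bound is trivial: \(I\) meets \(S\), so every point of \(J\) lies within \(L+c_S\rho+L\lesssim\rho\) of \(I\).

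The second conclusion in (6.6D) follows from the first. If \(z\in S\), pick \(y\in J\) with \(|z-y|\le c_S\rho\), then \(x\in I\) with \(|x-y|\le C\rho\). Since \(x\in I\subset T\), we get \(\dist(z,T)\le(c_S+C)\rho\), so \(S\subset N_{C\rho}(T)\).

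The main obstacle is the endpoint matching. That is exactly where equal length enters; for comparable-length tubelets, \(J\) could overhang \(I\) by a length of order \(L\), which is why the lemma excludes them. I would first try the projection argument above. The delicate point is checking that the cosine loss \(L(1-\cos\theta)\lesssim\rho^2/L\le\rho\) stays \(O(\rho)\) uniformly, and that clamping nearest points at the caps of \(J\) costs only \(c_S\rho\).
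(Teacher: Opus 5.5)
Your proof is correct and follows the same plan as the paper's. Equal length forces the endpoints of \(I\) and \(J\) to match up to \(O(\rho)\), linear interpolation between matched endpoints gives the Hausdorff bound, the regime \(\rho\gtrsim L\) is trivial, and \(S\subset N_{C\rho}(T)\) follows from the first conclusion. The only difference is that the paper matches endpoints more directly: the nearest points \(q_\pm\in J\) to the endpoints of \(I\) satisfy \(|q_+-q_-|\ge L-2c_S\rho\) by the triangle inequality, so they lie within \(2c_S\rho\) of the endpoints of \(J\). This makes your appeal to the direction bound of \cref{lem:capped-two-point} and the cosine estimate unnecessary.
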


\begin{proof}
Let \(p_-,p_+\) be the endpoints of \(I\).  Containment of the carrier in the
solid \(S\) gives points \(q_-,q_+\in J\) with
\(|p_\pm-q_\pm|\le C_0\rho\).  If
\(\rho\le L/(4C_0)\), then
\[
 |q_+-q_-|\ge L-2C_0\rho.
\]
Writing \(J=\{q_0+tu:0\le t\le L\}\), the two parameters of \(q_-\) and
\(q_+\) therefore differ by at least \(L-2C_0\rho\).  After possibly reversing
the order on \(J\), each endpoint of \(J\) is within \(2C_0\rho\) of the
corresponding \(q_\pm\), and hence within \(3C_0\rho\) of the corresponding
endpoint of \(I\).  Linear interpolation between the paired endpoints gives
both inclusions \(I\subset N_{C\rho}(J)\) and
\(J\subset N_{C\rho}(I)\).  If \(\rho>L/(4C_0)\), choose any point of \(I\)
and a nearest point of \(J\); the diameter bound \(L\lesssim\rho\) gives the
same two inclusions with a larger structural constant.  Finally every point
of \(S\) is within \(c_S\rho\) of \(J\), while \(I\subset T\), which proves
the second assertion.  Endpoint caps are already included in the two
neighbourhoods, so no extension of either carrier is used.
\end{proof}

\begin{lemma}[Proxy-chart inheritance]
\label{lem:proxy-chart}
There is an absolute constant
\(0<c_{\rm dom}\le c_{\rm sep}\), determined only by the finite charts in
\cref{lem:finitecharts} and the fixed cap/length constants in the carrier
convention, with the following property.  Let
\(\delta\le\rho\le a\le b\le r\), assume
\[
 \rho/r\le c_{\rm dom},
\tag{6.6c$'''$}
\]
 and let a fine
 \(\delta\times\delta\times r\) tube \(T\) be contained in a coarse
\(\rho\times\rho\times r\) tube \(S\), and \(S\) belongs to one of the
charts in \cref{lem:finitecharts}, then, after a fixed enlargement of that
chart, \(T\) is a graph in the same dominant coordinate and
\[
 |p(T)-p(S)|\lesssim \rho/b\le a/b.
\tag{6.6c$''$}
\]
Consequently every projected measurable shading of \(T\) constructed in
\cref{lem:projectedmass} is contained in a fixed enlargement of the
\(a/b\)-tube belonging to the proxy atom \(p(S)\).
\end{lemma}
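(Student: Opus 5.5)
The plan is to transfer everything through \cref{lem:capped-two-point}, which applies because \(\delta\le\rho\) and \(\rho/r\le c_{\rm dom}\le c_{\rm sep}\). That lemma gives a direction comparison and a carrier comparison, and I will feed these into the explicit chart formulas from the proof of \cref{lem:finitecharts}.

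\textbf{Step 1 (direction and position).} Write \(d(T),d(S)\) for the unit carrier directions, with signs chosen so that \(\langle d(T),d(S)\rangle\ge0\). By \cref{lem:capped-two-point},
\[
 |d(T)-d(S)|\le C\rho/r,
\]
and every point of \(I_T\) lies within \(C\rho\) of \(I_S\). Because \(S\) lies in its chart, the dominant coordinate \(j\) satisfies \(|d_j(S)|\ge3^{-1/2}\). Hence \(|d_j(T)|\ge3^{-1/2}-Cc_{\rm dom}\ge\tfrac12\) once \(c_{\rm dom}\) is small. So \(T\) is a graph over \(x_j\) with a uniformly bounded slope, even though it need not be assigned to the same chart by the least-index tie-break. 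This is exactly what the phrase ``after a fixed enlargement of that chart'' refers to: we relax the lower bound \(3^{-1/2}\) to \(\tfrac12\) and enlarge the bounded slope--intercept window by a fixed factor. The relevant \(x_j\)-interval of \(T\) still has length \(\asymp r\) inside the fixed enlargement of \(B\), by the two separated points \(x_1,x_2\) of \cref{lem:capped-two-point}.

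\textbf{Step 2 (slope).} The normalized slope is \(m=s_jd_k/(s_kd_j)\). The map \(d\mapsto d_k/d_j\) is Lipschitz on \(\{|d_j|\ge\tfrac12\}\), so
\[
 |m_T-m_S|\lesssim \frac{s_j}{s_k}\cdot\frac{\rho}{r}\asymp\frac{r}{b}\cdot\frac{\rho}{r}=\frac{\rho}{b},
\]
using \(s_j\asymp r\) and \(s_k\asymp b\) from \cref{lem:finitecharts}.

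\textbf{Step 3 (intercept).} Pick a point \(x\in I_T\) whose \(x_j\)-coordinate lies in the common relevant interval. Let \(y\in I_S\) be a point with \(|x-y|\le C\rho\). The graph of \(S\) over \(x_j\) has slope \(d_k(S)/d_j(S)=O(1)\), so moving along \(S\) to the point with the same \(x_j\)-coordinate as \(x\) costs only \(O(\rho)\) in \(x_k\). Thus the two normalized graph lines differ by \(O(\rho/s_k)=O(\rho/b)\) in \(v\) at one value of \(u\). Combined with Step 2 over the common bounded normalized \(u\)-interval, this gives \(|\beta_T-\beta_S|\lesssim\rho/b\). Since \((\beta,m)\) and the normalized endpoint ordinates are related by a fixed invertible linear map, (6.6c\('')\) follows. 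The bound \(\rho/b\le a/b\) is immediate from \(\rho\le a\).

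\textbf{Step 4 (projected shadings).} The set \(Z_T\) of \cref{lem:projectedmass} lies in a fixed-constant \(h\)-tube about the line of \(p(T)\), with \(h\asymp a/b\). Moving the line parameter by \(O(a/b)\) on a bounded \(u\)-interval moves the line by \(O(a/b)\). Therefore \(Z_T\) lies in a fixed enlargement of the \(a/b\)-tube about \(p(S)\).

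\textbf{Main obstacle.} I expect the delicate step to be the intercept comparison near endcaps. There the nearest point on \(I_S\) may be an endpoint, and the relevant \(x_j\)-interval of \(T\) may be one-sided. I will handle this by choosing \(x\) among the two separated points supplied by \cref{lem:capped-two-point}, which are already one-sided when needed. Carriers are never extended beyond the caps. The only extension used is the extension of affine graph lines to the common normalized interval, and that is a statement about parameters, not about physical carriers.
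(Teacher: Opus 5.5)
Your proposal is correct and follows essentially the same route as the paper. In both, \cref{lem:capped-two-point} supplies the $O(\rho/r)$ direction error and the $O(\rho)$ offset. The choice of $c_{\rm dom}$ keeps the fine dominant component bounded below, so the same graph coordinate applies. The slope error is $(s_j/s_k)(\rho/r)\asymp\rho/b$, the intercept error is $O(\rho/s_k)$, and the shading assertion follows from the triangle inequality. Your matched-point intercept comparison and your one-sided choice of separated points near the endcaps spell out what the paper's proof compresses into a line.
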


\begin{proof}
\Cref{lem:capped-two-point} gives directional separation
\(O(\rho/r)\) and transverse offset \(O(\rho)\), including both endcaps.
In the normalized coordinates of
\cref{lem:finitecharts}, the slope error is
\(O((r/b)(\rho/r))=O(\rho/b)\), and the intercept error is also
\(O(\rho/b)\).  The coarse dominant component is bounded below by a fixed
chart constant.  Choose \(c_{\rm dom}\) so that the directional error
\(C\rho/r\) is at most half that lower bound.  The fine dominant component
is then bounded below by half the same constant, so the same graph
coordinate applies.  Finally
\(\rho/b\le a/b\), and the projected fine shading already lies in an
\(O(a/b)\)-thickening of its fine carrier.  The triangle inequality proves
the last assertion.
\end{proof}

\begin{lemma}[Fibre reconstruction inside the parent]
\label{lem:fibrereconstruct}
For the projected shadings in \cref{lem:projectedmass},
\[
 |B\cap N_a(\bigcup_TY(T))|
 \gtrsim |B|\,\left|\bigcup_TZ_T\right|,
\]
where planar measure is in normalized coordinates.  The constant is uniform
over all charts and over shadings meeting a face of \(B\).
\end{lemma}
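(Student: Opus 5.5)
The plan is to show that every point of the planar set \(\bigcup_TZ_T\) lifts to a full fibre segment of \(B\) whose points all lie in \(N_a(\bigcup_TY(T))\), and then to integrate over the fibres.

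\emph{Coordinates.} Work in the chart coordinates of \cref{lem:finitecharts}. The normalized projection \(R_{jk}=S_{jk}(\pi_{jk}B)\) is a fixed-comparable rectangle. Let \(\Phi\) be the physical map \((x_j,x_k,x_\ell)\mapsto(S_{jk}(x_k,x_j),x_\ell)\). The parent \(B\) is, up to a fixed enlargement that we avoid by clipping, the product \(\pi_{jk}B\times I_\ell\), where \(I_\ell\) is the \(x_\ell\)-side of length \(s_\ell\asymp a\).

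\emph{Pointwise claim.} The key step is that if \((v,u)\in Z_T\), then every physical point \(x\in B\) with \(S_{jk}(\pi_{jk}x)=(v,u)\) lies in \(N_a(Y(T))\), provided the structural constant \(c\) in \cref{lem:projectedmass} is small.

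By construction \((v,u)\in A_T+[-ch,ch]e_v\). So there is \(y\in K_T\subset Y(T)\) with normalized coordinates \((v',u)\) and \(|v-v'|\le ch\).

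\begin{itemize}
\item In the \(x_k\) direction the physical separation is \(s_k|v-v'|\le c\,s_kh=c\,a\), since \(h=a/s_k\).
\item The \(x_j\) coordinates agree.
\item In the \(x_\ell\) direction, both \(x\) and \(y\) lie in \(B\), so \(|x_\ell-y_\ell|\le s_\ell\le a\).
\end{itemize}

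This gives \(|x-y|\le\sqrt{1+c^2}\,a\), which is only slightly too large for the open neighbourhood \(N_a\).

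I would repair this with one of two options. The first is to note that the \(x_\ell\) side is exactly \(a\) only for the shortest side, and to replace \(N_a\) on the left by a comparable set. A cleaner route is to pass through \cref{lem:CELL1}: \(|N_a(E)|\asymp|N_{Ca}(E)|\) up to constants, combined with the convexity bound of \cref{lem:convex-parallel-body}, which gives \(|B\cap N_{Ca}(E)|\lesssim|B\cap N_a(E)|\) for \(E\subset B\). This is the step I expect to be the main obstacle, because the comparison must stay inside \(B\) and must not leak outside the parent.

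The second option, which I would try first, is to avoid the issue by using only an \(x_\ell\)-subinterval of length \(a/2\) around \(y_\ell\) inside \(I_\ell\). This always exists because \(s_\ell\ge a\), and possibly lies one-sided at a face. Every point \(x\) with \(|x_\ell-y_\ell|\le a/2\) then satisfies \(|x-y|\le(1/4+c^2)^{1/2}a<a\). So each fibre contributes length at least \(a/2\asymp s_\ell\).

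\emph{Integration.} Let \(F=\{x\in B:S_{jk}\pi_{jk}x\in\bigcup_TZ_T\}\cap N_a(\bigcup Y(T))\). Fubini over the \(x_\ell\)-fibres gives
\[
|F|\ \ge\ s_j s_k\cdot\tfrac{a}{2}\cdot\Bigl|\bigcup_TZ_T\Bigr|\ \gtrsim\ |B|\,\Bigl|\bigcup_TZ_T\Bigr|,
\]
where \(s_js_k\) is the Jacobian of \(S_{jk}^{-1}\) and \(|B|\asymp s_js_ks_\ell\). The clipping \(Z_T\subset R_{jk}\) guarantees that fibres lie over \(\pi_{jk}B\), so they lie in \(B\) and not in an enlargement.

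\emph{Measurability and uniformity.} Measurability follows because each \(Z_T\) is Borel (\cref{lem:projectedmass}) and the union is finite. The constants depend only on \(c\), the chart constants and the cap constants, so they are uniform over charts and over shadings meeting faces. The faces are handled by the one-sided choices above: the one-sided \(x_\ell\)-subinterval here, and the one-sided thickening already built into \cref{lem:clipped1d}.
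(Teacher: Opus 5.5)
Your second option is correct and is essentially the paper's proof. The paper also lifts each point of \(Z_T\) to a point of \(Y(T)\) at physical \(x_k\)-distance at most \(ca\). It then uses a one-sided \(x_\ell\)-interval inside the \(x_\ell\)-side of \(B\), of length \(ca\) rather than your \(a/2\), and applies Fubini with Jacobian \(s_js_k\).

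You can drop the first repair option via \cref{lem:CELL1}; it is unnecessary. Note also that the bound \(s_\ell\le a\) in your discarded first attempt fails when \(j=1\): there only \(s_\ell\ge a\) and \(s_\ell\asymp a\) hold.
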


\begin{proof}
If \((v',u)\in Z_T\), then for some
\((x_\ell,x_k,x_j)\in Y(T)\), the corresponding physical observed
coordinate satisfies \(|x_k'-x_k|\le ca\).  The clipping keeps \(x_k'\) in
the parent projection.  Inside the \(x_\ell\)-side of \(B\), the interval
\[
 \{x_\ell':|x_\ell'-x_\ell|\le ca\}
\]
has length at least \(ca\), even at a face.  Choosing \(c\) small puts every
\((x_\ell',x_k',x_j)\) in \(B\cap N_a(Y(T))\).  Therefore Fubini and
\(|\det S_{jk}^{-1}|=s_js_k\) yield
\[
 |B\cap N_a(\bigcup_TY(T))|
 \gtrsim a s_js_k\left|\bigcup_TZ_T\right|
 \asymp |B|\left|\bigcup_TZ_T\right|.
\]
\end{proof}

\begin{proof}[Proof of \cref{thm:rect}]
The calculation now follows the projection--incidence--reconstruction chain.
If \(\lambda_B=0\), the right side of (6.4) vanishes and the conclusion is
immediate.  Hence assume \(\lambda_B>0\).
Choose a chart carrying at least \(J^{-1}\) of the total shaded mass, where
\(J\le3\), and write \(N_c=\vartheta N\).
\Cref{lem:marginalfrostman,lem:projectedmass} give Frostman constant
\(L\lesssim C_0/\vartheta\) and planar density at least
\(c\lambda_B/(J\vartheta)\).  Apply (6.2) with
\(t=1+\sigma\), \(q=2-\sigma\), and \(h\asymp a/b\).  The chart fraction
cancels exactly:
\[
 \left|\bigcup_TZ_T\right|
 \gtrsim_{\eps,\sigma}
 (C_0/\vartheta)^{-q}h^\eps
 (\lambda_B/(J\vartheta))^q
 \gtrsim C_0^{-q}h^\eps\lambda_B^q.
\]
\Cref{lem:fibrereconstruct} now gives (6.4).  In the unabsorbed planar
estimate the right side is
\[
 C_0^{-q}[\log(2/h)]^{-q}h^{q\eta}\lambda_B^q.
\]
This is (6.5), since \(h\asymp s=a/b\).  Taking
\(\eta=\eps/(2q)\) and using
\[
 [\log(2/s)]^{-q}s^{\eps/2}
 \gtrsim_{\eps,q}s^\eps
\]
recovers (6.4).  The compact approximation is removed by letting its mass
loss tend to zero.  All counts remain labelled, so repeated children and
repeated projected parameters are included.
\end{proof}

\begin{corollary}[Ordinary rectangular endpoint]
\label{cor:ordinaryrect}
Assume (6.3a).  Then
\[
 |B\cap N_a(U(\T_B,Y))|
 \gtrsim C_F^{-1}[\log(2/s)]^{-1}\lambda_B^2|B|.
\tag{6.6}
\]
This statement allows labelled repetitions, coincident projected parameters,
and arbitrary measurable shadings.
\end{corollary}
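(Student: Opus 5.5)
The plan is to rerun the projection--incidence--reconstruction chain from the proof of \cref{thm:rect}, with the Frostman exponent \(t=1+\sigma\) replaced by the endpoint \(t=1\). The DOV-based weighted estimate (6.2) is stated only for \(1<t\le2\). At \(t=1\) it would give \(q=2\) anyway, but with an \(h^\eps\) loss where the statement wants a single logarithm. So at the endpoint I would not use (6.2). Instead I would prove the planar input directly by the quadratic C\'ordoba \(L^2\) argument, which is where the \([\log(2/s)]^{-1}\) comes from.

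\emph{Step 1 (parameters).} I would choose, by \cref{lem:finitecharts}, a chart carrying at least \(1/3\) of the shaded mass, with \(N_c=\vartheta N\) children. Then (6.3b) of \cref{lem:marginalfrostman} gives
\[
 \nu_c(D(p,\rho))\lesssim C_F\vartheta^{-1}\rho,\qquad h\le\rho\le1 .
\]
\cref{lem:projectedmass} gives clipped planar shadings \(Z_T\) of \(h\)-tubes with \(\Lambda:=h^{-1}N_c^{-1}\sum_T|Z_T|\gtrsim\lambda_B/\vartheta\). By \cref{lem:fibrereconstruct}, it then suffices to prove the planar bound
\[
 \Bigl|\bigcup_T Z_T\Bigr|\gtrsim \frac{\Lambda^2}{L\log(2/h)},\qquad L\simeq C_F/\vartheta .
\]
Substituting, the right side is \(\vartheta^{-2}\lambda_B^2\cdot\vartheta C_F^{-1}[\log]^{-1}\ge C_F^{-1}[\log]^{-1}\lambda_B^2\), since \(\vartheta\le1\). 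So the chart fraction cancels favourably, and \(h\asymp s\) finishes.

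\emph{Step 2 (C\'ordoba).} Set \(F=\sum_i\alpha_i\1_{Z_i}\), where \(\alpha_i=1/N_c\). Then \(\int F=h\Lambda\), and Cauchy--Schwarz gives \(|\bigcup Z_i|\ge(h\Lambda)^2/\int F^2\). For the second moment I would use
\[
 \int F^2\le\sum_{i,j}\alpha_i\alpha_j\,|T_i^h\cap T_j^h| .
\]
Two \(h\)-tubes in a bounded slope--intercept chart whose parameters are at distance \(\simeq\rho\ge h\) intersect in area \(\lesssim h^2/\rho\). Parameters within distance \(h\) contribute at most \(h\). I would decompose dyadically in \(\rho=|p_i-p_j|\) and use the Frostman bound \(\nu_c(D(p_i,2\rho))\lesssim L\rho\). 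Each of the \(O(\log(2/h))\) dyadic shells then contributes \(\lesssim L\rho\cdot h^2/\rho=Lh^2\), and the near-diagonal part contributes \(\lesssim Lh\cdot h\). Hence
\[
 \int F^2\lesssim L\,h^2\log(2/h),
\]
and \(|\bigcup Z_i|\gtrsim\Lambda^2/(L\log(2/h))\).

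\emph{Expected obstacle.} The delicate step is the intersection bound \(|T_i^h\cap T_j^h|\lesssim h^2/(h+|p_i-p_j|)\). It must hold uniformly for parameters in the bounded chart, for tubes clipped to \(R_{jk}\), and for repeated parameters. A large intercept gap with a small slope gap yields empty intersection, so the bound really depends on the slope difference plus the intercept offset. I would handle this by writing \(|p_i-p_j|\sim|\Delta m|+|\Delta\beta|\). If \(|\Delta\beta|\gg|\Delta m|\), the lines are disjoint over the bounded \(u\)-interval and the intersection is empty. Otherwise the crossing region has length \(\lesssim h/|\Delta m|\sim h/\rho\). Repeated parameters fall in the near-diagonal term, so no fibre bound is needed and the whole argument is in labelled weights. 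Finally, the compact-approximation factor \(1/2\) from \cref{lem:projectedmass} is absorbed exactly as in the proof of \cref{thm:rect}.
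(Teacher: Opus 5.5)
Your proposal is correct and follows the paper's proof essentially step for step. It uses the same mass-selected chart, the one-dimensional marginal (6.3b), the parent-clipped projected shadings with fibre reconstruction, and then the weighted labelled C\'ordoba \(L^2\) bound, which the paper isolates as \cref{thm:cordobaappendix} with exactly your intersection estimate (\cref{lem:capped-graph-intersection}) and dyadic Frostman sum, the chart fraction cancelling as you compute since \((J^2\vartheta)^{-1}\ge J^{-2}\).
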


\begin{proof}
Repeat the chart selection, projection, parent clipping, and fibre
reconstruction in the proof of \cref{thm:rect}.  The chart marginal now
satisfies (6.3b).  Apply the weighted labelled C\'ordoba estimate
\cref{thm:cordobaappendix} with
\(L\lesssim C_F\vartheta^{-1}\) and planar density
\(\Lambda\gtrsim\lambda_B/(J\vartheta)\).  Its lower bound is
\[
 (C_F\vartheta^{-1})^{-1}[\log(2/h)]^{-1}
   (\lambda_B/(J\vartheta))^2
 =C_F^{-1}[\log(2/h)]^{-1}
   \lambda_B^2(J^2\vartheta)^{-1}.
\]
Since \(0<\vartheta\le1\) and \(J\le3\),
\((J^2\vartheta)^{-1}\ge J^{-2}\gtrsim1\).  Notice that the chart was
selected by shaded mass, so no lower bound for its fraction of labels is
being used.
The parent-clipped fibre reconstruction contributes only the fixed chart
Jacobian, and \(h\asymp s\).  This proves (6.6).  In particular, the endpoint
is not obtained by formally setting \(t=1\) in DOV.
\end{proof}

\begin{lemma}[Coarse/thickened-child C\'ordoba endpoint]
\label{lem:coarse-cordoba}
Let \(\delta\le\rho\le a\le b\le r\), and let \(B\) be an
\(a\times b\times r\) labelled rectangular parent.  Suppose a nonempty
labelled family \(\Scal_B\) of \(\rho\times\rho\times r\) coarse tubes is
contained in \(B\), and a nonempty fine family \(\T_B\) is given, with each
fine child \(T\in\T_B\) assigned to one
\(S(T)\in\Scal_B\) with \(T\subset S(T)\).
The fine and coarse carriers use one common capped length normalization at
scale \(r\): their carrier lengths lie in the same fixed interval
\([c_{\rm len}r,C_{\rm len}r]\), with the same endpoint convention.  Thus
all ancestry comparisons below use one declared length window rather than
unrelated ``comparable length'' constants.  Suppose there are
\(m\ge1\) and \(R_{\rm br}\ge1\) such that every coarse label \(S\) has
\(n_S\) assigned fine labels with
\[
 m\le n_S\le R_{\rm br}m.
\tag{6.6c$'$}
\]
Assume
\[
 \frac{\#\{S\in\Scal_B:S\subset K\}}{|\Scal_B|}
 \le C_{F,\rho}\frac{|K|}{|B|}
 \qquad(K\subset B\ \hbox{convex}).
\tag{6.6c}
\]
Then arbitrary measurable fine shadings, repeated fine labels, and coincident
fine carriers satisfy
\[
 |B\cap N_a(U(\T_B,Y))|
 \gtrsim C_{F,\rho}^{-1}R_{\rm br}^{-3}
          [\log(2b/a)]^{-1}
          \lambda_B^2|B|.
\tag{6.6d}
\]
The conclusion asserts no fine-scale all-convex Frostman condition.
\end{lemma}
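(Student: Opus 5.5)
We follow the chain used for \cref{cor:ordinaryrect}: chart selection, projection, parent clipping, fibre reconstruction. The difference is that the Frostman input to the weighted C\'ordoba estimate \cref{thm:cordobaappendix} is built on the coarse proxy parameters \(p(S)\) rather than on the fine parameters. First assign each coarse label \(S\in\Scal_B\) to one of the at most three charts of \cref{lem:finitecharts} with \(w=\rho\). If \(\rho/r>c_{\rm dom}\), then \(a\asymp b\asymp r\) and \(h\asymp1\), so a trivial one-tube lower bound suffices; we dispose of that case separately and otherwise assume (6.6c\('''\)). By \cref{lem:proxy-chart}, every fine child \(T\) with \(S(T)=S\) is then a graph in the same dominant coordinate after a fixed enlargement of the chart. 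Its projected shading \(Z_T\) from \cref{lem:projectedmass} lies in a fixed enlargement of the \(h\)-tube about \(p(S)\), where \(h\asymp a/b\). We then select the chart carrying at least \(J^{-1}\ge1/3\) of the fine shaded mass, and let \(\Scal_c\) be its coarse labels, with \(|\Scal_c|=\vartheta|\Scal_B|\).

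Next we set up the weighted planar input. Put the atomic probability measure \(\nu=\sum_{S\in\Scal_c}\alpha_S\delta_{p(S)}\), with \(\alpha_S=n_S/\sum_{S'\in\Scal_c}n_{S'}\). For each \(S\) take the shading \(Y_S:=\bigcup_{T:S(T)=S}Z_T\), which sits in an \(h\)-tube about \(p(S)\). The Frostman bound follows from \cref{lem:paramslab} with \(w=\rho\), since \(\rho/b\le h\), together with (6.6c). Exactly as in \cref{lem:marginalfrostman}, the number of coarse labels with parameter in \(D(p,u)\) is \(\lesssim C_{F,\rho}|\Scal_B|u\). The branching bound (6.6c\('\)) converts counts to \(\nu\)-mass at a cost of \(R_{\rm br}\), giving
\[
 \nu(D(p,u))\lesssim C_{F,\rho}R_{\rm br}\vartheta^{-1}u,\qquad h\le u\le 1 .
\]
For the density we need \(h^{-1}\sum_S\alpha_S|Y_S|\). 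Here the repeated fine labels are the problem, because \(|Y_S|\) is a union and not a sum. We bound it below via Cauchy--Schwarz, or by a layer-cake over the inner multiplicity \(\sum_{T:S(T)=S}\1_{Z_T}\le n_S\le R_{\rm br}m\). This gives \(|Y_S|\ge (R_{\rm br}m)^{-1}\sum_{T}|Z_T|\). Combined with \(|Z_T|/h\gtrsim|Y(T)|/|T|\) from \cref{lem:projectedmass}, we get
\[
 \Lambda\gtrsim R_{\rm br}^{-1}\lambda_B/(J\vartheta),
\]
after using \(\sum n_{S'}\le R_{\rm br}m|\Scal_c|\) and \(N\ge m|\Scal_B|\) to compare with the fine average \(\lambda_B\).

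Finally, \cref{thm:cordobaappendix} gives
\[
 \Big|\bigcup_S Y_S\Big|\gtrsim (C_{F,\rho}R_{\rm br}\vartheta^{-1})^{-1}[\log(2/h)]^{-1}\Lambda^2 .
\]
Inserting the bound for \(\Lambda\) yields \(C_{F,\rho}^{-1}R_{\rm br}^{-3}[\log(2b/a)]^{-1}\lambda_B^2(J^2\vartheta)^{-1}\), and \(\vartheta\le1\) removes the chart fraction. \Cref{lem:fibrereconstruct} applies unchanged, since each \(Z_T\) is a parent-clipped thickening of the projection of \(Y(T)\) itself. It converts the planar union into the volume \(|B\cap N_a(U(\T_B,Y))|\), which gives (6.6d).

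The main obstacle is the density step. We must check that the three powers of \(R_{\rm br}\), one each from the Frostman constant, the union-versus-sum loss, and the normalization mismatch between \(\alpha_S\) and the fine denominator, really compound to at most \(R_{\rm br}^{3}\) and no more. If the naive union bound loses too much, we would first try distributing \(\alpha_S\) proportionally to the fine shaded mass of \(S\), so that the atomic weights already account for the multiplicity.
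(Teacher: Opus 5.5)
Your argument follows the paper's proof. The common steps are: coarse charts with \(w=\rho\) via \cref{lem:proxy-chart}, the chart chosen by fine shaded mass, the Frostman bound \(\nu(D(p,u))\lesssim C_{F,\rho}R_{\rm br}\vartheta^{-1}u\), \cref{thm:cordobaappendix}, and fibre reconstruction. Your aggregated atom of mass \(n_S/N_f\) at \(p(S)\) is the paper's measure summed over descendants. The paper puts mass \(1/N_f\) at \(p(S(T))\) for each fine label and keeps the individual shadings \(Z_T\), which the C\'ordoba theorem permits because repeated atoms are allowed. Your split at \(\rho/r>c_{\rm dom}\), handled by a one-point bound, is a valid substitute for the paper's split at \(a/b\ge h_*\). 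Testing \(K=B\) in (6.6c) gives \(C_{F,\rho}\ge1\), so the trivial bound suffices there.

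The density step, however, does not give what you claim as written. From \(|Y_S|\ge(R_{\rm br}m)^{-1}\sum_{T:S(T)=S}|Z_T|\) and \(\alpha_S\ge m/N_f\) you get only \(\alpha_S|Y_S|\ge(R_{\rm br}N_f)^{-1}\sum_{T:S(T)=S}|Z_T|\). Combined with \(N_f\le R_{\rm br}m\vartheta|\Scal_B|\) and \(N\ge m|\Scal_B|\), this yields \(\Lambda\gtrsim R_{\rm br}^{-2}\lambda_B/(J\vartheta)\), and hence \(R_{\rm br}^{-5}\) in (6.6d), not \(R_{\rm br}^{-3}\). Your closing count reaches the same \(R_{\rm br}^{5}\): the Frostman loss plus two density losses, each squared by the C\'ordoba bound.

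The fix is to use the exact inner bound \(\sum_{T:S(T)=S}\1_{Z_T}\le n_S\), that is, \(n_S|Y_S|\ge\sum_{T:S(T)=S}|Z_T|\). Since \(\alpha_S\) is proportional to \(n_S\), the union then costs nothing: \(\alpha_S|Y_S|\ge N_f^{-1}\sum_{T:S(T)=S}|Z_T|\). Therefore
\[
 \Lambda\gtrsim\frac1{N_f}\sum_{T\text{ in the chart}}\frac{|Y(T)|}{|T|}
 \gtrsim\frac{\lambda_BN}{JN_f}\ge\frac{\lambda_B}{JR_{\rm br}\vartheta}.
\]
The only losses are then one \(R_{\rm br}\) in \(L\) and one \(R_{\rm br}\) in \(\Lambda\), the latter coming from \(N_f\le R_{\rm br}mN_c\) against \(N\ge m|\Scal_B|\). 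The C\'ordoba bound squares the second, giving exactly \(R_{\rm br}^{-3}\).

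Do not use your fallback of weighting \(\alpha_S\) by fine shaded mass. Condition (6.6c) controls only label counts, so a mass-weighted \(\nu\) would have no Frostman bound.
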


\begin{proof}
Put \(h=a/b\), and fix a sufficiently small absolute \(h_*>0\).  If
\(h\ge h_*\), partition the fine labels themselves by
\cref{lem:finitecharts} with \(w=\delta\), and choose a chart carrying at
least one third of the fine shaded mass.  For the normalized equal-weight
measure on that chart,
\(\nu(D(p,u))\le1\le h_*^{-1}u\) for \(h\le u\le1\).
The weighted labelled C\'ordoba endpoint
\cref{thm:cordobaappendix} and fine fibre reconstruction give
\(|B\cap N_a(U)|\gtrsim\lambda_B^2|B|\).  Since
\(C_{F,\rho},R_{\rm br}\ge1\) and \(\log(2b/a)=O_{h_*}(1)\), this is stronger
than (6.6d).

Assume henceforth that \(h<h_*\), where at the outset we also choose
\(h_*\le c_{\rm dom}\).  Since \(\rho\le a\) and \(b\le r\),
\[
 \rho/r\le a/b=h<h_*\le c_{\rm dom}.
\]
Thus \cref{lem:proxy-chart} is applicable.  Partition the coarse labels into
the at most three graph charts of \cref{lem:finitecharts} with \(w=\rho\), assigning
all labelled descendants of a coarse label to its chart.  Choose a chart
carrying at least one third of the total fine shaded mass.  If it contains
\(N_c\) coarse labels and \(N_f=\sum_Sn_S\) fine labels, put equal mass
\(1/N_f\) on each descendant label, but place that atom at the parameter
\(p(S(T))\) of its coarse proxy.

For \(h\le u\le1\), the parameter-slab construction
\cref{lem:paramslab} with transverse padding \(C(ub+\rho)\) contains every
coarse proxy whose parameter lies in a \(u\)-ball.  Its relative volume is
\(O(u)\), because \(\rho\le a=hb\le ub\).  Applying (6.6c), and then dividing
by the coarse chart fraction, proves the one-dimensional Frostman estimate
for the repeated proxy measure with constant
\[
 L\lesssim C_{F,\rho}R_{\rm br}\vartheta^{-1},
\tag{6.6e}
\]
where \(\vartheta=N_c/|\Scal_B|\).  Indeed a coarse label has proxy mass
\(n_S/N_f\le R_{\rm br}/N_c\).  This is the only use of branching
comparability.

By \cref{lem:proxy-chart}, the projected shading \(Z_T\) constructed in
\cref{lem:projectedmass} is a shading of a fixed enlargement of the
\(h\)-tube belonging to the proxy atom \(p(S(T))\).  This verifies the chart
and carrier inheritance for every assigned fine label; no fine shading mass
is merged or replaced.  Since \(N_f\le R_{\rm br}mN_c\), whereas the full
family has at least \(m|\Scal_B|\) descendants, selecting the chart by shaded
mass gives planar density
\[
 \Lambda\gtrsim\lambda_B/(R_{\rm br}\vartheta).
\tag{6.6f}
\]

Apply the weighted labelled C\'ordoba endpoint
\cref{thm:cordobaappendix} to these repeated proxy atoms.  Combining
(6.6e)--(6.6f) gives
\[
 L^{-1}\Lambda^2
 \gtrsim C_{F,\rho}^{-1}R_{\rm br}^{-3}
           \vartheta^{-1}\lambda_B^2
 \ge C_{F,\rho}^{-1}R_{\rm br}^{-3}\lambda_B^2.
\]
Finally,
\cref{lem:fibrereconstruct} applies to the original fine projected shadings,
so parent clipping and boundary faces cost only a fixed constant.  This gives
(6.6d).
\end{proof}

\subsection{Density after joint regularization}

The rectangular estimate is parentwise, whereas the joint graph retains mass
globally across parents.  The next averaging lemma is the bridge: comparable
child denominators convert the global retained-mass statement into a lower
bound for the mean surviving density, after which Jensen supplies the parent
union estimate.

\begin{lemma}[Denominator-comparability averaging]
\label{lem:denominator-average}
Let \(\B\) be a finite nonempty parent family, let \(D_B>0\), let
\(m_B\ge0\) with \(\sum_Bm_B>0\), and suppose
\[
 \frac{\max_{B\in\B}D_B}{\min_{B\in\B}D_B}\le\kappa_D.
\tag{6.6a}
\]
Put \(\lambda=(\sum_Bm_B)/(\sum_BD_B)\).  If
\(\B'\subset\B\) and \(0\le m'_B\le m_B\) satisfy
\[
 \sum_{B\in\B'}m'_B\ge\mathfrak L^{-1}\sum_{B\in\B}m_B,
\]
then
\[
 \frac1{|\B'|}\sum_{B\in\B'}\frac{m'_B}{D_B}
 \ge \kappa_D^{-1}\mathfrak L^{-1}\lambda.
\tag{6.6b}
\]
\end{lemma}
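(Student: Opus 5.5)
The estimate (6.6b) is a direct averaging argument. I will compare every denominator \(D_B\) with the largest one, \(D_{\max}:=\max_{B\in\B}D_B\), and then use that \(|\B'|\le|\B|\).

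First, since \(D_B\le D_{\max}\) for every \(B\), I bound termwise
\[
 \frac1{|\B'|}\sum_{B\in\B'}\frac{m'_B}{D_B}
 \ge\frac{1}{|\B'|\,D_{\max}}\sum_{B\in\B'}m'_B
 \ge\frac{1}{|\B'|\,D_{\max}}\,\mathfrak L^{-1}\sum_{B\in\B}m_B .
\]
Here \(\B'\) is nonempty, because \(\sum_{B\in\B'}m'_B\ge\mathfrak L^{-1}\sum_Bm_B>0\). So the left side of (6.6b) is well defined.

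Second, I rewrite \(\sum_Bm_B=\lambda\sum_{B\in\B}D_B\). Using (6.6a), every \(D_B\ge D_{\min}\ge\kappa_D^{-1}D_{\max}\), which gives \(\sum_{B\in\B}D_B\ge|\B|\kappa_D^{-1}D_{\max}\). Substituting this yields
\[
 \frac1{|\B'|}\sum_{B\in\B'}\frac{m'_B}{D_B}
 \ge\frac{|\B|}{|\B'|}\,\kappa_D^{-1}\mathfrak L^{-1}\lambda
 \ge\kappa_D^{-1}\mathfrak L^{-1}\lambda,
\]
where the last step uses \(|\B'|\le|\B|\).

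There is no real obstacle. The one point that needs care is that the normalization on the left of (6.6b) is by \(|\B'|\), not \(|\B|\). This is harmless only because \(\B'\subset\B\), so the ratio \(|\B|/|\B'|\) is at least one. The hypothesis \(m'_B\le m_B\) is not needed for the inequality itself; it only guarantees that the \(m'_B\) are genuine refinements in the later application.
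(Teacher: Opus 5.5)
Your proposal is correct and is essentially the paper's proof. Both bound each $D_B$ by $D_{\max}$, apply the retained-mass hypothesis, and then use $\sum_B D_B\ge|\B|D_{\min}$ together with $|\B'|\le|\B|$ to reach the ratio $D_{\min}/D_{\max}\ge\kappa_D^{-1}$; the only difference is that the paper uses $|\B'|\le|\B|$ in its first step rather than its last.
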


\begin{proof}
Write \(D_- =\min_BD_B\) and \(D_+=\max_BD_B\).  Since
\(|\B'|\le|\B|\),
\[
 \frac1{|\B'|}\sum_{B\in\B'}\frac{m'_B}{D_B}
 \ge \frac{\mathfrak L^{-1}\sum_Bm_B}{|\B|D_+}
 =\mathfrak L^{-1}\lambda
   \frac{\sum_BD_B}{|\B|D_+}
 \ge\mathfrak L^{-1}\lambda\frac{D_-}{D_+}.
\]
This is (6.6b).  Notice that the symmetric normalization
\(\kappa_D^{-1}D\le D_B\le\kappa_DD\) would give only
\(D_+/D_-\le\kappa_D^2\); it is not used here.
\end{proof}

Apply \cref{thm:rect} to the original labelled child family, assigning empty
shadings to discarded labels.  In the ordinary alternative use
\cref{cor:ordinaryrect}; in the coarse/thickened-child alternative use
\cref{lem:coarse-cordoba}.  By \cref{lem:CELL1} and (4.11), in the biased
case
\[
 |Z(B)|\gtrsim
 C_0^{-q}s^\eps\lambda_B^q|B|.
\tag{6.7}
\]
Suppose the original child denominators
\[
 D_B=\sum_{T\in\T_B}|T|
\]
satisfy the ratio condition (6.6a).  Apply
\cref{lem:denominator-average} with the P1 masses; this gives
\[
 \frac1{|\B'|}\sum_{B\in\B'}\lambda_B
 \ge \kappa_D^{-1}\mathfrak L^{-1}
       \lambda(\T,Y).
\tag{6.8}
\]
Jensen now proves the global estimate.

\begin{corollary}[Dimension-sensitive parent density]
\label{cor:density}
For \(0<\sigma\le1\),
\[
 \lambda(\widetilde\B',Z)
 \gtrsim
 C_0^{-(2-\sigma)}s^\eps
 \kappa_D^{-(2-\sigma)}\mathfrak L^{-(2-\sigma)}
 \lambda(\T,Y)^{2-\sigma}.
\tag{6.9}
\]
In the ordinary Frostman endpoint, the corresponding bound is
\[
 \lambda(\widetilde\B',Z)
 \gtrsim
 C_F^{-1}[\log(2/s)]^{-1}
 \kappa_D^{-2}\mathfrak L^{-2}\lambda(\T,Y)^2.
\tag{6.10}
\]
In the coarse/thickened-child alternative, replace \(C_F\) in (6.10) by
\(C_{F,\rho}R_{\rm br}^3\).
\end{corollary}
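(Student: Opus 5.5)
The plan is to pass from the parentwise lower bound (6.7) to the global ratio \(\lambda(\widetilde\B',Z)=\sum_{B\in\B'}|Z(B)|/\sum_{B\in\B'}|\widetilde B|\), using Jensen on the convex map \(t\mapsto t^{q}\), \(q=2-\sigma\ge1\). First, for each retained \(B\in\B'\), apply \cref{thm:rect} to the original labelled children of \(B\) with the shading \(Y_1\); discarded labels carry empty shadings. Let \(\lambda_B\) denote the density of \(Y_1\) relative to the original denominator \(D_B\). Since \(Y_1\subset Y_0\subset Y\), one has \(U(\T_B,Y_1)\subset Z(B)\) and \(Z(B)=\Ccal_h(U(\T_B,Y_1))\) by (4.11). \cref{lem:CELL1} with \(h=c_0a\) then gives \(|Z(B)|\gtrsim|N_a(U(\T_B,Y_1))|\ge|B\cap N_a(U(\T_B,Y_1))|\), and combining this with (6.4) yields (6.7) with \(\lambda_B\) the \(Y_1\)-density. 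Here \(|B|\asymp|\widetilde B|\) by \cref{lem:CELL2}, and \(s=a/b\) is common to the family because the parents are comparable rectangles. If the parents are only comparable, the factor \(s^\eps\) is replaced by its minimum, which costs a constant.

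Next, sum over \(B\in\B'\) and divide by \(\sum_{B\in\B'}|\widetilde B|\asymp|\B'|\,|B|\), using that the parents have comparable volumes. This gives \(\lambda(\widetilde\B',Z)\gtrsim C_0^{-q}s^\eps|\B'|^{-1}\sum_{B\in\B'}\lambda_B^{q}\). Jensen then bounds this below by \(C_0^{-q}s^\eps\bigl(|\B'|^{-1}\sum_{B\in\B'}\lambda_B\bigr)^{q}\). Finally insert (6.8), which is \cref{lem:denominator-average} applied with \(m_B=\sum_{T\in\T_B}|Y(T)|\), \(m'_B=M\)-mass of \(Y_1\) on \(\T_B\), and \(\sum m'_B\ge\mathfrak L^{-1}\sum m_B\) by (P1). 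This produces exactly \(\kappa_D^{-q}\mathfrak L^{-q}\lambda(\T,Y)^{q}\), which is (6.9).

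The ordinary endpoint (6.10) follows by the same chain with \cref{cor:ordinaryrect} in place of \cref{thm:rect}, so that \(q=2\) and the factor \(C_F^{-1}[\log(2/s)]^{-1}\) appears. Note that the Frostman hypothesis (6.3a) is a complete-child condition on the original children, which is why the estimate is applied to the original labelled family with a smaller shading rather than to a subfamily. The coarse variant substitutes \cref{lem:coarse-cordoba}, whose constant is \(C_{F,\rho}R_{\rm br}^3\).

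The main obstacle I expect is bookkeeping rather than analysis. The per-parent \(\lambda_B\) must be normalized by the original denominators \(D_B\), so that both the Frostman hypotheses and (6.6a) apply. At the same time, the density \(\lambda(\widetilde\B',Z)\) must be normalized by volumes of the enlarged carriers \(\widetilde B\) of retained labels only. The step to check carefully is that \(|\widetilde B|\) is uniformly comparable across \(\B'\), so that the Jensen averaging over labels is legitimate.
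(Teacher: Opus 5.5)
Your proposal is correct and follows the paper's own argument. You apply \cref{thm:rect} (respectively \cref{cor:ordinaryrect} or \cref{lem:coarse-cordoba}) to the original labelled family with the shading \(Y_1\), giving empty shadings to discarded labels, and pass to \(|Z(B)|\) through \cref{lem:CELL1} and (4.11) to obtain (6.7); you then average with \cref{lem:denominator-average} using the P1 masses to get (6.8), and conclude by Jensen together with the comparability \(|\widetilde B|\asymp|B|\) of the parent volumes.
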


\begin{proof}
Parent volumes are comparable and
\(|\widetilde B|\asymp|B|\).  Apply Jensen to (6.7) using
(6.8).  For (6.10), apply the quadratic local input
(6.6), or (6.6d) in the coarse/thickened-child alternative; every mass loss
is then squared, while branching comparability contributes the explicit
\(R_{\rm br}^{-3}\) from that lemma.
\end{proof}

\section{Cellwise angular refinement}
\label{sec:angle}

Angular refinement must preserve the object type created in Sections~3--5.
The ordinary typical-angle descent is pointwise, but the lifting theorem acts
on whole parent--cell incidences.  We therefore freeze the incident parent
family on each half-open cell, run the descent once on that finite family,
and use deterministic tie-breaking.  The result is a genuine edge subset,
not a collection of unrelated pointwise choices.

Set
\[
 \alpha=a/r,\qquad q_{\rm res}=a/b.
\]
For parent labels \(B,B'\), define
\[
 \ang_{q_{\rm res}}(B,B')
 =\max\{q_{\rm res},\ang(TB,TB')\}
\tag{7.1}
\]
and for a nonempty labelled family \(\mathcal F\),
\[
 M_q(\mathcal F)=
 \max_{B,B'\in\mathcal F}\ang_{q_{\rm res}}(B,B').
\tag{7.2}
\]
Thus a singleton has resolved diameter \(q_{\rm res}\).

Put
\[
 L_0=\log\alpha^{-1},\qquad L_+=\max\{1,L_0\},
\]
and take
\[
 A_{\rm ang}=e^{L_+^{1/2}},
\qquad B_{\rm ang}=e^{L_+^{3/4}}.
\tag{7.3}
\]
Thus \(A_{\rm ang},B_{\rm ang}>1\) on the full range \(0<\alpha\le1\), including
the endpoint \(\alpha=1\) covered by the definition of \(L_+\).
At a retained right cell \(Q\), start from
\(\mathcal F_0(Q)=\{B:(B,Q)\in E_2\}\).  Recursively, while the indicated
collection is nonempty, define
\[
 \mathcal F_{j+1}(Q)=\operatorname{first}\left\{
 \mathcal H\subset\mathcal F_j(Q):
 |\mathcal H|\ge A_{\rm ang}^{-1}|\mathcal F_j(Q)|,
 \ M_q(\mathcal H)\le B_{\rm ang}^{-1}M_q(\mathcal F_j(Q))
 \right\}.
\tag{7.3a}
\]
Here ``first'' uses a fixed total order on the finite power set.  At
\(j=0\), the threshold in (7.3a) is the relative quantity
\(B_{\rm ang}^{-1}M_q(\mathcal F_0(Q))\).  Stop at the first \(J\) for which
the displayed set is empty and put \(\mathcal F(Q)=\mathcal F_J(Q)\) and
\(\theta(Q)=M_q(\mathcal F_J(Q))\).

\begin{lemma}[Cellwise descent]
\label{lem:descent}
The procedure stops after at most
\[
 K=1+\lceil\log_{B_{\rm ang}}(q_{\rm res}^{-1})\rceil
 \le2+L_+^{1/4}
\tag{7.4}
\]
successful selections, and
\[
 |\mathcal F(Q)|\ge A_{\rm ang}^{-K}|\mathcal F_0(Q)|.
\tag{7.5}
\]
If \(\mathcal H\subset\mathcal F(Q)\) and
\(|\mathcal H|\ge A_{\rm ang}^{-1}|\mathcal F(Q)|\), then
\[
 \frac{\theta(Q)}{B_{\rm ang}}
 <M_q(\mathcal H)\le\theta(Q).
\tag{7.6}
\]
\end{lemma}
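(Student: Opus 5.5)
The plan is to track the resolved diameter along the recursion (7.3a) and use the fact that it cannot drop below the floor \(q_{\rm res}\). By (7.1) every pair satisfies \(\ang_{q_{\rm res}}(B,B')\ge q_{\rm res}\). Since \(\ang\le1\), it also satisfies \(\ang_{q_{\rm res}}\le\max\{q_{\rm res},1\}=1\); here we use \(q_{\rm res}=a/b\le1\). Hence every nonempty \(\mathcal F\) has \(q_{\rm res}\le M_q(\mathcal F)\le1\). The sets \(\mathcal F_j(Q)\) stay nonempty along the recursion: a successful selection has \(|\mathcal H|\ge A_{\rm ang}^{-1}|\mathcal F_j(Q)|>0\), and \(\mathcal F_0(Q)\ne\varnothing\) because \(Q\) is a retained right cell.

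Each successful step multiplies \(M_q\) by at most \(B_{\rm ang}^{-1}\). After \(J\) successes we therefore have
\[
 q_{\rm res}\le M_q(\mathcal F_J(Q))\le B_{\rm ang}^{-J}.
\]
This gives \(J\le\log_{B_{\rm ang}}(q_{\rm res}^{-1})\), so the procedure stops after at most \(K\) successes. For the bound \(K\le2+L_+^{1/4}\), we compute \(\log_{B_{\rm ang}}(q_{\rm res}^{-1})=\log(b/a)/L_+^{3/4}\). Since \(b\le r\), we have \(b/a\le r/a=\alpha^{-1}\), so this is at most \(L_0/L_+^{3/4}\le L_+^{1/4}\). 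The ceiling then adds at most one, which gives (7.4).

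For (7.5), each success retains at least an \(A_{\rm ang}^{-1}\) fraction of the labels. After \(J\le K\) successes we get \(|\mathcal F(Q)|\ge A_{\rm ang}^{-J}|\mathcal F_0(Q)|\ge A_{\rm ang}^{-K}|\mathcal F_0(Q)|\), using \(A_{\rm ang}>1\).

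For (7.6), the upper bound is monotonicity: a subfamily's pairs are among the family's pairs, so \(M_q(\mathcal H)\le M_q(\mathcal F(Q))=\theta(Q)\). For the lower bound we use the stopping rule. Suppose \(\mathcal H\subset\mathcal F(Q)=\mathcal F_J(Q)\) with \(|\mathcal H|\ge A_{\rm ang}^{-1}|\mathcal F(Q)|\) had \(M_q(\mathcal H)\le B_{\rm ang}^{-1}\theta(Q)\). Then \(\mathcal H\) would belong to the collection in (7.3a) at step \(J\), contradicting that this collection is empty. Note that \(\mathcal H\) is automatically nonempty, since \(|\mathcal F(Q)|\ge1\) forces \(|\mathcal H|>0\).

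The only delicate point is checking the bound \(K\le2+L_+^{1/4}\) at the endpoint \(\alpha\) near \(1\), where \(L_0<1\). There \(L_+=1\), and \(\log(b/a)\le L_0\le1\) still gives the estimate. The case \(q_{\rm res}=1\) gives \(K=1\) and is trivial.
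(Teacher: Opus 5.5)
Your proof is correct and follows the paper's argument. The floor \(q_{\rm res}\), set against division of the diameter by \(B_{\rm ang}\) at each success, bounds the number of successes; the \(A_{\rm ang}^{-1}\) retention per step gives (7.5); and (7.6) is monotonicity of \(M_q\) under passing to a subfamily plus the negation of the stopping condition. You also make explicit, and handle correctly, several points the paper leaves implicit: that the initial resolved diameter is at most \(1\), the arithmetic \(\log(b/a)\le L_0\le L_+\) behind \(K\le 2+L_+^{1/4}\), and that \(\mathcal H\) is nonempty.
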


\begin{proof}
Every resolved diameter is at least \(q_{\rm res}\), whereas every
successful step divides the current diameter by \(B_{\rm ang}\).  This proves
(7.4).  Each selected family has at least an \(A_{\rm ang}^{-1}\) fraction of its
predecessor, proving (7.5).  At the stopping stage, (7.6) is the
negation of the next successful selection; its upper bound follows because
diameter cannot increase on passing to a subfamily.
\end{proof}

The local descent must still be synchronized globally.  Deterministic
tie-breaking first makes \(\mathcal F(Q)\) a well-defined edge subset.  We
then pigeonhole \(\theta(Q)\) into at most
\[
 J_\theta=1+\lceil\log_2(q_{\rm res}^{-1})\rceil
\]
classes by labelled edge count.  The surviving cell degrees lie between
\(A_{\rm ang}^{-K}d_0\) and \(2d_0\); a second dyadic pigeonhole uses at most
\[
 J_d=1+\lceil\log_2(2A_{\rm ang}^K)\rceil
\]
classes.  Retain every edge at a selected cell.

\begin{theorem}[Resolved ANGLE-CELL]
\label{thm:angle}
There exist an edge subset \(E_{\rm ang}\subset E_2\), a cellular parent
shading \(Z_{\rm ang}\), a dyadic degree \(d_{\rm ang}\), and
\(\theta\in[q_{\rm res},1]\) such that:
\begin{enumerate}[label=\textup{(A\arabic*)},leftmargin=2.8em]
\item every retained cell has parent degree in
      \([d_{\rm ang},2d_{\rm ang})\);
\item its resolved angular diameter lies in \([\theta,2\theta)\);
\item for every retained cell \(Q\), every
      \(\mathcal H\subset\{B:(B,Q)\in E_{\rm ang}\}\) with
      \[
       |\mathcal H|\ge A_{\rm ang}^{-1}\#\{B:(B,Q)\in E_{\rm ang}\}
      \]
      has resolved diameter larger than \(\theta/B_{\rm ang}\);
\item
\[
 \sum_B|Z_{\rm ang}(B)|
 \ge c_{\rm ang}\sum_B|Z(B)|,
 \qquad
 c_{\rm ang}\ge\frac{A_{\rm ang}^{-K}}{J_\theta J_d}
 .
\tag{7.7}
\]
More explicitly, with an absolute constant \(C\),
\[
 c_{\rm ang}^{-1}
 \le C(1+L_+)^2
       \exp\!\bigl(L_+^{3/4}+2L_+^{1/2}\bigr).
\tag{7.7a}
\]
Consequently \(c_{\rm ang}^{-1}=O(1)\) on \(L_0\le1\), while
\(c_{\rm ang}=\alpha^{o(1)}\) as \(\alpha\downarrow0\).
\end{enumerate}
The corresponding child lift retains at least \(c_{\rm ang}/2\) of the
pre-angle child mass.
\end{theorem}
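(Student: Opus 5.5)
The proof runs the cellwise descent, synchronizes the cells through two pigeonholes, and then lifts the result with \cref{thm:lift}.

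\emph{Step 1 (cellwise descent).} For each cell \(Q\) incident to \(E_2\), run (7.3a) to obtain \(\mathcal F(Q)\). Set
\[
 E_{\rm loc}=\{(B,Q):B\in\mathcal F(Q)\}\subset E_2 .
\]
This is a genuine edge subset, because the ``first'' rule fixes each \(\mathcal F(Q)\) deterministically.

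By (7.5) and the degree bound \(d_1(Q)\ge d_0\), each cell keeps at least \(A_{\rm ang}^{-K}\) of its \(E_2\)-edges. Since every \(E_2\)-degree lies in \([d_0,2d_0)\), the surviving degrees lie in \([A_{\rm ang}^{-K}d_0,2d_0)\). Summing over cells, the total \(E_{\rm loc}\)-count is at least \(A_{\rm ang}^{-K}|E_2|\).

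\emph{Step 2 (two pigeonholes).} First pigeonhole \(\theta(Q)\in[q_{\rm res},1]\) into the \(J_\theta\) dyadic classes \([\theta,2\theta)\), choosing the class carrying the most \(E_{\rm loc}\)-edges. Then, among the cells of that class, pigeonhole the degree \(|\mathcal F(Q)|\) into the \(J_d\) dyadic classes covering \([A_{\rm ang}^{-K}d_0,2d_0)\), again by edge count. Keep every \(E_{\rm loc}\)-edge at the selected cells, and call the result \(E_{\rm ang}\).

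The selection is saturated: at a retained cell the retained family is exactly \(\mathcal F(Q)\). The conclusions then follow directly.
\begin{itemize}
\item[(A1)] This is immediate from the degree pigeonhole.
\item[(A2)] Here \(\theta(Q)=M_q(\mathcal F(Q))\) is exactly the resolved diameter of the retained family at \(Q\), and it lies in \([\theta,2\theta)\).
\item[(A3)] This is the lower half of (7.6) applied at \(Q\). It gives \(M_q(\mathcal H)>\theta(Q)/B_{\rm ang}\ge\theta/B_{\rm ang}\).
\end{itemize}
Saturation is where (A3) would otherwise break: if a retained cell lost part of \(\mathcal F(Q)\), the descent guarantee would no longer apply to the retained family.

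\emph{Step 3 (counting for (A4)).} Each edge contributes exactly \(h^3\) to the parent mass, so
\[
 \sum_B|Z_{\rm ang}(B)|=h^3|E_{\rm ang}|\ge h^3\frac{A_{\rm ang}^{-K}|E_2|}{J_\theta J_d}=\frac{A_{\rm ang}^{-K}}{J_\theta J_d}\sum_B|Z(B)|,
\]
which is (7.7).

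For (7.7a), three factors need bounding.
\begin{itemize}
\item By (7.4), \(K\le 2+L_+^{1/4}\). Hence \(A_{\rm ang}^{K}=\exp(KL_+^{1/2})\le\exp(L_+^{3/4}+2L_+^{1/2})\).
\item Since \(\alpha=a/r\le a/b=q_{\rm res}\), we have \(\log_2 q_{\rm res}^{-1}\lesssim L_0\), so \(J_\theta\lesssim 1+L_+\).
\item Similarly \(J_d\lesssim 1+KL_+^{1/2}\lesssim 1+L_+\).
\end{itemize}
The product of these bounds gives (7.7a). On \(L_0\le1\) we have \(L_+=1\), so \(c_{\rm ang}^{-1}=O(1)\). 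As \(\alpha\downarrow0\), the exponent \(L_+^{3/4}+2L_+^{1/2}\) is \(o(L_0)\), so \(c_{\rm ang}=\alpha^{o(1)}\).

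\emph{Step 4 (child lift).} Apply \cref{thm:lift} with \(E^\star=E_{\rm ang}\) and \(c=c_{\rm ang}\). This retains at least \(c_{\rm ang}/2\) of \(M(Y_1)\).

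The main obstacle is the range check in Step 3. One has to confirm that \(J_\theta\), defined through \(q_{\rm res}\), is controlled by \(L_+\), which is defined through \(\alpha\); this holds because \(q_{\rm res}\ge\alpha\). One also has to confirm that \(K\) is small enough for \(A_{\rm ang}^K\) to have the stated exponent.
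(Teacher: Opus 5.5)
Your proposal is correct and follows the paper's proof essentially step for step. You run the cellwise descent of \cref{lem:descent}, then two saturated dyadic pigeonholes (resolved diameter, then degree) that leave $\mathcal F(Q)$ intact, so (7.6) gives (A3); you identify parent mass with edge count to get (7.7), bound $A_{\rm ang}^K\le\exp(L_+^{3/4}+2L_+^{1/2})$ and $J_\theta,J_d\lesssim 1+L_+$ to get (7.7a), and finish with a single application of \cref{thm:lift}. Your write-up adds only routine checks that the paper leaves implicit, such as $q_{\rm res}\ge\alpha$.
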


\begin{proof}
\cref{lem:descent} gives the cellwise angular conclusions.  Both global
pigeonholes retain all edges at each selected cell, so the output degree is
constant and the cellwise angular family is unchanged.  Equal cell volumes
identify labelled parent mass with edge count, proving (7.7).  Finally
apply \cref{thm:lift} once to the resulting edge set.
The estimate
\[
 A_{\rm ang}^K\le\exp(L_+^{3/4}+2L_+^{1/2})
\]
and the elementary bounds
\(J_\theta,J_d\le C(1+L_+)\) prove (7.7a).  If \(L_0\to\infty\),
then \(L_+=L_0\), and hence
\(\log c_{\rm ang}^{-1}=o(L_0)\).  If \(L_0\le1\), every factor in
(7.7a) is bounded absolutely.
\end{proof}

\begin{remark}
Here and below ``raw'' means the capped geometric angle \(\ang\), before the
resolution floor \(q_{\rm res}\) is imposed.  The exact threshold at which
(7.6) yields genuine raw separation, including for repeated parallel labels,
is
\[
 \boxed{\theta>B_{\rm ang}q_{\rm res}}.
\tag{7.8}
\]
Indeed, then \(\theta/B_{\rm ang}>q_{\rm res}\), so
\(M_q(\mathcal H)>\theta/B_{\rm ang}\) lies above the resolution floor, so
the raw angular diameter is larger than \(\theta/B_{\rm ang}\).  At and below
this threshold, including \(\theta\asymp q_{\rm res}\), the conclusion is
resolution-limited.
\end{remark}

\section{A stable cellular maximal-density factoring theorem}
\label{sec:corrected-factoring}

We now assemble the reusable Euclidean theorem.  Its combinatorial engine is
the atomic factoring-and-lifting principle of
\cref{thm:abstract-cellular}; rectangular geometry supplies the carrier, and
Section~6 supplies three alternative density inputs.  The theorem records not
only the output estimates but also the category in which they are stable:
polynomially many labelled objects, positive parent--cell edges, and a fixed
number of whole-incidence refinements.  The angular theorem is one admissible
protocol on this output, not an extra shading created by the factoring
statement.

\begin{lemma}[Labelled biased maximal-density factoring]
\label{lem:biased-factoring}
Let \(U\subset\R^n\) be a bounded convex body with nonempty interior, let
\(\mathcal V\) be a finite nonempty labelled family of congruent, full-dimensional
compact convex bodies of positive \(n\)-dimensional volume contained in
\(U\), and fix \(\gamma>0\).  Every convex hull used as a candidate parent
is required to be full-dimensional (equivalently, only nonempty subfamilies
with positive-volume hulls enter the maximization), so every candidate lies
in the test-body class \(\mathfrak K_n\) fixed before (1.2).  Repeated geometric
carriers are allowed and are counted with their labels.  Fix finite sets of
integers \(\mathcal K_\Phi,\mathcal K_1,\ldots,\mathcal K_n\).  Assume that,
for every nonempty labelled subfamily \(\mathcal A\subset\mathcal V\), a
maximizer \(K_{\mathcal A}\) of the score below can be chosen so that
\[
 \Phi_{\mathcal A}(K)
 :=|K|^{-\gamma}\Delta(\mathcal A,K)
 =\frac{\sum_{V\in\mathcal A:\,V\subset K}|V|}
 {|K|^{1+\gamma}}
\tag{8.0A}
\]
satisfies \(\Phi_{\mathcal A}(K_{\mathcal A})\in[2^k,2^{k+1})\) for some
\(k\in\mathcal K_\Phi\), and its
ordered L\"owner half-axes \(\ell_j(K_{\mathcal A})\) belong to
\([2^{k_j},2^{k_j+1})\) with \(k_j\in\mathcal K_j\).  Define
\[
 L_{\rm bias}:=|\mathcal K_\Phi|\prod_{j=1}^n|\mathcal K_j|.
\tag{8.0A$'$}
\]
Then there are a labelled subfamily
\(\mathcal V'\subset\mathcal V\), a family \(\mathcal W\) of convex
parents with comparable L\"owner dimensions, and a partition
\[
 \mathcal V'=\bigsqcup_{W\in\mathcal W}\mathcal V_W,
\qquad V\subset W\quad(V\in\mathcal V_W),
\tag{8.0B}
\]
such that
\[
 \sum_{V\in\mathcal V'}|V|
 \ge L_{\rm bias}^{-1}\sum_{V\in\mathcal V}|V|.
\tag{8.0C}
\]
Writing \(w\) for the common parent volume up to a dimensional factor, the
following hold.
\begin{enumerate}[label=\textup{(B\arabic*)},leftmargin=2.8em]
\item For \(K\subset W\) convex,
\[
 \Delta(\mathcal V_W,K)
 \le C\left(\frac{|K|}{|W|}\right)^\gamma
       \Delta(\mathcal V_W,W).
\tag{8.0D}
\]
Equivalently, because the children are congruent,
\[
 \frac{\#\mathcal V_W[K]}{\#\mathcal V_W}
 \le C\left(\frac{|K|}{|W|}\right)^{1+\gamma}.
\tag{8.0E}
\]
\item The density achieved in every selected parent satisfies
\[
 \Delta(\mathcal V_W,W)
 \ge c\left(\frac{|W|}{|U|}\right)^\gamma
       \Delta_{\max}(\mathcal V').
\tag{8.0F}
\]
\item The labelled parent family obeys
\[
 \Delta_{\max}(\mathcal W)
 \le C\left(\frac{w}{|U|}\right)^{-\gamma}.
\tag{8.0G}
\]
\end{enumerate}
The constants \(C,c\) depend only on \(n,\gamma\) and the factor-two class
widths.  More explicitly, if
\(|\mathcal V|\le\delta^{-C_{\rm comp}}\) and every positive score and every
chosen half-axis above lies in
\([\delta^{C_{\rm win}},\delta^{-C_{\rm win}}]\), then each displayed index
set may be taken to have at most
\(C(C_{\rm comp},C_{\rm win},n,\gamma)\log(2/\delta)\) elements.  Hence
\(L_{\rm bias}\le C\log^{n+1}(2/\delta)=\delta^{-o(1)}\), with all
dependencies explicit.
\end{lemma}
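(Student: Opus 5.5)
We build the parents by the greedy maximal-density exhaustion of Wang--Zahl/GWZ, run with labels and with the dyadic pigeonholing placed at the end. Set \(\mathcal A_0=\mathcal V\). While \(\mathcal A_i\neq\varnothing\), let \(W_i:=K_{\mathcal A_i}\) be the chosen maximizer of \(\Phi_{\mathcal A_i}\) from (8.0A). Let \(\mathcal V_{W_i}:=\{V\in\mathcal A_i:V\subset W_i\}\), and put \(\mathcal A_{i+1}=\mathcal A_i\setminus\mathcal V_{W_i}\). The family \(\mathcal V_{W_i}\) is nonempty, since otherwise \(\Phi=0\) would be beaten by the hull of a single child, which lies in \(\mathfrak K_n\). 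Hence the process terminates and partitions \(\mathcal V\) by labels.

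Each step \(i\) carries the tag \((k,k_1,\dots,k_n)\in\mathcal K_\Phi\times\prod_j\mathcal K_j\), of which there are \(L_{\rm bias}\) possibilities. Pigeonhole the tag class carrying the largest total child volume. This class gives \(\mathcal W\) and \(\mathcal V'\), and (8.0C) follows. Comparable L\"owner half-axes give comparable L\"owner dimensions and volumes, so \(|W|\asymp w\) with dimensional constants.

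\textbf{(B1).} For convex \(K\subset W_i\), maximality at stage \(i\) gives \(\Phi_{\mathcal A_i}(K)\le\Phi_{\mathcal A_i}(W_i)\). When \(\Delta>0\) the body \(K\) is admissible, and \(\mathcal V_{W_i}[K]\subset\mathcal A_i[K]\). Hence
\[
 \Delta(\mathcal V_{W_i},K)\le(|K|/|W_i|)^\gamma\,\Delta(\mathcal A_i,W_i),
\]
and \(\Delta(\mathcal A_i,W_i)=\Delta(\mathcal V_{W_i},W_i)\) by the definition of \(\mathcal V_{W_i}\). This gives (8.0D) with \(C=1\). Because the children are congruent, \(\Delta\) is a child count times \(|V|/|K|\), which converts (8.0D) into (8.0E).

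\textbf{(B2).} Let \(K\) be any admissible body. At the first stage \(i\) with \(\mathcal V'[K]\cap\mathcal A_i\neq\varnothing\), all of \(\mathcal V'[K]\) is still present, so \(\Phi_{\mathcal A_i}(K)\ge|K|^{-\gamma}\Delta(\mathcal V',K)\). Maximality then bounds this by \(\Phi(W_i)<2^{k+1}\). Every selected \(W\) in the pigeonholed class has \(\Phi\ge2^k\), so
\[
 \Delta(\mathcal V',K)\le 2|K|^{\gamma}|W|^{-\gamma}\Delta(\mathcal V_W,W).
\]
Here we use \(\Delta(\mathcal V_W,W)\) rather than \(\Delta(\mathcal A,W)\), which is legitimate because these coincide. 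Finally \(|K|\le|U|\) is needed to get (8.0F); this holds whenever \(\mathcal V'[K]\neq\varnothing\), since we may replace \(K\) by \(K\cap\conv(\mathcal V'[K])\subset U\), which only raises the density.

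\textbf{(B3).} This step is the main obstacle. Fix a convex \(K\) containing \(\mathcal W[K]\). The subfamily \(\bigcup_{W\subset K}\mathcal V_W\) lies in \(\mathcal V'[K]\), and its members are disjoint by label. Every such \(W\) has the same \(\Phi\)-class, so
\[
 \sum_{W\subset K}|W|\cdot\Delta(\mathcal V_W,W)\lesssim\Delta(\mathcal V',K)\,|K|.
\]
Combining this with (B2) applied to \(K\), and dividing by \(\Delta(\mathcal V_W,W)\approx 2^k w^{\gamma}\), yields
\[
 \Delta(\mathcal W,K)\lesssim (|K|/w)^\gamma\le(|U|/w)^\gamma.
\]
The delicate points are the uniformity of \(\Phi\) across the class and the restriction \(K\subset U\) obtained via convex hulls. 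Both must be checked with labels, without merging coincident parents. This is exactly where the common tag class is essential.

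The explicit bound on \(L_{\rm bias}\) is immediate. Scores and half-axes lie in a polynomial window, so each index set has \(O(\log(2/\delta))\) dyadic values, which gives \(L_{\rm bias}\lesssim\log^{n+1}(2/\delta)\).
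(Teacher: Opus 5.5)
Your proof is correct and follows the paper's argument: greedy exhaustion by maximizers, pigeonholing the combined score/half-axis class by retained child volume, maximality at the assigning stage for (B1), maximality at a stage where the relevant children of \(\mathcal V'\) are all still present for (B2), and summation over the parents contained in \(K\) for (B3). Two slips need fixing. In (B2), the stage must be the first one at which a member of \(\mathcal V'[K]\) is \emph{assigned}; as literally written, \(\mathcal V'[K]\cap\mathcal A_i\ne\varnothing\) already holds at \(i=0\), and \(W_0\) need not lie in the selected class (the paper simply uses the first selected index \(j_0\), at which all of \(\mathcal V'\) is present). In (B3), the step \((|K|/w)^\gamma\le(|U|/w)^\gamma\) needs \(|K|\le|U|\); this is avoided by using your already reduced bound \(\Delta(\mathcal V',K)\le 2^{k+1}|U|^\gamma\), exactly as the paper does with (8.0J).
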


\begin{proof}
For completeness, we give the greedy construction.  For a nonempty labelled subfamily
\(\mathcal A\subset\mathcal V\), maximize \(\Phi_{\mathcal A}(K)\) over
\(K\in\mathfrak K_n\).  A maximizer exists: replacing \(K\) by the convex hull of the
members of \(\mathcal A[K]\) does not change the numerator and can only
decrease the denominator, and only finitely many labelled subfamilies occur.
Fix an order on those finitely many hulls and on all dyadic classes, and use
it to break every maximizing or class-selection tie.  Thus the construction
is a deterministic function of labelled carrier and containment data.

Set \(\mathcal V^{(0)}=\mathcal V\).  Having defined
\(\mathcal V^{(j)}\ne\varnothing\), choose a maximizer \(W_j\), put
\[
 \mathcal V_{W_j}:=\mathcal V^{(j)}[W_j],\qquad
 \phi_j:=\Phi_{\mathcal V^{(j)}}(W_j),
\tag{8.0H}
\]
and remove all labels in \(\mathcal V_{W_j}\).  The process terminates and
partitions \(\mathcal V\).  Pigeonhole the values \(\phi_j\) and the
L\"owner half-axes of \(W_j\), choosing the combined class that maximizes
retained labelled child volume.  This proves (8.0B)--(8.0C), makes
\(\phi_j\) comparable to one number \(\phi\), and makes \(|W_j|\) comparable
to \(w\).  It also proves the asserted polylogarithmic bound for
\(L_{\rm bias}\) in the stated finite windows.

Fix a selected \(W_j\) and a convex \(K\subset W_j\).  Since the children
assigned to \(W_j\) are exactly the members of
\(\mathcal V^{(j)}\) contained in \(W_j\),
\[
\begin{aligned}
 \Delta(\mathcal V_{W_j},K)
 &\le\Delta(\mathcal V^{(j)},K)\\
 &\le\phi_j|K|^\gamma
 =\left(\frac{|K|}{|W_j|}\right)^\gamma
   \Delta(\mathcal V_{W_j},W_j).
\end{aligned}
\tag{8.0I}
\]
This gives (8.0D), up to the fixed class-comparability constants.
Multiplying by \(|K|/|W_j|\) and using equal child volume gives (8.0E).

Let \(j_0\) be the first selected greedy index.  All labels of
\(\mathcal V'\) are still present in \(\mathcal V^{(j_0)}\).  Any test body
in \(\mathfrak K_n\) may be intersected with \(U\), without losing a child
of \(\mathcal V'\); a positive-volume intersection remains in
\(\mathfrak K_n\), while a null intersection has zero numerator.  Hence
\[
 \Delta_{\max}(\mathcal V')
 \le C\phi |U|^\gamma.
\tag{8.0J}
\]
On the other hand
\[
 \Delta(\mathcal V_{W_j},W_j)
 =\phi_j|W_j|^\gamma
 \ge c\phi|W_j|^\gamma.
\]
Combining these inequalities proves (8.0F).  This is the quantitative
near-tie rule: among almost equal density scores, a smaller parent receives
the larger value of the volume-biased score.

Finally let \(K\in\mathfrak K_n\).  If \(W\subset K\), then every child assigned
to \(W\) is contained in \(K\).  By (8.0F), or directly by the common
\(\phi\)-class,
\[
\begin{aligned}
 \Delta_{\max}(\mathcal V')|K|
 &\ge \sum_{W\in\mathcal W:\,W\subset K}
       \sum_{V\in\mathcal V_W}|V|\\
 &\ge c\phi w^\gamma
       \sum_{W\in\mathcal W:\,W\subset K}|W|.
\end{aligned}
\]
Use (8.0J), divide by \(|K|\), and take the supremum over \(K\).  This gives
(8.0G).  Every selection and sum was made on labels, so the proof is
unchanged for repeated carriers.
\end{proof}

\begin{theorem}[Quantitative cellular maximal-density factoring]
\label{thm:mainfact}
Fix \(C_{\rm comp}\ge1\), \(m_0\in\mathbb N\), and \(\eps>0\).  Let
\(0<\delta\le a\le b\le r\le1\), put \(h=c_0a\), and let
\(\T=\bigsqcup_{B\in\B}\T_B\) be a finite labelled family with \(M(Y)>0\).
Assume:
\begin{enumerate}[label=\textup{(F\arabic*)},leftmargin=2.8em]
\item every \(B\) is a labelled rectangular box, with the ordered side frame
      fixed in (3.4a), of dimensions comparable to
      \(a\times b\times r\), and every \(T\in\T_B\) is a labelled
      \(\delta\times\delta\times r\) child contained in \(B\), with
      measurable shading \(Y(T)\subset T\);
\item for some \(D>0\) and \(\kappa_D\ge1\),
\[
 D\le D_B:=\sum_{T\in\T_B}|T|\le\kappa_DD
\]
for every parent;
\item the finite active-cell set
\[
 \Q_h(Y)=\left\{Q\in\Q_h:
   \sum_{B\in\B}\sum_{T\in\T_B}|Y(T)\cap Q|>0\right\}
\]
satisfies
\[
 |\T|+|\B|+|\Q_h(Y)|\le\delta^{-C_{\rm comp}};
\]
\item one of the following three global alternatives is designated for the
whole family.  In the ordinary fine-child alternative every \(\T_B\)
satisfies, with one common constant \(C_F\),
\[
 \frac{\#\{T\in\T_B:T\subset K\}}{|\T_B|}
 \le C_F\frac{|K|}{|B|}
 \qquad\text{for every convex }K\subset B,
\tag{8.0}
\]
In the biased alternative every \(\T_B\) satisfies, for one fixed
\(0<\sigma\le1\), the complete-child condition (6.3) with one common
constant \(C_0\).  In the coarse/thickened-child alternative there is one
scale \(\delta\le\rho\le a\) and, for every \(B\), a nonempty labelled
family \(\Scal_B\) of \(\rho\times\rho\times r\) capped tubes contained in
\(B^\sharp\), satisfying (6.6c) with the common constant \(C_F\).  The fine
and coarse carriers use the same declared length window
\([c_{\rm len}r,C_{\rm len}r]\) and the same endcap convention.  Each fine
child \(T\) is assigned to one coarse label \(S(T)\) with \(T\subset S(T)\),
and the assigned branching numbers satisfy (6.6c$'$) with one common factor
\(R_{\rm br}\).
\end{enumerate}
After the common inner-level selection, let \(E^+\) be the positive graph
(3.7) constructed from \(Y_0\), put
\(N_{\max}=\max_B|\T_B|\), \(e_+=|E^+|\), and define
\[
 \mathfrak L_0
 =4(1+\lceil\log_2N_{\max}\rceil)
   (1+\lceil\log_2(2e_+)\rceil)
   (1+\lceil\log_2e_+\rceil).
\tag{8.1}
\]
Then there exist a retained positive edge set \(E_2\subset E^+\), its labelled
rectangular parent family \(\widetilde\B'\), a cellular parent shading
\(Z\), and a child refinement \((\T',Y_1)\) satisfying P1--P5 of
\cref{thm:joint}, with constants depending only on \(\eps,\sigma\) where applicable, the
fixed comparison constants, and \(C_{\rm comp}\), and with
\[
 M(Y_1)\ge\mathfrak L_0^{-1}M(Y)
\tag{8.2}
\]
Here the output label sets are literal:
\[
 \B'=\{B:\exists Q\ (B,Q)\in E_2\},\qquad
 \T'=\bigsqcup_{B\in\B'}\T_B,
 \qquad
 \T'_{\rm act}=\{T\in\T':Y_1(T)\ne\varnothing\}
\tag{8.2$'$}
\]
The shading \(Y_1\) is a total function on the structural family \(\T'\),
and labels in \(\T'\setminus\T'_{\rm act}\) have empty shading.  Complete-child
Frostman hypotheses are always read on the structural family \(\T_B\), while
mass, union, and pointwise multiplicity are unchanged if one restricts to
\(\T'_{\rm act}\).  Thus an empty shading is never confused with deletion of
a complete carrier from a denominator.
Explicitly, \(Y_1(T)\subset Z(B)\) for \(T\in\T_B\), and there are
\(\mu_{\rm in},d_0\ge1\) such that on the corresponding unions
\[
\begin{aligned}
 \mu_{\rm in}
 &\le\sum_{T\in\T_B}\mathbf1_{Y_1(T)}<2\mu_{\rm in},\\
 d_0
 &\le\sum_B\mathbf1_{Z(B)}<2d_0,\qquad
 \sum_{T\in\T}\mathbf1_{Y_1(T)}<4\mu_{\rm in}d_0.
\end{aligned}
\tag{8.2a}
\]
In the biased case the initial parent output satisfies
\[
 \lambda(\widetilde\B',Z)
 \gtrsim C_0^{-(2-\sigma)}(a/b)^\eps
 \kappa_D^{-(2-\sigma)}\mathfrak L_0^{-(2-\sigma)}
 \lambda(\T,Y)^{2-\sigma};
\tag{8.3}
\]
in the ordinary endpoint it satisfies
\[
 \lambda(\widetilde\B',Z)
 \gtrsim C_F^{-1}[\log(2b/a)]^{-1}
 \kappa_D^{-2}\mathfrak L_0^{-2}\lambda(\T,Y)^2.
\tag{8.4}
\]
and in the coarse/thickened-child endpoint it satisfies
\[
 \lambda(\widetilde\B',Z)
 \gtrsim C_F^{-1}R_{\rm br}^{-3}[\log(2b/a)]^{-1}
 \kappa_D^{-2}\mathfrak L_0^{-2}\lambda(\T,Y)^2.
\tag{8.4a}
\]

A \emph{whole-incidence refinement protocol} is a nested sequence
\[
 E^{(m)}\subset\cdots\subset E^{(1)}\subset E^{(0)}=E_2,
 \qquad m\le m_0,
\]
where
\[
 \begin{aligned}
 Z^{(j)}(B)&=\bigcup_{Q:(B,Q)\in E^{(j)}}Q,\\
 Y^{(j)}(T)&=Y_1(T)\cap
       \bigcup_{Q:(B,Q)\in E^{(j)}}Q\quad(T\in\T_B),
 \end{aligned}
\tag{8.5}
\]
and, for \(1\le j\le m\),
\[
 \sum_B|Z^{(j)}(B)|\ge c_j\sum_B|Z^{(j-1)}(B)|,
 \qquad 0<c_j\le1.
\tag{8.6}
\]
Such a protocol retains every child incidence of \(B\) in a selected
\((B,Q)\)-edge.  It obeys
\[
 \begin{gathered}
 M(Y^{(m)})\ge
 \mathfrak L_0^{-1}2^{-m}\prod_{j=1}^mc_j\,M(Y),\\
 \mathfrak L(\T,\B,h;c_1,\ldots,c_m)
 :=\mathfrak L_0\prod_{j=1}^m\frac2{c_j}.
 \end{gathered}
\tag{8.7}
\]
Equivalently,
\(M(Y^{(m)})\ge
\mathfrak L(\T,\B,h;c_1,\ldots,c_m)^{-1}M(Y)\).
If one additionally knows \(c_j\ge c_{\min}>0\), then
\(\mathfrak L\le\mathfrak L_0(2/c_{\min})^{m_0}\); no such fixed lower
bound is built into the theorem.
At every stage, exact cellular compatibility, the common inner multiplicity,
and the original pointwise multiplicity upper bound survive.  Any dyadic
right-degree re-regularization is counted as one of the \(m\) stages and must
retain all edges at every selected cell.
\end{theorem}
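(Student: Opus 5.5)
The plan is to assemble \cref{thm:mainfact} from results already proved, in three layers: the atomic combinatorics, the parentwise density inputs, and the iteration bookkeeping.

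\textbf{Combinatorial output.} Take as the measure space Lebesgue measure on \(\R^3\). Let \(X_0\) be the union of the active cells \(\Q_h(Y)\) of (F3), and let \(\mathcal Q\) be that finite cell partition. This is legitimate because every shading lies, up to a null set, in the active cells, and the cells have equal measure \(h^3\), so \(\kappa_Q=1\). Apply \cref{thm:abstract-cellular} to the parent family \(\B\) with child families \(\T_B\). This produces \(k_0\), \(\mu_{\rm in}\), the positive edge set \(E_2\), and the outputs (5.7). Then:
\begin{itemize}
\item (5.8), with \(L_{\rm at}\le\mathfrak L_0\), is (8.2);
\item (A1)--(A4) are exactly (8.2a) and P1--P5, with P2 and the compatibility given by (5.9) and (4.11);
\item the containment \(Z(B)\subset\widetilde B\) follows from \cref{lem:CELL2}.
\end{itemize}
The literal label sets (8.2\('\)) come from (4.9a), together with the convention that discarded labels carry empty shadings. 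The Frostman hypotheses are therefore still read on the unrefined \(\T_B\).

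\textbf{Density estimates.} Fix \(B\in\B'\) and apply the appropriate parentwise estimate to the original labelled family \(\T_B\), but with shading \(Y_1\):
\begin{itemize}
\item \cref{thm:rect} in the biased alternative;
\item \cref{cor:ordinaryrect} under (8.0);
\item \cref{lem:coarse-cordoba} in the coarse alternative.
\end{itemize}
By (4.11) and \cref{lem:CELL1}, \(|Z(B)|\gtrsim|B\cap N_a(U(\T_B,Y_1))|\), which gives (6.7) and its quadratic analogues with \(\lambda_B\) computed from \(Y_1\) and the full denominator \(D_B\).

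Next, (F2) gives the ratio hypothesis (6.6a). Feeding the P1 masses \(m_B'=\sum_{T\in\T_B}|Y_1(T)|\) into \cref{lem:denominator-average} yields (6.8) with \(\mathfrak L_0\). Since all \(|\widetilde B|\) are comparable, \(\lambda(\widetilde\B',Z)\) is comparable to the average of \(|Z(B)|/|B|\). Jensen's inequality for the convex maps \(t\mapsto t^{2-\sigma}\) and \(t\mapsto t^2\) then gives (8.3), (8.4) and (8.4a), with \(s=a/b\). Boxes that are only comparable to \(a\times b\times r\) cost a structural constant.

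\textbf{Refinement protocol.} For (8.7), invoke the lifting half of \cref{thm:abstract-cellular} with \(\kappa_Q=1\). Since \(Y_1\) restricted to the cells of \(E^{(j)}\) equals \(Y_0\) restricted to them, (5.9a) coincides with (8.5), and (5.11) gives
\[
 M(Y^{(m)})\ge\mathfrak L_0^{-1}2^{-m}\prod_j c_j\,M(Y).
\]
A dyadic degree re-regularization that keeps every edge at each selected cell is just another nested \(E^{(j)}\). Its parent-mass fraction \(c_j\ge(1+\lceil\log_2|E^{(j-1)}|\rceil)^{-1}\) follows from equal cell volumes, as in \cref{cor:rer}. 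The stated survival of compatibility, inner level and the product bound is the closing assertion of \cref{thm:abstract-cellular}.

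\textbf{Main obstacle.} The delicate step is the density layer, specifically checking that the parentwise theorems may be applied to \(Y_1\) rather than \(Y\). The complete-child hypotheses (6.3), (8.0) and (6.6c) involve only carriers, so they are unaffected by shrinking the shadings. What must be verified is that the quantity being averaged is \(\lambda_B\) built from the retained mass over the unchanged denominator \(D_B\), not over a deleted denominator. It must also be checked that \(Z(B)\), being a union of whole cells, dominates \(B\cap N_a(U(\T_B,Y_1))\) up to the constant in (3.3). I would settle this first; the remaining steps are bookkeeping.
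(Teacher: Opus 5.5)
Your proposal is correct and follows the paper's proof. It uses the same three modules: joint regularization giving (8.2) and P1--P5; the parentwise estimates applied to the original structural families with shading \(Y_1\) and unchanged denominators \(D_B\), followed by \cref{lem:denominator-average} and Jensen as in \cref{cor:density}; and iterated whole-incidence lifting, with degree re-regularizations entered at their actual \(c_j\). Running the combinatorics and the iteration through \cref{thm:abstract-cellular} (with \(X_0\) the union of the active cells and \(\kappa_Q=1\)), instead of \cref{thm:joint} and repeated use of \cref{thm:lift}, only repackages the same edge-weight and edge-count calculations.
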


\begin{proof}
The proof has three modules: joint regularization, analytic density, and
iterated lifting.  Run the common inner level, \cref{lem:REG1}, right-degree selection, and
\cref{thm:joint}.  Since \(|E_1|\le e_+\), (4.8) is at most
\(\mathfrak L_0\), which proves (8.2).  Apply \cref{cor:density} with this
larger explicit loss, using \cref{lem:coarse-cordoba} in the third
alternative, to obtain (8.3), (8.4), or (8.4a).  Iterating \cref{thm:lift}
along (8.5)--(8.6) proves (8.7).  In particular a dyadic degree
re-regularization may retain only a polylogarithmic fraction; its actual
fraction is entered as \(c_j\), rather than compared with a fixed constant.
The last assertions are precisely the invariants in
\cref{thm:lift,cor:rer}.
\end{proof}

\begin{lemma}[Source L\"owner frame]
\label{lem:ordered-john}
Let \(W\subset\R^3\) be a bounded convex body with nonempty interior.  Fix
the L\"owner (minimum-volume containing) ellipsoid
\(E_W^{\rm out}\) used to define the dimensions of \(W\), and order its semiaxes
\(\ell_1\le\ell_2\le\ell_3\).  When eigenvalues coincide, choose an
orthonormal eigenframe by a fixed lexicographic rule against the ambient
coordinate frame, use that same rule for both \(W\) and its rectangular
label, and retain the frame as part of the label.  There is a rectangle
\(B_W\), aligned with this ordered outer frame, such that
\[
 W\subset B_W,\qquad |B_W|\le C_J|W|,
\tag{8.7a}
\]
and, under this fixed convention, the labelled tangent plane
\(TB_W=\operatorname{span}(e_2(W),e_3(W))\) is the source two-long-axis
plane when that plane is uniquely determined.  At a repeated eigenvalue it
is a fixed representative compatible with the source resolution convention,
which does not select a lexicographic eigenframe.  In particular no
comparison with the principal frame of an independently chosen inner John
ellipsoid is made.
\end{lemma}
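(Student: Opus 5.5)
The plan is to take \(B_W\) to be the box circumscribing the L\"owner ellipsoid in its own principal frame, so that containment is free and the volume bound follows from the standard John--L\"owner comparison between \(E_W^{\rm out}\) and \(W\).  Let \(E_W^{\rm out}=x_0+\sum_j\ell_j^{}\,[-1,1]\text{-ball coordinates}\), i.e.\ \(E_W^{\rm out}=\{x_0+\sum_j t_j\ell_je_j:\sum_jt_j^2\le1\}\), with the ordered orthonormal eigenframe \((e_1,e_2,e_3)\) fixed by the lexicographic rule at coincident eigenvalues.  Define
\[
 B_W:=\Bigl\{x_0+\sum_{j=1}^3 s_je_j:|s_j|\le\ell_j\Bigr\}.
\]
Since \(E_W^{\rm out}\subset B_W\) coordinatewise (each \(|t_j|\le1\)), we get \(W\subset E_W^{\rm out}\subset B_W\).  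The box has side lengths \(2\ell_1\le2\ell_2\le2\ell_3\), so its ordered side frame in the sense of (3.4a) is exactly \((e_1,e_2,e_3)\), and we record this frame as part of the label, as required by (3.4d).

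For the volume bound, I will use that \(|B_W|=8\ell_1\ell_2\ell_3=\tfrac{6}{\pi}|E_W^{\rm out}|\).  I will then invoke John's theorem in its L\"owner form: for a convex body \(W\subset\R^3\) with nonempty interior, the shrunken ellipsoid \(x_0+\tfrac13(E_W^{\rm out}-x_0)\) is contained in \(W\), with the dilation factor \(n=3\) about the L\"owner centre.  This gives \(|E_W^{\rm out}|\le 27|W|\), hence \(|B_W|\le\tfrac{162}{\pi}|W|\), which is (8.7a) with an absolute \(C_J\).  Only the outer ellipsoid enters, so no inner John ellipsoid or comparison of principal frames is needed, which is the content of the final sentence of the statement.

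For the tangent plane, (3.4a) gives \(TB_W=\operatorname{span}(e_2,e_3)\).  When \(\ell_1<\ell_2\), the two-long-axis plane is the orthogonal complement of the unique short eigendirection and is independent of how the eigenframe is chosen inside eigenspaces (the case \(\ell_2=\ell_3\) is harmless).  So \(TB_W\) is the source plane whenever that plane is uniquely determined.  When \(\ell_1=\ell_2\), the plane is not determined by \(E_W^{\rm out}\), and \(TB_W\) is simply the fixed representative produced by the lexicographic rule.  Since the same rule is applied to \(W\) and to its label, this is a consistent convention.

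The main obstacle is this repeated-eigenvalue clause rather than any estimate.  One must check that any representative plane is acceptable downstream: plane choices at a repeated eigenvalue differ by angles not controlled by the geometry.  However, at \(\ell_1\asymp\ell_2\) the resolution \(a/b\) in (7.1) is \(\asymp1\), so all such planes are within the resolution floor.  Hence the convention is compatible with \(\ang_{q_{\rm res}}\), and I would state that compatibility explicitly.
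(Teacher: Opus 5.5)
Your proposal is correct and follows essentially the same route as the paper: the box with half-sides \(\ell_1,\ell_2,\ell_3\) circumscribing \(E_W^{\rm out}\) in its ordered frame, the identity \(|B_W|=(6/\pi)|E_W^{\rm out}|\), and the L\"owner--John inclusion \(c_W+\tfrac13(E_W^{\rm out}-c_W)\subset W\) about the L\"owner centre, giving \(|E_W^{\rm out}|\le27|W|\). Your tie analysis is also the paper's: a middle--long tie leaves the plane unique, while at a short--middle tie \(a/b=1\) and the source angle is unresolved at unit scale, so the lexicographic rule only selects a representative.
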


\begin{proof}
The source defines convex dimensions from the axes of its minimum-volume
containing ellipsoid and defines plank angle from the two longest of those axes
\cite[p.~4 and the plank-angle convention on p.~16]{GWZ2026}.  Take \(B_W\)
to be the coordinate rectangle with half-side lengths
\(\ell_1,\ell_2,\ell_3\).  Then
\(W\subset E_W^{\rm out}\subset B_W\), and
\(|B_W|=(6/\pi)|E_W^{\rm out}|\).

For volume comparability, let \(c_W\) be the L\"owner
centre.  The non-symmetric L\"owner--John inclusion in dimension \(n\)
states precisely that the homothetic copy
\(c_W+n^{-1}(E_W^{\rm out}-c_W)\) lies in \(W\).  Thus, in dimension three,
\[
 |E_W^{\rm out}|\le 3^3|W|=27|W|;
\]
see \cite[Theorem~2.1 and equation~(2)]{Henk2012}, together with the
original extremal theorem \cite{John1948}.  This proves (8.7a).  The
deterministic tie rule makes the ordered outer frame a label when an
eigenspace is multiple.  Using this one chosen outer frame for both \(W\)
and \(B_W\) makes their two tangent planes identical.  If the short and middle axes tie, then
\(\ell_1/\ell_2=1\) and the source angle is already unresolved at unit
scale; the common tie rule merely chooses one representative of that source
ambiguity.  If only the middle and long axes tie, their two-dimensional
eigenspace is itself unique, so the two-long-axis plane is independent of the
basis chosen inside it.  If all three axes tie, the source plane is again
unresolved at unit scale.  Standard background on uniqueness of the L\"owner
ellipsoid, parallel bodies, and orthogonal choices inside repeated
eigenspaces is recorded in \cite[Chs.~1, 3, and 10]{Schneider2014}; only the
source convention just cited determines which plane is used in the GWZ call.
\end{proof}

\begin{lemma}[Transfer to permanent L\"owner rectangles]
\label{cor:convexbox}
Suppose a labelled convex parent \(W\) has the containing rectangle \(B_W\)
from \cref{lem:ordered-john}, has a nonempty assigned child family, and every
assigned child label is wholly contained in \(W\).  Let
\(p\in\{1,1+\sigma\}\).  If
\[
 \frac{\#\{T:T\subset K\}}{\#\{T:T\subset W\}}
 \le C_0\left(\frac{|K|}{|W|}\right)^p
 \qquad(K\subset W\text{ convex}),
\tag{8.7b}
\]
then the same labelled children, viewed inside \(B_W\), satisfy
\[
 \frac{\#\{T:T\subset K\}}{\#\{T:T\subset B_W\}}
 \le C_0C_J^p\left(\frac{|K|}{|B_W|}\right)^p
 \qquad(K\subset B_W\text{ convex}).
\tag{8.7c}
\]
The total child denominator is unchanged.  For a labelled parent family,
\[
 \sum_W|W|\le\sum_W|B_W|\le C_J\sum_W|W|,
 \qquad
 \Delta_{\max}(\{B_W\})\le C_J\Delta_{\max}(\{W\}).
\tag{8.7d}
\]
The ordered tangent frame is unchanged.  The same argument with \(p=1\)
transfers the ordinary fine-child and coarse/thickened-child containment
conditions, provided the counted carriers are wholly contained in \(W\).
Consequently \cref{thm:mainfact} applies to the permanent labelled boxes
\(B_W\), with its shading and union conclusions interpreted on those boxes.
The transfer is one-way: it supplies neither a restriction to the sharp
boundary of \(W\) nor a bounded-overlap conclusion for the rectangles.
\end{lemma}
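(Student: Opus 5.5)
The transfer is a direct comparison of volumes. It uses three facts from \cref{lem:ordered-john}: \(W\subset B_W\), \(|W|\le|B_W|\le C_J|W|\), and the fact that the ordered frame of \(B_W\) is by definition the L\"owner frame of \(W\).

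\emph{Denominator.} Every assigned child is contained in \(W\subset B_W\). Among the labels of the assigned family, being contained in \(B_W\) and being contained in \(W\) are therefore both automatic. Hence \(\#\{T:T\subset B_W\}=\#\{T:T\subset W\}\), and the total child denominator \(\sum|T|\) is literally the same sum. The counts are always taken over the same labelled family, never over children that merely happen to lie geometrically in \(B_W\).

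\emph{Test bodies.} Fix a convex \(K\subset B_W\); only positive-volume \(K\) are admissible, as in \(\mathfrak K_n\). Set \(K'=K\cap W\). This is convex and contained in \(W\). Every assigned child \(T\) with \(T\subset K\) also satisfies \(T\subset W\), so \(T\subset K'\).

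If \(|K'|=0\), then no assigned child (each of positive volume) lies in \(K'\). The left side of (8.7c) is then zero and there is nothing to prove.

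Otherwise \(K'\in\mathfrak K_n\), and (8.7b) gives
\[
\frac{\#\{T\subset K\}}{\#\{T\subset B_W\}}\le \frac{\#\{T\subset K'\}}{\#\{T\subset W\}}\le C_0\Bigl(\frac{|K'|}{|W|}\Bigr)^p\le C_0\Bigl(\frac{C_J|K|}{|B_W|}\Bigr)^p .
\]
The last step uses \(|K'|\le|K|\) and \(|W|\ge|B_W|/C_J\). This is (8.7c).

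The same computation with \(p=1\) handles the ordinary condition (8.0). It also handles the coarse condition (6.6c), applied to the coarse family \(\Scal_B\), because those carriers are assumed to be wholly inside \(W\). The hypothesis ``wholly contained'' is exactly what is needed for the intersection step.

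\emph{Parent volumes and density.} The first part of (8.7d) is just the termwise sum of \(|W|\le|B_W|\le C_J|W|\).

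For \(\Delta_{\max}\), fix \(K\in\mathfrak K_n\). If \(B_W\subset K\), then \(W\subset K\) and \(|B_W|\le C_J|W|\). Therefore
\[
\sum_{B_W\subset K}|B_W|\le C_J\sum_{W\subset K}|W|.
\]
Divide by \(|K|\) and take the supremum over \(K\). This is a monotonicity argument, and it is where the one-way nature of the transfer enters. No reverse inequality is claimed, and none is needed.

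\emph{Frame and application.} The frame statement holds by construction, since \cref{lem:ordered-john} uses one frame for both \(W\) and \(B_W\).

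For the final consequence, we check the hypotheses of \cref{thm:mainfact} on the boxes \(B_W\):
\begin{enumerate}[label=\textup{(\roman*)},leftmargin=2.4em]
\item (F1) holds, because the children are contained in \(B_W\) and the side lengths of \(B_W\) are the L\"owner semiaxes.
\item (F2) holds, because the child denominators are unchanged.
\item (F3) holds, because the counts are unchanged.
\item (F4) follows from (8.7c), with the constant \(C_0C_J^p\) (respectively \(C_FC_J\)).
\end{enumerate}

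The only delicate point is the degenerate test-body case \(|K\cap W|=0\). It is the reason the argument intersects \(K\) with \(W\) rather than applying (8.7b) to \(K\) directly, and it is harmless because children have positive volume. I do not expect any other obstacle. The remaining disclaimer, that nothing is transferred back to \(\partial W\) and no overlap bound is obtained for the rectangles, simply records what the proof does not use.
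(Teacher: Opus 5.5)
Your proof is correct and is essentially the paper's argument: you intersect each test body with \(W\), apply (8.7b) to \(K\cap W\) using \(|K\cap W|\le|K|\) and \(|B_W|\le C_J|W|\), and obtain the \(\Delta_{\max}\) bound by summing the implication \(B_W\subset K\Rightarrow W\subset K\) over labels. Your explicit treatment of the null case \(|K\cap W|=0\) and your (F1)--(F4) check are small additions that the paper leaves implicit, though the common \(a\times b\times r\) scale needed in (F1) comes from the separate dimension-level selection rather than from this lemma.
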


\begin{proof}
For a convex \(K\subset B_W\), a child already contained in \(W\) is
contained in \(K\) exactly when it is contained in the convex set
\(K\cap W\).  Applying (8.7b) to \(K\cap W\), then using
\(|K\cap W|\le|K|\) and \(|B_W|/|W|\le C_J\), proves (8.7c).  The label set
and child carriers are unchanged, so their denominator is identical.

For the parent concentration, \(B_W\subset K\) implies \(W\subset K\), and
\(|B_W|\le C_J|W|\).  Summing this implication over labels proves the second
part of (8.7d); the first is immediate from containment and volume
comparability.  The tangent-frame assertion is
\cref{lem:ordered-john}.  Repeating the same intersection argument for the
fine or coarse carrier counted in the relevant condition proves the remaining
claims.  All labels, including coincident ones, are retained.
\end{proof}

\subsection{Quantitative dependence and iteration budget}

The finite graph makes every loss explicit.  For the
polynomial-complexity application, \(e_+\le|\B||\Q_h(Y)|\) and
\[
 \mathfrak L_0\le C(C_{\rm comp})[\log(2/\delta)]^3.
\tag{8.8}
\]
In the applications of Sections~9--10, after their displayed bounds have
been verified, the additional notation \(\delta^{-o(1)}\) abbreviates only
the following controlled factors:
\[
\begin{gathered}
 L_{\rm in},\quad
 1+\log(2|E^+|),\quad
 L_{\rm deg},\quad
 J_\theta,\quad J_d,\quad
 c_{\rm ang}^{-1}.
\tag{8.9}
\end{gathered}
\]
In \cref{thm:mainfact} itself, \(\kappa_D\) remains explicit and no
subpolynomial asymptotic bound on it is assumed.  An application may append
\(\kappa_D\) to (8.9) only after proving such a bound for its denominator
level.
Section~\ref{sec:section9} adds the explicit quantities
\[
 J_v,\quad L_m,\quad L_{a,b},\quad L_N,\quad
 L_{\rm reg},\quad\kappa_{\rm inc},\quad
 \kappa_{\rm loc},\quad\kappa_{\rm amb},\quad H_{\rm ang}.
\tag{8.10}
\]
Jensen raises every synchronized density loss to \(2-\sigma\), and to two
in the C\'ordoba endpoint; LIFT losses remain separate from density losses.

\begin{remark}
The theorem is not uniform over an arbitrary indexed multiset of
super-polynomial cardinality.  Without the stated polynomial-complexity bound, the
logarithms in (8.1)--(8.2) must remain explicit.
\end{remark}

\section{Two small-middle applications in GWZ Section 6}
\label{sec:section6}

We first test the cellular theorem on the two small-middle factorizations in
the factoring-through-planks argument of~\cite{GWZ2026}.  Both applications
need the same three outputs: quantitative retention of child mass, the
child--parent multiplicity product, and the ordinary quadratic parent-density
endpoint.  Neither application needs a ballwise output or a metric
reconstruction of the parent shading.  Consequently the cellular theorem can
be inserted through a short, explicit interface without altering the
surrounding proof architecture.

\begin{table}[t]
\centering
\small
\begin{tabularx}{\textwidth}{@{}>{\raggedright\arraybackslash}p{1.7cm}>{\raggedright\arraybackslash}p{2.2cm}>{\raggedright\arraybackslash}X>{\raggedright\arraybackslash}X@{}}
\toprule
application & objects & cellular realization & additional check\\
\midrule
Section 6.5(A) &
\(\T=\bigsqcup_W\T_W\) through Frostman planks &
P1, P3--P5, (6.10), and
\(\mu(\T,Y)\lesssim_{\log}\mu(\B,Z)\mu(\T_W,Y_1)\) &
choose one inner parent attaining the post-refinement average density\\
\addlinespace
Section 6.5(B) &
fine tubes grouped through coarse \(\rho\)-tubes &
same cellular product and quadratic endpoint on the fine labelled multiset &
retain a common number of fine descendants per coarse label\\
\addlinespace
biased interface &
maximal-density convex parents &
permanent labelled rectangles and (6.9) &
never restrict the conclusion back to the sharp parent\\
\bottomrule
\end{tabularx}
\caption{The two Section~6 applications of the cellular theorem.}
\label{tab:section6}
\end{table}

The next four lemmas isolate, in order, fixed-dilate symmetry, conflict
coloring, the denominator cost of a mass-selected color, and normalized
concentration under bounded ancestry.  The application proofs invoke these
four roles through their tagged conclusions.

\begin{lemma}[Fixed-dilate symmetry for source-standard tubes]
\label{lem:fixed-dilate-symmetry}
Let \(S,S'\) be capped source-standard tubes of the same length \(L\) and
width \(w\), in one fixed bounded direction chart.  For every fixed
\(C\ge1\),
\[
 S\subset CS'\quad\Longrightarrow\quad S'\subset C_*S,
\tag{9.0a$'$}
\]
where \(C_*\) depends only on \(C\) and the chart constants.
\end{lemma}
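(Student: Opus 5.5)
The argument works entirely with carriers and reuses the endpoint-matching idea of \cref{lem:standard-capped-ancestry}. Write \(S=N_{c w}(I)\) and \(S'=N_{c w}(I')\), where \(|I|=|I'|=L\) and \(c\) is the fixed cap constant. Interpret \(CS'\) as the fixed enlargement \(N_{Cc w}(I')\) about the same carrier. This is the structural-dilation convention, and it agrees with a homothety about the centre up to a change of \(C\) depending on the chart constants. The hypothesis \(S\subset CS'\) then places both endpoints \(p_\pm\) of \(I\) within \(C_1w\) of \(I'\), with \(C_1=Cc\).

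\emph{Case \(w\le L/(4C_1)\).} Choose nearest points \(q_\pm\in I'\). Then \(|q_+-q_-|\ge L-2C_1w\). Since \(I'\) has length exactly \(L\), each endpoint of \(I'\) lies within \(2C_1w\) of the corresponding \(q_\pm\), after possibly reversing orientation. Hence each endpoint of \(I'\) lies within \(3C_1w\) of the corresponding endpoint of \(I\). Linear interpolation between the paired endpoints gives \(I'\subset N_{3C_1w}(I)\). Therefore
\[
 S'=N_{cw}(I')\subset N_{(3C_1+c)w}(I)=C_*S,
 \qquad C_*:=(3C_1+c)/c .
\]

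\emph{Case \(w>L/(4C_1)\).} Every point of \(I'\) is within \(L\) of the point \(q_-\), which is itself within \(C_1w\) of \(p_-\in I\). So \(I'\subset N_{(4C_1+C_1)w}(I)\), and the same conclusion follows with a larger \(C_*\). The chart hypothesis is needed only to convert between the centred-homothety reading of \(CS'\) and the neighbourhood reading. For a bounded direction chart and width at most a fixed multiple of \(L\), these two readings differ by a constant factor depending only on the chart constants.

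The main obstacle is the exact-length step in the first case: the reverse inclusion needs the equality \(|I|=|I'|\), so that no portion of \(I'\) can extend beyond \(I\). This is precisely why the statement is restricted to source-standard tubes and is not asserted for comparable-length tubelets. I would also check that the endcaps require no extension, since both neighbourhoods already include them.
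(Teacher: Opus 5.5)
Your proof is correct and is essentially the paper's argument: the same split into a wide case ($w\gtrsim L$, where everything lies within $O_C(w)$) and a thin case. In the thin case, the endpoints of the carrier of $S$ lie within $O_C(w)$ of the carrier of $S'$, the exact equality of the two lengths pairs the endpoints, and linear interpolation followed by thickening gives $S'\subset C_*S$. The paper phrases the endpoint pairing through the $O_C(w/L)$ direction error, while you use the parameter argument of \cref{lem:standard-capped-ancestry}; these are the same computation.

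One correction to your side remark: when $w\ll L$, a centred homothety by $C$ is not comparable to $N_{Ccw}(I')$ up to a chart constant, because it stretches the carrier to length $CL$. Nothing in your proof depends on this remark. You and the paper both use the paper's convention that a fixed enlargement keeps the carrier and changes only the cap constant. Under that reading the chart hypothesis is not needed at all.
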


\begin{proof}
If \(w\ge cL\) for a fixed structural \(c>0\), both capped carriers have
diameter \(O(w)\), and the assumed containment places their centres within
\(O_C(w)\); the conclusion is immediate after a structural dilation.  Assume
henceforth that \(w<cL\).  The two endpoint cross-sections of the carrier of \(S\) lie within \(O_C(w)\)
of \(S'\).  Their separation is comparable to \(L\), so the corresponding
unit directions differ by \(O_C(w/L)\), and one endpoint of the carrier of
\(S'\) is within \(O_C(w)\) of the matching endpoint of \(S\).  Linear
interpolation along the capped carriers gives Hausdorff distance \(O_C(w)\)
between them.  Thickening by width \(w\) proves (9.0a$'$), including parallel
and coincident carriers.
\end{proof}

\begin{lemma}[Source-resolution conflict coloring]
\label{lem:source-conflict-coloring}
Let \(\mathcal F_0\) be a source-standard, essentially distinct family of
\(w\)-tubes.  Quantitatively, for every fixed \(C\ge1\), assume
\[
 \sup_{T_0}\#\{T\in\mathcal F_0:T\subset CT_0\}
 \le \kappa_{\rm ed}(C),
\tag{9.0a}
\]
where \(\kappa_{\rm ed}(C)=O_C(1)\) in a literal source call.  Let \(I\) be a
labelled representation family with a map \(\pi:I\to\mathcal F_0\), fibres
of size at most \(\kappa_{\rm rep}\), and carriers \(T_i\) contained in and
containing fixed enlargements of \(\pi(i)\).  Join two distinct vertices
\(i,j\) when
\[
 T_i\subset C_{\rm src}T_j\quad\hbox{or}\quad
 T_j\subset C_{\rm src}T_i,
\tag{9.0b}
\]
where \(C_{\rm src}\) is the dilation in the source essential-distinctness
convention.  Then the conflict graph has maximum degree at most
\[
 C\kappa_{\rm rep}\kappa_{\rm ed}(C'),
\tag{9.0c}
\]
for fixed structural \(C,C'\), and hence has a proper coloring with
\(H_{\rm src}\le1+C\kappa_{\rm rep}\kappa_{\rm ed}(C')\) colors.  Each color
is source-essentially-distinct.  For arbitrary nonnegative label weights one
color carries at least \(H_{\rm src}^{-1}\) of their total.  If the weights
are shading masses and all carrier volumes are comparable, that color has
shading density at least a fixed multiple of \(H_{\rm src}^{-1}\) times the
original density.  Passing to a color cannot increase the absolute
complete-containment density \(\Delta_{\max}\); normalized Frostman constants
are treated separately in \cref{lem:color-normalized}.  Moreover, a proper
color contains at most one vertex in every fibre whose represented carriers
are pairwise adjacent in (9.0b), in particular in a fibre of coincident copies.
If this condition holds for every fibre, the restriction of \(\pi\) to a color
is injective and its absolute \(\Delta_{\max}\) is bounded by that of the source
family.
\end{lemma}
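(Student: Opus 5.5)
The plan is a bounded-degree graph argument followed by greedy coloring. First I bound the degree. Fix a vertex \(i\) and count the \(j\ne i\) adjacent to it through (9.0b). Put \(T^0=\pi(i)\) and \(T^1=\pi(j)\). The carriers \(T_i,T_j\) contain and are contained in fixed enlargements of \(T^0,T^1\), so either alternative of (9.0b) gives \(T^1\subset C_1T^0\) or \(T^0\subset C_1T^1\), with \(C_1\) structural. In the first case \(T^1\) is counted in (9.0a) with \(C=C_1\) and \(T_0=T^0\). In the second case, \cref{lem:fixed-dilate-symmetry} converts the containment to \(T^1\subset C_*T^0\), because source-standard tubes of common width and length are symmetric under fixed dilates. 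Again \(T^1\) is counted in (9.0a), now with \(C'=\max\{C_1,C_*\}\). So the image \(\pi(j)\) ranges over at most \(2\kappa_{\rm ed}(C')\) source tubes, and each has at most \(\kappa_{\rm rep}\) preimages. This gives (9.0c).

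For the direction chart used in \cref{lem:fixed-dilate-symmetry}, I would partition by the finitely many charts of \cref{lem:finitecharts} beforehand. Alternatively, I would note that the lemma's proof only uses comparable endpoint separation, which is chart-free. The main obstacle is making this symmetry step honest. I expect the constant to depend only on \(C_{\rm src}\) and the enlargement constants, not on the family.

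Next I color and select a color. A graph of maximum degree \(\Delta\) is greedily properly colorable with \(\Delta+1\) colors, which gives \(H_{\rm src}\). In a proper color no two vertices are adjacent, so no member lies in the \(C_{\rm src}\)-dilate of another. This is exactly source essential distinctness. By pigeonhole over the \(H_{\rm src}\) colors, some color carries at least \(H_{\rm src}^{-1}\) of any nonnegative weight total. For shading masses with comparable carrier volumes, the chosen color has denominator at most a fixed multiple of its label count times a common volume, and hence at most a fixed multiple of the full denominator. Therefore its density is at least \(cH_{\rm src}^{-1}\) times the original density. I would record honestly that this uses the full-family denominator bound \(D(\text{color})\le C\,D(I)\), not equal label counts.

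The remaining assertions are direct. \(\Delta_{\max}\) is monotone under passing to a subfamily, because each numerator in (1.2) can only decrease. If a fibre's carriers are pairwise adjacent, that fibre is a clique, so a proper color meets it at most once. In particular this holds for coincident copies, since \(T\subset C_{\rm src}T\). If every fibre has this property, \(\pi\) restricted to the color is injective. Each color member's carrier is contained in a fixed enlargement of its image. Comparing containment in a test body \(K\) via a fixed dilate of \(K\), which has comparable volume by \cref{lem:convex-parallel-body} or a homothety, then bounds \(\Delta_{\max}\) of the color by a fixed multiple of that of \(\mathcal F_0\). The one-to-one count carries this through, with no factor \(\kappa_{\rm rep}\).
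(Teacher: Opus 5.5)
Your proposal is correct and follows the paper's proof essentially step for step. The shared steps are: two-sided comparison to turn (9.0b) into a source containment, \cref{lem:fixed-dilate-symmetry} for the reverse inclusion, the count (9.0a) times $\kappa_{\rm rep}$ for the degree, greedy colouring, pigeonhole by weight, subfamily monotonicity of $\Delta_{\max}$, and the clique observation for fibres.

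The only divergence is in the last clause. The paper handles it by monotonicity alone: once $\pi$ is injective on a colour, its image is a genuine subfamily of $\mathcal F_0$. Your dilated-test-body comparison instead bounds $\Delta_{\max}$ of the colour's own carriers, at the cost of a structural constant.
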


\begin{proof}
If \(i\) and \(j\) are adjacent, the fixed two-sided comparison between
\(T_k\) and \(\pi(k)\) turns (9.0b) into
\(\pi(j)\subset C'\pi(i)\) or the reverse inclusion, with \(C'\) structural.
For equal-length, equal-width source tubes the reverse inclusion also implies
\(\pi(j)\subset C''\pi(i)\) by \cref{lem:fixed-dilate-symmetry}.
Equation~(9.0a) leaves at most \(C\kappa_{\rm ed}(C')\) possible
geometric source labels, and each has at most \(\kappa_{\rm rep}\) copies.
This proves (9.0c), so greedy coloring gives the asserted \(H_{\rm src}\).
A fibre whose represented carriers are pairwise adjacent in (9.0b) is a
clique, so a proper color contains at most one of its vertices.  Coincident
copies satisfy this condition.

No conflict remains inside a color, which is precisely the source
essential-distinctness condition after the declared fixed enlargement.  The
largest color by weight proves the mass assertion.  Its denominator is no
larger than the original denominator and the fixed carrier dilation changes
individual volumes only by a structural factor, proving the density assertion.
The complete-containment numerator only decreases on passage to a subfamily.
Finally a fixed enlargement replaces \(w\) by \(Cw\); the scale-variant source
statement is applied at this new width.  All scale powers and density
thresholds change by fixed constants only, so no hidden power of \(w\) is
spent.
\end{proof}

\begin{lemma}[Mass-selected color: absolute versus normalized concentration]
\label{lem:color-normalized}
Let a finite labelled family \(\mathcal R\) of congruent carriers of volume
\(v>0\) be partitioned into at most \(H\) colors, and let
\(Y(R)\subset R\) be measurable.  Set
\[
 D=v|\mathcal R|,\qquad M=\sum_{R\in\mathcal R}|Y(R)|,
 \qquad \lambda_{\mathcal R}=M/D>0.
\tag{9.0d}
\]
If a color \(\mathcal R_c\) carries at least \(H^{-1}M\), then
\[
 \frac{|\mathcal R_c|}{|\mathcal R|}
 \ge \frac{\lambda_{\mathcal R}}{H}.
\tag{9.0e}
\]
Moreover, writing \(\mu(\mathcal R,Y)=M/|U(\mathcal R,Y)|\),
\[
 \mu(\mathcal R,Y)\le H\mu(\mathcal R_c,Y|_{\mathcal R_c}).
\tag{9.0e$'$}
\]
Let \(\mathcal K\) be any prescribed class of test bodies.  Consequently,
if
\[
 \frac{|\mathcal R[K]|}{|\mathcal R|}\le C_{\rm test}\Psi(K)
 \qquad(K\in\mathcal K),
\tag{9.0f}
\]
then
\[
 \frac{|\mathcal R_c[K]|}{|\mathcal R_c|}
 \le H\lambda_{\mathcal R}^{-1}C_{\rm test}\Psi(K)
 \qquad(K\in\mathcal K).
\tag{9.0g}
\]
On the other hand,
\(\Delta_{\max}(\mathcal R_c)\le\Delta_{\max}(\mathcal R)\).
Thus absolute concentration is monotone under label deletion, whereas
normalized concentration must pay the denominator factor in (9.0g).  The
same statements hold for carrier volumes in \([v,\kappa_vv]\), with one
additional factor \(\kappa_v\).
\end{lemma}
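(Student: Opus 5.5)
The four assertions are elementary counting statements, so the plan is to verify each directly from the definitions in (9.0d), in order, and then make the bounded-volume modification at the end.

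\emph{Step 1: the size bound (9.0e).}  Every label \(R\in\mathcal R_c\) satisfies \(|Y(R)|\le|R|=v\).  Hence
\[
 H^{-1}M\le\sum_{R\in\mathcal R_c}|Y(R)|\le v|\mathcal R_c|.
\]
Dividing by \(D=v|\mathcal R|\) gives \(|\mathcal R_c|/|\mathcal R|\ge M/(HD)=\lambda_{\mathcal R}/H\).

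\emph{Step 2: the multiplicity bound (9.0e\('\)).}  Since \(\mathcal R_c\subset\mathcal R\), we have \(U(\mathcal R_c,Y)\subset U(\mathcal R,Y)\).  The mass of \(\mathcal R_c\) is at least \(M/H\), so
\[
 \mu(\mathcal R_c,Y|_{\mathcal R_c})\ge\frac{M/H}{|U(\mathcal R,Y)|}=H^{-1}\mu(\mathcal R,Y).
\]
The denominators are positive because \(M>0\) forces the union to have positive measure.

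\emph{Step 3: the normalized concentration bound (9.0g).}  Label deletion gives \(|\mathcal R_c[K]|\le|\mathcal R[K]|\).  Dividing by \(|\mathcal R_c|\), inserting the factor \(|\mathcal R|/|\mathcal R_c|\le H\lambda_{\mathcal R}^{-1}\) from Step~1, and then applying (9.0f) yields (9.0g).

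\emph{Step 4: the absolute bound.}  For each test body \(K\), the numerator of \(\Delta(\mathcal R_c,K)\) is a subsum of that of \(\Delta(\mathcal R,K)\).  Taking the supremum over \(K\in\mathfrak K_n\) gives \(\Delta_{\max}(\mathcal R_c)\le\Delta_{\max}(\mathcal R)\).

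\emph{Step 5: carrier volumes in \([v,\kappa_vv]\).}  Define \(D=\sum_R|R|\), so \(v|\mathcal R|\le D\le\kappa_vv|\mathcal R|\).  In Step~1 we now have \(\sum_{\mathcal R_c}|Y(R)|\le\kappa_vv|\mathcal R_c|\), which gives \(|\mathcal R_c|/|\mathcal R|\ge\lambda_{\mathcal R}/(\kappa_vH)\).  This introduces exactly one factor \(\kappa_v\) into (9.0e) and (9.0g).  Steps~2 and~4 do not use equal volumes and are unchanged.

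None of these steps is a genuine obstacle.  The only point needing care is Step~5: one must be consistent about the definition of the denominator \(D\), so that the extra factor \(\kappa_v\) enters exactly once rather than twice.
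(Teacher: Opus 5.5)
Your proposal is correct and follows essentially the same route as the paper. Both bound the selected color's shading mass by its carrier volume to get (9.0e), divide the label-deletion inequality \(|\mathcal R_c[K]|\le|\mathcal R[K]|\) by (9.0e) to get (9.0g), combine \(M\le HM_c\) with \(U(\mathcal R_c,Y)\subset U(\mathcal R,Y)\) for (9.0e\('\)), use the subsum for \(\Delta_{\max}\), and use the endpoints of \([v,\kappa_vv]\) for the comparable-volume version.
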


\begin{proof}
The selected shading mass is at most its carrier denominator, so
\[
 v|\mathcal R_c|
 \ge\sum_{R\in\mathcal R_c}|Y(R)|
 \ge H^{-1}M
 =H^{-1}\lambda_{\mathcal R}v|\mathcal R|.
\]
This is (9.0e).  Since
\(|\mathcal R_c[K]|\le|\mathcal R[K]|\), division by (9.0e) proves
(9.0g), pointwise on the prescribed class \(\mathcal K\); no closure
property of that class is used.  Also \(M\le HM_c\) and
\(U(\mathcal R_c,Y)\subset U(\mathcal R,Y)\), which proves (9.0e$'$).
For the absolute assertion, the complete-carrier numerator for
every fixed \(K\) only decreases.  The comparable-volume version follows by
using the two endpoints of the comparison interval in the same calculation.
\end{proof}

\begin{lemma}[Normalized concentration through bounded ancestry]
\label{lem:normalized-ancestry}
Let \(\mathcal T\) and \(\mathcal P\) be finite nonempty labelled carrier
families and let \(I\subset\mathcal P\times\mathcal T\) be a labelled incidence
relation such that \(T\subset P\) whenever \((P,T)\in I\).  Suppose
\(N>0\), \(R_N,\kappa_I\ge1\), and
\[
 N\le d_I(P)\le R_NN\quad(P\in\mathcal P),
 \qquad 1\le d_I(T)\le\kappa_I\quad(T\in\mathcal T).
\tag{9.0h}
\]
Then, for every test body \(K\),
\[
 \frac{|\mathcal P[K]|}{|\mathcal P|}
 \le \kappa_IR_N
       \frac{|\mathcal T[K]|}{|\mathcal T|}.
\tag{9.0i}
\]
Consequently a normalized complete-containment bound for \(\mathcal T\)
passes to \(\mathcal P\) with the explicit factor \(\kappa_IR_N\).  This is
an ancestry statement, not monotonicity under taking a color or subfamily.
\end{lemma}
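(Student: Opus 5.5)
The plan is a double-counting argument over the incidence relation \(I\), restricted to incidences whose parent lies in \(K\). Write \(\mathcal P[K]=\{P\in\mathcal P:P\subset K\}\) and similarly \(\mathcal T[K]\). The key observation is that if \((P,T)\in I\) and \(P\subset K\), then \(T\subset P\subset K\), so \(T\in\mathcal T[K]\). We therefore count the set \(I_K:=\{(P,T)\in I:P\in\mathcal P[K]\}\) in two ways.

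\emph{Lower bound via parent degrees.} Every \(P\in\mathcal P[K]\) contributes \(d_I(P)\ge N\) pairs to \(I_K\), so \(|I_K|\ge N|\mathcal P[K]|\).

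\emph{Upper bound via child degrees.} Every pair in \(I_K\) has its second coordinate in \(\mathcal T[K]\). Each such \(T\) occurs in at most \(d_I(T)\le\kappa_I\) pairs, so \(|I_K|\le\kappa_I|\mathcal T[K]|\). Combining,
\[
 |\mathcal P[K]|\le\frac{\kappa_I}{N}|\mathcal T[K]|.
\]

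\emph{Normalizing the denominator.} We compare \(|\mathcal P|\) and \(|\mathcal T|\) by counting all of \(I\). Each \(T\) has \(d_I(T)\ge1\), so \(|I|\ge|\mathcal T|\). Each \(P\) has \(d_I(P)\le R_NN\), so \(|I|\le R_NN|\mathcal P|\). Hence
\[
 |\mathcal T|\le R_NN|\mathcal P|,\qquad\text{equivalently}\qquad \frac1{|\mathcal P|}\le\frac{R_NN}{|\mathcal T|}.
\]
Multiplying this by the previous display gives
\[
 \frac{|\mathcal P[K]|}{|\mathcal P|}\le\kappa_IR_N\frac{|\mathcal T[K]|}{|\mathcal T|},
\]
which is (9.0i). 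Since no property of \(K\) is used, the "consequently" clause follows by applying (9.0i) pointwise over whatever test class the normalized bound for \(\mathcal T\) is stated on.

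All counts are taken over labels, so coincident carriers and repeated incidences are handled automatically. There is no real obstacle. The only points that need attention are that the lower bound \(d_I(T)\ge1\) is genuinely required for the denominator comparison, and that the containment \(T\subset P\) is what transfers membership in \(K\) from parent to child.
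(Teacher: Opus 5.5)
Your proof is correct and is the same argument the paper gives. You double-count the incidences at parents contained in \(K\) to get \(N|\mathcal P[K]|\le\kappa_I|\mathcal T[K]|\), count all of \(I\) to get \(|\mathcal T|\le|I|\le R_NN|\mathcal P|\) (using \(d_I(T)\ge1\) and \(d_I(P)\le R_NN\)), and then multiply the two inequalities.
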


\begin{proof}
Every \(P\subset K\) contributes at least \(N\) incidences, and all its
incident children lie in \(K\).  Hence
\[
 N|\mathcal P[K]|\le\kappa_I|\mathcal T[K]|.
\]
On the other hand, the lower fine degree and the upper parent degree give
\(|\mathcal T|\le|I|\le R_NN|\mathcal P|\).  Multiplying these two
inequalities proves (9.0i).  Repeated geometric carriers cause no change
because all four counts are labelled counts.
\end{proof}

\paragraph{Interface convention.}
Each source call is recorded through seven fields: (SC1) family and reference
shading, (SC2) essential distinctness, (SC3) normalized scales, (SC4) density,
(SC5) concentration, (SC6) affine behavior, and (SC7) source output and
pullback.  The cited tagged formulas in this table contain the quantitative
data used below.
\begin{center}
\scriptsize
\begin{tabularx}{\textwidth}{@{}>{\raggedright\arraybackslash}p{1.65cm}>{\raggedright\arraybackslash}p{2.45cm}>{\raggedright\arraybackslash}p{2.75cm}>{\raggedright\arraybackslash}X@{}}
\toprule
call & (SC1)--(SC2) & (SC3)--(SC5) & (SC6)--(SC7)\\
\midrule
Lemma 6.4, child & selected color of the normalized local child family, with its shading & (9.2a), (9.2d0): scales, density, and \(C_F\) & affine invariance and (9.I-46), recovered in (9.2d)\\
Lemma 6.4, parent & selected color of \(\widetilde\W'\), with cellular parent shading & (9.2c), (9.2e0): scales, density, and \(C_F\) & affine invariance and (9.I-45), recovered in (9.2e)\\
Lemma 6.1, child & complete normalized representation family \(\mathcal F_{\rm ch}\), with its local shading & (9.4b), (9.5b), (9.5c0): scales, density, restricted slab tests, and \(\Delta_{\max}\) & affine invariance and (9.I-49), giving (9.5c)\\
Lemma 6.1, parent & retained parent family \(\W'\), with cellular shading \(Z\) & (9.5d0): scales, density, and absolute concentration & affine invariance and (9.I-48), giving (9.5d)\\
\bottomrule
\end{tabularx}
\end{center}
The density field (SC4) is always read on the displayed reference shading;
(SC5) uses the corresponding labelled denominator.

\begin{proposition}[Section~6 source interface]
\label{prop:corrected-s6-interface}
Fix \(0<\beta\le1\) and \(\varepsilon>0\).  There are
\(\eta,b_0>0\), depending only on these parameters, with the following
property.  Let \(0<\tau\le s\le t\le b_0\), and let
\((\mathcal P,A)\) be an essentially distinct labelled family of
\(s\times t\times1\) planks in a unit ball, with
\(\lambda(\mathcal P,A)\ge\tau^\eta\).  The scale-variant convention is the
one in \cite[Remark~3.6]{GWZ2026}; thus the losses below are powers of the
declared auxiliary scale \(\tau\), not powers silently transferred between
\(s\) and \(\tau\).

If \(K_F(\beta)\) holds and, with \(P_\vartheta\) denoting the source
\(\vartheta t\times t\times1\) thickening of \(P\),
\[
 |\mathcal P[P_\vartheta]|\le M\vartheta
 \quad(P\in\mathcal P,\ s/t\le\vartheta\le1),
\tag{9.I-F0}
\]
then the original statement of
\cite[Lemma~6.4, equation~(31)]{GWZ2026} gives
\[
 \mu(\mathcal P,A)
 \lesssim \tau^{-\varepsilon}
 C_F(\mathcal P)^{1-\beta/2}M^{\beta/2}
 \frac{s}{t}\,t^{-2\beta}
 \bigl(t^2|\mathcal P|\bigr)^{1-\beta/2}.
\tag{9.I-F}
\]
Consequently the substitutions
\[
 \begin{array}{c|c|c|c}
  \text{call}&s&t&M\\ \hline
  \text{parent}&a&b&\lesssim b/a\\
  \text{child}&\delta/b&\delta/a&\lesssim(b/a)^2
 \end{array}
\tag{9.I-F1}
\]
give, respectively,
\[
 \mu_{\rm par}\lesssim\tau^{-\varepsilon}
 C_F^{1-\beta/2}
 \left(\frac ab\right)^{1-\beta/2}b^{-2\beta}
 \bigl(b^2N_{\rm par}\bigr)^{1-\beta/2},
\tag{9.I-45}
\]
and
\[
 \mu_{\rm ch}\lesssim\tau^{-\varepsilon}
 C_F^{1-\beta/2}
 \left(\frac ab\right)^{1-\beta}
 \left(\frac\delta a\right)^{-2\beta}
 \left(\frac{\delta^2}{a^2}N_{\rm ch}\right)^{1-\beta/2}.
\tag{9.I-46}
\]

If instead \(K_{KT}(\beta)\) holds, \(0\le\gamma\le1\), and every
\(\vartheta\times1\times1\) test slab in the sense of
\cite[equation~(28)]{GWZ2026} satisfies
\[
 |\mathcal P_S|\le\tau^{-\eta}\vartheta^\gamma|\mathcal P|
 \quad(s/t\le\vartheta\le1),
\tag{9.I-K0}
\]
where \(\mathcal P_S\) has exactly the meaning of source equation~(28): it
counts only those \(P\subset S\) for which the planes spanned by the two
longest axes of \(P\) and of the \(\vartheta\times1\times1\) test slab \(S\)
make angle at most \(\vartheta\),
then the original statement of
\cite[Lemma~6.1, equation~(29)]{GWZ2026} gives
\[
 \mu(\mathcal P,A)
 \lesssim\tau^{-\varepsilon}
 \Delta_{\max}(\mathcal P)^{1-\beta}
 \left(\frac st\right)^{\gamma\beta}|\mathcal P|^\beta.
\tag{9.I-K}
\]
For \((s,t,\gamma)=(a,b,0)\) and
\((\delta/b,\delta/a,1)\), this becomes
\[
 \mu_{\rm par}\lesssim\tau^{-\varepsilon}
 \Delta_{\max}(\mathcal P_{\rm par})^{1-\beta}N_{\rm par}^\beta,
\tag{9.I-48}
\]
and
\[
 \mu_{\rm ch}\lesssim\tau^{-\varepsilon}
 \Delta_{\max}(\mathcal P_{\rm ch})^{1-\beta}
 \left(\frac ab\right)^\beta N_{\rm ch}^\beta.
\tag{9.I-49}
\]
Here \(\mathcal P_{\rm ch}\) denotes the local child family supplied to the
lemma.
\end{proposition}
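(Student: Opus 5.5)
The plan is to treat \cref{prop:corrected-s6-interface} as a bookkeeping statement: the two general bounds (9.I-F) and (9.I-K) are quoted from \cite[Lemmas~6.4 and~6.1]{GWZ2026}, and the four specialized bounds follow by substituting the scales in (9.I-F1) and simplifying exponents. First we check that the hypotheses match the source verbatim. The source calls require an essentially distinct family of \(s\times t\times1\) planks in a unit ball, density \(\lambda(\mathcal P,A)\ge\tau^\eta\), the scale-variant convention of \cite[Remark~3.6]{GWZ2026} with auxiliary scale \(\tau\le s\), and, respectively, the thickening count (9.I-F0) or the slab condition (9.I-K0) with \(\mathcal P_S\) read exactly as in source equation~(28). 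We fix \(\eta,b_0\) as the minimum of the source thresholds for the given \(\beta,\varepsilon\). By the discussion after (1.3), \(C_F(\mathcal P)\) is the all-convex constant relative to the ambient body. The restricted slab form (9.I-K0) is the one the source Lemma~6.1 actually uses, as recorded in the introduction.

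Next comes the parent substitution \((s,t,M)=(a,b,b/a)\) in (9.I-F). The factor \(C_F^{1-\beta/2}M^{\beta/2}(s/t)\) becomes \(C_F^{1-\beta/2}(b/a)^{\beta/2}(a/b)=C_F^{1-\beta/2}(a/b)^{1-\beta/2}\), and the remaining factors \(t^{-2\beta}(t^2N)^{1-\beta/2}\) become \(b^{-2\beta}(b^2N_{\rm par})^{1-\beta/2}\). This is (9.I-45).

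For the child substitution \((s,t,M)=(\delta/b,\delta/a,(b/a)^2)\), we have \(s/t=a/b\), so \(M^{\beta/2}(s/t)=(b/a)^{\beta}(a/b)=(a/b)^{1-\beta}\). The remaining factors are \(t^{-2\beta}=(\delta/a)^{-2\beta}\) and \((t^2N)^{1-\beta/2}=(\delta^2a^{-2}N_{\rm ch})^{1-\beta/2}\), which gives (9.I-46).

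For (9.I-K), the choice \(\gamma=0\) kills the factor \((s/t)^{\gamma\beta}\) and gives (9.I-48). The choice \(\gamma=1\) with \(s/t=a/b\) gives the factor \((a/b)^\beta\) and hence (9.I-49). Throughout, the constants \(M\lesssim\cdots\) enter only as fixed powers, so they are absorbed into the implicit constant.

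The main obstacle is not the algebra but justifying that the displayed values of \(M\) are legitimate, namely that (9.I-F0) holds with \(M\lesssim b/a\) for parents and \(M\lesssim(b/a)^2\) for children after normalization, and that the affine rescaling in the child call keeps the family essentially distinct and inside a unit ball. Since the statement only asserts the conditional implication ``if (9.I-F0) holds, then \dots'', we read \(M\) in (9.I-F1) as the hypothesized bound rather than something to be proved here. We would still remark that the parent value follows from the fact that at most \(O(\vartheta t/s)\) essentially distinct \(s\)-planks fit in a \(\vartheta t\)-thickening. The child value follows from the analogous two-parameter count after the affine map sending an \(a\times b\times r\) box to unit scale, using affine invariance (SC6) as in the interface table.
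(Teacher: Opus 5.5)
Your proposal is correct and follows the paper's proof. Both arguments read (9.I-F) and (9.I-K) directly off source equations (31) and (29) under the scale-variant convention at auxiliary scale \(\tau\), and both obtain (9.I-45)--(9.I-49) from the same exponent bookkeeping, \(M^{\beta/2}(s/t)=(a/b)^{1-\beta/2}\) or \((a/b)^{1-\beta}\), together with \(\gamma\in\{0,1\}\).

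You should drop your closing aside, however. The bounds \(M\lesssim b/a\) and \(M\lesssim(b/a)^2\) do not follow from a generic packing count. Counting short-direction translations, long-axis tilts and cross-sectional rotations, a \(\vartheta t\times t\times 1\) box can contain on the order of \((\vartheta t/s)^3\) essentially distinct \(s\times t\times 1\) planks. The paper accordingly treats these bounds as hypotheses inherited from source equations (45)--(46), as in \cref{prop:s6A}.
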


\begin{proof}
Apply the scale-variant forms of the two cited lemmas at auxiliary scale
\(\tau\).  Formula (9.I-F) is equation~(31) with its parameters renamed
\((a,b)=(s,t)\).  In the parent row of (9.I-F1),
\(M^{\beta/2}(s/t)\lesssim(a/b)^{1-\beta/2}\); in the child row it is
\(\lesssim(a/b)^{1-\beta}\).  The remaining powers are obtained by replacing
\(t\) with \(b\) and \(\delta/a\), respectively.  Formula (9.I-K) is exactly
equation~(29) with the same renaming.  Taking \(\gamma=0\) and \(\gamma=1\)
gives (9.I-48)--(9.I-49), so both conclusions follow directly from the cited
source lemmas.
\end{proof}

\begin{proposition}[Small-middle cellular replacement in GWZ Proposition~6.6(A)]
\label{prop:s6A}
Fix \(0<\beta\le1\), \(\varepsilon>0\), and a loss profile
\(\mathfrak p\), and assume \(K_F(\beta)\).
There exist \(\eta=\eta(\varepsilon,\beta)>0\),
\(b_0=b_0(\varepsilon,\beta)>0\), and
\(\delta_0=\delta_0(\varepsilon,\beta,\mathfrak p)>0\) such that the following holds for
\(0<\delta\le\delta_0\).  Here \(b_0\) is chosen to be a sufficiently small
structural multiple of the minimum of the original source thresholds, so every
fixed enlargement used below of a plank with middle width at most \(b_0\)
still lies in the stated range of the corresponding source lemma.  Let
\((\T,Y)\) be uniform in the sense of
\cite[Definition~2.2, p.~4]{GWZ2026}, with all source uniformity,
comparability, and normalized-interface losses governed by \(\mathfrak p\), and with
\(\lambda(\T,Y)\ge\delta^{\eta}\), and suppose
there is a source coarse scale \(\delta\le\rho\le a\le b\le b_0\), with
\(\delta/a\le b_0\), for which the
GWZ Part~(A) factoring structure gives
\(\T=\bigsqcup_{W\in\mathcal W}\T_W\), where \(\mathcal W\) is a nonempty
labelled family of \(a\times b\times1\) parents and every \(\T_W\) is
nonempty.  Assume that the normalized parent labels and normalized child
representations have maps, with uniformly bounded fibres, to
source-essentially-distinct carrier families, with each represented carrier
contained in and containing fixed enlargements of its image.  Assume also,
as part of the factoring data, that their fixed-dilate source conflict graphs
have maximum degree \(O(1)\).  Assume that the numbers \(|\T_W|\) are comparable,
that every \(\T_W\) is Frostman in \(W\), and that after the affine
normalizations each parent and child plank family satisfies, with
\(P_\vartheta\) its source thickening,
\[
 |\mathcal P[P_\vartheta]|\le M\vartheta
 \quad(P\in\mathcal P,\ a/b\le\vartheta\le1),
\]
with
\[
 M_{\rm par}\lesssim b/a
 \quad\text{for the parent call},\qquad
 M_{\rm ch}\lesssim(b/a)^2
 \quad\text{for the normalized child call}.
\]
These are exactly the two bounds used in source equations (45)--(46) of
\cite{GWZ2026}.  Then replacing source
Proposition~5.1 by \cref{thm:mainfact} gives
\[
 \mu(\T,Y)\lesslog
 \delta^{-\varepsilon}C_F(\T)^{1-\beta/2}
 \left(\frac ab\right)^{3\beta/2}\delta^{-2\beta}
 \bigl(\delta^2|\T|\bigr)^{1-\beta/2}.
\tag{9.1}
\]
Thus the normalization and conclusion are exactly those of source
Proposition~6.6(A), up to the explicit cellular logarithms and fixed
enlargements.
\end{proposition}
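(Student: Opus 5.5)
The plan is to run the three-factor multiplicity chain of source Proposition~6.6(A), with the cellular theorem in place of source Proposition~5.1, and then to apply the source Lemma~6.4 bound to each factor.

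\emph{Step 1: cellular factoring.} Normalize each \(a\times b\times1\) parent to its permanent L\"owner rectangle via \cref{lem:ordered-john,cor:convexbox}. This transfers the Frostman condition with \(p=1\), so \cref{thm:mainfact} applies in the ordinary fine-child alternative. The hypothesis that \(|\T_W|\) is comparable supplies (F2) with \(\kappa_D=O(1)\). Uniformity and \(|\T|\le\delta^{-C}\) supply (F3). The output consists of \(E_2\), \(\W'\), \(Z\), \(Y_1\) and satisfies P1--P5. \cref{cor:product} gives
\[
 \mu(\T,Y)\lesssim\mathfrak L_0\,\mu(\W',Z)\,\mu_{\rm child},
\]
and (8.4) gives \(\lambda(\W',Z)\gtrsim_{\log}\lambda(\T,Y)^2\ge\delta^{2\eta}\). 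Here \(\mathfrak L_0\lesssim_{\log}1\) by (8.8).

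\emph{Step 2: choose the inner parent.} Pick one \(W\in\W'\) whose local child density after refinement is at least the average. By \cref{lem:denominator-average} this density is at least \(\kappa_D^{-1}\mathfrak L_0^{-1}\lambda(\T,Y)\). Affinely normalize \(W\) to the unit scale; the children become \((\delta/b)\times(\delta/a)\times1\) planks. Pass to one source-conflict color of the normalized child family (\cref{lem:source-conflict-coloring}). Because the conflict degree is \(O(1)\), this costs only \(H_{\rm src}=O(1)\) in mass and in \(\mu\) by (9.0e\('\)). By (9.0g), the normalized Frostman constant grows by at most \(H\lambda^{-1}\le\delta^{-O(\eta)}\). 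Do the same for the parent family \(\widetilde\W'\).

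\emph{Step 3: apply the source bounds.} Apply (9.I-46) to the chosen child color and (9.I-45) to the parent color, with \(\tau=\delta\). This requires checking the source density threshold \(\lambda\ge\tau^{\eta'}\): take \(\eta\) small relative to the source \(\eta'\), and \(\delta_0\) small enough that the logarithmic losses under profile \(\mathfrak p\) are absorbed. The \(M\)-hypotheses are exactly the assumed bounds \(M_{\rm par}\lesssim b/a\) and \(M_{\rm ch}\lesssim(b/a)^2\). Multiply the two bounds. Use \(N_{\rm par}N_{\rm ch}\lesssim|\T|\), which holds by comparable \(|\T_W|\), together with the relation between \(C_F(\T)\) and the parent and child Frostman constants. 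The combined scale powers are
\[
 (a/b)^{1-\beta/2}\,b^{-2\beta}\,b^{2-\beta}\cdot(a/b)^{1-\beta}\,(\delta/a)^{-2\beta}\,(\delta/a)^{2-\beta}.
\]
Multiplying these into the \(C_F\) factors reproduces the bookkeeping of source (45)--(46) and yields (9.1). Every \(\delta^{-O(\eta)}\) factor is absorbed into \(\delta^{-\varepsilon}\).

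\emph{Main obstacle.} The hard step is the concentration bookkeeping. One must show that \(C_F\) of the selected parent color and of the normalized child color are controlled by \(C_F(\T)\), or absorbed into \(\delta^{-\varepsilon}\), to the power \(1-\beta/2\). This is done through \cref{lem:color-normalized,lem:normalized-ancestry}, which pay \(\lambda^{-1}\) and the ancestry factor \(\kappa_IR_N\). The parent cellular shading \(Z\) lives on enlarged boxes, so the source calls must be made at fixed enlargements; this is why \(b_0\) is taken as a structural multiple of the source thresholds. I would first try to bound the normalized Frostman constants by \(\delta^{-O(\eta)}\) directly, and only then identify the remaining factor with \(C_F(\T)\) as the source does.
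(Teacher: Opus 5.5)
Your proposal is correct and follows the paper's proof essentially step for step. The paper runs the same sequence: cellular factoring in place of source Proposition~5.1, choice of a dense inner parent, colored calls to (9.I-46) and (9.I-45) with $\tau=\delta$, recovery through (9.0e$'$), and the combination $|\widetilde\W'|\,|\T_{W_0}|\le C_N|\T|$ with the same scale bookkeeping. The points you leave implicit are closed in the paper as follows. P3 makes $\mu_{\rm child}$ comparable to $\mu(\T_W,Y_1)$ for your chosen $W$. The local child Frostman constant is absorbed entirely into $\delta^{-\varepsilon}$. Only the parent constant is charged to $C_F(\T)$, through the retained-child restriction (9.2b) followed by \cref{lem:normalized-ancestry} with $\kappa_I=1$, $R_N=C_N$, which is exactly the route you sketched.
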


\begin{proof}
\medskip\noindent\emph{Setup and retained density.}
Let \(N_W=|\T_W|\), let \(M_W\) be the retained child shading mass in
\(W\), and let \(N_0\le N_W\le C_NN_0\) be the fixed source count level.
P1 gives \(\sum_WM_W\gtrlog M(Y)\), and the retained labels form a
subfamily of the source partition.  Hence
\[
 \frac{\sum_WN_W\lambda(\T_W,Y_1)}{\sum_WN_W}
 =\frac{\sum_WM_W}{|T|\sum_WN_W}
 \gtrlog\lambda(\T,Y).
\tag{9.2}
\]
Choose \(W_0\) attaining this weighted average, so its density is at least
\(\delta^{\eta+o(1)}\).  Take \(\eta\) below the threshold
\(\eta_{6.4}(\varepsilon,\beta)\) from
\cite[Lemma~6.4, p.~15]{GWZ2026}, with strict reserve for the cellular losses
and the scale-variant convention of \cite[Remark~3.6, p.~6]{GWZ2026}.

\medskip\noindent\emph{Child source call.}
The first row of the interface table is realized by the following checks.
Normalizing \(W_0\) gives widths
\[
 a_{\rm ch}=b^{-1}\delta,\qquad b_{\rm ch}=a^{-1}\delta,
\qquad 0<a_{\rm ch}\le b_{\rm ch}\le1.
\]
Here \(b_{\rm ch}=\delta/a\le b_0\), and the auxiliary scale is
\(\tau_{\rm src}=\delta\le a_{\rm ch}\).  After decreasing \(\delta_0\),
the affine-invariant density from (9.2) exceeds
\(\delta^{\eta_{6.4}}\).  The shading restriction leaves the complete child
family, its convex Frostman tests, and the assumed count
\(|\mathcal P[P_\vartheta]|\le M\vartheta\), for every base
\(P\in\mathcal P\) and every \(\vartheta\in[a/b,1]\), unchanged.

Properly color the assumed child source-conflict graph.  Its \(O(1)\) colors
are source-essentially-distinct; the heaviest color is the called
family, and complete-containment counts do not increase.  If
\(\lambda_{\rm ch}\) and \(C_{F,{\rm ch}}\) are the
pre-color density and normalized Frostman constant,
\cref{lem:color-normalized} gives
\[
 \lambda_{{\rm ch},c}\ge H_{\rm src}^{-1}\lambda_{\rm ch},
 \qquad
 C_{F,{\rm ch},c}\le
 H_{\rm src}\lambda_{\rm ch}^{-1}C_{F,{\rm ch}}.
\tag{9.2a}
\]
Here \(\lambda_{\rm ch}\ge\delta^{\eta+o(1)}\).  With source-output allowance
\(\varepsilon/4\), choose the input exponents and \(\delta_0\) so that
\[
 \bigl(H_{\rm src}\lambda_{\rm ch}^{-1}
 C_{F,{\rm ch}}\bigr)^{1-\beta/2}
 \le\delta^{-\varepsilon/8}
\tag{9.2a$'$}
\]
The fixed enlargement and recovery factor \(H_{\rm src}\) is also at most
\(\delta^{-\varepsilon/8}\).  Thus the color and enlargement costs are
reserved, and (9.0e$'$) returns the source estimate to the uncolored family.
The call data are
\[
 \begin{gathered}
 s_{\rm ch}=\delta/b,\qquad t_{\rm ch}=\delta/a,\qquad
 \tau_{\rm src}=\delta\le s_{\rm ch},\qquad
 \mathcal F_{\rm ch}=\text{the selected child color},\\
 \lambda(\mathcal F_{\rm ch})\ge H_{\rm src}^{-1}\lambda_{\rm ch},\qquad
 C_F(\mathcal F_{\rm ch})\le
 H_{\rm src}\lambda_{\rm ch}^{-1}C_{F,{\rm ch}},\qquad
 M_{\rm ch}\lesssim(b/a)^2.
 \end{gathered}
\tag{9.2d0}
\]
The normalized thick tests pull back to the source convex class, and replacing
\(W_0\) by its source L\"owner rectangle costs a dimensional dilation.
Applying the child specialization
(9.I-46) of \cref{prop:corrected-s6-interface}, and then using (9.0e$'$) and
(9.2a$'$), gives the following estimate on the uncoloured local family:
\[
 \mu(\T_{W_0},Y_1)
 \lesssim \delta^{-\varepsilon/2}
 \left(\frac ab\right)^{1-\beta}
 \left(\frac\delta a\right)^{-2\beta}
 \left(\frac{\delta^2}{a^2}|\T_{W_0}|\right)^{1-\beta/2}.
\tag{9.2d}
\]
Average multiplicity is affine invariant, and the cardinality is labelled.

\par\medskip
\noindent\emph{Parent source call.}\enspace
Here (8.4) supplies the positive parent density and \(b\le b_0\) the source
range.  Let \(\T'\) be the complete child labels retained in the cellular
parents.  Their shading mass is at least \(L_{\rm cell}^{-1}M(Y)\), where
\(L_{\rm cell}=\delta^{-o(1)}\), and is at most their carrier denominator;
hence
\[
 \frac{|\T'|}{|\T|}\ge L_{\rm cell}^{-1}\lambda(\T,Y).
\tag{9.2b}
\]
The restricted normalized fine Frostman constant is therefore at most
\(L_{\rm cell}\lambda(\T,Y)^{-1}C_F(\T)\).  Apply
\cref{lem:normalized-ancestry} to
\(I_{\rm par}=\{(W,T):T\in\T_W'\}\).  The source assignment is unique on
fine labels, and the selected parent-count level lies in
\([N_0,C_NN_0]\); hence
\[
 C_F(\W')\le
 C_NL_{\rm cell}\lambda(\T,Y)^{-1}C_F(\T).
\tag{9.2c}
\]
This factor is \(\delta^{-O(\eta)-o(1)}\).  Fixed dilation preserves the
source count \(N(\theta)\lesssim\theta(b/a)\).  Properly color the assumed
parent source-conflict graph, and let \(\widetilde\W'_c\) be the
shading-mass-heavy color; its color number is \(H_{\rm src}=O(1)\).  With the
density
\(\lambda_{\rm par}\) from (8.4), the parent call data are
\[
 \begin{gathered}
 s_{\rm par}=a,\qquad t_{\rm par}=b,\qquad
 \tau_{\rm src}=\delta\le s_{\rm par},\qquad
 \mathcal F_{\rm par}=\widetilde\W'_c,\qquad
 M_{\rm par}\lesssim b/a,\\
 \lambda(\mathcal F_{\rm par})\ge
 H_{\rm src}^{-1}\lambda_{\rm par},\\
 C_F(\mathcal F_{\rm par})\le
 H_{\rm src}\lambda_{\rm par}^{-1}
 C_NL_{\rm cell}\lambda(\T,Y)^{-1}C_F(\T).
 \end{gathered}
\tag{9.2e0}
\]
This verifies the second row of the interface table, using (9.2c) and
\cref{lem:color-normalized}.  With source-output allowance
\(\varepsilon/4\), impose
\[
 \bigl(H_{\rm src}\lambda_{\rm par}^{-1}C_NL_{\rm cell}
 \lambda(\T,Y)^{-1}\bigr)^{1-\beta/2}
 H_{\rm src}\le\delta^{-\varepsilon/4}.
\tag{9.2e$'$}
\]
Choose the input exponent and \(\delta_0\) with strict reserve so that the
parent-color density is at least \(\delta^{\eta_{6.4}}\).  The specialization
(9.I-45) of \cref{prop:corrected-s6-interface}, with
\(\tau_{\rm src}=\delta\), followed by recovery through (9.0e$'$), gives
\[
 \mu(\widetilde\W',Z)
 \lesssim\delta^{-\varepsilon/2}C_F(\T)^{1-\beta/2}
 \left(\frac ab\right)^{1-\beta/2}b^{-2\beta}
 \bigl(b^2|\widetilde\W'|\bigr)^{1-\beta/2}.
\tag{9.2e}
\]

\medskip\noindent\emph{Combination.}
Finally, P4--P5 give, for the chosen \(W_0\),
\[
 \mu(\T,Y)\lesslog
 \mu(\widetilde\W',Z)\mu(\T_{W_0},Y_1),
\tag{9.3}
\]
which is source equation (44) with the cellular loss.  Affine changes leave
average multiplicity invariant, and the carrier and label denominators in
(45)--(46) are the ones displayed above.  Since the retained parents form a subfamily of
the source partition and \(N_W\in[N_0,C_NN_0]\),
\[
 |\widetilde\W'|\,|\T_{W_0}|\le C_N|\T|.
\]
This cardinality bound and the scale factors in
(9.2d)--(9.2e) combine explicitly as
\[
 \left(\frac ab\right)^{1-\beta/2+1-\beta+2\beta}
 \left(\frac ba\right)^{2-\beta}
 =\left(\frac ab\right)^{3\beta/2}.
\tag{9.3a}
\]
The factor \(\delta^{-2\beta}(\delta^2|\T|)^{1-\beta/2}\)
is the remaining child--parent volume product.  Substitution in (9.3) proves
(9.1), with the two \(\varepsilon/2\) allowances adding to
\(\varepsilon\).
\end{proof}

\begin{proposition}[Small-middle cellular replacement in GWZ Proposition~6.6(B)]
\label{prop:s6B}
Fix \(0<\beta\le1\), \(\varepsilon>0\), and a loss profile
\(\mathfrak p\), and assume
\(K_{KT}(\beta)\).  There exist
\(\eta=\eta(\varepsilon,\beta)>0\),
\(b_0=b_0(\varepsilon,\beta)>0\), and
\(\delta_0=\delta_0(\varepsilon,\beta,\mathfrak p)>0\) such that the following holds for
\(0<\delta\le\delta_0\).  Here \(b_0\) is chosen to be a sufficiently small
structural multiple of the minimum of the original source thresholds, so every
fixed enlargement used below of a plank with middle width at most \(b_0\)
still lies in the stated range of the corresponding source lemma.  Let
\((\T,Y)\) be uniform in the sense of
\cite[Definition~2.2, p.~4]{GWZ2026}, with all source uniformity,
comparability, and normalized-interface losses governed by \(\mathfrak p\), and with
\(\lambda(\T,Y)\ge\delta^\eta\).  Suppose that
\(\delta\le\rho\le a\le b\le b_0\), \(\delta/a\le b_0\), and that the
coarse \(\rho\)-tubes factor,
in the sense of \cite[Definition~4.2]{GWZ2026}, through a nonempty labelled
family \(\mathcal W\) of \(a\times b\times1\) rectangular parents satisfying
\(\Delta_{\max}(\mathcal W)\lesssim1\), and assume that its normalized
parent labels are source-essentially-distinct up to uniformly bounded
representation fibres.  Assume that the fine tubes are
assigned to essentially distinct coarse \(\rho\)-tubes, each fine tube has
\(O(1)\) coarse representations, and the branching numbers lie in
\([m,R_{\rm src}m]\), where
\(1\le R_{\rm src}\le\delta^{-\omega_{\rm br}(\delta)}\) for a branching
modulus \(\omega_{\rm br}\) controlled by \(\mathfrak p\).  Finally assume
that every parent has a nonempty coarse-child family and that the coarse
children in every parent \(B\in\mathcal W\) satisfy
\[
 \frac{\#\{S:S\subset K\}}{\#\{S:S\subset B\}}
 \lesslog\frac{|K|}{|B|}
 \qquad(K\subset B\ \mathrm{convex}).
\]
Then the
coarse/thickened-child alternative of \cref{thm:mainfact} gives
\[
 \mu(\T,Y)\lesslog
 \delta^{-\varepsilon}\Delta_{\max}(\T)^{1-\beta}
 \left(\frac ab\right)^\beta|\T|^\beta.
\tag{9.4}
\]
This is the source Part~(B) conclusion.  It neither requires nor implies a
fine-descendant all-convex Frostman condition.
\end{proposition}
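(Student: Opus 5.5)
The plan follows the proof of \cref{prop:s6A}, with the Lemma~6.1 ($K_{KT}$) specialisations of \cref{prop:corrected-s6-interface} in place of the Lemma~6.4 ones, and with the coarse/thickened-child alternative of \cref{thm:mainfact}.

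\emph{Step 1: factor and extract the product.} Pass each parent of $\mathcal W$ to its permanent L\"owner rectangle via \cref{cor:convexbox}. This preserves the coarse containment hypothesis with a fixed factor and gives $\Delta_{\max}(\{B_W\})\lesssim1$. Then run \cref{thm:mainfact} in the third alternative with $C_F\lesssim_{\log}1$ and $R_{\rm br}=R_{\rm src}=\delta^{-o(1)}$. This yields (8.2), the multiplicity structure (8.2a), and the parent density (8.4a): $\lambda(\widetilde\W',Z)\gtrsim\delta^{-o(1)}\kappa_D^{-2}\lambda(\T,Y)^2\ge\delta^{2\eta+o(1)}$. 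Here $\kappa_D$ is bounded by the uniformity of $(\T,Y)$ and the branching window, so it is one of the profile losses. As in (9.2)--(9.3), choose a parent $W_0$ attaining the weighted retained density, so its child density is $\ge\delta^{\eta+o(1)}$. Then P4--P5 give
\[
\mu(\T,Y)\lesssim_{\log}\mu(\widetilde\W',Z)\,\mu(\T_{W_0},Y_1).
\]

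\emph{Step 2: parent call.} Apply (9.I-48) with $(s,t,\gamma)=(a,b,0)$ and $\tau=\delta$. First colour the parent conflict graph by \cref{lem:source-conflict-coloring} and keep the mass-heavy colour. Only absolute concentration enters here, so $\Delta_{\max}$ of the colour is at most $\Delta_{\max}(\mathcal W)\lesssim1$ by \cref{lem:color-normalized}, with no denominator cost. The density threshold holds after shrinking $\eta$. Recovering through (9.0e$'$) gives
\[
\mu(\widetilde\W',Z)\lesssim\delta^{-\varepsilon/2}|\widetilde\W'|^\beta.
\]

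\emph{Step 3: child call.} Apply (9.I-49) with $(s,t,\gamma)=(\delta/b,\delta/a,1)$ to the normalised fine family in $W_0$. This gives
\[
\mu(\T_{W_0},Y_1)\lesssim\delta^{-\varepsilon/2}\Delta_{\max}(\mathcal P_{\rm ch})^{1-\beta}(a/b)^\beta|\T_{W_0}|^\beta.
\]
The main obstacle is verifying the restricted slab tests (9.I-K0) with $\gamma=1$ and controlling $\Delta_{\max}(\mathcal P_{\rm ch})$. No fine-scale Frostman condition is available, so both must come from the coarse hypothesis through ancestry. For the slab tests, a normalised $\vartheta\times1\times1$ slab (with $\vartheta\ge a/b$) pulls back to a convex $K\subset W_0$ of relative volume $\asymp\vartheta$. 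By \cref{lem:capped-two-point} and \cref{lem:proxy-chart}, a fine tube in the slab with plane angle at most $\vartheta$ has its coarse ancestor in a fixed dilate of $K$. The coarse Frostman bound then controls the number of such ancestors, and the branching window $[m,R_{\rm src}m]$ converts ancestor counts into fine counts at cost $R_{\rm src}\le\delta^{-o(1)}$, as in \cref{lem:normalized-ancestry}. For $\Delta_{\max}(\mathcal P_{\rm ch})$, the affine normalisation multiplies volumes by $|W_0|^{-1}$. Using $\Delta(\mathcal W\text{-children},W_0)\lesssim\Delta_{\max}(\T)$ and a fixed dilation, I expect $\Delta_{\max}(\mathcal P_{\rm ch})\lesssim\Delta_{\max}(\T)\cdot\delta^{-o(1)}$. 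I would first try to get this directly from absolute monotonicity (\cref{lem:color-normalized}) after colouring, and if needed use the density lower bound to reserve a $\delta^{-O(\eta)}$ loss absorbed in $\varepsilon/8$.

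\emph{Step 4: combine.} Multiply the two estimates and use $|\widetilde\W'|\,|\T_{W_0}|\le C|\T|$, which follows from the comparable counts per parent supplied by the branching window. The scale factors match directly, leaving $\delta^{-\varepsilon}\Delta_{\max}(\T)^{1-\beta}(a/b)^\beta|\T|^\beta$. This is (9.4), with all cellular and colour losses inside $\lesssim_{\log}$ and $\delta_0(\varepsilon,\beta,\mathfrak p)$.
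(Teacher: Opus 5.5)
Your architecture matches the paper's. You use the coarse/thickened-child alternative of \cref{thm:mainfact} for the parent density, choose a parent $W_0$ by the weighted average, take the product from P4--P5, make a $\gamma=0$ Lemma~6.1 call on the parents and a $\gamma=1$ call on the normalized children, and combine cardinalities.

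The gap is in the step you flag as the main obstacle, the restricted slab test (9.I-K0) for the child family. You assert that a fine tube counted in the equation-(28) subfamily of a normalized slab $\Sigma$ has its coarse ancestor in a fixed dilate of $K=\Phi_{W_0}^{-1}(\Sigma)$. That assertion is the whole content of the step, and the lemmas you cite do not give it. \Cref{lem:capped-two-point} only places the fine carrier near the coarse one. \Cref{lem:proxy-chart} controls two-dimensional chart parameters, i.e.\ slabs normal to the middle chart coordinate, whereas a restricted slab may tilt toward $e_1$.

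Suppose you have $S\subset N_{C\rho}(T)\subset N_{C\rho}(K)$, which needs the equal-length ancestry of \cref{lem:standard-capped-ancestry}. The $C\rho$-padding can then be absorbed into a homothetic dilate of comparable volume only if $K$ contains a ball of radius $\gtrsim\rho$. For an unrestricted test slab this fails: with normal $e_1$ and $\vartheta=a/b$, the pullback has physical width $\vartheta a=a^2/b$, which can be far below $\rho$.

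The missing argument is the plane computation. Put $D=\operatorname{diag}(a^{-1},b^{-1},1)$.
\begin{itemize}
\item The normalized long plane of a child plank contains a vector within $O(a/b)$ of $e_1$, so $|(n_P)_1|\lesssim a/b$.
\item The angular restriction then gives $|(n_\Sigma)_1|\lesssim\vartheta$, hence $a|D^Tn_\Sigma|\lesssim\vartheta$, while $a\|D\|=1$. So $K$ contains a ball of radius $ca\ge c\rho$.
\item \Cref{lem:convex-parallel-body} then gives $K+B(0,C\rho)\subset z_K+C_+(K-z_K)$, and (9.1a) with branching comparability yields (9.5).
\end{itemize}
That last count is elementary. \Cref{lem:normalized-ancestry}, which you invoke, transfers concentration in the opposite direction, from children to parents.

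There are also secondary points.
\begin{itemize}
\item (9.I-49) is stated for essentially distinct families. \Cref{prop:s6B}, unlike \cref{prop:s6A}, gives no bounded-conflict hypothesis on the normalized child family, and each fine tube may have $O(1)$ coarse representations.
\item The paper therefore forms a labelled copy $(S,T)$ for every containment. Each $S$ keeps its $n_S\in[m,R_{\rm src}m]$ descendants, and mass, denominator and union change only by $\kappa_{\rm cont}$. It then proves a labelled form of Lemma~6.1 through thick proxies, where the fibre size $N$ cancels after (9.1c), and applies it to the complete families with no colouring.
\item Colouring the parents is a workable alternative, since $\Delta_{\max}(\mathcal W)\lesssim1$ bounds the conflict degree. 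Colouring the child family would need an unstated hypothesis, and choosing a single representation per fine tube can break the lower branching bound.
\item $\Delta_{\max}(\mathcal P_{\rm ch})\le\kappa_{\rm cont}\Delta_{\max}(\T)$ follows at once from affine invariance and monotonicity under subfamilies, with no $\delta^{-o(1)}$ or $\delta^{-O(\eta)}$ reserve.
\item Your bounds on $\kappa_D$ and on $|\widetilde\W'|\,|\T_{W_0}|\lesssim\delta^{-o(1)}|\T|$ also need the per-parent coarse count level (9.1b$'$) from the source factoring, not only the branching window.
\end{itemize}
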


\begin{proof}
\medskip\noindent\emph{Setup and labelled ancestry.}
Set \(\varepsilon_*:=\varepsilon/2\).  Every source-output and recovery
allowance uses \(\varepsilon_*\); the remainder pays the final subpower
cardinality factor.  Source Part~(B) factors the coarse \(\rho\)-tubes through
the rectangular parents, as licensed by Remark~5.3
\cite[proof of Proposition~6.6(B), pp.~24--25]{GWZ2026}.  Its coarse-family
input is
\[
 \frac{\#\{S\in\T_{\rho,B}:S\subset K\}}{|\T_{\rho,B}|}
 \lesslog \frac{|K|}{|B|},
\tag{9.1a}
\]
For every source containment \(T\subset S\), form the labelled copy
\((S,T)\), with the carrier and shading of \(T\), and assign it to \(S\).
Source bounded ancestry and branching comparability give \(m\ge1\) and
\[
 m\le n_S:=\#\{(S,T):T\subset S\}
       \le R_{\rm src}m,\qquad R_{\rm src}=\delta^{-o(1)}.
\tag{9.1b}
\]
Each fine label has between one and \(\kappa_{\rm cont}=O(1)\)
representations.  Thus labelled mass and denominator change by factors in
\([1,\kappa_{\rm cont}]\), the physical union is unchanged, and the coarse
assignment is unique on representation labels.

Source factoring supplies the denominator comparison through
\(N_\rho\ge1\) and \(C_\rho=\delta^{-o(1)}\):
\[
 N_\rho\le |\T_{\rho,B}|\le C_\rho N_\rho
 \qquad(B\in\mathcal W).
\tag{9.1b$'$}
\]
If \(D_B:=|T|\sum_{S\in\T_{\rho,B}}n_S\) is the represented fine-carrier
denominator in \(B\), then (9.1b)--(9.1b$'$) give
\[
 mN_\rho|T|\le D_B
 \le R_{\rm src}C_\rho mN_\rho|T|.
\tag{9.1b$''$}
\]
Thus parent denominators are comparable by
\(R_{\rm src}C_\rho=\delta^{-o(1)}\), which verifies the ratio condition
(6.6a) of \cref{lem:denominator-average} and hypothesis (F2) of
\cref{thm:mainfact}.

For the later source calls we use the labelled form of source Lemma~6.1 at
\(0<\tau\le s\le t\).  Its low-multiplicity branch and the angular selections, refinements, and
pigeonholing in source Lemmas~6.11 and 6.13 run on the label index set, with
angles resolved below at \(s/t\).  Assign every surviving label to the
essentially distinct thick proxy of source Definition~6.10 and dyadically
regularize the proxy-fibre size \(N\).  Each retained proxy contains
\(\sim N\) labels and has volume \(\asymp(t\vartheta/s)\) times the original
plank volume.  Fibre counting, together with the monotonicity in source
Remark~3.3(B), gives
\[
 \Delta_{\max}(\mathcal P_{\vartheta,S})
 \lesssim \frac{t\vartheta}{sN}\Delta_{\max}(\mathcal P),
 \qquad
 |\mathcal P_{\vartheta,S}|\lesssim N^{-1}|\mathcal P_S|.
\tag{9.1c}
\]
Source Lemma~6.13 also gives the proxy shading density
\(\gtrsim s^{4\eta_{\rm in}}\ge\tau^{4\eta_{\rm in}}\); choosing its input
exponent with strict reserve verifies the scale-variant density threshold.
Only the affine image of the proxy family enters source Lemma~3.7.  Using
\(|\mathcal P_S|\le\tau^{-\eta_{\rm in}}\vartheta^\gamma|\mathcal P|\) and
\(\vartheta\ge s/t\), the complete cancellation is
\[
 \frac{sN}{t\vartheta}
 \left(\frac{t\vartheta}{sN}\Delta_{\max}(\mathcal P)\right)^{1-\beta}
 \left(N^{-1}|\mathcal P_S|\right)^\beta
 \lesssim \tau^{-\eta_{\rm in}\beta}
 \Delta_{\max}(\mathcal P)^{1-\beta}
 \left(\frac st\right)^{\gamma\beta}|\mathcal P|^\beta.
\]
Thus \(N\) cancels exactly and the remaining auxiliary-scale power is
absorbed by the source-output reserve.  This proves equation~(29), with all
multiplicities, cardinalities, and \(\Delta_{\max}\) counted on labels.
Thus the analytic inequality in (9.I-49), and its \(\gamma=0\) specialization
(9.I-48), hold for the labelled families below; the dyadic losses are
governed by \(\mathfrak p\).

\medskip\noindent\emph{Coarse parent estimate.}
Since \(\rho\le a\), (9.1a) and (9.1b) verify
\cref{lem:coarse-cordoba} with \(R_{\rm br}=R_{\rm src}\), giving the local
estimate (6.6d).  Combining it with the denominator comparison (9.1b$''$) in
\cref{cor:density} gives the coarse version of (6.10), including arbitrary
measurable fine shadings, clipping, and fixed enlargement, with factor
\(R_{\rm src}^{-3}=\delta^{o(1)}\).

\medskip\noindent\emph{Child source call.}
The weighted averaging (9.2), performed on the complete local representation
families, chooses a useful inner parent \(W\).  Let \(\mathcal R_W\) be its
representation family and put
\[
 \lambda_W:=
 \frac{\sum_{(S,T)\in\mathcal R_W}|Y'(S,T)|}
 {|\mathcal R_W|\,|T|}.
\tag{9.4a}
\]
Then \(\lambda_W\ge\delta^{\eta+o(1)}\), with the stronger normalized-scale
lower bound used in the source scale-variant call.  We retain the complete
labelled representation family.  Since each fine label has at most
\(\kappa_{\rm cont}\) representations,
\[
 \lambda(\mathcal R_W,Y')=\lambda_W,\qquad
 \Delta_{\max}(\mathcal R_W)\le
 \kappa_{\rm cont}\Delta_{\max}(\T).
\tag{9.4b}
\]

For the restricted slab-test condition, let \(\Phi_W\) normalize \(W\).
For \(\vartheta\in[a/b,1]\) and a source test slab \(\Sigma\) from
(9.I-K0), put
\[
 K_{\vartheta,\Sigma}:=\Phi_W^{-1}(\Sigma).
\]
Let \(\mathcal K_{\rm ch}\) be the pullbacks having nonempty equation-(28)
subfamily; the omitted tests are trivial.  Fix a counted normalized child
plank \(P\), and choose unit normals \(n_P,n_\Sigma\) to its two-long-axis
plane and to \(\Sigma\).  In principal coordinates write
\(D=\Phi_W=\operatorname{diag}(a^{-1},b^{-1},1)\).  The plane calculation on
pp.~24--25 of \cite{GWZ2026}, before its final semiaxis comparison, shows that
the normalized long plane of \(P\) contains a unit vector within \(O(a/b)\)
of \(e_1\), and hence \(|(n_P)_1|\lesssim a/b\).  The equation-(28) angular
condition permits signs such that
\[
 |n_\Sigma-n_P|\lesssim\vartheta,
 \qquad |(n_\Sigma)_1|\lesssim a/b+\vartheta\lesssim\vartheta.
\]
Since \(a/b\le\vartheta\), \(b\le1\), and \(|n_\Sigma|=1\),
\[
 a|D^Tn_\Sigma|\lesssim\vartheta,
 \qquad a\|D\|=1.
\]
A fixed inner copy of the L\"owner ellipsoid of the normalized
\(\vartheta\times1\times1\) slab therefore contains a translate of
\(D B(0,ca)\): its normal support is at most
\(ca|D^Tn_\Sigma|\lesssim c\vartheta\), while its tangential radius is at
most \(ca\|D\|\le c\).  Pulling back gives a translated \(ca\)-ball in
\(K_{\vartheta,\Sigma}\).  Writing \(a'(K_{\vartheta,\Sigma})\) for its
shortest L\"owner semiaxis, we consequently have
\[
 a'(K_{\vartheta,\Sigma})\gtrsim a\ge\rho.
\]
Fix \(K=K_{\vartheta,\Sigma}\), and let \(z_K\) be the centre of the inball
just constructed.  For a fine \(T\subset K\), the full-carrier
conclusion of \cref{lem:standard-capped-ancestry} gives
\[
 S\subset N_{C\rho}(T)\subset N_{C\rho}(K).
\]
Thus \cref{lem:convex-parallel-body} gives
\[
 K^+:=K+B(0,C\rho)
 \subset z_K+C_+(K-z_K),
 \qquad |K^+|\le C_+^3|K|.
\tag{9.4c}
\]
Every counted coarse child lies in both \(K^+\) and \(W\).  Hence (9.1a),
branching comparability, and \(|K^+|\lesssim|K|\) give
\[
 \frac{\#\{(S,T):T\subset K\}}{\#\{(S,T):T\subset W\}}
 \le R_{\rm src}
 \frac{\#\{S:S\subset K^+\cap W\}}{\#\{S:S\subset W\}}
 \lesslog R_{\rm src}\frac{|K|}{|W|}.
\tag{9.5}
\]
This is the normalized bound for the whole representation family on
\(\mathcal K_{\rm ch}\); explicitly,
\[
 \frac{\#\{(S,T)\in\mathcal R_W:T\subset K\}}
      {|\mathcal R_W|}
 \lesslog R_{\rm src}\frac{|K|}{|W|}
 \qquad(K\in\mathcal K_{\rm ch}).
\tag{9.5a}
\]
Let \(\mathcal F_{\rm ch}:=\Phi_W(\mathcal R_W)\), and interpret
\((\mathcal F_{\rm ch})_\Sigma\) exactly as the angularly filtered source
equation~(28) subfamily.  It lies in the normalized numerator of (9.5a), and
the affine Jacobian cancels, so
\[
 \frac{|K_{\vartheta,\Sigma}|}{|W|}
 \lesssim \frac{|\Sigma|}{|\Phi_W(W)|}
 \lesssim\vartheta,
 \qquad
 \frac{|(\mathcal F_{\rm ch})_\Sigma|}{|\mathcal F_{\rm ch}|}
 \lesslog R_{\rm src}\vartheta.
\tag{9.5a$'$}
\]
This is precisely the restricted class in (9.I-K0), whose necessity was
pointed out by Zeraoulia Rafik \cite{Zeraoulia2026Local}.  The normalized
labelled image has dimensions
\((\delta/b)\times(\delta/a)\times1\); density, multiplicity, and absolute
complete-containment density are affine invariant.
Put \(a_{\rm ch}=\delta/b\), \(b_{\rm ch}=\delta/a\), and let
\(\eta_{6.1}\) be the exponent returned by source Lemma~6.1 with an
\(\varepsilon_*/4\) output allowance.  Use its scale-variant form, licensed by
\cite[Remark~3.6 and the note after Proposition~6.6]{GWZ2026}, with
\(\tau_{\rm src}=\delta\le a_{\rm ch}\).  Choose the input exponent first and
then \(\delta_0\) so that
\[
 R_{\rm src}\log^{C}(2/\delta)\le \delta^{-\eta_{6.1}},
 \qquad
 \lambda_W\ge\delta^{\eta_{6.1}}.
\tag{9.5b}
\]
The strict density reserve and
\(R_{\rm src}\le\delta^{-\omega_{\rm br}(\delta)}\) permit (9.5b); moreover
\(\tau_{\rm src}=\delta\) and \(b_{\rm ch}=\delta/a\le b_0\).

The seven interface fields for the child call are therefore
\[
 \begin{gathered}
 s_{\rm ch}=\delta/b,\qquad t_{\rm ch}=\delta/a,\qquad
 0<s_{\rm ch}\le t_{\rm ch}\le b_0,\qquad
 \tau_{\rm src}=\delta\le s_{\rm ch},\qquad \gamma=1,\\
 \mathcal F_{\rm ch}=\Phi_W(\mathcal R_W),\qquad
 \lambda(\mathcal F_{\rm ch})=\lambda_W\ge\delta^{\eta_{6.1}},\\
 |(\mathcal F_{\rm ch})_\Sigma|
 \le\delta^{-\eta_{6.1}}\vartheta|\mathcal F_{\rm ch}|
 \quad(s_{\rm ch}/t_{\rm ch}\le\vartheta\le1,
       \ \Sigma\ \text{as in (9.I-K0)}),\\
 \Delta_{\max}(\mathcal F_{\rm ch})
 \le\kappa_{\rm cont}\Delta_{\max}(\T),\qquad
 s_{\rm ch}/t_{\rm ch}=a/b.
 \end{gathered}
\tag{9.5c0}
\]
Thus (9.5c0) verifies the child Lemma~6.1 row, including the equation-(28)
conditions.  Apply the labelled form of source Lemma~6.1 proved at (9.1c),
and absorb the fixed factor \(\kappa_{\rm cont}^{1-\beta}\):
\[
 \mu(\mathcal R_W,Y')
 \lesssim\delta^{-\varepsilon_*/2}
 \Delta_{\max}(\T)^{1-\beta}
 \left(\frac ab\right)^\beta|\mathcal R_W|^\beta.
\tag{9.5c}
\]
The complete original family is used, so there is no color-recovery factor.

\medskip\noindent\emph{Parent source call.}
For the parent call, \(b\le b_0\), and we again retain the complete labelled
family.  Its call data are
\[
 \begin{gathered}
 s_{\rm par}=a,\qquad t_{\rm par}=b,\qquad
 \tau_{\rm src}=\delta\le s_{\rm par},\qquad \gamma=0,\qquad
 \mathcal F_{\rm par}=\W',\\
 \lambda(\mathcal F_{\rm par})=\lambda_{\rm par},
 \qquad \Delta_{\max}(\mathcal F_{\rm par})\lesssim1.
 \end{gathered}
\tag{9.5d0}
\]
Absolute concentration is monotone under the retained parent subfamily, and the cellular
density exceeds \(\delta^{\eta_{6.1}}\) after decreasing \(\delta_0\).
Hence the labelled \(\gamma=0\) form proved at (9.1c) gives
\[
 \mu(\W',Z)\lesssim\delta^{-\varepsilon_*/2}|\W'|^\beta.
\tag{9.5d}
\]
\medskip\noindent\emph{Combination.}
Finally the cellular product is source equation (47).  Since the
retained parents form a subfamily and (9.1b)--(9.1b$''$) give comparable
represented denominators,
\[
 |\W'|\,|\mathcal R_W|
 \lesssim \kappa_{\rm cont}R_{\rm src}C_\rho|\T|
 =\delta^{-o(1)}|\T|.
\]
Multiplying (9.5c)--(9.5d) through (47) costs
\(\delta^{-\varepsilon_*}\), together with the displayed cardinality factor.
After all parameters and the loss profile \(\mathfrak p\) are fixed,
decrease \(\delta_0(\varepsilon,\beta,\mathfrak p)\) so that
\[
 (\kappa_{\rm cont}R_{\rm src}C_\rho)^\beta
 \le \delta^{-(\varepsilon-\varepsilon_*)}.
\]
This is possible because the product is \(\delta^{-o(1)}\).  Thus the total
cost is at most \(\delta^{-\varepsilon}\), apart from the declared logarithmic
factors, and (9.4) follows.  The child call has used the
restricted slab-test condition (9.5a$'$), as encoded in (9.5c0).
\end{proof}

\begin{remark}[Scope of the two source interfaces]
\label{rem:small-middle-scope}
\Cref{prop:s6A,prop:s6B} cover precisely the two small-middle interfaces for
which both the physical parent and normalized child middle widths lie below
the returned \(b_0\).  The slab estimate (B.2) is used through its stated
quadratic form.  In Section~10, Lemma~6.4 is invoked in the thick branch, where
\(a\ge\delta^{1-\tau}\) gives \(\delta/a\le\delta^\tau<b_0\) after decreasing
\(\delta_0\).  The large-middle branch (S1b) instead uses (10.15)--(10.16) and
\cref{thm:slab}, while (S1c) uses neither proposition nor a normalized
Lemma~6.1/6.4 child call.
\end{remark}

\begin{remark}[Source version]
The two calls use Lemmas~6.1 and 6.4 in the forms recorded in
\cref{prop:corrected-s6-interface}; Appendix~D records the corresponding v1
textual comparison.
\end{remark}

Within the scope fixed in \cref{rem:small-middle-scope}, these two
propositions complete the small-middle applications.  Their proof uses the
cellular theorem as a self-contained interface and is independent of the
multiscale synchronization in the next section.

\section{Cellular closure of the very-non-sticky reduction}
\label{sec:section9}

The very-non-sticky application represents fine tubes by tagged tubelets,
factors them through biased parents, regularizes and angularly refines the
parent--cell graph, and lifts the survivors to the original labels.  The
ledger below separates tubelet mass from represented fine mass and keeps
angular uniformity on its reference shading.  After synchronization and
cellularization, five geometric branches close in the parameter order of
\cref{sec:paramclose}.  The final application is conditional on
\(K_{KT}(\beta)\), \(K_F(\beta)\), (10.33), and (I1)--(I4); Appendix~D gives
the source correspondence.

\subsection{Reference shadings and the object hierarchy}

Fix an \(r_1\)-ball \(B\), with
\(r_1=\delta^{\varepsilon_{\rm scal}}\).  The objects and their three principal
transitions are summarized here; the tagged equations below provide the
quantitative definitions.
\begin{center}
\scriptsize
\begin{tabularx}{\textwidth}{@{}>{\raggedright\arraybackslash}p{1.55cm}>{\raggedright\arraybackslash}p{1.7cm}>{\raggedright\arraybackslash}p{3.0cm}>{\raggedright\arraybackslash}X@{}}
\toprule
object & carrier level & creating operation & retained property / later use\\
\midrule
\(\bar Y\) & fine tubes & source angular uniformization & fixed reference for cap and direction uniformity\\
\(Y^{\rm sc}\) & fine tubes & all-discrete-scale refinement & source scale shadings and comparisons\\
\(Y^{\rm pre}\to Y^0\) & represented fine tubes & canonical levels and whole-tag level, then good-ball deletion & representation coverage and the mass bound (10.1)\\
\(Y_B\) & canonical tubelets & good-ball restriction of \(\widetilde Y_B\) & reference shading for the local cellular input\\
\(Y_{\rm reg}\to Y_1,Z\) & tubelets / parents & joint cellular factoring, then saturated local-volume selection & compatibility and synchronized multiplicities\\
\((Y_2,Z^\angle)\) & tubelets / parents & ANGLE-CELL and one lift & resolved angle and retained tubelet mass\\
\(\widehat Y\to Y^*\) & fine tubes & global fine level and bias, then retained tubelet incidences & represented physical fine mass and final lift\\
\bottomrule
\end{tabularx}
\end{center}

Apply \cref{lem:source-canonical} to the fixed
\(Y^{\rm sc}\subset\bar Y\).  The induced physical shading
\(Y^{\rm pre}\) is defined by (10.7X) after the canonical multiplicity and
density levels (10.7T)--(10.7W) and the whole-tag outer-ball level
(10.7W$'$).  Every
\((T,x)\), \(x\in Y^{\rm pre}(T)\), then has a labelled canonical
representation \((B,T_B)\) with \(T\in\T(T_B)\) and
\(x\in\widetilde Y_B(T_B)\).  After the good-ball deletion define
\[
 Y^0(T):=Y^{\rm pre}(T)\cap
 \bigcup_{\substack{(B,T_B):\,T\in\T(T_B)}}Y_B(T_B).
\tag{10.0}
\]
Thus \(Y^0\) is the represented fine shading before biased factoring, and
\[
 \sum_T|Y^0(T)|\ge c_{\rm pre}\sum_T|\bar Y(T)|;
\tag{10.1}
\]
more explicitly, if \(L_{\rm tag}\) is the number of classes in the whole-tag
outer-ball level,
then (10.7W)--(10.7X), (10.7W$'$), and (10.1d) permit
\[
 c_{\rm can}:=
 \frac{1}{2\kappa_{\rm inc}L_{\rm mult}L_{\rm dens}L_{\rm tag}},
 \qquad
 c_{\rm pre}:=\frac{c_{\rm sc}c_{\rm can}}{4\kappa_{\rm inc}}.
\tag{10.1$'$}
\]
All four level counts are logarithmic under the polynomial-complexity and
positive-density hypotheses, so \(c_{\rm pre}^{-1}=\delta^{-o(1)}\).  The
good-ball restriction of \(\widetilde Y_B\) defines the fixed tubelet shading
\(Y_B\); joint factoring gives \(Y_{\rm reg}\subset Y_B\) and \(Z\), the
saturated local-volume selection gives \(Y_1\subset Y_{\rm reg}\), and
ANGLE-CELL gives \(Y_2\subset Y_1\) and \(Z^\angle\).  The global fine-multiplicity level
followed by the biased tubelet subset defines \(\widehat Y\subset Y^0\); if
the latter retains represented fraction \(c_{\rm bias}\), set
\(c_{\rm setup}=c_{\rm pre}c_{\rm bias}\).  Finally \(Y^*\subset\widehat Y\)
is represented by the retained tubelet incidences.  Cap and direction
uniformity remain attached to \(\bar Y\).

For \(x\in Y^{\rm pre}(T)\), let
\[
 \mathcal R_{\rm pre}(T,x)
 :=\{(B,T_B):T\in\T(T_B),\ x\in\widetilde Y_B(T_B)\},
 \qquad
 \kappa_{\rm inc}:=\sup_{T,x}|\mathcal R_{\rm pre}(T,x)|.
\tag{10.0a}
\]
The direct tagged-incidence assignment of \cref{lem:source-canonical} gives
\(1\le|\mathcal R_{\rm pre}(T,x)|\le\kappa_{\rm inc}=O(1)\).
After the good-ball deletion put
\[
 \mathcal R_0(T,x)
 :=\{(B,T_B):T\in\T(T_B),\ x\in Y_B(T_B)\}.
\tag{10.0b}
\]
Definition (10.0) gives
\[
 1\le|\mathcal R_0(T,x)|\le\kappa_{\rm inc}
 \qquad(x\in Y^0(T)).
\tag{REP}
\]

Thus source (97) is read on \(Y_B\), whereas (100)--(102) compare \(Y^*\)
with \(\bar Y\); direction uniformity is not assigned to \(Y^0\) or \(Y^*\).

\subsection{Good balls and global fine-multiplicity synchronization}

\begin{lemma}[Reordered good-ball selection]
\label{lem:goodball-reorder}
Let \(\widetilde Y_B(T_B)\) be the canonical tubelet shading immediately
after (10.7T)--(10.7W) and the whole-tag outer-ball selection.  Suppose
\(0<2\eta_{\rm src}<\eta\) and
\[
 |\widetilde Y_B(T_B)|\gtrsim\delta^{2\eta_{\rm src}}|T_B|,
 \qquad
 m(T_B)\le g_{T_B}(x)<2m(T_B)
 \quad(x\in\widetilde Y_B(T_B)).
\tag{10.1a}
\]
The source good-\(\delta\)-ball operation may be performed at this point,
before biased parent and dimension selection.  It produces a subshading
\(Y_B(T_B)\), retains a fixed fraction of every tubelet's shaded and
represented fine-incidence mass for sufficiently small \(\delta\), and
preserves the second assertion in (10.1a) on \(Y_B(T_B)\).  With \(Y^0\)
defined by (10.0), it also gives
\[
 M(Y^0)\ge(4\kappa_{\rm inc})^{-1}M(Y^{\rm pre}).
\tag{10.1d}
\]
All subsequent
biased-parent, dimension, and child-count selections may be made by whole
tubelet or parent labels, so the same pointwise level persists until cellular
factoring.
\end{lemma}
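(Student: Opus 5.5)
The plan is to run the source good-$\delta$-ball argument tubelet by tubelet on the shading $\widetilde Y_B(T_B)$, with each tubelet label treated as an isolated object. The operation is then a subshading selection inside each $\widetilde Y_B(T_B)$, so it commutes with any later whole-label selection.

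\textbf{Local retention.} For a fixed $T_B$, the source operation deletes the points $x\in\widetilde Y_B(T_B)$ lying in bad $\delta$-balls, namely balls where the local shaded density of $T_B$ is below a threshold $\delta^{\eta}$. A covering or Fubini count across the $\delta$-balls meeting $T_B$ shows that the bad balls carry at most $\lesssim\delta^{\eta}|T_B|$ of the mass. Since $2\eta_{\rm src}<\eta$, this is at most $\tfrac12|\widetilde Y_B(T_B)|$ for small $\delta$. This is the only use of the first bound in (10.1a).

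\textbf{Level persistence and fine-incidence retention.} The second assertion in (10.1a) is a pointwise statement about $g_{T_B}$. The represented fine-incidence count $g_{T_B}(x)$ is defined from the fixed canonical family $\T(T_B)$, and deleting points does not change $g_{T_B}$ at surviving points. So the level bound survives restriction to $Y_B(T_B)$. For the represented fine-incidence mass $\int_{Y_B(T_B)}g_{T_B}$, this mass is comparable to $m(T_B)\,|Y_B(T_B)|$ on both the old and the new set, because $g_{T_B}\in[m,2m)$. Hence retaining half the shaded mass retains at least a quarter of the incidence mass.

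\textbf{Global bound (10.1d).} I would write
\[
 M(Y^{\rm pre})\le\sum_{(B,T_B)}\int_{\widetilde Y_B(T_B)}g_{T_B},
\]
using $|\mathcal R_{\rm pre}(T,x)|\ge1$. Summing the local retention gives
\[
 \sum_{(B,T_B)}\int_{Y_B(T_B)}g_{T_B}\ge\tfrac14\,(\cdots).
\]
The left side here counts pairs $(T,x)$ with $x\in Y^0(T)$, with multiplicity $|\mathcal R_0(T,x)|\le\kappa_{\rm inc}$ by (REP). Together these give $M(Y^0)\ge(4\kappa_{\rm inc})^{-1}M(Y^{\rm pre})$.

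\textbf{Reordering.} The later biased-parent, dimension and child-count selections keep or delete whole tubelet or parent labels. They therefore keep $Y_B(T_B)$ and its pointwise level intact on retained labels. Good-ball selection and these selections act on disjoint data, so they commute.

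\textbf{Main obstacle.} The hard step is confirming that the source good-ball threshold is measured per tubelet relative to $|T_B|$, rather than using parent-dependent data chosen later. If it did use such data, I would take the threshold from the canonical levels (10.7T)--(10.7W), which are already fixed at this point.
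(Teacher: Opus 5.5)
Your proposal is correct and is essentially the paper's proof. A bounded-overlap $\delta$-ball cover shows the sub-threshold balls carry at most $C\delta^{\eta}|T_B|<\tfrac12|\widetilde Y_B(T_B)|$. The level bound $g_{T_B}\in[m(T_B),2m(T_B))$ then turns half the shaded mass into a quarter of the represented mass, as in (10.1c). Summing over labelled representations, using $|\mathcal R_{\rm pre}|\ge1$ and the $\kappa_{\rm inc}$-to-one projection, gives (10.1d).

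Your ``main obstacle'' does not arise: the threshold is the per-ball quantity $c\delta^{\eta}\delta^3$ on each fixed tubelet, so it uses no later parent data. The reordering is justified exactly as in the paper, because the bias and dimension choices use only carrier data (\cref{lem:carrier-shading-commutation}). That is the precise content behind your phrase ``act on disjoint data''.
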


\begin{proof}
Cover each tubelet by a fixed bounded-overlap family of \(\delta\)-balls.
The balls below threshold contribute at most
\(C_{\rm ov}c\delta^\eta|T_B|\), which is less than half of (10.1a) because
\(\eta>2\eta_{\rm src}\).  Since \(g_{T_B}\in[m(T_B),2m(T_B))\) on the old
shading, writing \(E=\widetilde Y_B(T_B)\) and \(E'=Y_B(T_B)\) gives
\[
 \int_{E'}g_{T_B}\ge m(T_B)|E'|
 \ge\tfrac12m(T_B)|E|,
 \qquad
 \int_Eg_{T_B}\le2m(T_B)|E|,
\tag{10.1c}
\]
so at least one quarter of represented mass remains.  With
\(\mathcal D_{\rm good}(T_B)\) the passing balls, define
\[
 Y_B(T_B):=\widetilde Y_B(T_B)\cap
       \bigcup_{D\in\mathcal D_{\rm good}(T_B)}D.
\tag{10.1b}
\]
Thus every retained point lies in a good ball and keeps its dyadic level.
Summing (10.1c) over labelled representations and using the
\(\kappa_{\rm inc}\)-to-one physical projection proves (10.1d).

Bias and dimension choices use only carrier data, so
\cref{lem:carrier-shading-commutation} moves them after this deletion.  The
global \(m_{\rm fine}\)-level must still precede the biased subset; all later
child-count operations delete whole labels and preserve its pointwise range.
\end{proof}

Thus, without an occupancy pigeonhole,
\[
 |D\cap Y_B(T_B)|\gtrsim \delta^\eta\delta^3
 \quad\text{for every selected ball }D.
\tag{10.2}
\]
This is the quantified replacement for source equation (94).  Accordingly,
the global \(m_{\rm fine}\)-level is selected before any occupancy class based
on unweighted tubelet mass.

For a tubelet \(T_B\), put
\[
 g^0_{T_B}(x)=\#\{T\in\T(T_B):x\in Y^0(T)\}.
\tag{10.3}
\]
Let \(g^{\rm pre}_{T_B}\) denote the canonical count defined in (10.7S),
before the good-ball deletion.  On the retained tubelet shading one has the
identity
\[
 g^{\rm pre}_{T_B}(x)=g^0_{T_B}(x)
 \qquad(x\in Y_B(T_B)).
\tag{10.3a}
\]
Indeed, if \(T\) is counted on the left, then
\(x\in Y^\circ(T)\cap B\) and
\(x\in Y_B(T_B)\subset\widetilde Y_B(T_B)\); hence (10.7X) places
\(x\in Y^{\rm pre}(T)\), and the same retained representation in (10.0)
places \(x\in Y^0(T)\).  Conversely, if \(T\) is counted on the right, then
\(Y^0(T)\subset Y^{\rm pre}(T)\subset Y^\circ(T)\), while
\(x\in Y_B(T_B)\subset B\), so \(T\) is counted on the left.  We write
\(g_{T_B}\) for this common restriction to \(Y_B(T_B)\).

The canonical construction (10.7S)--(10.7U) selects one dyadic level of
\(g^{\rm pre}_{T_B}\) and induces the corresponding fine refinement before
any biased parent is chosen.  Identity (10.3a) transfers that level to
\(g^0_{T_B}\).  \Cref{lem:goodball-reorder} performs the one
intervening pointwise tubelet operation before our synchronization and
proves that its restriction preserves that level.  Consequently there is a number
\(m(T_B)\ge1\) such that
\[
 m(T_B)\le g_{T_B}(x)<2m(T_B)
 \quad (x\in Y_B(T_B)).
\tag{10.4}
\]
What is missing is comparability between different tubelets.

We separate the global incidence constant in (10.0a) from the following local
label constant.  Uniformly over tagged outer balls \(B\), put
\[
 \kappa_{\rm loc}
 :=\sup_{B,T}\#\{T_B\in\T_B:T\in\T(T_B)\}.
\tag{10.4b}
\]
Bounded overlap of the outer balls and the single-valued half-open canonical
assignment, with the fixed neighboring-cell convention, give
\(\kappa_{\rm inc}\le C\kappa_{\rm amb}=O(1)\) and
\(\kappa_{\rm loc}=O(1)\).  A fine \emph{label} may cross
\(\asymp r_1^{-1}\) different outer balls.  The uniform bounds concern instead
a fine incidence \((T,x)\), which has \(O(1)\) tagged representations, and a
fine label within a fixed tag, which has \(O(1)\) canonical tubelets.

\begin{lemma}[Global fine-multiplicity synchronization]
\label{lem:global-sync}
There is one dyadic value \(m_{\rm fine}\), selected after the good-ball
deletion and before biased factoring, for which the fine mass bound (10.6)
holds and every selected tubelet satisfies
\[
 m_{\rm fine}\le m(T_B)<2m_{\rm fine}.
\tag{10.4c}
\]
On every later whole-tubelet subshading \(A\), the same fixed level obeys
the identity (10.7).  If a subsequent whole-label selection retains a
fraction \(c\) of tubelet shading mass, it retains at least
\(c/(4\kappa_{\rm inc})\) of represented physical fine mass.  No spatial
selection made after synchronization is used to redefine \(m_{\rm fine}\).
\end{lemma}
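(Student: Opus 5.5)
The plan is a dyadic pigeonhole over the tubelet levels \(m(T_B)\) from (10.4), weighted by represented fine-incidence mass rather than by tubelet shading mass. For each dyadic value \(m\), let \(\mathcal S_m\) be the set of tagged tubelets with \(m\le m(T_B)<2m\), and give it the weight
\[
 \Omega_m:=\sum_{T_B\in\mathcal S_m}\int_{Y_B(T_B)}g_{T_B}.
\]
Since \(1\le g_{T_B}\le\kappa_{\rm loc}|\T|\le\delta^{-C}\), there are only \(O(\log(2/\delta))\) classes. Choose \(m_{\rm fine}\) to be the class of largest \(\Omega_m\), and keep every tubelet in it. By (10.3a), \(\sum_m\Omega_m\) equals the tagged represented incidence mass \(\sum_T\sum_{(B,T_B)}|Y^0(T)\cap Y_B(T_B)|\). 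By (REP) this lies between \(M(Y^0)\) and \(\kappa_{\rm inc}M(Y^0)\). The selected class therefore carries at least \(M(Y^0)/(C\log(2/\delta))\) of represented physical fine mass, which is the bound (10.6). Property (10.4c) holds by construction. The selection uses only the pointwise levels fixed after \cref{lem:goodball-reorder}, so it comes after the good-ball deletion and before any biased choice.

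For the identity (10.7) on a whole-tubelet subshading \(A\), I expect it to read
\[
 \int_{A(T_B)}g_{T_B}\in\bigl[m_{\rm fine}|A(T_B)|,\,4m_{\rm fine}|A(T_B)|\bigr).
\]
This follows from (10.4) together with (10.4c), because any subset of \(Y_B(T_B)\) inherits the pointwise range, and whole-label selections never alter \(g_{T_B}\) at retained points.

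For the transfer statement, suppose the selection retains tubelet shading mass \(\sum_{\rm ret}|A(T_B)|\ge c\sum_{\rm all}|A(T_B)|\). Applying the two-sided comparison from (10.7) on both sides gives
\[
 \sum_{\rm ret}\int_A g\ \ge\ m_{\rm fine}\,c\sum_{\rm all}|A|\ \ge\ \tfrac c4\sum_{\rm all}\int_A g.
\]
So a fraction \(c/4\) of tagged represented incidence mass is kept. Projecting to physical fine mass costs at most one further factor \(\kappa_{\rm inc}\), since each incidence \((T,x)\) has between \(1\) and \(\kappa_{\rm inc}\) representations, which gives \(c/(4\kappa_{\rm inc})\).

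The main obstacle is the bookkeeping between tagged represented mass and physical fine mass. Converting the tagged sum through (REP) at each step could turn one factor \(\kappa_{\rm inc}\) into two. I will keep every comparison in the tagged represented measure and convert to physical mass only once, at the end. I also need to check that "retained represented physical fine mass" in the statement means the union over retained representations, so that the lower bound uses the count \(\ge1\) and the upper bound uses \(\le\kappa_{\rm inc}\) exactly once.
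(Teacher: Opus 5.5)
Your proposal is correct and follows the paper's proof essentially step for step. Your \(\Omega_m\) is the paper's \(F_m\) in (10.5), your (REP) sandwich is (10.5a), and your two-sided range bound is (10.7a$'$). Your transfer argument, a factor \(c/4\) in tagged represented mass followed by a single \(\kappa_{\rm inc}\) from the representation count lying in \([1,\kappa_{\rm inc}]\), is exactly the paper's use of the bridge (10.7$'$).

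The only slip is in naming (10.6). In the paper, (10.6) is the bound \(\sum_T|\widehat Y(T)|\gtrsim(\kappa_{\rm inc}L_m)^{-1}c_{\rm setup}\sum_T|\bar Y(T)|\) for the shading after the biased subset. It follows from your layer bound \((\kappa_{\rm inc}L_m)^{-1}M(Y^0)\) by multiplying by the biased-subset fraction \(c_{\rm bias}\) and using \(M(Y^0)\ge c_{\rm pre}M(\bar Y)\) from (10.1). Keep \(\kappa_{\rm inc}\) and \(L_m\) explicit there rather than absorbing them into \(C\log(2/\delta)\).
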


\begin{proof}
Let \(N_{\max}\) dominate the fine multiplicities and set
\(L_m=1+\lceil\log_2N_{\max}\rceil\).  For dyadic \(m\), define
\[
 F_m=\sum_{(B,T_B):\,m\le m(T_B)<2m}
       \int_{Y_B(T_B)}g_{T_B}(x)\,dx .
\tag{10.5}
\]
By (REP), summing all dyadic layers gives the two-sided comparison
\[
 M(Y^0)\le\sum_mF_m\le\kappa_{\rm inc}M(Y^0).
\tag{10.5a}
\]
Choose \(F_{m_{\rm fine}}\ge M(Y^0)/L_m\) and retain physical incidences having
a representation in that layer.  By (REP), their mass is at least
\((\kappa_{\rm inc}L_m)^{-1}M(Y^0)\).  Biased factoring then retains the
represented fraction \(c_{\rm bias}=\delta^{o(1)}\), because tubelet shadings
have comparable measure and represented masses are comparable to
\(m_{\rm fine}|Y_B(T_B)|\).  The induced shading \(\widehat Y\) satisfies
\[
 \sum_T|\widehat Y(T)|
 \gtrsim (\kappa_{\rm inc}L_m)^{-1}
 c_{\rm setup}\sum_T|\bar Y(T)|.
\tag{10.6}
\]
For a later tubelet subshading \(A(T_B)\subset Y_B(T_B)\), define
\[
 Y_A(T):=Y^0(T)\cap
   \bigcup_{\substack{(B,T_B):\,T\in\T(T_B)}}A(T_B).
\tag{10.6a}
\]
Its labelled fine-incidence mass is
\[
 \begin{split}
 \labmass(A)
 &=\sum_{(B,T_B)}\sum_{T\in\T(T_B)}
   |Y^0(T)\cap A(T_B)|\\
 &=\sum_{(B,T_B)}\int_{A(T_B)}g_{T_B}(x)\,dx,
\end{split}
\tag{10.7}
\]
and (10.4), (10.4c) give the explicit two-sided estimate
\[
 m_{\rm fine}\sum_{(B,T_B)}|A(T_B)|
 \le \labmass(A)
 <4m_{\rm fine}\sum_{(B,T_B)}|A(T_B)|.
\tag{10.7a$'$}
\]
By (10.6a) and (REP), the physical fine mass satisfies the exact bridge
\[
 \kappa_{\rm inc}^{-1}\labmass(A)
 \le M(Y_A)\le\labmass(A).
\tag{10.7$'$}
\]
Whole-tubelet, descendant-label, or tag deletion preserves (REP) for the
physical shading defined by (10.6a); a common spatial restriction does as
well.  If \(A'\) retains a tubelet-mass fraction \(c\), then
\[
 \labmass(A')\ge m_{\rm fine}\sum|A'|
 \ge c\,m_{\rm fine}\sum|A|
 >\frac c4\labmass(A),
\]
and (10.7$'$) supplies \(\kappa_{\rm inc}^{-1}\).  This is the claimed
tubelet-to-fine mass bridge.
\end{proof}

\paragraph{Multiplicity-expanded tubelet labels.}
After the global value \(m_{\rm fine}\) has been fixed, replace every
selected geometric tubelet label \(T_B\) by the coincident labelled copies
\[
 \T_B^{\rm lab}
 :=\{(T_B,\ell):T_B\in\T_B,\ 1\le\ell\le m_{\rm fine}\},
 \qquad
 Y_B^{\rm lab}(T_B,\ell):=Y_B(T_B).
\tag{10.7a}
\]
Then
\[
\begin{aligned}
 M(\T_B^{\rm lab},Y_B^{\rm lab})
 &=m_{\rm fine}\sum_{T_B}|Y_B(T_B)|
 \asymp\labmass(Y_B),\\
 D(\T_B^{\rm lab})
 &=m_{\rm fine}\sum_{T_B}|T_B|,\qquad
 U(\T_B^{\rm lab},Y_B^{\rm lab})=U(\T_B,Y_B).
\end{aligned}
\tag{10.7b}
\]
Thus the expansion changes labelled mass and denominator but not the physical
union.  The required concentration transfer is the following separate
geometric statement.

\begin{lemma}[Canonical tubelets from tagged fine incidences]
\label{lem:source-canonical}
Let \(0<\delta\le r_1\le1/10\), let \(\mathcal B\) be a
bounded-overlap cover by radius-\(r_1\) balls, and let \(Y^\circ\) be a
shading of a labelled family \(\T\) of capped unit \(\delta\)-tubes.  Assume
that the balls cover \(U(\T,Y^\circ)\).  All structural constants are fixed
before the construction, in the following order: first the cap convention and
its ball-enlargement constant \(C_2\); then \(C_J>2C_2+2\); then a sufficiently
fine half-open net mesh \(c_{\rm net}\); and finally containment dilations
\(C_{\rm can}\) and \(C_\dagger\).  Thereafter an unadorned \(C\) denotes a
fixed maximum of these constants.  None is changed with the tag, scale,
dyadic level, or any later exponent choice.  Before any tubelet pigeonhole one
can construct, in every tag \(B\in\mathcal B\), a fixed half-open net
\(\T_B^{\rm can}\) of capped \(C\delta\times C\delta\times Cr_1\)
carriers and a descendant assignment
\[
 \pi_B:\{T:Y^\circ(T)\cap B\ne\varnothing\}\longrightarrow\T_B^{\rm can}
\tag{10.7P}
\]
such that
\[
 |\T_B^{\rm can}|
 \le C(1+r_1/\delta)^5
\tag{10.7P$''$}
\]
and with the following properties.
\begin{enumerate}[label=\textup{(C\arabic*)},leftmargin=2.8em]
\item For every active pair \((B,T)\), the carrier
\(T_B:=\pi_B(T)\) contains the solid piece \(T\cap B\), is contained in
\(CB\), has volume comparable to \(\delta^2r_1\), and contains a
subsegment of the carrier of \(T\) of length at least \(c r_1\).  If
\(B\) meets an endcap of \(T\), this segment is chosen one-sided.

No comparison is asserted between \(|T_B|\) and \(|T\cap B|\), and no
inclusion \(T_B\subset N_{C\delta}(T\cap B)\) is asserted.  In particular
this statement remains literal when \(T\) only grazes \(B\) and
\(|T\cap B|/(\delta^2r_1)\) tends to zero.
\item The map \(\pi_B\) has a fixed half-open tie rule and never merges fine
labels.  In each fixed tag, every active fine label has exactly one canonical
representation.  Consequently
\[
 \kappa_{\rm loc}=1
\tag{10.7Q}
\]
for this assignment (or \(O(1)\) if a fixed set of neighbouring net cells is
retained).  A physical fine incidence \((T,x)\) belongs to at least one and
at most
\[
 \kappa_{\rm inc}\le C\kappa_{\rm amb}
\tag{10.7R}
\]
tagged representations.
\item Put
\[
 \T(T_B):=\{T:\pi_B(T)=T_B\},\qquad
 E_{B,T}:=Y^\circ(T)\cap B,
\]
and
\[
 g_{T_B}(x):=\sum_{T\in\T(T_B)}\mathbf1_{E_{B,T}}(x).
\tag{10.7S}
\]
 Let
 \[
  N_{\rm can}:=\max\{1,\max_{B,T_B}|\T(T_B)|\},
 \]
 where the inner maximum is interpreted as zero if there is no active label,
 and put \(L_{\rm mult}:=1+\lceil\log_2N_{\rm can}\rceil\).  For every
 canonical label carrying positive represented mass
 \(\sum_{T\in\T(T_B)}|E_{B,T}|>0\), choose a dyadic number \(m(T_B)\) and set
\[
 \widetilde Y_B(T_B)
 :=\{x\in T_B:m(T_B)\le g_{T_B}(x)<2m(T_B)\}
\tag{10.7T}
\]
and set \(\widetilde Y_B(T_B)=\varnothing\) on every remaining zero-mass
canonical label,
so that
\[
 \sum_{B,T_B}\int_{\widetilde Y_B(T_B)}g_{T_B}
 \ge L_{\rm mult}^{-1}
       \sum_{B,T}|Y^\circ(T)\cap B|.
\tag{10.7U}
\]
If \(\lambda(\T,Y^\circ)=:\lambda^\circ>0\), labels with
\[
 \frac{|\widetilde Y_B(T_B)|}{|T_B|}
 <c\lambda^\circ/L_{\rm mult}
\tag{10.7V}
\]
carry at most half of the left side of (10.7U), after decreasing the
structural \(c\).  A further dyadic shading-density selection therefore
retains at least
\[
 (2L_{\rm mult}L_{\rm dens})^{-1}
 \sum_{B,T}|Y^\circ(T)\cap B|,
\qquad
 L_{\rm dens}
 :=1+\left\lceil\log_2\frac{CL_{\rm mult}}{\lambda^\circ}\right\rceil ,
\tag{10.7W}
\]
as represented fine-incidence mass, and every surviving canonical label has
one common density up to a factor two, bounded below by the right side of
(10.7V).  The induced physical fine shading
\[
 Y^{\rm pre}(T)
 :=Y^\circ(T)\cap
   \bigcup_{\substack{(B,T_B):\,T\in\T(T_B)}}
       \widetilde Y_B(T_B)
\tag{10.7X}
\]
satisfies the same lower bound with the additional factor
\(\kappa_{\rm inc}^{-1}\).
\item If \(T_B\subset K\) for a convex body \(K\), set
\(K^\dagger=K+CB_\delta\).  Let
\(R_{K^\dagger}\supset K^\dagger\) be a comparable-volume containing
L\"owner rectangle with centre \(z_K\), and put
\[
 K^+=z_K+C r_1^{-1}(R_{K^\dagger}-z_K).
\tag{10.7Y}
\]
Every descendant \(T\in\T(T_B)\) is contained in this same \(K^+\), and
\[
 |K^\dagger|\le C|K|,\qquad |K^+|\le Cr_1^{-3}|K|.
\tag{10.7Z}
\]
This is the only complete-containment transfer used below; it is one-way
from a complete canonical carrier to its fine descendants.
\end{enumerate}
\end{lemma}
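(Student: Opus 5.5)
The construction is explicit: build one fixed half-open net of tubelet carriers in each tag, assign fine labels to it by a deterministic rule, and then verify (C1)--(C4) one at a time. All structural constants are fixed first, in the stated order.

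\emph{The net.} Fix a tag \(B\) with centre \(x_B\). Parametrize a \(Cr_1\)-segment by a centre point \(z\in CB\) and a unit direction \(e\), up to sign. Partition \(CB\) into half-open cubes of side \(c_{\rm net}\delta\). Partition the projective direction sphere into half-open caps of diameter \(c_{\rm net}\delta/r_1\), chosen with a fixed lexicographic convention. A net element is the capped \(C\delta\)-neighbourhood of the segment of length \(Cr_1\) through the centre of a cube with the central direction of a cap. Counting gives \((r_1/\delta)^3\) centres and \((r_1/\delta)^2\) directions, which is (10.7P\(''\)); the factor \(1+r_1/\delta\) covers the case \(\delta\asymp r_1\).

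\emph{The assignment \(\pi_B\).} For an active \(T\), let \(\ell_T\) be the subsegment of its carrier in \(C_2B\). Take it one-sided from the cap if \(B\) meets an endcap, so its length is at least \(cr_1\); when \(T\) merely grazes \(B\), extend to a length-\(cr_1\) carrier piece inside \(C_2B\), using \(C_J>2C_2+2\). Let \(\pi_B(T)\) be the net element indexed by the cube containing the point of the carrier of \(T\) nearest \(x_B\) and the cap containing the direction of \(T\). The triangle inequality then gives (C1): the carrier of \(T\) lies within \(O(c_{\rm net}\delta)\) of the net segment along length \(\gtrsim r_1\) inside \(CB\). So \(T\cap B\subset\pi_B(T)\) once \(C_{\rm can}\) is large, and \(\pi_B(T)\subset CB\) holds with volume \(\asymp\delta^2r_1\). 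No comparison with \(|T\cap B|\) is needed.

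\emph{Multiplicities (C2).} \(\pi_B\) is a function on labels, so \(\kappa_{\rm loc}=1\), and labels are never merged. A physical incidence \((T,x)\) is represented in every tag whose ball contains \(x\), with \(x\in T\cap B\subset T_B\). Bounded overlap of the balls gives at least one and at most \(C\kappa_{\rm amb}\) representations; this is (10.7R).

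\emph{Level selections (C3).} Sum \(\int g_{T_B}\) over all tagged labels; the total is \(\sum_{B,T}|Y^\circ(T)\cap B|\), since \(E_{B,T}\subset T_B\) by (C1). Pigeonhole the \(L_{\rm mult}\) dyadic levels of \(g\) per label, choosing for each label its heaviest level; this gives (10.7U). For (10.7V): labels of density below \(c\lambda^\circ/L_{\rm mult}\) contribute
\[
\int g\le 2m(T_B)|\widetilde Y_B|\le 2N_{\rm can}|\widetilde Y_B|.
\]
Comparing with the total requires relating \(\sum_{T_B}N_{\rm can}|T_B|\) to the fine denominator \(D(\T)\), via \(|T_B|\asymp\delta^2 r_1\) and about \(r_1^{-1}\) tags per fine tube. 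This is the delicate point. I would bound instead by \(2\sum_{T_B}\#\T(T_B)|\widetilde Y_B|\le C\sum_T\sum_{B\ni T}\delta^2r_1\cdot c\lambda^\circ/L_{\rm mult}\lesssim c\,\lambda^\circ D(\T)/L_{\rm mult}\). This is half of the \(L_{\rm mult}^{-1}M(Y^\circ)\) lower bound after shrinking \(c\), using the bounded overlap to say each fine tube meets \(O(r_1^{-1})\) tags. The density ratio then lies in \([c\lambda^\circ/L_{\rm mult},1]\), so a dyadic pigeonhole over \(L_{\rm dens}\) classes gives (10.7W). The physical bound for (10.7X) follows from (C2) as in \cref{lem:tag-recombine}. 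I expect this density-threshold bookkeeping to be the main obstacle.

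\emph{Containment transfer (C4).} From \(T_B\subset K\) and (C1), a segment of length \(cr_1\) of the carrier of \(T\), together with its \(\delta\)-neighbourhood, lies in \(K^\dagger\). Moreover \(K^\dagger\) contains a ball of radius \(\gtrsim\delta\) around a point of that segment. The full unit tube \(T\) lies in a \(C/r_1\)-dilate about that segment: a unit segment agreeing with a length-\(cr_1\) piece deviates linearly, and cross-sections scale by the same factor. Passing to the Löwner rectangle \(R_{K^\dagger}\) and dilating by \(Cr_1^{-1}\) about \(z_K\) therefore contains \(T\), using \cref{lem:convex-parallel-body} for convex dilation inclusions. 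For the volumes, \(|K^\dagger|\le C|K|\) follows from \cref{lem:convex-parallel-body}, since \(K\supset T_B\) contains a \(c\delta\)-ball. Then \(|K^+|\le Cr_1^{-3}|R_{K^\dagger}|\le Cr_1^{-3}|K|\) by (8.7a).
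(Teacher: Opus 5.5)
Your proposal is correct and follows essentially the paper's proof: a product net in midpoint and direction with a half-open deterministic assignment, the packing bound of $O(r_1^{-1})$ active tags per unit tube combined with $m(T_B)\le\#\T(T_B)$ for the density threshold, and, for (C4), the contained $cr_1$-subsegment together with the parallel-body lemma and the L\"owner rectangle. The only differences are cosmetic. The paper discretizes a fixed-length window $J_{B,T}$ of length $\min\{1,C_Jr_1\}$ rather than the carrier point nearest the tag centre. It also bounds coordinates in the frame of $R_{K^\dagger}$ rather than recentring a dilation at $z_K$; your recentring costs only a factor $2$ by central symmetry of the rectangle, but this is an elementary convexity fact, not a consequence of \cref{lem:convex-parallel-body} as you cite.
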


\begin{proof}
Write \(I_T\) for the unit carrier segment of \(T\), and \(c_B\) for the
centre of \(B\).  Fix a sufficiently large structural \(C_J\), and put
\[
 \ell_{r_1}:=\min\{1,C_Jr_1\}\asymp r_1.
\tag{10.7P$'$}
\]
The set \(I_T\cap B(c_B,C_2r_1)\) is a segment of length at most
\(2C_2r_1\).  Here \(C_2\) is chosen from the fixed tube-radius convention
so that, whenever \(T\cap B\ne\varnothing\), this set contains every carrier
point within \(C\delta\) of \(T\cap B\).  Choose
\(C_J>2C_2+2\).  If \(C_Jr_1<1\), the displayed segment is contained in a
subsegment \(J_{B,T}\subset I_T\) of the single length
\(\ell_{r_1}=C_Jr_1\); near an endpoint of \(I_T\), take the extension
entirely to the available side.  If \(C_Jr_1\ge1\), take
\(J_{B,T}=I_T\).  Thus in all cases every \(J_{B,T}\) has the same length
\(\ell_{r_1}\), \(N_{C\delta}(J_{B,T})\) contains \(T\cap B\), and that
neighbourhood lies in \(CB\).  This fixed-length window, rather than the
possibly very short raw intersection, is the object that is discretized.

Parameterize these equal-length windows by
\((u,m)\in\mathbb{RP}^2\times CB\), where \(u\) is the unoriented direction
and \(m\) the midpoint, and use the scaled product metric
\[
 d_{r_1}((u,m),(u',m'))
 :=r_1d_{\mathbb{RP}^2}(u,u')+|m-m'|.
\]
Choose a maximal \((c\delta/r_1)\)-separated net in
\(\mathbb{RP}^2\) and a maximal \((c\delta)\)-separated net in \(CB\).
Order both finite nets once and use the induced half-open Voronoi cells in
the product to assign each \(J_{B,T}\) to the first nearest net window.
Thus the assignment is a genuine function even on Voronoi boundaries.  The
two-dimensional projective packing bound and three-dimensional midpoint
packing bound give
\[
 \#\mathcal N_{\rm dir}\le C(1+r_1/\delta)^2,\qquad
 \#\mathcal N_{\rm mid}\le C(1+r_1/\delta)^3,
\]
which proves (10.7P$''$).  A fixed dilation of
the chosen net tube contains \(N_{C\delta}(J_{B,T})\), so it contains
\(T\cap B\), remains in \(CB\), has volume comparable to
\(\delta^2r_1\), and its carrier segment has Hausdorff distance
\(O(\delta)\) from \(J_{B,T}\).  This proves (C1) and the geometric
comparison later recorded in (10.7L).  Since the assignment acts on fine
labels, coincident geometric carriers remain distinct.

Inside one tag the map is single-valued, which proves (10.7Q).  If
\(x\in Y^\circ(T)\), every tag used to represent \((T,x)\) contains \(x\).
The balls cover the union, so at least one representation exists; bounded
overlap gives at most \(\kappa_{\rm amb}\) tags at \(x\).  The canonical net
is single-valued inside each such tag.  Retaining a fixed set of neighbouring
net cells changes this by at most a dimensional packing constant.  This proves
(10.7R): the uniform bound applies to the incidence \((T,x)\), even though a
whole unit tube may cross \(O(r_1^{-1})\) tags.

 For one \(T_B\) carrying positive represented mass, layer-cake decomposition
 of the integer-valued function \(g_{T_B}\) gives a level satisfying
\[
 \int_{\widetilde Y_B(T_B)}g_{T_B}
 \ge L_{\rm mult}^{-1}
        \sum_{T\in\T(T_B)}|E_{B,T}|.
\]
Zero-mass canonical labels have the declared empty selected shading and
contribute zero to both sides.
Summing proves (10.7U).  The density selection follows from a direct
volume-packing estimate for the tag count.  The centre of every active radius-\(r_1\)
ball lies in \(N_{Cr_1}(T)\); bounded overlap and integration over that
neighbourhood give
\[
 \#\{B:Y^\circ(T)\cap B\ne\varnothing\}r_1^3
 \lesssim |N_{Cr_1}(T)|\lesssim r_1^2.
\]
Thus a unit tube meets only \(O(r_1^{-1})\) active tags, and
\[
 \sum_{B,T_B}|\T(T_B)|\,|T_B|
 \lesssim |\T|\,|T|,
\qquad
 \sum_{B,T}|Y^\circ(T)\cap B|\ge M(Y^\circ).
\tag{10.7U$'$}
\]
On the chosen multiplicity level,
\[
 \int_{\widetilde Y_B(T_B)}g_{T_B}
 <2m(T_B)|\widetilde Y_B(T_B)|,
\qquad
 m(T_B)\le|\T(T_B)|.
\tag{10.7U$''$}
\]
Combining (10.7U)--(10.7U$''$) shows
\[
 \sum_{B,T_B}m(T_B)|\widetilde Y_B(T_B)|
 \gtrsim \frac{\lambda^\circ}{L_{\rm mult}}
          \sum_{B,T_B}m(T_B)|T_B|.
\]
The labels failing (10.7V) therefore contribute at most half of the selected
represented mass when \(c\) is sufficiently small.  On the remaining range
the density lies between \(c\lambda^\circ/L_{\rm mult}\) and one, so there
are at most \(L_{\rm dens}\) dyadic classes.  Selecting a class by represented
mass proves (10.7W).  Each physical incidence is counted at most
\(\kappa_{\rm inc}\) times, which proves (10.7X).  Thus the canonical
objects carry the two tubelet levels and the induced fine refinement used on
pp.~36--37 of \cite{GWZ2026}.

It remains to prove the one-way convex assertion.  If \(T_B\subset K\),
the net construction supplies, for every descendant \(T\), a carrier
subsegment of length at least \(cr_1\) inside \(K^\dagger\), with direction
within \(C\delta/r_1\) of the canonical direction.  Because \(K\) contains
the \(C\delta\)-thick carrier \(T_B\), it contains a ball of radius
\(c\delta\).  Applying \cref{lem:convex-parallel-body} with the fixed ratio
between the padding and this inradius shows that adding \(CB_\delta\)
changes its volume by only a dimensional factor.  Choose the containing L\"owner
rectangle \(R_{K^\dagger}\) and write its half-side lengths as
\(s_1,s_2,s_3\).  Subtracting the endpoint coordinates of the contained
\(cr_1\)-segment gives \(r_1|u_i|\le Cs_i\) for the unit direction \(u\)
of \(T\).  From that segment to either endpoint of the unit carrier costs at
most \(Cr_1^{-1}s_i\) in coordinate \(i\).  The solid radius is already
absorbed by \(K^\dagger\).  Hence \(T\subset K^+\), simultaneously for all
descendants, and
\[
 |K^+|\lesssim r_1^{-3}|R_{K^\dagger}|
 \lesssim r_1^{-3}|K|.
\]
This proves (10.7Y)--(10.7Z) and the lemma.
\end{proof}

For the application take \(Y^\circ=Y^{\rm sc}\).  After (10.7T)--(10.7W),
set \(\mathsf F_B:=\sum_{T_B}\int_{\widetilde Y_B(T_B)}g_{T_B}(x)\,dx\) for each
nonempty tag.  Partition the tags dyadically by \(\mathsf F_B\), retain a class
maximizing \(\sum_B\mathsf F_B\), and restrict all subsequent canonical objects,
including the application instance of \(Y^{\rm pre}\), to that class.  Let
\(L_{\rm tag}\) count the nonempty dyadic levels.  By (10.7V), the application
density lower bound,
\(m(T_B)\ge1\), and \(|T_B|\asymp\delta^2r_1\), every nonzero
\(\mathsf F_B\) has a fixed-power lower bound; (10.7P$''$) and the input
cardinality bound give a fixed-power upper bound.  Hence
\[
 L_{\rm tag}\le C\log(2/\delta).
\tag{10.7W$'$}
\]
This whole-tag level leaves the canonical carriers, descendants, pointwise
multiplicity, and tubelet densities fixed.

For the application, fix a structural integer \(C_{\rm pre}\) and define the
pre-cellular profile
\[
 \mathcal E_{\rm pre}(\delta)
 :=\max\left\{1,
      \delta^{-C_{\rm pre}\omega_0(\delta)},
      [\log(2/\delta)]^{C_{\rm pre}}\right\},
 \qquad
 \omega_{\rm pre}(\delta)
 :=\frac{\log\mathcal E_{\rm pre}(\delta)}{\log(1/\delta)}.
\tag{10.7W$''$}
\]
Here \(C_{\rm pre}\) absorbs the two input-refinement factors
\(c_{\rm init}^{-1},c_{\rm sc}^{-1}\), the canonical multiplicity level, and
fixed representation constants.  Thus \(\omega_{\rm pre}(\delta)\to0\), and
its definition involves neither \(J_v\), an angular factor, nor any
post-cellular quantity.

\begin{lemma}[One-way consequences of canonical ancestry]
\label{lem:local-ancestry}
Let \(0<\delta\le r_1\le1/10\), let \(B\) be one fixed tagged \(r_1\)-ball,
and let \(T=N_{c\delta}(I_T)\) be a capped unit \(\delta\)-tube whose carrier
segment \(I_T\) has length \(1\).  Use exactly the canonical assignment
\(\pi_B\) of \cref{lem:source-canonical}.  The following aliases record its
one additional local estimate and preserve the downstream interface.
\begin{enumerate}[label=\textup{(G\arabic*)},leftmargin=2.8em]
\item The representation space, tubelet dimensions, containment, and tag
  localization are those of clause (C1) in \cref{lem:source-canonical}.
\item If \(T\in\T(T_B)\), there is a subsegment
  \(I_{T,B}\subset I_T\) of length at least \(c r_1\) such that
  \[
   \ang(I_T,I_{T_B})\le C\delta/r_1,\qquad
   d_H(I_{T,B},I_{T_B})\le C\delta.
  \tag{10.7L}
  \]
  The assertion remains true when \(B\) meets an endcap of \(T\): in that
  case \(I_{T,B}\) is chosen one-sided and \(CB\) contains it.
\item The fixed-tag representation bound is clause (C2):
  \(\kappa_{\rm loc}=1\) for the half-open assignment and \(O(1)\) when the
  fixed neighboring-cell convention is retained.
\item If \(T_B\subset K\) for a convex body \(K\), set
  \(K^\dagger=K+CB_\delta\).  Let \(R_{K^\dagger}\supset K^\dagger\) be a
  comparable-volume containing rectangle with centre \(z_K\), and put
  \[
   K^+=z_K+C r_1^{-1}(R_{K^\dagger}-z_K).
  \tag{10.7M}
  \]
  Then every descendant \(T\in\T(T_B)\) is contained in \(K^+\), and
  \[
   |K^\dagger|\le C|K|,\qquad |K^+|\le Cr_1^{-3}|K|.
  \tag{10.7N}
  \]
  By clause (C4), these inclusions hold simultaneously for all convex \(K\)
  under the fixed descendant assignment.
\end{enumerate}
\end{lemma}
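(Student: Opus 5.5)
Most of this lemma is a relabelling of \cref{lem:source-canonical}, so the plan is to read off (G1), (G3) and (G4) from the clauses (C1), (C2) and (C4) there. The only genuinely new content is the angular and Hausdorff estimate (10.7L) in (G2), and I will extract it from the net construction already made in that proof.

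\emph{Transferred clauses.}
(G1) is clause (C1) with nothing added.
(G3) is clause (C2): the half-open Voronoi assignment is single-valued inside a fixed tag, which gives (10.7Q). Keeping a fixed set of neighbouring net cells multiplies this by a dimensional packing constant.
(G4) is clause (C4), with (10.7M)--(10.7N) identical to (10.7Y)--(10.7Z). Simultaneity over all convex $K$ follows because the descendant set $\T(T_B)$ is fixed before $K$ is chosen. The inclusion $T\subset K^+$ uses only the descendant's carrier subsegment inside $K^\dagger$ and the coordinatewise extrapolation by $Cr_1^{-1}s_i$; neither depends on $K$ beyond its L\"owner rectangle.

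\emph{Proof of (G2).} Take $I_{T,B}:=J_{B,T}$, the fixed-length window of length $\ell_{r_1}\asymp r_1$ from the proof of \cref{lem:source-canonical}.
\begin{enumerate}[label=(\roman*)]
\item If $B$ meets an endcap, $J_{B,T}$ was extended entirely to the available side, so it is one-sided.
\item It lies in $CB$ because $N_{C\delta}(J_{B,T})\subset CB$.
\item The window is assigned to the nearest net point in the metric $d_{r_1}$ at separation scale $c\delta$. Hence its direction differs from the net direction by $O(\delta/r_1)$, and its midpoint from the net midpoint by $O(\delta)$.
\item Since the two segments have comparable lengths $\asymp r_1$, linear interpolation from the midpoint along the directions gives $d_H(I_{T,B},I_{T_B})\le C\delta+r_1\cdot C\delta/r_1\lesssim\delta$.
\item The unoriented angle between $I_T$ and $I_{T_B}$ equals the angle between $I_{T,B}\subset I_T$ and $I_{T_B}$, which is $\le C\delta/r_1$.
\end{enumerate}

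\emph{Main obstacle.} The delicate point is the fixed dilation. The canonical carrier $T_B$ is a structural dilation of the chosen net tube, so its carrier length is $\asymp r_1$ rather than exactly $\ell_{r_1}$. The Hausdorff bound must therefore be stated against the net carrier segment, or against the dilated carrier restricted to a comparable subsegment, so that the endpoint comparison is not spoiled by length mismatch.

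I would handle this as in \cref{lem:capped-two-point}. Pick two points of $I_{T,B}$ separated by $cr_1$ and their nearest points on $I_{T_B}$ within $C\delta$. This yields the direction bound $C\delta/r_1$ and the offset bound $C\delta$ uniformly, including at endcaps. Since $d_H$ is asserted only between $I_{T,B}$ and $I_{T_B}$, I would shorten or extend $I_{T,B}$ one-sidedly to match the net carrier length. The constants then depend only on the structural constants fixed before the construction.
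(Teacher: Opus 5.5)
Your proposal is correct and follows the paper's proof: (G1), (G3), (G4) are read off from (C1), (C2), (C4), and (G2) comes from the net construction with $I_{T,B}=J_{B,T}$. The only differences are cosmetic. The paper derives the angle bound by dividing the $O(\delta)$ Hausdorff closeness by the window length, whereas you go from the $d_{r_1}$-net bounds to the Hausdorff bound. Your ``main obstacle'' also does not arise: by the paper's convention a fixed enlargement never changes the labelled carrier, so $I_{T_B}$ is the net segment itself, of exactly the same length $\ell_{r_1}$ as $J_{B,T}$. Equal length is what your step (iv) actually needs, since mere comparability of lengths would not give an $O(\delta)$ Hausdorff distance.
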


\begin{proof}
Clauses (C1), (C2), and (C4) give (G1), (G3), and (G4), including
(10.7M)--(10.7N); in particular,
\[
 |K^+|\le C r_1^{-3}|R_{K^\dagger}|
          \le C r_1^{-3}|K|.
\]
In the net construction, the carrier of \(T_B\) is within
\(O(\delta)\) of the length-\(cr_1\) axial window of \(T\); division by that
length gives the angular bound in (10.7L).  The one-sided window in (C1)
covers the endcap case.
\end{proof}

\begin{lemma}[Expanded-tubelet concentration]
\label{lem:expanded-concentration}
Fix one tagged outer ball \(B\).  For each selected tubelet in \(\T_B\),
fix once and for all an injection of its
\(m_{\rm fine}\) copies into distinct represented fine descendants.  If one
fine label has at most \(\kappa_{\rm loc}\) canonical tubelet
representations inside this fixed tag, including its fixed boundary tie,
then
\[
 \Delta_{\max}(\T_B^{\rm lab})
 \le C\kappa_{\rm loc}r_1^{-2}\Delta_{\max}(\T).
\tag{10.7c}
\]
The same injection is used for every point and every convex test body.
\end{lemma}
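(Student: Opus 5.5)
Fix a convex test body \(K\in\mathfrak K_3\); the aim is to bound \(\Delta(\T_B^{\rm lab},K)\) by a multiple of \(\Delta(\T,K^+)\), where \(K^+\) is the enlarged body of clause (G4) in \cref{lem:local-ancestry} (equivalently (C4) of \cref{lem:source-canonical}). Taking the supremum over \(K\) then gives (10.7c). The numerator is
\[
 \sum_{(T_B,\ell):\,T_B\subset K}|T_B|
 = m_{\rm fine}\,|T_B|\,\#\{T_B\in\T_B:T_B\subset K\},
\]
using that selected tubelets have comparable volume \(\asymp\delta^2r_1\); I will treat the common-volume convention loosely, since only comparability is used.

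\emph{Step 1: pass from copies to fine labels.} The fixed injection \(\iota\) sends the \(m_{\rm fine}\) copies of \(T_B\) to distinct descendants \(T\in\T(T_B)\). This is possible because on \(Y_B(T_B)\) we have \(g_{T_B}\ge m(T_B)\ge m_{\rm fine}\) by (10.4) and (10.4c), so \(|\T(T_B)|\ge m_{\rm fine}\). If \(T_B\subset K\), then (G4) puts every descendant, and in particular every image \(\iota(T_B,\ell)\), inside the single body \(K^+\). A fine label \(T\) is the image of copies of at most \(\kappa_{\rm loc}\) tubelets in this tag, and of at most one copy per tubelet because \(\iota\) is injective on each tubelet. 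Hence
\[
 m_{\rm fine}\,\#\{T_B\subset K\}\le\kappa_{\rm loc}\,\#\{T\in\T:T\subset K^+\}.
\]

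\emph{Step 2: compare volumes.} Multiply both sides by \(|T_B|\asymp\delta^2r_1\) and compare with \(|T|\asymp\delta^2\); this gives a factor \(r_1\). Then divide by \(|K|\) and use \(|K^+|\le Cr_1^{-3}|K|\) from (10.7N):
\[
 \Delta(\T_B^{\rm lab},K)\lesssim \kappa_{\rm loc}\,r_1\,\frac{\sum_{T\subset K^+}|T|}{|K|}
 \le C\kappa_{\rm loc}\,r_1\cdot r_1^{-3}\,\Delta(\T,K^+)\le C\kappa_{\rm loc}r_1^{-2}\Delta_{\max}(\T).
\]
The body \(K^+\) is a full-dimensional convex set of finite positive volume, so it is admissible in (1.2). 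If \(K\) contains no tubelet, the left side is zero and there is nothing to prove.

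\emph{Main obstacle.} The delicate point is the one-way transfer in Step 1: the enlargement \(K^+\) must depend only on \(K\), not on the tubelet, so that all descendants of all tubelets contained in \(K\) land in one common test body. This is exactly why I rely on (C4)/(G4) rather than on a per-tubelet enlargement. A second check is that \(m_{\rm fine}\le|\T(T_B)|\) holds for every selected tubelet, so that the injection exists. This follows from the pointwise level (10.4) on the nonempty shading \(Y_B(T_B)\) together with (10.4c).
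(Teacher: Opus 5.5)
Your proposal is correct and follows the paper's proof essentially step for step. It uses the same injection built from the level \(m(T_B)\ge m_{\rm fine}\) in (10.4)--(10.4c), the same \(\kappa_{\rm loc}\) bound on how many copies can land on one fine label, the same single tubelet-independent body \(K^+\) from (G4)/(C4), and the same volume comparisons \(|T_B|\asymp r_1|T|\) and \(|K^+|\lesssim r_1^{-3}|K|\); your remark that each fine label receives at most one copy per tubelet just spells out what the paper calls the ``injection multiplicity.''
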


\begin{proof}
For each \(T_B\), choose one point \(x_{T_B}\in Y_B(T_B)\) and, using the
fixed label order, the first \(m_{\rm fine}\) descendants supplied by (10.4).
This defines the required injection independently of later \(x\) and \(K\).
Clause (G3) of \cref{lem:local-ancestry}, equivalently (C2) of
\cref{lem:source-canonical}, bounds the use of any fine label by
\(\kappa_{\rm loc}\) in the fixed tag.

For a convex \(K\), use the common test body from (G4), equivalently clause
(C4):
\[
 K^+=z_K+C r_1^{-1}(R_{K^\dagger}-z_K).
\tag{10.7c$'$}
\]
Every injected descendant of a copy contained in \(K\) lies in this convex
\(K^+\), and (C4) gives
\[
 |K^+|\lesssim r_1^{-3}|K|,\qquad
 |T_B|\asymp r_1|T|.
\tag{10.7c$''$}
\]
The injection multiplicity and the definition of \(\Delta_{\max}(\T)\) now
give
\[
\begin{aligned}
 \sum_{(T_B,\ell)\subset K}|T_B|
 &\lesssim r_1\kappa_{\rm loc}
       \sum_{T\subset K^+}|T|\\
 &\le r_1\kappa_{\rm loc}\Delta_{\max}(\T)|K^+|\\
 &\lesssim\kappa_{\rm loc}r_1^{-2}
       \Delta_{\max}(\T)|K|.
\end{aligned}
\]
Taking the supremum over \(K\) proves (10.7c).
\end{proof}

\begin{lemma}[Labelled denominator from represented fine mass]
\label{lem:represented-denominator}
Let \(A(T_B)\subset Y_B(T_B)\) be any retained tubelet subshading on the
fixed global \(m_{\rm fine}\)-class, and let \(Y_A\) be the physical fine
shading in (10.6a).  If
\[
 M(Y_A)\ge P^{-1}M(\bar Y),
\tag{10.7d}
\]
then the multiplicity-expanded labelled tubelet families obey, with the
denominator summed over all retained tags,
\[
\begin{aligned}
 D_{\rm tag}^{\rm tot}
 &:=\sum_{B\ {\rm retained}}D(\T_B^{\rm lab})
   =m_{\rm fine}\sum_{(B,T_B)}|T_B|\\
 &\gtrsim \labmass(A)
 \ge M(Y_A)
 \ge P^{-1}\lambda(\T,\bar Y)|\T|\,|T|.
\end{aligned}
\tag{10.7e}
\]
The factor \(\kappa_{\rm inc}\) is needed when (10.7d) is established from
labelled retention, through (10.7$'$), but is not inserted again in (10.7e).
\end{lemma}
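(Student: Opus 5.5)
The chain in (10.7e) has four links, and I will verify them from left to right.

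\emph{First link (the identity).} The equality \(D_{\rm tag}^{\rm tot}=m_{\rm fine}\sum_{(B,T_B)}|T_B|\) is definitional. By (10.7a)--(10.7b), each retained tag contributes \(D(\T_B^{\rm lab})=m_{\rm fine}\sum_{T_B\in\T_B}|T_B|\). I then sum over the retained tags. The sum is over labelled pairs \((B,T_B)\) in the tagged disjoint union (3.4b), so no physical recombination is involved.

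\emph{Second link (\(D_{\rm tag}^{\rm tot}\gtrsim\labmass(A)\)).} I use the upper half of (10.7a\('\)):
\[
 \labmass(A)<4m_{\rm fine}\sum_{(B,T_B)}|A(T_B)|.
\]
Since \(A(T_B)\subset Y_B(T_B)\subset T_B\), we have \(|A(T_B)|\le|T_B|\). This gives \(\labmass(A)<4D_{\rm tag}^{\rm tot}\).

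The only point to check is that (10.7a\('\)) really applies to \(A\). It rests on the pointwise level (10.4) together with the global synchronization (10.4c), and \(A\) is by hypothesis a subshading on the fixed \(m_{\rm fine}\)-class. \Cref{lem:global-sync} states that this identity persists on every later whole-tubelet subshading. I will also sum over all retained tags rather than a single one, since (10.7) is itself a sum over all pairs \((B,T_B)\).

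\emph{Third link (\(\labmass(A)\ge M(Y_A)\)).} This is the upper inequality of the exact bridge (10.7\('\)). Its justification is (REP): every physical incidence \((T,x)\) with \(x\in Y_A(T)\) has at least one tagged representation \((B,T_B)\) with \(x\in A(T_B)\), by definition (10.6a). Each such incidence is therefore counted at least once in (10.7). Here I will stress the remark at the end of the statement: only the lower bound \(|\mathcal R_0|\ge1\) is used in this direction, so no factor \(\kappa_{\rm inc}\) appears. That factor is needed only for the reverse inequality, which is where (10.7d) would be established from labelled retention.

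\emph{Final link.} I combine hypothesis (10.7d) with the identity
\[
 M(\bar Y)=\lambda(\T,\bar Y)D(\T)=\lambda(\T,\bar Y)|\T|\,|T|.
\]
This uses (1.1) and the convention that congruent fine tubes have common volume \(|T|\), with \(|\T|\) the labelled cardinality.

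I expect no genuine obstacle. The only delicate step is the applicability of (10.7a\('\)) to an arbitrary later subshading \(A\). I will argue that any pointwise level valid on \(Y_B(T_B)\) restricts to \(A(T_B)\), and that \(g_{T_B}\) is computed from \(Y^0\), not from \(Y_A\), as in (10.7). So spatial restriction never changes the multiplicity weight.
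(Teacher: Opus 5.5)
Your proposal is correct and follows the paper's proof. The four links are the same: the identity from (10.7b), the bound \(\labmass(A)\lesssim D_{\rm tag}^{\rm tot}\) from the upper half of (10.7a$'$) together with \(|A(T_B)|\le|T_B|\), the middle inequality as the upper half of (10.7$'$) (so no \(\kappa_{\rm inc}\)), and the last step from (10.7d) with \(M(\bar Y)=\lambda(\T,\bar Y)|\T|\,|T|\).
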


\begin{proof}
Equation (10.7) gives
\(\labmass(A)\asymp m_{\rm fine}\sum_{(B,T_B)}|A(T_B)|\), which is at most a
fixed multiple of \(D_{\rm tag}^{\rm tot}\).  The middle inequality is the upper
half of (10.7$'$), and the last identity is the definition of the density of
the fixed fine reference shading.  This proves every displayed comparison.
\end{proof}
The score and L\"owner-axis windows required by
\cref{lem:biased-factoring} are verified in
\cref{prop:source-extraction}.  Apply that lemma with
\(\gamma=\varepsilon_{\rm fact}\) and \(U=CB\) in each tag.  The tagged
disjoint union retains \(L_{\rm bias}^{-1}\); a later \(L_{a,b}\)-level
selects common widths.
Assign coincident copies to one parent.  The common duplication factor
\(m_{\rm fine}\) preserves normalized child fractions.  Expansion is used
only for the slab denominator; non-slab degree and cap estimates use the
unexpanded family, so (10.19e1) counts \(m_{\rm fine}\) once.

All dimension, child-count, and regularity levels follow
\(m_{\rm fine}\).  A global \((a,b)\)-level costs at most
\[
 L_{a,b}\le
 \bigl(1+\lceil\log_2(r_1/\delta)\rceil\bigr)^2.
\tag{10.8}
\]
For each outer-ball tag \(B\), let \(B^+\) be a fixed dilation containing all
permanent carriers and active full cells in that tag.  Since the outer balls
have one common radius and bounded overlap, the family \((B^+)_B\) has overlap
\(O(\kappa_{\rm amb})\); absorb this structural factor into
\(\kappa_{\rm amb}\).  Run the graph on
\(\bigsqcup_B\{B\}\times\mathbb R^3\), with right vertices \((B,Q)\), and
synchronize only after each tagwise output is fixed.  The next theorem
formalizes this step; physical recombination then costs the single factor
\(\kappa_{\rm amb}\) from \cref{lem:tag-recombine}.

\begin{theorem}[Global tagged regularization and synchronization]
\label{thm:tagged-sync}
Let \(\mathcal A\) be a finite nonempty family of tags.  In tag \(A\), let
\((\mathcal P_A,\mathcal T_A,Y_A)\) be a positive-mass input to
\cref{thm:joint}, all supported in a fixed dilation of \(A\), and let
\(\mathcal Q_A\) be its finite active half-open cell set.  Assume that,
for fixed \(C_{\rm comp},C_{\rm mass}\),
\[
 |\mathcal A|+
 \max_{A\in\mathcal A}
 (|\mathcal P_A|+|\mathcal T_A|+|\mathcal Q_A|)
 \le\delta^{-C_{\rm comp}},\qquad
 \delta^{C_{\rm mass}}\le M_A(Y_A)\le\delta^{-C_{\rm mass}}
 \quad(A\in\mathcal A).
\tag{10.8A}
\]
Run J1, R1, and R2 independently in every tag, write
\(\mu_A,\omega_A,d_A\) for the three local dyadic parameters, and denote the
local graph edge set by \(E_{2,A}\) and its shading output by
\(Y_{{\rm reg},A}\).  Put
\[
 \mathcal Q_A^+=\{Q:\text{some }(P,Q)\in E_{2,A}\},\qquad
 v_A(Q)=|U(\mathcal T_A,Y_{{\rm reg},A})\cap N_{Ca}(Q)|.
\]
There is a structural \(C_v\ge1\), independent of \(A\), such that every
\(Q\in\mathcal Q_A^+\) satisfies
\[
 \frac{\omega_A}{2\mu_A}\le v_A(Q)\le C_va^3.
\tag{10.8A$'$}
\]
Define the literal number of positive volume levels by
\[
 J_v(A):=1+\max\left\{0,
 \left\lceil\log_2\!\left(
  \frac{2C_va^3\mu_A}{\omega_A}
 \right)\right\rceil\right\}.
\tag{10.8A$''$}
\]
If \(\mathcal Q_A^+=\varnothing\), set \(J_v(A)=1\) and declare the
local-volume output empty; the positive-mass hypothesis and P1 exclude this
case for every tag retained below.  Synchronize
\(\mu_A,\omega_A,d_A\) by a whole-tag selection.  Only then, independently
in each selected tag, partition \(\mathcal Q_A^+\) into the \(J_v(A)\)
half-open dyadic intervals covering (10.8A$'$), choose a class
maximizing the sum of its incident edge weights, and retain every edge at each
chosen right cell.  This is the \emph{saturated local-volume selection}; call
its output \(Y_{1,A}\).  Put
\[
 L_{\rm reg}^{\max}:=\max_A L_{\rm reg}(A),\qquad
 J_v^{\max}:=\max_A J_v(A).
\tag{10.8B}
\]
There exist a whole-tag subfamily \(\mathcal A_1\), dyadic numbers
\(\mu_*,\omega_*,d_*\), and level counts
\(L_\mu,L_\omega,L_d\le C\log(2/\delta)\) such that
\[
 \mu_*\le\mu_A<2\mu_*,\quad
 \omega_*\le\omega_A<2\omega_*,\quad
 d_*\le d_A<2d_*\qquad(A\in\mathcal A_1),
\tag{10.8C}
\]
every \(A\in\mathcal A_1\) satisfies the local conclusions P2--P5 and
\[
 M_A(Y_{1,A})\ge
   [J_v(A)L_{\rm reg}(A)]^{-1}M_A(Y_A),
\tag{10.8D}
\]
while globally
\[
 \sum_{A\in\mathcal A_1}M_A(Y_{1,A})
 \ge [L_\mu L_\omega L_dJ_v^{\max}L_{\rm reg}^{\max}]^{-1}
       \sum_{A\in\mathcal A}M_A(Y_A).
\tag{10.8E}
\]

Suppose in addition that the common scales determine the same resolved-angle
parameters \(A_{\rm ang},B_{\rm ang},K,q_{\rm res}\) in every tag.  Run
\cref{thm:angle} independently in each \(A\in\mathcal A_1\).  There is a
whole-tag subfamily \(\mathcal A_2\subset\mathcal A_1\) and dyadic
\(\theta_*,d_{\angle,*}\), with
\[
 \theta_*\le\theta_A<2\theta_*,\qquad
 d_{\angle,*}\le d_{\angle,A}<2d_{\angle,*},
\tag{10.8F}
\]
for which all local A1--A4 conclusions hold and
\[
 \sum_{A\in\mathcal A_2}M_A(Y_{2,A})
 \ge \frac{c_{\rm ang}^{\min}}
 {2L_{\theta}^{\rm tag}L_{d_\angle}^{\rm tag}}
 \sum_{A\in\mathcal A_1}M_A(Y_{1,A}),
\quad
 c_{\rm ang}^{\min}:=\min_{A\in\mathcal A_1}
       \frac{A_{\rm ang}^{-K}}{J_{\theta}(A)J_d(A)},
\tag{10.8G}
\]
where \(L_{\theta}^{\rm tag},L_{d_\angle}^{\rm tag}\le
C\log(2/\delta)\).  In particular the common input degree \(d_*\) permits
one global low/high split, and in the high case the synchronized output
degree is \(d_{\angle,*}\).

Finally, suppose there are a common \(m_*>0\) and fixed constants
\(0<c_-\le c_+<\infty\) such that a represented fine mass \(F_A(V)\) on every
whole-tubelet subshading obeys
\[
 c_-m_*M_A(V)\le F_A(V)\le c_+m_*M_A(V)
\tag{10.8H}
\]
then (10.8E) and (10.8G) hold for
\(\sum_AF_A\), with the additional factor \(c_+/c_-\).  Thus the selected
tags carry the same polylogarithmic fraction of represented fine mass and,
whenever the denominator is comparable to that mass at the fixed density
level, of the labelled denominator.
\end{theorem}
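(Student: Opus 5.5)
The plan is to treat \cref{thm:tagged-sync} as bookkeeping on top of \cref{thm:joint}, \cref{thm:angle}, and \cref{thm:lift}, applied tag by tag. Since \cref{lem:tag-recombine} makes cells literal in \(X^{\rm tag}\), every local assertion is just the single-tag theorem. What actually needs proof is the local-volume window (10.8A\('\)), the count of dyadic classes, and the order in which whole-tag pigeonholes are composed.

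\emph{The window (10.8A\('\)).} Fix \(A\) and \(Q\in\mathcal Q_A^+\), and pick \(P\) with \((P,Q)\in E_{2,A}\).
\begin{itemize}[leftmargin=2em]
\item For the lower bound, (3.6) and (P3) give
\[
 \omega_A\le\omega_{PQ}=\int_Q f_P^{Y_{{\rm reg},A}}<2\mu_A\,|Q\cap U(\mathcal T_A,Y_{{\rm reg},A})|\le 2\mu_A v_A(Q).
\]
Here we use that the edge keeps every \(P\)-child incidence in \(Q\), so \(f_P\) agrees with the level-\(\mu_A\) function there.
\item For the upper bound, \(v_A(Q)\le|N_{Ca}(Q)|\le C_va^3\) because \(h=c_0a\).
\end{itemize}
Thus (10.8A\('\)) holds, and the ratio of its endpoints is \(2C_va^3\mu_A/\omega_A\), so (10.8A\(''\)) counts the dyadic classes.

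\emph{Synchronization.} First use (10.8A) to bound the dyadic range of \(\mu_A,\omega_A,d_A\).
\begin{itemize}[leftmargin=2em]
\item \(\mu_A\in[1,\delta^{-C_{\rm comp}}]\) and \(d_A\in[1,\delta^{-C_{\rm comp}}]\).
\item \(\omega_A\) is bounded above by \(M_A\le\delta^{-C_{\rm mass}}\). It is bounded below via (4.5): \(\omega_A\ge\Omega_A/(4e_{+,A})\), with \(\Omega_A\ge M_A/L_{\rm in}\ge\delta^{C_{\rm mass}}/L_{\rm in}\).
\end{itemize}
Hence \(L_\mu,L_\omega,L_d\le C\log(2/\delta)\).

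Each tag retains \(M_A(Y_{{\rm reg},A})\ge L_{\rm reg}(A)^{-1}M_A(Y_A)\). Pigeonhole the joint class \((\mu,\omega,d)\), weighting each tag by \(M_A(Y_{{\rm reg},A})\); this loses \(L_\mu L_\omega L_d\). Only then run the saturated local-volume selection. Its class is chosen by incident edge weight, and all edges at chosen cells are kept, so it loses \(J_v(A)\) and preserves P2--P5: the degree class and inner level are untouched, and only whole right cells are deleted. This gives (10.8D). Summing over the selected tags with \(J_v(A)\le J_v^{\max}\) and \(L_{\rm reg}(A)\le L_{\rm reg}^{\max}\) gives (10.8E).

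One subtlety must be checked. The local-volume selection deletes whole cells, so degrees remain in \([d_A,2d_A)\). However, it is made after synchronization, and the loss must be computed against the synchronized mass, not the raw mass. Doing the tag pigeonhole first and the local selection second, as the statement prescribes, makes the losses multiply cleanly.

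\emph{Angular step.} Apply \cref{thm:angle} in every tag of \(\mathcal A_1\). By \cref{thm:lift}, each tag retains \(M_A(Y_{2,A})\ge \tfrac{c_{\rm ang}(A)}2M_A(Y_{1,A})\ge\tfrac{c_{\rm ang}^{\min}}2M_A(Y_{1,A})\).

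The quantities \(\theta_A\in[q_{\rm res},1]\) and \(d_{\angle,A}\in[A_{\rm ang}^{-K}d_*,4d_*)\) range over \(O(\log(2/\delta))\) dyadic classes, using \(q_{\rm res}\ge\delta\) and (7.7a). A whole-tag pigeonhole weighted by \(M_A(Y_{2,A})\) therefore gives (10.8F) and (10.8G). Whole-tag deletion does not touch A1--A4.

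\emph{Transfer to fine mass.} Under (10.8H), each inequality \(\sum M_A(V')\ge c\sum M_A(V)\) converts to \(\sum F_A(V')\ge c\,(c_-/c_+)\sum F_A(V)\). The main obstacle I expect is checking that every intermediate object fed into (10.8H) is a whole-tubelet subshading. This holds because each step keeps complete parent--cell incidences, and because \(m_*\) is fixed before any tag selection, as in \cref{lem:global-sync}.
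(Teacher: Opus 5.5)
Your proposal is correct and follows essentially the same route as the paper. You derive the window (10.8A\('\)) from one retained edge of weight at least \(\omega_A\) with inner multiplicity below \(2\mu_A\); you get polynomial windows for \(\mu_A,\omega_A,d_A\) via \cref{lem:REG1}; you run a whole-tag pigeonhole followed by the saturated local-volume class, then the angular stage with a second whole-tag pigeonhole; and you convert both sides through (10.8H). The only differences are cosmetic: you count the classes of \(d_{\angle,A}\) through the relative window \([A_{\rm ang}^{-K}d_*,4d_*)\) where the paper simply uses \(1\le d_{\angle,A}\le|\mathcal P_A|\), and your bound \(\omega_A\ge\Omega_A/(4e_{+,A})\) (which comes from the proof of \cref{lem:REG1} rather than (4.5) itself) is the slightly more careful constant.
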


\begin{proof}
Fix a tag \(A\).  Applying \cref{thm:joint} only to its own left parents and
its own half-open cells gives
\(M_A(Y_{{\rm reg},A})\ge L_{\rm reg}(A)^{-1}M_A(Y_A)\).
This is a local statement before any tag is discarded.

If \(Q\in\mathcal Q_A^+\), choose an edge \((P,Q)\in E_{2,A}\).  Its weight
is at least \(\omega_A\), while the inner child multiplicity for this one
parent is below \(2\mu_A\).  Therefore the physical union of those child
shadings in \(Q\), and hence the union counted by \(v_A(Q)\), has measure at
least \(\omega_A/(2\mu_A)\).  Conversely the \(Ca\)-neighbourhood of a cell
of side \(c_0a\) has volume at most \(C_va^3\).  This proves (10.8A$'$).
The interval between these two positive endpoints is covered by the
\(J_v(A)\) half-open dyadic intervals in (10.8A$''$), including the case in
which the endpoint ratio is at most one.  Thus every symbol in (10.8B) is
defined before it is used.

All values in (10.8C) lie in polynomial scale windows.  Indeed
\(1\le\mu_A\le|\mathcal T_A|\) and
\(1\le d_A\le|\mathcal P_A|\).  If \(m_A^+=|E_A^+|\) and
\(\Omega_A=\sum_{E_A^+}\omega_e\), the proof of \cref{lem:REG1} gives
\[
 \frac{\Omega_A}{2m_A^+}\le\omega_A\le\Omega_A.
\]
The inner level and (10.8A) give
\(\Omega_A\ge L_{J1}(A)^{-1}\delta^{C_{\rm mass}}\), while the upper
bound in (10.8A) controls the other endpoint.  Hence each of
\(\mu_A,\omega_A,d_A\) has \(O(\log(2/\delta))\) dyadic classes.  Partition
the \emph{tags} by the triple of classes and choose a class maximizing
\(\sum_AM_A(Y_{{\rm reg},A})\).  The selection keeps every edge in each
chosen tag.  Inside such a tag, the saturated volume class costs
\(J_v(A)\) and preserves the synchronized local degrees.  Summing the local graph
bounds, pigeonholing the tags, and then applying the local volume bounds
proves (10.8C)--(10.8E), including (10.8D).

For the second stage, the local theorem gives
\(M_A(Y_{2,A})\ge(c_{{\rm ang},A}/2)M_A(Y_{1,A})\).  The possible
\(\theta_A\in[q_{\rm res},1]\) and integer
\(d_{\angle,A}\le|\mathcal P_A|\) again form only logarithmically many
classes.  Select a class by its post-angle mass.  Summing the local lower
bounds and then pigeonholing proves (10.8F)--(10.8G).  Since this selection
deletes whole tags, every local A1--A4 conclusion persists.
Finally (10.8H) converts each coarse sum to the corresponding fine sum on
both sides.  This proves the last assertion and the theorem.
\end{proof}

\begin{corollary}[Polynomial-complexity volume-level count]
\label{cor:tagged-volume-levels}
In the setting of \cref{thm:tagged-sync}, assume \(0<a\le1\).  Then
\[
 J_v^{\max}\le C(C_{\rm comp},C_{\rm mass})\log(2/\delta).
\tag{10.8I}
\]
More generally, without (10.8A) but for finite positive-mass inputs, the
local saturated selection and (10.8D) remain valid with the literal integer
\(J_v(A)\) in (10.8A$''$); the global proof likewise uses the actual finite
numbers of nonempty dyadic tag classes in place of the displayed logarithmic
bounds.  No logarithmic estimate is part of the abstract local-volume
identity.  Thus (10.8I) is an application-level complexity consequence, not
an implicit hypothesis of the synchronization identity.
\end{corollary}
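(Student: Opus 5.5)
The main step is to bound each factor in the definition (10.8A$''$) of \(J_v(A)\) by a fixed power of \(\delta^{-1}\), uniformly in \(A\). Then
\[
 J_v(A)\le 2+\log_2\!\bigl(2C_va^3\mu_A/\omega_A\bigr)\le C\log(2/\delta).
\]
Since \(a\le1\), we have \(a^3\le1\). Also \(\mu_A\le|\mathcal T_A|\le\delta^{-C_{\rm comp}}\) by (10.8A), because the inner level satisfies \(\mu_A=2^{k_0}\le N_{\max}\). So the only real work is a lower bound for \(\omega_A\).

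For that lower bound I will reuse the window estimate already used in the proof of \cref{thm:tagged-sync}. The proof of \cref{lem:REG1} gives
\[
 \omega_A\ge\frac{\Omega_A}{2m_A^+},
\]
since every retained weight is at least \(\Omega_A/(2m_A^+)\), and the heaviest dyadic class has floor \(\omega_A\) at least that threshold up to a factor \(2\). Here \(m_A^+\le|\mathcal P_A||\mathcal Q_A|\le\delta^{-2C_{\rm comp}}\). For \(\Omega_A\), the inner-level selection (4.3) gives
\[
 \Omega_A=M_A(Y_{0,A})\ge L_{J1}(A)^{-1}M_A(Y_A)\ge L_{J1}(A)^{-1}\delta^{C_{\rm mass}},
\]
with \(L_{J1}(A)\le 1+\lceil\log_2\delta^{-C_{\rm comp}}\rceil\). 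Combining these,
\[
 \omega_A\ge\delta^{C_{\rm mass}+2C_{\rm comp}}\,[C\log(2/\delta)]^{-1}.
\]
Consequently
\[
 \frac{2C_va^3\mu_A}{\omega_A}\le C\,\delta^{-(3C_{\rm comp}+C_{\rm mass})}\log(2/\delta).
\]
Taking \(\log_2\) and adding the constant \(2\) gives (10.8I), with a constant depending only on \(C_{\rm comp}\), \(C_{\rm mass}\) and the structural \(C_v\), uniformly in \(A\). The empty case \(J_v(A)=1\) is trivial.

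The general assertion without (10.8A) needs no new argument. The local saturated selection is a pigeonhole over the \(J_v(A)\) intervals covering the finite positive window (10.8A$'$), so it loses exactly \(J_v(A)^{-1}\) in edge weight. Because \(E_2\)-weights lie in \([\omega_A,2\omega_A)\), this edge-weight loss transfers to mass, which gives (10.8D). In the same way the global tag pigeonholes use only the actual number of nonempty dyadic classes. The one point to watch is that the polylogarithmic bound on \(L_{J1}(A)\) also comes from (10.8A); this is harmless, since any logarithmic factor is absorbed into the constant.
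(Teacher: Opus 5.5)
Your proposal is correct and follows the paper's own proof essentially step for step. You bound \(\mu_A\le|\mathcal T_A|\le\delta^{-C_{\rm comp}}\), bound \(\omega_A\) from below through the threshold \(\Omega_A/(2m_A^+)\) from the proof of \cref{lem:REG1} using \(\Omega_A\ge L_{J1}(A)^{-1}\delta^{C_{\rm mass}}\) and \(m_A^+\le|\mathcal P_A||\mathcal Q_A|\le\delta^{-2C_{\rm comp}}\), and use \(a^3\le1\) to get a logarithmic bound on (10.8A$''$) uniformly in \(A\). Your reading of the general clause also agrees with the paper, and the weight comparability you invoke there is not even needed, since retained edge weight is exactly retained child mass.
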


\begin{proof}
For one tag let \(m_A^+=|E_A^+|\) and
\(\Omega_A=\sum_{E_A^+}\omega_e\).  The inner level and (10.8A) give
\[
 \Omega_A\ge L_{J1}(A)^{-1}\delta^{C_{\rm mass}},
 \qquad L_{J1}(A)\le C\log(2/\delta),
\]
while \(m_A^+\le|\mathcal P_A||\mathcal Q_A|\le
\delta^{-2C_{\rm comp}}\).  The proof of \cref{lem:REG1} therefore gives
\(\omega_A\ge\Omega_A/(2m_A^+)\ge\delta^{C_1}\) after increasing the fixed
exponent \(C_1\).  Also \(\mu_A\le|\mathcal T_A|\le
\delta^{-C_{\rm comp}}\).  Since \(a^3\le1\), the logarithm in
(10.8A$''$) is bounded by \(C_2\log(2/\delta)\), uniformly in \(A\).
Taking the maximum proves (10.8I).
\end{proof}

\subsection{Analytic estimates inherited by the application}
\label{sec:inherited-estimates}

The branch estimates below close through two analytic assertions from
\cite[Definitions~3.4--3.5, pp.~5--6]{GWZ2026}.  We first state them in their
source-standard finite-set form.  Fix \(0<\beta\le1\).
The assertion \(K_{KT}(\beta)\) means that for every \(\varepsilon>0\) there
are \(\eta,\delta_0>0\) such that every finite set
\(\mathcal F\) of \(\delta\)-tubes in the unit ball, with
\[
 \Delta_{\max}(\mathcal F)\le\delta^{-\eta},
 \qquad
 \lambda(\mathcal F,Y)\ge\delta^\eta,
\]
satisfies
\[
 \mu(\mathcal F,Y)
 \lesssim_{\varepsilon,\beta}\delta^{-\varepsilon}
 |\mathcal F|^\beta
 \qquad(0<\delta\le\delta_0).
\tag{10.8K}
\]
The assertion \(K_F(\beta)\) means that, under the same quantifiers, the
hypotheses
\[
 C_F(\mathcal F,B_1)\le\delta^{-\eta},
 \qquad
 \lambda(\mathcal F,Y)\ge\delta^\eta
\]
imply
\[
 |U(\mathcal F,Y)|
 \gtrsim_{\varepsilon,\beta}
 \delta^{\varepsilon+2\beta}
 \bigl(|\mathcal F|\,|T_\delta|\bigr)^{\beta/2},
\tag{10.8J}
\]
where \(T_\delta\) denotes one unit \(\delta\)-tube.  For labelled KKT calls,
discard labels whose individual shading density is below half the mean and
colour exact repeated carriers into \(m_*\le\Delta_{\max}(\mathcal F)\)
carrier-injective layers.  Applying the source set assertion to the layers and
using \(\sum_j|\mathcal F_j|^\beta\le
m_*^{1-\beta}|\mathcal F|^\beta\) gives the labelled form after reselecting
\(\eta,\delta_0\); the random-subset proof of Lemma~3.7 then gives its labelled
form.  No unrestricted labelled extension of \(K_F\) is asserted: its calls
in Sections~9--10 use source-standard colours or proxies and charge fibre multiplicity
separately.  We also use the
scale-variant forms in \cite[Remark~3.6, p.~6]{GWZ2026}.  The only consequences for
planks are restated, with all parameters used here, in
\cref{lem:thick-parent,lem:labelled-plank-reduction}.  Those restatements fix
the complete hypotheses encoded by \(K_F(\beta)\) and \(K_{KT}(\beta)\).

\subsection{Thick parents}
\label{sec:thickbranch}

The first geometric split occurs before cellularization.  When the shortest
parent width is already large, the inherited Frostman and Katz--Tao estimates
close the branch directly; the four cellular alternatives considered later
belong only to the complementary thin regime.  This branch uses the original
labels, without multiplicity expansion.

\begin{lemma}[The thick-parent alternative]
\label{lem:thick-parent}
Assume \(a\ge\delta^{1-\tau}\), and assume both \(K_F(\beta)\) and
\(K_{KT}(\beta)\).  Let \(Y^{\rm sc}\subset\bar Y\) be the all-grid
refinement fixed in (S1).  Choose the source-output and setup exponents so that
\[
 0<\varepsilon_{\rm thick},\eta,\qquad
 \varepsilon_{\rm thick}+\eta+3\eta_{\rm grid}
   <\frac18\tau\varepsilon_{\rm fact}\beta
\tag{10.8a}
\]
and also choose \(\eta\) below
\(\tau\eta_F(\varepsilon_{\rm thick},\beta)\), where \(\eta_F\) is the
density threshold in the Frostman plank estimate
\cite[Lemma~6.4, p.~15]{GWZ2026}.  Let \(r\) be the already fixed source
scale-grid radius with \(a\le r\le\delta^{-\eta_{\rm grid}}a\).
For sufficiently small \(\delta\), there is a ball \(B_r\) of radius
comparable to \(r\), centered at a point of the fixed scale-\(r\) union,
such that
\[
 \frac{|U(\T,Y^{\rm sc})\cap B_r|}{|B_r|}
 \gtrsim_{\log}
 \delta^{-2\nu_{\rm thick}}
 \left(\frac{\delta}{r}\right)^{2\beta},
 \qquad
 \nu_{\rm thick}:=\frac18
       \tau\varepsilon_{\rm fact}\beta>0.
\tag{10.8b}
\]
Consequently the source scale-\(r\) comparison gives
\[
 |U(\T,\bar Y)|
 \gtrsim_{\log}
 \delta^{-\nu_{\rm thick}}|T|\,|\T|^{1-\beta}.
\tag{10.8c}
\]
\end{lemma}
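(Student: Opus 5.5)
The plan is to localize to one parent at the scale-grid radius $r$, apply the Frostman plank estimate (source Lemma~6.4, in the form of \cref{prop:corrected-s6-interface}) to the normalized child family of a dense parent, and then convert the local density gain into a global union bound through the source scale-$r$ comparison.

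\emph{Step 1: choose the parent and ball.} Pigeonhole, as in (9.2), a biased parent $W_0$ of dimensions $a\times b\times1$ with $a\ge\delta^{1-\tau}$ whose child family carries at least the average shading density $\gtrlog\lambda(\T,Y^{\rm sc})\ge\delta^{\eta+o(1)}$. Let $B_r$ be an $r$-ball centred at a point of the scale-$r$ union meeting $W_0$. Because $a\le r\le\delta^{-\eta_{\rm grid}}a$, the portion $W_0\cap B_r$ is comparable to an $a\times r\times r$ box up to $\delta^{-O(\eta_{\rm grid})}$ losses.

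\emph{Step 2: Frostman call.} After the affine normalization of $W_0\cap B_r$ to unit size, the children become $(\delta/r)\times(\delta/a)\times1$ planks. The thick hypothesis gives $\delta/a\le\delta^{\tau}<b_0$, so Lemma~6.4 applies with auxiliary scale $\delta^\tau$. The biased Frostman bound (B1), with $\gamma=\varepsilon_{\rm fact}$, has exponent $1+\varepsilon_{\rm fact}$. I expect this strict excess over the ordinary exponent $1$ to be the source of the gain $\delta^{-2\nu_{\rm thick}}$: the normalized child constant $C_F$ is at least a power $(\delta/a)^{-c\varepsilon_{\rm fact}}\ge\delta^{-c\tau\varepsilon_{\rm fact}}$ better than the trivial bound. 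Feeding this into $K_F(\beta)$ in the union form (10.8J), at normalized scale $\delta/r$ with density above $\delta^{\tau\eta_F}$, should give
\[
 |U\cap B_r|/|B_r|\gtrsim(\delta/r)^{2\beta}\,\delta^{-\tau\varepsilon_{\rm fact}\beta/4+\varepsilon_{\rm thick}+\eta+3\eta_{\rm grid}}.
\]
Condition (10.8a) then leaves the stated $\delta^{-2\nu_{\rm thick}}$. The $K_{KT}(\beta)$ hypothesis is used only to control the parent-multiplicity side if the counting is routed through (9.I-48) rather than directly through the union.

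\emph{Step 3: globalize.} Since $B_r$ sits in the fixed scale-$r$ union, the source scale-$r$ comparison and uniformity of $\bar Y$ give
\[
 |U(\T,\bar Y)|\gtrsim_{\log}\frac{|U\cap B_r|}{|B_r|}\,\bigl(\text{scale-}r\text{ union volume}\bigr).
\]
The scale-$r$ union volume is bounded below by $(r/\delta)^{2\beta}|T|\,|\T|^{1-\beta}$ via the inductive $K_{KT}$ estimate at scale $r$. Multiplying by (10.8b), the powers of $r$ cancel and one factor $\delta^{-\nu_{\rm thick}}$ is spent on the $\delta^{-O(\eta_{\rm grid})}$ and $\lesssim_{\log}$ losses, which yields (10.8c).

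\emph{Main obstacle.} The hardest step is Step 2: checking exactly how the excess exponent $\varepsilon_{\rm fact}$ in (8.0E) survives the normalization to $W_0\cap B_r$ and the colouring and denominator costs of \cref{lem:color-normalized}, so that the net gain is at least $\tau\varepsilon_{\rm fact}\beta/4$. I would first try tracking the improved $C_F$ directly through (9.I-F) with $M\lesssim(r/a)^2$.
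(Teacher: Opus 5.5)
\textbf{There is a genuine gap in Step~2, in two places.}

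\emph{First, the gain cannot come from an improved Frostman constant.} One always has \(C_F(\mathcal F,K_0)\ge1\) (test \(K=K_0\)), and an ordinary Frostman parent already has \(C_F\lesssim1\). In the union form (10.8J) of \(K_F(\beta)\), \(C_F\) is only a hypothesis and does not appear in the conclusion; that form is also stated for tubes, whereas your normalized children are anisotropic planks. An ordinary Frostman parent gives only \(|U\cap W|/|W|\gtrsim\delta^{\eta+\varepsilon_{\rm thick}}(\delta/a)^{2\beta}\). After the \((a/r)^3\) localization this is at best the threshold \((\delta/r)^{2\beta}\), with no \(\delta^{-2\nu_{\rm thick}}\).

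In the paper the excess exponent enters through the thick-plank count \(M\) of source Lemma~6.4. Apply the count form (8.0E) of the biased inequality to \(K_\vartheta\), the pullback of \(P_\vartheta\) intersected with \(W\), which has relative volume \(\lesssim\vartheta t^2\) with \(t=\delta/a\). This gives \(|\mathcal P[P_\vartheta]|\lesssim_{\log}\vartheta t^{2+\varepsilon_{\rm fact}}|\mathcal P|\). The child-count bound (10.8e), obtained by testing (8.0D) on a single child, ensures this entry dominates the \(\max\{1,\cdot\}\) defining \(M\). Dividing \(\lambda|\mathcal P|st\) by (10.8g0) then yields \(|U\cap W|/|W|\gtrsim\delta^{\eta+\varepsilon_{\rm thick}+o(1)}t^{2\beta-\varepsilon_{\rm fact}\beta/2}\), that is, a gain \(t^{-\varepsilon_{\rm fact}\beta/2}\ge\delta^{-\tau\varepsilon_{\rm fact}\beta/2}\).

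Your bookkeeping also does not close. A raw gain of only \(\tau\varepsilon_{\rm fact}\beta/4=2\nu_{\rm thick}\), minus the positive losses \(\varepsilon_{\rm thick}+\eta+3\eta_{\rm grid}\), falls strictly short of \(2\nu_{\rm thick}\). It is the gain \(\tau\varepsilon_{\rm fact}\beta/2\), combined with (10.8a), that leaves more than \(3\tau\varepsilon_{\rm fact}\beta/8\). This exceeds \(2\nu_{\rm thick}\) by a \(\tau\varepsilon_{\rm fact}\beta/8\) reserve for the \(o(1)\) terms.

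\emph{Second, you localize before the plank call, which destroys complete containment.} The children have length \(r_1\) (length \(1\) in your normalization), in general much larger than \(r\asymp a\). So they are not complete children of \(W_0\cap B_r\), and neither the Frostman hypothesis nor the \(M\)-hypothesis of Lemma~6.4 is available for the truncated pieces. The correct normalized children are the \((\delta/b)\times(\delta/a)\times1\) planks of the whole parent. The order must be reversed:
\begin{enumerate}
\item Apply Lemma~6.4 to the full normalized parent, where (8.0D)--(8.0E) hold.
\item Obtain the union density inside \(W\).
\item Use a bounded-overlap cover of \(W\) by \(a\)-balls to find \(B_a\) with the same density.
\item Take a shaded \(x\in B_a\), which lies in the scale-\(r\) union, and set \(B_r=B(x,2r)\).
\item Pay the factor \((a/r)^3\), using \((a/r)^3(\delta/a)^{2\beta}\ge\delta^{3\eta_{\rm grid}}(\delta/r)^{2\beta}\).
\end{enumerate}

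Your Step~3 is essentially the paper's use of source (87)--(88) on \(Y^{\rm sc}\subset\bar Y\), where \(K_{KT}(\beta)\) enters through (88).
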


\begin{proof}
Fix a retained outer ball \(B\).  The common parent-density level contains
a parent \(W\) of dimensions \(a\times b\times r_1\) for which
\(\lambda(\T_{B,W},Y_B)\gtrsim_{\log}\delta^\eta\).  The locally proved
biased factoring inequality (8.0D), in the exact complete-containment form
(10.37b), says that for every convex \(K\subset W\),
\[
 \Delta(\T_{B,W},K)
 \lesssim_{\log}
 \left(\frac{|K|}{|W|}\right)^{\varepsilon_{\rm fact}}
 \Delta(\T_{B,W},W).
\tag{10.8d}
\]
Taking \(K\) to be a child tubelet, for which
\(\Delta(\T_{B,W},K)\ge1\), and using equal child volume gives
\[
 |\T_{B,W}|
 \gtrsim_{\log}
 \left(\frac{|W|}{|T_B|}\right)^{1+\varepsilon_{\rm fact}}
 \ge
 \left(\frac a\delta\right)^{2+2\varepsilon_{\rm fact}}.
\tag{10.8e}
\]

Let \(L_W\) be the affine map carrying the labelled source L\"owner box of \(W\) to a
unit box.  The complete unexpanded labelled children become planks
\(\mathcal P\) of dimensions
\[
 s\times t\times1,\qquad s=\delta/b,\quad t=\delta/a.
\]
Fixed L\"owner-box and boundary enlargements are absorbed in
\(\lesssim_{\log}\).  Equation (10.8d) gives
\(C_F(\mathcal P)\lesssim_{\log}1\).  For a base label
\(P\in\mathcal P\) and \(\vartheta\in[s/t,1]=[a/b,1]\), let
\(P_\vartheta\) be its source \(\vartheta t\times t\times1\) thickening.
Intersect the pullback of \(P_\vartheta\) with \(W\), and call the result
\(K_\vartheta\).  This is convex, every completely contained child counted
by \(\mathcal P[P_\vartheta]\) lies in \(K_\vartheta\), and
\[
 \frac{|K_\vartheta|}{|W|}
 \lesssim\vartheta\left(\frac\delta a\right)^2.
\]
The count form of (10.8d) therefore yields
\[
 |\mathcal P[P_\vartheta]|
 \lesssim_{\log}
 \vartheta\left(\frac\delta a\right)^{2+\varepsilon_{\rm fact}}
 |\mathcal P|.
\tag{10.8f}
\]
We use the labelled form of source Lemma~6.4.  In its proof, Lemmas~6.11 and
6.13 select and refine labels, with angles resolved below at \(s/t\).  Assign
every surviving label to the essentially
distinct thick proxy chosen in source Definition~6.10 and dyadically
regularize the proxy-fibre size \(N\).  The bound (10.8f), for every base
label, gives \(N\lesssim M\vartheta\).  On this level the positive-sum proof
of source Remark~3.3(A) gives the same Frostman bound with sums over labels,
and only the essentially distinct proxy subfamily is passed to source
Lemma~3.9.  Thus coincident original carriers are retained and counted by
\(M\); the logarithmic fibre regularization is absorbed in
\(L_{\rm thick}\).  Equation (10.8f) gives
\[
 M=\max\left\{1,\,
   L_{\rm thick}
   \left(\frac\delta a\right)^{2+\varepsilon_{\rm fact}}
   |\mathcal P|\right\},
 \qquad L_{\rm thick}=\delta^{-o(1)}.
\]
By (10.8e) the second entry is already at least one after enlarging
\(L_{\rm thick}\).  Also \(s\le t\le\delta^\tau\), and
\(\eta<\tau\eta_F\) gives
\(\lambda(\mathcal P,Y_{\mathcal P})\gtrsim_{\log}s^{\eta_F}\).  Hence the
scale-variant convention \cite[Remark~3.6, p.~6]{GWZ2026} applies with call
data
\[
 \begin{gathered}
 s=\delta/b,\qquad t=\delta/a,\qquad
 \mathcal F=\mathcal P,\qquad
 \lambda(\mathcal P,Y_{\mathcal P})\gtrsim\delta^\eta\ge s^{\eta_F},\\
 C_F(\mathcal P)\lesssim_{\log}1,\qquad
 M\le L_{\rm thick}t^{2+\varepsilon_{\rm fact}}|\mathcal P|,\qquad
 |\mathcal P[P_\vartheta]|\le M\vartheta
 \quad(P\in\mathcal P,\ s/t\le\vartheta\le1).
 \end{gathered}
\tag{10.8f$'$}
\]
The labelled form just verified of source Lemma~6.4, equation (31), now gives
\[
 \mu(\mathcal P,Y_{\mathcal P})
 \lesssim s^{-\varepsilon_{\rm thick}}
 C_F(\mathcal P)^{1-\beta/2}\frac st
 M^{\beta/2}t^{-2\beta}
 \bigl(t^2|\mathcal P|\bigr)^{1-\beta/2}.
\tag{10.8g0}
\]
Divide \(\lambda|\mathcal P|st\) by (10.8g0), insert the bound for \(M\),
and pull back by \(L_W\):
\[
\begin{aligned}
 \frac{|U(\T_{B,W},Y_B)\cap W|}{|W|}
 &\asymp |U(\mathcal P,Y_{\mathcal P})|\\
 &\gtrsim
 \delta^{\eta+\varepsilon_{\rm thick}+o(1)}
 \left(\frac\delta a\right)^{
       2\beta-\varepsilon_{\rm fact}\beta/2}.
\end{aligned}
\tag{10.8g}
\]
Here every \(Y_B\)-point has positive represented fine multiplicity by
(10.3)--(10.4), and the induced descendants belong to the current
all-scales fine refinement \(Y^{\rm sc}\subset\bar Y\).  Hence this coarse
union is contained in \(U(\T,Y^{\rm sc})\).  A bounded-overlap cover of
\(W\) by \(a\)-balls selects one \(B_a\) with the same density lower bound.
Choose a shaded point \(x\in B_a\) and use the source ball
\(B_r=B(x,2r)\); replacing \(r\) by this fixed neighbouring scale is part
of the stability fixed in (S1).  Since \(B_a\subset B_r\), its relative
density loses at most
\((a/r)^3\), and
\[
 \left(\frac ar\right)^3
 \left(\frac\delta a\right)^{2\beta}
 =
 \left(\frac ar\right)^{3-2\beta}
 \left(\frac\delta r\right)^{2\beta}
 \ge
 \delta^{3\eta_{\rm grid}}
 \left(\frac\delta r\right)^{2\beta}.
\]
Since \(\delta/a\le\delta^\tau\), (10.8a), and then a choice of
\(\delta_0\) absorbing the \(o(1)\)-term within the remaining one-eighth
reserve, turn (10.8g) into (10.8b).

To complete the branch, the all-discrete-\(a\) multiplicity refinement made
immediately after source
equation (86) provides \(Y^{\rm sc}\subset\bar Y\) and the scale-\(r\)
shading \((\T_r,Y_r)\) for which
\[
\begin{aligned}
 |U(\T,Y^{\rm sc})|
 &\gtrsim_{\log}|U(\T_r,Y_r)|
   \frac{|U(\T,Y^{\rm sc})\cap B_r|}{|B_r|},\\
 |U(\T_r,Y_r)|
 &\gtrsim
 \delta^{\nu_{\rm thick}/9}
 \left(\frac r\delta\right)^{2\beta}
 |T|\,|\T|^{1-\beta}.
\end{aligned}
\tag{10.8h}
\]
These are source equations (87)--(88) on p.~36 at the previously fixed grid
radius \(r\).
Substitution of (10.8b) gives a
power stronger than (10.8c).  All later objects used above refine this fixed
fine shading, so their local union lies in the union appearing in (10.8h).
\end{proof}

Henceforth we are in the thin case
\[
 a<\delta^{1-\tau}.
\tag{10.8i}
\]

\subsection{Local cell volume and the cellular parent output}

In the thin regime apply the first stage of \cref{thm:tagged-sync} tagwise,
using (10.7).  Before this call, the input density, (10.7V), and the
pre-cellular cutoff \(\omega_{\rm pre}<\eta_{\rm src}\) give
\[
 \frac{|\widetilde Y_B(T_B)|}{|T_B|}
 \gtrsim
 \frac{c_{\rm sc}c_{\rm init}}{L_{\rm mult}}
 \delta^{\eta_{\rm src}}
 \ge \delta^{2\eta_{\rm src}}.
\tag{10.8A0}
\]
Hence \cref{lem:goodball-reorder} applies without any \(J_v\)-dependent
input.  Its fixed retention, one nonempty tubelet of volume
\(\asymp\delta^2r_1\), (10.7W$'$), polynomial complexity, and the fixed-ball
upper bound verify (10.8A).  Put
\(\mathcal B_{\rm good}:=\mathcal A_1\) and
\[
 \mu_{\rm in}:=\mu_*,\quad \omega_0:=\omega_*,\quad d_0:=d_*,
 \quad L_{\rm reg}:=L_{\rm reg}^{\max},
\quad P_{\rm tag}:=L_\mu L_\omega L_d .
\tag{10.8k}
\]
Every local parameter is within a factor two of these references.  By
(10.8D)--(10.8E), the retained tags carry at least
\((P_{\rm tag}J_v^{\max}L_{\rm reg}^{\max})^{-1}\) of represented fine mass.

Fix one such tagged outer-ball component for the next three displays; thus
\(Q\) means the tagged vertex \((B,Q)\), and all degrees in (10.9)--(10.10)
are computed before physical recombination.  After its local joint graph
regularization, let
\(w(Q)=\sum_{W:(W,Q)\in E_2}\omega_{WQ}\).  If the right degree is
\(d_B\le d(Q)<2d_B\), where \(d_0\le d_B<2d_0\), then
\[
 d_0\omega_0\le w(Q)<16d_0\omega_0.
\tag{10.9}
\]
Property P5 bounds the child multiplicity of \(Y_{\rm reg}\) on \(Q\) by
\(16\mu_{\rm in}d_0\); therefore
\[
 |U(\T,Y_{\rm reg})\cap Q|\ge \frac{\omega_0}{16\mu_{\rm in}}.
\tag{10.10}
\]
After projection to physical space, the same pointwise bound and any sum of
the local union lower bounds acquire at most the factor
\(\kappa_{\rm amb}\), by \cref{lem:tag-recombine}.  Thus (10.9)--(10.10)
remain tagged identities until this controlled recombination.
We may now select one dyadic level of
\(v(Q)=|U(\T,Y_{\rm reg})\cap N_{Ca}(Q)|\).  Equations (10.9)--(10.10) and
\(v(Q)\le C'a^3\) show that the theorem-level count (10.8A$''$) is bounded by
the common envelope
\[
 \widehat J_v:=1+\max\left\{0,\left\lceil
  \log_2\!\left(\frac{16C'a^3\mu_{\rm in}}{\omega_0}\right)
 \right\rceil\right\},\qquad J_v(B)\le\widehat J_v.
\tag{10.11}
\]
This is logarithmic under polynomial complexity and remains explicit
otherwise.  Set \(J_v:=\widehat J_v\), padding shorter local partitions by
empty intervals.  The saturated selection of \cref{thm:tagged-sync} chooses
\(\mathcal Q_v(B)\) with
\[
 \sum_{Q\in\mathcal Q_v(B)}w(Q)
 \ge J_v^{-1}\sum_Qw(Q).
\tag{10.11$'$}
\]
Retain the saturated edge set
\[
 E_3(B)=\{(W,Q)\in E_2(B):Q\in\mathcal Q_v(B)\},
\tag{10.11$''$}
\]
and define the resulting post-selection shadings, in particular \(Y_1\), by
(4.10), discarding empty parents.  The saturated
conclusions (10.8D), P2--P5, and the local P1 cost \(J_v\) now follow directly
from \cref{thm:tagged-sync}.

\begin{lemma}[Good-ball local transfer]
\label{lem:goodball-transfer}
Assume every selected tubelet shading \(Y_B(T_B)\) is covered by a
bounded-overlap family of selected \(\delta\)-balls \(D\), each of which meets
the shading and satisfies
\[
 |D\cap Y_B(T_B)|\ge c_{\rm gb}\delta^\eta\delta^3,
\tag{10.11a}
\]
and assume every physical cell in
\(U(\widetilde\W'_B,Z)\) is a positive parent--cell incidence for the joint
factoring output.  Then
\[
 |U(\T_B,Y_B)|
 \gtrsim c_{\rm gb}\delta^\eta(\delta/a)^3
 |U(\widetilde\W'_B,Z)|.
\tag{10.11b}
\]
More generally, let \(Z'(W)\subset Z(W)\) be any measurable parent
subshading, and let \(B_R\) be a ball with \(R\ge a\).
Then, for a fixed dimensional \(C\),
\[
 |U(\T_B,Y_B)\cap B_{CR}|
 \gtrsim c_{\rm gb}\delta^\eta(\delta/a)^3
 |U(\widetilde\W'_B,Z')\cap B_R|.
\tag{10.11b$'$}
\]
Repeated parent labels and cells meeting the parent boundary do not alter the
constant.
\end{lemma}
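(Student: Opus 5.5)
The proof will be a cell-by-cell counting argument, with the general localized form (10.11b$'$) proved directly and (10.11b) obtained as the special case \(Z'=Z\) with \(R\) large. The structure is as follows. Every physical point of \(U(\widetilde\W'_B,Z')\) lies in a half-open cell \(Q\) of side \(h=c_0a\). By hypothesis \(Q\) is a positive parent--cell incidence, so some \(W\) with \((W,Q)\in E_2\) has \(\omega_{WQ}>0\). Hence some child tubelet \(T_B\in\T_{B,W}\) has \(|Y_B(T_B)\cap Q|>0\), working with the pre-regularization tubelet shading \(Y_B\supset Y_{\rm reg}\), which only helps. Pick a point \(x_Q\) of that shading in \(Q\). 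By the good-ball covering hypothesis, \(x_Q\) lies in a selected \(\delta\)-ball \(D_Q\) satisfying (10.11a). Therefore
\[
 |U(\T_B,Y_B)\cap N_{2\delta}(Q)|\ge|D_Q\cap Y_B(T_B)|\ge c_{\rm gb}\delta^\eta\delta^3 .
\]
This is the only place where (10.11a) enters. Since \(\delta\le a\), the set \(N_{2\delta}(Q)\) lies in \(N_{Ca}(Q)\).

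Next I will sum over the cells. Let \(\mathcal Q'\) be the set of cells meeting \(U(\widetilde\W'_B,Z')\cap B_R\). Because \(Z'(W)\subset Z(W)\) is a union of parts of cells, we have
\[
 |U(\widetilde\W'_B,Z')\cap B_R|\le h^3|\mathcal Q'|\asymp a^3|\mathcal Q'| .
\]
The neighbourhoods \(N_{Ca}(Q)\), \(Q\in\mathcal Q'\), have overlap \(O(1)\) because the grid is fixed at scale \(\asymp a\). Summing the per-cell lower bound therefore gives
\[
 |U(\T_B,Y_B)\cap\textstyle\bigcup_{\mathcal Q'}N_{Ca}(Q)|\gtrsim c_{\rm gb}\delta^{\eta}\delta^3|\mathcal Q'|
 \gtrsim c_{\rm gb}\delta^\eta(\delta/a)^3|U(\widetilde\W'_B,Z')\cap B_R| .
\]
Finally, every \(Q\in\mathcal Q'\) meets \(B_R\) and has diameter \(\lesssim a\le R\). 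So \(N_{Ca}(Q)\subset B_{CR}\) for a fixed dimensional \(C\), which yields (10.11b$'$).

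Two bookkeeping points need care, and I expect the main obstacle to lie in the second. First, the per-cell witness ball is counted once per cell, not per parent label. Repeated parent labels only increase the multiplicity of \(Z'\), and this does not enter the measure \(|U(\cdot)|\). Second, cells meeting the parent boundary are harmless: the witness \(x_Q\) lies in \(Q\) itself, and \(D_Q\) is an ambient ball with no clipping to \(W\). This is why the constant is insensitive to the boundary. The delicate part is confirming that the positive incidence of \(Q\) (which is guaranteed for the joint output) gives a shaded tubelet point inside \(Q\) rather than merely near it. This follows from the definition of \(E^+\) via \(\omega_{WQ}=\sum|Y_0\cap Q|>0\) together with (4.11). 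Once that is in place, everything takes place inside a single tag, so no recombination factor \(\kappa_{\rm amb}\) arises.
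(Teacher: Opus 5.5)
Your proposal is correct and follows essentially the paper's argument: each physical cell meeting the parent union carries a positive incidence, which yields a shaded tubelet point in that cell and hence a good \(\delta\)-ball of mass \(\ge c_{\rm gb}\delta^\eta\delta^3\) near it; the cell count becomes volume through \(h^3\asymp a^3\), and \(R\ge a\) gives the localized form. The only difference is bookkeeping: the paper colours the grid by residue classes so that the chosen witness balls are pairwise disjoint, whereas you sum using the bounded overlap of the neighbourhoods \(N_{Ca}(Q)\), and both devices cost only a dimensional constant.
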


\begin{proof}
The parent union is the disjoint union of its distinct half-open
\(h\)-cells, with \(h=c_0a\).  Color the grid by residue classes modulo a
fixed integer \(M>4C/c_0\).  A color carrying a maximal number of active
physical cells retains a fixed fraction, and two of its cells are separated
by more than \(3Ca\).

For each retained physical cell \(Q\), choose one labelled positive edge
\((W,Q)\).  Positivity and the definition of \(Y_1\) give a tubelet \(T_B\)
and a point \(x_Q\in Y_1(T_B)\cap Q\).  Choose one of the selected good
\(\delta\)-balls \(D_Q\) containing \(x_Q\), using the fixed half-open
tie-breaking convention on ball boundaries.  Since \(\delta\le a\), the
balls \(D_Q\) belonging to the chosen grid color are pairwise disjoint
(enlarging or shrinking all balls by one fixed factor only changes
\(c_{\rm gb}\)).  Consequently
\[
\begin{aligned}
 |U(\T_B,Y_B)|
 &\ge \sum_Q |D_Q\cap Y_B(T_B(Q))|\\
 &\ge c_{\rm gb}\delta^\eta\delta^3\#\{Q\}\\
 &\gtrsim c_{\rm gb}\delta^\eta(\delta/a)^3
       |U(\widetilde\W'_B,Z)|.
\end{aligned}
\]
The count is over physical cells, so coincident parent labels have no effect
on the right side.  Each measured set lies in the tubelet shading, and the
grid coloring costs a dimensional constant.  For the localized assertion,
run the same argument on all active
physical cells that meet \(U(\widetilde\W'_B,Z')\cap B_R\).  Because each
such cell has volume \(h^3\), their full cell volumes dominate that union
volume.  The inclusion \(Z'(W)\subset Z(W)\) makes every chosen cell a
positive original parent--cell incidence.  The point \(x_Q\) is within
\(O(a)\) of \(B_R\), and its good \(\delta\)-ball is therefore contained in
\(B_{CR}\) because \(R\ge a\).  The same residue-class coloring makes those
balls disjoint and proves (10.11b$'$).
\end{proof}

The self-contained labelled construction in \cref{lem:biased-factoring}
uses the same exponent in (8.0D) and (8.0F)--(8.0G), matching the biased
formulation of \cite[Lemma~9.2, p.~35]{GWZ2026}.  We therefore identify
\[
 \eta_{\rm bias}:=\varepsilon_{\rm fact};
\tag{10.11c}
\]
so the two symbols denote the same parameter.  Put
\[
 \sigma=\eta_{\rm bias},\qquad q=2-\sigma,\qquad
 \mathfrak s=a/b,
\]
and let \(L_N\) denote the child-count level and
\(L_{\rm reg}\) the J1/REG-1/REG-2 product.  Define once, for every later
branch,
\[
 P_{\rm pre}:=
 c_{\rm setup}^{-1}J_v\kappa_{\rm inc}L_mL_{a,b}.
\tag{10.11d}
\]
This collects the setup, representation, dimension, and local-volume losses
already exposed above.  The
rectangular theorem and Jensen give the cellular parent-density estimate
\[
 \lambda(\widetilde\W'_B,Z)
 \gtrsim C_0^{-q}\mathfrak s^\eps\kappa_D^{-q}
 (P_{\rm pre}L_NL_{\rm reg})^{-q}
 \lambda(\T_B,Y_B)^q.
\tag{10.12}
\]
The relevant scale is \(a/b\), and every synchronized mass loss has power
\(-q\).  This is the parent-density line corresponding to source equation
(95).  Exact compatibility is
\[
 (\T_B)_{Y_1}(x)
 \subset
 \bigcup_{\widetilde W:\,x\in Z(\widetilde W)}
 (\T_{B,W})_{Y_1}(x),
\tag{10.13}
\]
which is the incidence line corresponding to source equation (96).
\Cref{lem:goodball-transfer} gives
\[
 |U(\T_B,Y_B)|
 \gtrsim \delta^\eta(\delta/a)^3
 |U(\widetilde\W'_B,Z)|.
\tag{10.14}
\]
The left side is evaluated on the reference shading \(Y_B\).

\subsection{Broad middle axes}

When the middle axis is broad,
\(b\ge \delta^{\varepsilon_{\rm scal}}r_1\), the labelled slab lemma
proved in \cref{app:slab} applies to the \(a\times b\times r_1\) parent
boxes.  In this branch use the expanded child family
\((\T_B^{\rm lab},Y_B^{\rm lab})\) from (10.7a).  Assign all copies of one
carrier to the same biased parent and rerun the joint graph.  Every
parent--cell edge weight is multiplied by the common \(m_{\rm fine}\), so
the dyadic edge and right-degree selections, the parent family, and the
physical parent shading \(Z\) are unchanged.  The child shading density and
the normalized complete-child fractions are also unchanged by (10.7b).
Since the parent boxes have comparable volume, define their labelled
parent-shading density by
\[
 \lambda_B^{\rm par}
 :=\frac{M(Z)}
 {|\widetilde\W'_B|\,|W|}
 =\frac{\sum_{W\in\widetilde\W'_B}|Z(W)|}
 {|\widetilde\W'_B|\,|W|}.
\tag{10.14a}
\]
With \(L_{\rm slab}=1+\lceil\log_2(r_1/a)\rceil\), the slab lemma gives
\[
 \frac{|U(\widetilde\W'_B,Z)|}
 {|\widetilde\W'_B|\,|W|}
 \gtrsim
 \frac{(\lambda_B^{\rm par})^2}
 {1+L_{\rm slab}\Delta_{\max}(\widetilde\W'_B)(r_1/b)^2}.
\tag{10.15}
\]
This formula retains both normalizations needed in the application.  First,
union volume follows directly from (10.15), with both powers of
\(\lambda_B^{\rm par}\) present.  Second, the parent-mass comparison is
\[
 |\widetilde\W'_B|\,|W|
 \gtrlog r_1^2\delta^\eta |\T_B^{\rm lab}|\,|T_B|,
\tag{10.16}
\]
and its normalization is as follows.  With
\[
 \Delta_{\max}(\T_B^{\rm lab})
 :=\sup_{K\in\mathfrak K_3}
   \frac{\sum_{(T_B,\ell)\subset K}|T_B|}{|K|},
\]
complete containment gives
\[
\begin{aligned}
 |\widetilde\W'_B|\,|W|
 &=\sum_{W\in\widetilde\W'_B}|W|\\
 &\ge \Delta_{\max}(\T_B^{\rm lab})^{-1}
    \sum_{W\in\widetilde\W'_B}
       \sum_{(T_B,\ell)\in\T^{\rm lab}_{B,W}}|T_B|\\
 &\gtrlog \Delta_{\max}(\T_B^{\rm lab})^{-1}
          |\T_B^{\rm lab}|\,|T_B|.
\end{aligned}
\tag{10.16$'$}
\]
The last inequality is the retained dimension/child-count level;
it counts labelled tubelets, including coincident carriers.  At this scale,
the source thickening comparison gives
\[
 \Delta_{\max}(\T_B^{\rm lab})
 \le C\kappa_{\rm loc}r_1^{-2}\Delta_{\max}(\T)
 \le C\kappa_{\rm loc}r_1^{-2}\delta^{-\eta},
\tag{10.16$''$}
\]
which proves (10.16), since \(\kappa_{\rm loc}=O(1)\).  The source
complete-containment bound
\(\Delta_{\max}(\T)\le\delta^{-\eta}\) contributes the factor
\(\delta^\eta\) in this comparison.
Equations (10.14)--(10.16) give the complete local slab inequality.
Converting it into a numerical terminal gain uses the common
source-to-admissible quantitative bounds proved in
\cref{prop:source-extraction}; the resulting exponent is displayed in
(10.33a).

\subsection{The angular degree dichotomy}

For every thin branch set
\[
 \alpha=a/r_1,\qquad q_{\rm ang}=a/b,\qquad \rho_2=b/r_1.
\]
The common thin range has
\(\delta^{1-\varepsilon_{\rm scal}}\le\alpha\le\rho_2\le1\) and
\(q_{\rm ang}=\alpha/\rho_2\ge\alpha\).  In the non-slab alternative one has
the additional upper restriction
\[
 \delta^{1-\varepsilon_{\rm scal}}\le\alpha\le\rho_2
 \le\delta^{\varepsilon_{\rm scal}},
 \qquad q_{\rm ang}\ge\alpha.
\tag{10.16a}
\]
The remaining non-slab argument is organized by the local angular degree:
small degree yields a direct multiplicity bound, while large degree passes to
the transverse--tangential decomposition.
Before splitting by degree, we collect the losses that have already been
incurred.  With \(P_{\rm pre}\) fixed
once in (10.11d), and up to one fixed numerical constant, put
\[
\begin{aligned}
 P_{\rm sync}
   &=\kappa_{\rm inc}P_{\rm tag}P_{\rm pre}L_NL_{\rm reg},\\
 P_{\rm angtag}
   &:=L_{\theta}^{\rm tag}L_{d_\angle}^{\rm tag},\\
 c_*^{-1}
   &\le
   \frac{64\kappa_{\rm inc}^2P_{\rm tag}P_{\rm angtag}
         J_vL_mL_{a,b}L_NL_{\rm reg}}
         {c_{\rm setup}c_{\rm ang}}.
\end{aligned}
\tag{10.16b}
\]
Define nonnegative exponents by
\[
\begin{gathered}
 P_{\rm sync}\le\delta^{-\Lambda_{\rm sync}},\qquad
 P_{\rm den}:=\delta^{-\eta_{\rm src}}P_{\rm sync}
     \le\delta^{-\Lambda_{\rm den}},\qquad
 C_0\kappa_D(P_{\rm pre}L_NL_{\rm reg})
     \le\delta^{-\Lambda_{\rm pre}},\\
 c_*^{-1}\le\delta^{-\Lambda_*},\qquad
 D_W,D_S\le\delta^{-\Lambda_D},\qquad
 H_{\rm col}\le\delta^{-\Lambda_H}.
\end{gathered}
\tag{10.16c}
\]
Here
\[
 D_W=\max_B\Delta_{\max}(\widetilde\W'_B),\qquad
 D_S=\max_{B,S}\Delta_{\max}(\widetilde\W^\angle_{B,S}).
\]
We next define a uniform envelope independent of the subsequently selected
tag.  Fix a structural integer \(C_{\rm env}\) larger than the
total multiplicity with which any primitive factor occurs in
(10.16b), (10.19), and (10.25).  For every possible retained tag family and
every realized thin discrete scale pair
\(\delta^{1-\varepsilon_{\rm scal}}\le\alpha\le\rho_2\le1\), set
\[
 \begin{aligned}
 \mathcal E_{\rm setup}
  &:={}
  c_{\rm setup}^{-1}C_0\kappa_D\kappa_{\rm inc}\kappa_{\rm loc}
  J_vL_mL_{a,b}L_NL_{\rm reg}P_{\rm tag}P_{\rm pre}P_{\rm angtag},\\
 \mathcal E_{\rm ang}&:=\max_B c_{{\rm ang},B}^{-1},\\
 \mathcal E_{\rm src}(\delta)
  &:=\max\left\{1,
    \delta^{-C_{\rm env}\omega_0(\delta)},
    [\log(2/\delta)]^{C_{\rm env}},
    \sup_{\rm tags,scales}\mathcal E_{\rm setup}^{C_{\rm env}},
    \sup_{\rm tags,scales}\mathcal E_{\rm ang}^{C_{\rm env}}
    \right\},\\
 \omega_{\rm src}(\delta)
  &:=\frac{\log\mathcal E_{\rm src}(\delta)}{\log(1/\delta)}.
 \end{aligned}
\tag{10.16d}
\]
For slab pairs, where ANGLE-CELL is not invoked, the angular-only factors
\(P_{\rm angtag}\) and \(\mathcal E_{\rm ang}\) are interpreted as \(1\), and
neither \(D_S\) nor \(H_{\rm col}\) is used.  The setup suprema range over the
finite tagged nets and all thin discrete scales in the fixed polynomial
window, including both (S1b) and (S1c) and choices later deleted by
\(\eta,\tau,\eta_{\rm pl}\); the angular supremum is restricted to the
non-slab window (10.16a).  Consequently \(\omega_{\rm src}\) is defined
before, and is independent of, those later exponent choices.

The operation map in \cref{lem:exact-loss}, using the pre-cellular
verification (10.8A0) of (10.8A), verifies uniformly that
\(\omega_{\rm src}(\delta)\to0\): all setup, reference-density, and angular entries are bounded
or subpower, and the only non-subpower term is inherited
complete-containment.  The resulting \(D_W\) bound holds on the full thin
window, while \(D_S\) and \(H_{\rm col}\) occur only in the non-slab branch:
\[
 \begin{gathered}
 D_W\le
 \delta^{-2(1-\varepsilon_{\rm scal})\varepsilon_{\rm fact}
             -\omega_{\rm src}(\delta)}
 \quad\text{on all thin pairs},\\
 D_S\le
 \delta^{-2(1-\varepsilon_{\rm scal})\varepsilon_{\rm fact}
             -\omega_{\rm src}(\delta)},
 \qquad H_{\rm col}\le CD_S
 \quad\text{on non-slab pairs}.
 \end{gathered}
\tag{10.16e}
\]
Consequently we may take
\[
\begin{aligned}
 \Lambda_{\rm sync}&=\omega_{\rm src}(\delta),&
 \Lambda_{\rm pre}&=\omega_{\rm src}(\delta),&
 \Lambda_*&=\omega_{\rm src}(\delta),\\
 \Lambda_D&=2(1-\varepsilon_{\rm scal})\varepsilon_{\rm fact}
              +\omega_{\rm src}(\delta),&
 \Lambda_H&=\Lambda_D+\omega_{\rm src}(\delta),&
 \Lambda_{\rm den}&=\eta_{\rm src}+\omega_{\rm src}(\delta).
\end{aligned}
\tag{10.16f}
\]
The choice of \(C_{\rm env}\) absorbs the finite powers in (10.16b), so this
single \(\omega_{\rm src}\) controls every later \(o(1)\)-term.

In every retained tag, ANGLE-CELL gives a local mass fraction
\[
 c_{{\rm ang},B}\ge \frac{A_{\rm ang}^{-K}}{J_\theta(B)J_d(B)}.
\tag{10.17}
\]
In the high-degree case below we invoke the second stage of
\cref{thm:tagged-sync}, replace the tag family by \(\mathcal A_2\), and set
\[
 c_{\rm ang}:=c_{\rm ang}^{\min},\qquad
 \theta:=\theta_*,\qquad d_\angle:=d_{\angle,*}.
\]
These are synchronized reference values.  For a retained tag \(B\), keep
\(\theta_B\) and \(d_{\angle,B}\) for its local outputs.  Then
\[
 \theta\le\theta_B<2\theta,\qquad
 d_\angle\le d_{\angle,B}<2d_\angle,
\tag{10.17a}
\]
and A1--A3 hold with \(\theta_B,d_{\angle,B}\).  Thus the corresponding
common upper bounds for the local cell diameter and degree are
\(4\theta\) and \(4d_\angle\).
The whole-tag angle synchronization costs \(P_{\rm angtag}\) in global
represented mass and preserves the local conclusions.  Since the local input
degrees are at least \(d_0\), the synchronized output
degree satisfies
\[
 d_\angle\ge \tfrac14A_{\rm ang}^{-K}d_0.
\tag{10.18}
\]
Moreover \(c_{\rm ang}\ge\alpha^{o(1)}\), because the local level counts are
logarithmic.  In the low-degree case the angle stage is not run and, for the
common bookkeeping only, \(P_{\rm angtag}=c_{\rm ang}^{-1}=1\).
The factor \(A_{\rm ang}^{-K}\) prevents an unbuffered input-degree threshold from
implying the output threshold required by the plank-reduction lemma.  Define
\[
 H_{\rm ang}=4A_{\rm ang}^K.
\tag{10.19}
\]
\paragraph{Low degree.}
The low-degree estimate passes from the tubelet shading \(Y_1\) to the fine
reference shading through represented incidences.  Let
\(\mathcal R(T)\) be the retained labelled tubelet representations of a fine
tube \(T\), and define
\[
 Y_1^{\rm f}(T)=\widehat Y(T)\cap
 \bigcup_{(B,T_B)\in\mathcal R(T)}Y_1(T_B).
\tag{10.19a}
\]
Writing
\[
 M_{\rm c}(A)=\sum_{(B,T_B)}|A(T_B)|,\qquad
 U_{\rm c}(\T_B,A)=\bigcup_{(B,T_B)}A(T_B),\qquad
 \mu_{\rm c}(\T_B,A)=M_{\rm c}(A)/|U_{\rm c}(\T_B,A)|,
\]
joint regularization and the global \(m_{\rm fine}\)-level give
\[
\begin{aligned}
 M_{\rm c}(Y_1)&\ge
   (P_{\rm tag}J_vL_NL_{\rm reg})^{-1}M_{\rm c}(Y_B),\\
 P_{\rm sync}^{-1}M(\bar Y)
     &\le M(Y_1^{\rm f})\le M(\bar Y).
\end{aligned}
\tag{10.19b}
\]
Property P5 gives the pointwise and average coarse bounds
\[
 \sum_{T_B}\1_{Y_1(T_B)}(x)
   <16\kappa_{\rm amb}\mu_{\rm in}d_0,\qquad
 \mu_{\rm c}(\T_B,Y_1)
   \le16\kappa_{\rm amb}\mu_{\rm in}d_0,
\tag{10.19c}
\]
and consequently
\[
 |U_{\rm c}(\T_B,Y_1)|
 \ge\frac{M_{\rm c}(Y_1)}
          {16\kappa_{\rm amb}\mu_{\rm in}d_0}.
\tag{10.19d}
\]
\begin{lemma}[Incident-centred cap comparison]
\label{lem:incident-cap}
Assume the reference shading \(\bar Y\) has the source uniformity property that,
whenever \(x\in\bar Y(T_0)\),
\[
 \#\{T:x\in\bar Y(T),\
       \ang(T,T_0)\le C_{\rm geo}\rho_2\}
 \le C_{\rm cap}\mu(\rho_2),
\tag{10.19e0}
\]
where changing \(\rho_2\) by a fixed dyadic factor changes \(\mu(\rho_2)\)
by at most a fixed uniformity constant.  At a point incident to one retained
inner parent,
\[
 m_{\rm fine}\mu_{\rm in}
 \le C\kappa_{\rm loc}C_{\rm cap}\mu(\rho_2).
\tag{10.19e1}
\]
\end{lemma}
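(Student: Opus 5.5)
The plan is to fix a point \(x\) incident to one retained inner parent \(W\) in a fixed tag \(B\): \(x\in Y_1(T_B)\) for some \(T_B\in\T_{B,W}\). We then count fine tubes through \(x\) in two ways. By (P3), the number of \(W\)-tubelets whose \(Y_1\)-shading contains \(x\) is at least \(\mu_{\rm in}\). For each such tubelet \(T_B'\), (10.4) and (10.4c) give at least \(m(T_B')\ge m_{\rm fine}\) represented fine descendants \(T\in\T(T_B')\) with \(x\in Y^0(T)\subset\bar Y(T)\). Here we use (10.3a) and \(Y_1\subset Y_B\). Summing over the tubelets yields at least \(m_{\rm fine}\mu_{\rm in}\) fine incidences at \(x\). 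By (C2) and (10.4b), each fine label is represented by at most \(\kappa_{\rm loc}\) tubelets in the fixed tag. So there are at least \(\kappa_{\rm loc}^{-1}m_{\rm fine}\mu_{\rm in}\) distinct fine labels \(T\) with \(x\in\bar Y(T)\), each a descendant of a tubelet of \(W\).

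The geometric step is to show that all these fine tubes make angle \(\le C_{\rm geo}\rho_2\) with one of them, say \(T_0\), chosen among the descendants. Each descendant tubelet lies in \(W\), an \(a\times b\times r_1\) box with \(a\le b\), so its carrier direction lies within \(O(b/r_1)=O(\rho_2)\) of the long axis of \(W\). This follows from the two-point argument of \cref{lem:capped-two-point}: two carrier points separated by \(\gtrsim r_1\) lie in \(W\), so transverse displacement is \(O(b)\). By (10.7L), each descendant fine tube is within \(C\delta/r_1\le C\rho_2\) in angle of its tubelet, since \(\delta\le a\le b\). The triangle inequality for \(\ang\) then gives pairwise angles \(\le C\rho_2\). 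We enlarge \(C_{\rm geo}\) structurally, or, if \(C_{\rm geo}\) is fixed, replace \(\rho_2\) by a fixed dyadic multiple and use the stated stability of \(\mu(\rho_2)\) under such changes. One needs \(\bar Y\) containing the point for \(T_0\), which holds since \(x\in\bar Y(T_0)\).

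Applying (10.19e0) at \(x\) with this \(T_0\) bounds the count by \(C_{\rm cap}\mu(\rho_2)\), up to the uniformity constant. Combining gives \(\kappa_{\rm loc}^{-1}m_{\rm fine}\mu_{\rm in}\le CC_{\rm cap}\mu(\rho_2)\), which is (10.19e1).

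The main obstacle is the angular step when a tubelet grazes \(W\) or the tube's carrier reaches \(W\) only through an endcap. The direction comparison must use the one-sided window of (C1)/(10.7L), with length \(\gtrsim r_1\), inside a fixed enlargement of \(W\). We also need the descendants counted at \(x\) to actually contain \(x\) in \(\bar Y\), not merely in \(Y^{\rm sc}\). That follows from the inclusion chain \(Y^0\subset Y^{\rm pre}\subset Y^{\rm sc}\subset\bar Y\).
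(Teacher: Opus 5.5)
Your proposal is correct and follows the paper's argument. The paper likewise confines all tubelets of one parent to a direction ball of radius $C_{\rm geo}\rho_2$, converts the $\mu_{\rm in}$ tubelet incidences into at least $m_{\rm fine}\mu_{\rm in}$ labelled fine incidences via (10.4) and (10.7), divides by $\kappa_{\rm loc}$ to obtain distinct labels, picks one of them as $T_0$, and applies (10.19e0). Your additional bookkeeping — the $O(\delta/r_1)$ angle error between a fine tube and its tubelet, the inclusion chain $Y^0\subset Y^{\rm pre}\subset Y^{\rm sc}\subset\bar Y$, and the dyadic-stability handling of $C_{\rm geo}$ — only makes explicit what the paper's terse proof leaves implicit.
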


\begin{proof}
All length-\(r_1\) tubelets contained in one
\(a\times b\times r_1\) parent have directions in a ball of radius
\(C_{\rm geo}b/r_1=C_{\rm geo}\rho_2\).  At the point under consideration,
the \(\mu_{\rm in}\) tubelet incidences of that parent represent at least
\(m_{\rm fine}\mu_{\rm in}\) labelled fine incidences by (10.4) and (10.7).
Inside this fixed tagged outer ball, every physical fine label has at most
\(\kappa_{\rm loc}\) canonical tubelet
representations, so at least
\(m_{\rm fine}\mu_{\rm in}/\kappa_{\rm loc}\) distinct incident fine labels
remain.  Choose one of them as \(T_0\).  Every other one lies in the
incident-centred angular ball in (10.19e0); applying that upper bound and
multiplying by \(\kappa_{\rm loc}\) proves (10.19e1).  Thus angular
uniformity is invoked on \(\bar Y\) with an incident-centred cap.
``Distinct'' means distinct source labels, so coincident geometric carriers
remain separate in both counts.
\end{proof}

At a retained physical tubelet point, (10.4), P5 in each tag, and
\cref{lem:tag-recombine} bound the represented labelled fine multiplicity by
\(64\kappa_{\rm amb}m_{\rm fine}\mu_{\rm in}d_0\): the local parent--cell
bound contributes \(16\), the synchronized fine level contributes \(4\), and
tag recombination contributes \(\kappa_{\rm amb}\).  Apply
\cref{lem:incident-cap}, and identify \(\mu(\rho_2)\) with the source
quantity \(\mu(\T[T_{\rho_2}],\bar Y)\) up to the fixed neighbouring-scale
constant.  Together with the preceding pointwise upper bound, this proves
\[
 \mu(\T,Y_1^{\rm f})
 \le64\kappa_{\rm amb}\kappa_{\rm loc}d_0\,
       \mu(\T[T_{\rho_2}],\bar Y).
\tag{10.19e}
\]
Combining (10.19b) and (10.19e), and using
\(U(\T,Y_1^{\rm f})\subset U(\T,\bar Y)\), yields the mass--union chain
\[
\begin{aligned}
 |U(\T,Y_1^{\rm f})|
 &\ge
 \frac{M(\bar Y)}
 {64\kappa_{\rm amb}\kappa_{\rm loc}P_{\rm sync}d_0
  \mu(\T[T_{\rho_2}],\bar Y)},\\
 \mu(\T,\bar Y)
 &\le64\kappa_{\rm amb}\kappa_{\rm loc}P_{\rm sync}d_0
       \mu(\T[T_{\rho_2}],\bar Y).
\end{aligned}
\tag{10.19f}
\]

Suppose now that
\(d_0<H_{\rm ang}\alpha^{-\eta_{\rm pl}}\).  Cap uniformity and the
non-sticky count state
\[
\begin{aligned}
 \mu(\T[T_{\rho_2}],\bar Y)
  &\lesssim\delta^{-\eta_{\rm bias}}
       |\T[T_{\rho_2}]|^\beta,\\
 |\T[T_{\rho_2}]|\,|\T_{\rho_2}|
  &\lesslog|\T|,\qquad
 |\T_{\rho_2}|\ge\rho_2^{-2-\zeta}.
\end{aligned}
\tag{10.19g}
\]
For every fixed \(\xi>0\), the ANGLE-CELL stopping calculation gives
\(H_{\rm ang}\le C_\xi\alpha^{-\xi}\).  Substitution in (10.19f) gives
\[
 \mu(\T,\bar Y)
 \lesssim
 \delta^{\nu_{\rm low}-o(1)}|\T|^\beta,
 \qquad
 \nu_{\rm low}
 =\varepsilon_{\rm scal}\beta(2+\zeta)
  -\Lambda_{\rm sync}-\eta_{\rm bias}
  -(1-\varepsilon_{\rm scal})(\eta_{\rm pl}+\xi).
\tag{10.19h}
\]
Thus this alternative closes, without the plank-density lemma, under the
strict inequality
\[
 \Lambda_{\rm sync}+\eta_{\rm bias}
 +(1-\varepsilon_{\rm scal})(\eta_{\rm pl}+\xi)
 <\varepsilon_{\rm scal}\beta(2+\zeta).
\tag{L}
\]

If instead \(d_0\ge H_{\rm ang}\alpha^{-\eta_{\rm pl}}\), then (10.18)
gives the output threshold
\[
 d_\angle\ge\alpha^{-\eta_{\rm pl}}.
\tag{10.20}
\]
The angle mass fraction and (10.12) also imply the parent-density hypothesis
\[
 c_{\rm ang}C_0^{-q}(a/b)^\eps\kappa_D^{-q}
 (P_{\rm pre}L_NL_{\rm reg})^{-q}\delta^{q\eta}
 \ge \alpha^{\eta_{\rm pl}}
\tag{10.21}
\]
as follows.  Denote the left side by \(\mathcal D\).  For each fixed
\(\xi>0\), (10.17) gives \(c_{\rm ang}\ge C_\xi\alpha^\xi\); hence
\[
 \mathcal D\ge C_\xi\alpha^{E_{\rm high}},\qquad
 E_{\rm high}
 =\xi+\eps+\frac{q(\eta+\Lambda_{\rm pre})}
                    {\varepsilon_{\rm scal}}.
\tag{10.21a}
\]
Here \(q_{\rm ang}\ge\alpha\), while
\(\delta^u\ge\alpha^{u/\varepsilon_{\rm scal}}\) follows from (10.16a).
Choose a fixed \(\kappa>0\) and require
\[
 \xi+\eps+\frac{q(\eta+\Lambda_{\rm pre})}
                   {\varepsilon_{\rm scal}}
 \le\eta_{\rm pl}-\kappa.
\tag{H}
\]
After decreasing \(\delta_0\), (H) proves (10.21).  The subsequent source
call uses (10.20) and (10.21) as its multiplicity and density hypotheses.

One further density comparison is needed in the tangential branch.  Given a
Katz--Tao loss \(\varepsilon_{KT}>0\), let
\(\eta_{KT}(\varepsilon_{KT},\beta)>0\) be its required shading-density
exponent.  The inequality
\[
 E_{\rm dens}:=(1-\varepsilon_{\rm scal})\xi+\eps
               +q(\eta+\Lambda_{\rm pre})
 <\varepsilon_{\rm scal}\eta_{KT}
\tag{K}
\]
implies \(\mathcal D\ge\rho_2^{\eta_{KT}}\) with a fixed positive power
reserve.  After decreasing the final scale cutoff, that reserve absorbs the
structural carrier-density comparison in (10.29d).  Consequently every later
proxy colour whose normalized density is at least one half of its ambient
proxy-slab density satisfies the density hypothesis of the generalized
Katz--Tao estimate.

\begin{lemma}[Buffered labelled-multiset filling lemma]
\label{lem:labelled-plank-reduction}
Fix \(\eta>0\) and constants \(C_{\rm comp},C_{\rm box}\ge1\).  Let
\(0<\alpha\le\rho\le1\) be sufficiently small, and let
\((\mathcal P,Y)\) be a finite labelled multiset of
\(C_{\rm box}\)-comparable rectangular
\(\alpha\times\rho\times1\) planks in the unit ball, equipped
with measurable shadings.  Each
plank carries an ordered orthonormal short--middle--long frame; coincident
carriers with different labels or frames are allowed, and every cardinality,
mass, multiplicity, and
\(\Delta_{\max}\) counted by labels.  Assume
\(|\mathcal P|\le\alpha^{-C_{\rm comp}}\), and suppose
\[
 \lambda(\mathcal P,Y)\ge\alpha^{\eta},\qquad
 \mu(\mathcal P,Y)\ge\alpha^{-\eta},
\]
Let \(m>0\), and suppose the labelled multiplicity lies in \([m,2m)\)
on the union.  Let \(0<\theta\le1\), and let
\(A_{\rm ang}=A_{\rm ang}(\alpha),B_{\rm ang}=B_{\rm ang}(\alpha)\ge2\)
satisfy
\(\log A_{\rm ang}+\log B_{\rm ang}=o(\log\alpha^{-1})\), and assume that
\(A_{\rm ang}/\log^C(2/\alpha)\to\infty\) for every fixed \(C\).  Assume that at each shaded
point all incident two-long-axis planes have raw angular diameter at most
\(C\theta\), while every incident subcollection of at least
\(2m/A_{\rm ang}\) labels has raw angular diameter greater than
\(\theta/B_{\rm ang}\).  Finally assume
the buffer
\[
 \theta\ge2B_{\rm ang}\,\alpha/\rho.
\tag{10.21b}
\]
Then there is a labelled refinement \((\mathcal P',Y')\) with
\[
 M(Y')\ge\alpha^{o(1)}M(Y),
\tag{10.21c}
\]
where this occurrence uses the displayed uniform modulus
\[
 \omega_{\rm fill}(\alpha):=
 \frac{\log(1/c_{\rm fill})}{\log(1/\alpha)}\longrightarrow0
\]
and \(c_{\rm fill}\in(0,1]\) is the fixed structural mass-retention product
furnished by \cref{thm:multiset-appendix}.  In particular, this is a retained
labelled shading-mass fraction, not a fraction of
distinct carriers or labels.  Whenever
\(U(\mathcal P',Y')\) meets a ball of radius \(\theta\rho\), its intersection
with a fixed concentric enlargement of that ball has relative volume at least
\(\alpha^{4\eta+o(1)}\).
Passing to any labelled subfamily cannot increase its labelled
\(\Delta_{\max}\).
\end{lemma}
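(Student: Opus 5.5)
The plan is to reduce the lemma to the multiset filling theorem of the appendix, \cref{thm:multiset-appendix}, which the statement itself cites as the source of the retention constant \(c_{\rm fill}\). The work therefore splits into three parts: verifying that theorem's hypotheses on the labelled multiset, reading off the relative-density conclusion, and checking the bookkeeping claims (labelled mass and the monotonicity of \(\Delta_{\max}\)).

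The first step is to turn the resolved-angle hypotheses into raw angular information above the resolution floor. Since \(\theta\ge2B_{\rm ang}\alpha/\rho\) by (10.21b), we get \(\theta/B_{\rm ang}\ge2\alpha/\rho\). Hence, exactly as in the remark following \cref{thm:angle} (the threshold (7.8)), the lower bound on the diameter of every incident subcollection of size at least \(2m/A_{\rm ang}\) is a genuine raw separation, at least twice the plank resolution \(\alpha/\rho\). This holds even for repeated parallel labels. The factor \(2\) in the buffer absorbs the \(O(\alpha/\rho)\) ambiguity in the tangent plane of an \(\alpha\times\rho\times1\) plank, which is already fixed by its labelled frame.

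At each shaded point we then have two facts. All incident planes lie in a cap of radius \(C\theta\). No cap of radius \(\theta/(2B_{\rm ang})\) carries more than \(2m/A_{\rm ang}\) of the fewer than \(2m\) incident labels. This is the two-sided angular spreading input of the filling theorem.

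The second step is a layer-cake reduction that discards exceptional labels, keeping everything as a labelled multiset. First remove labels whose shading density is below \(\lambda/2\), which keeps half the mass. Then apply the appendix theorem, treating coincident carriers as separate atoms, so that no fibre bound is needed. Its output is a refinement of mass at least \(c_{\rm fill}M(Y)\); since \(c_{\rm fill}\) is structural, this is (10.21c) with the displayed modulus \(\omega_{\rm fill}\).

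The filling mechanism runs as follows. Take a ball of radius \(\theta\rho\) meeting the refined union. The planes through a point fan out over angle about \(\theta\), so the union of the shaded planks sweeps a region of width about \(\theta\rho\) in the transverse direction. The density loss has three sources: the shading density \(\lambda\ge\alpha^\eta\) enters twice (Córdoba-type \(L^2\) counting, as in \cref{app:slab}), the multiplicity lower bound \(\mu\ge\alpha^{-\eta}\) enters once, and the angular-spreading ratio \(A_{\rm ang}B_{\rm ang}\) enters once. The polylog assumption on \(A_{\rm ang}\) guarantees that the pigeonholing losses are dominated by \(A_{\rm ang}\). Altogether this gives relative volume at least \(\alpha^{4\eta}(A_{\rm ang}B_{\rm ang})^{-O(1)}=\alpha^{4\eta+o(1)}\) in a fixed concentric enlargement of the ball.

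The last claim, that passing to a labelled subfamily cannot increase \(\Delta_{\max}\), is immediate: for each test body \(K\) the labelled numerator in (1.2) only decreases.

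The main obstacle is the exponent \(4\eta\). I would need to check that the appendix theorem's hypotheses allow a labelled multiplicity window \([m,2m)\) rather than an exact count, and that the \(\alpha^{o(1)}\) losses stay uniform over \(\theta\). My first attempt would be to pigeonhole a dyadic class of incident-plane caps at scale \(\theta/B_{\rm ang}\) so that the appendix statement applies verbatim, and to charge that pigeonhole to the \(o(1)\) term using \(\log A_{\rm ang}+\log B_{\rm ang}=o(\log\alpha^{-1})\).
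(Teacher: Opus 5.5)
Your reduction is exactly the paper's proof. The lemma is a verbatim restatement of \cref{thm:multiset-appendix}: the buffer (10.21b) is (C.2d), and the plank class, the bounds (C.2b), the window \([m,2m)\), the growth conditions (C.2c), the raw angular hypotheses, and all three conclusions coincide. The paper therefore simply cites that theorem. Your ``main obstacle'' is not one: the window \([m,2m)\) is precisely the theorem's hypothesis, and no cap pigeonhole is needed to make the statement apply verbatim. Your \(\Delta_{\max}\) monotonicity remark is correct.

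You should drop the steps you insert before the citation, because one of them actually breaks it.

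\emph{The label pre-deletion.} If you delete the labels of density below \(\lambda/2\) and then invoke the theorem, the multiplicity can fall below \(m\) at points where some but not all incident labels were deleted. The hypothesis ``multiplicity in \([m,2m)\) on the union'' then fails. In addition, \(\lambda\ge\alpha^\eta\) and \(\mu\ge\alpha^{-\eta}\) survive only up to a factor two, and the retention constant is no longer the \(c_{\rm fill}\) that defines \(\omega_{\rm fill}\). The theorem already performs this deletion internally at (C.2e\('\)), and follows it with the integral comparison that restores multiplicity at least \(c_2m\); that is why its statement carries the window.

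\emph{The resolved-to-raw conversion.} Your first step is redundant, since the hypotheses are already stated for raw angles. The conversion via (7.8) is done by the caller, \cref{lem:transverse-interface}, using \(\theta_B\ge\delta^{-\tau'}q_{\rm ang}\ge2B_{\rm ang}q_{\rm ang}\). Inside the appendix the buffer has a different role. It gives \(\theta\rho\ge4\alpha\) for the full-piece grids of \cref{lem:full-piece-grid}. It also makes the transverse intersection bound \(CB_{\rm ang}\alpha^2\rho/\theta\) in (C.2g) no larger than the full-piece volume \(C\alpha\rho^2\).

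\emph{The exponent heuristic.} Your accounting is inaccurate; by your own count it gives \(3\eta\), not \(4\eta\). The actual mechanism is a cellwise C\'ordoba bound \(|E_Q|\gtrsim B_{\rm ang}^{-1}\lambda_Q^2|Q|\ge\alpha^{2\eta+o(1)}|Q|\) on full-piece grid cells of size \(\theta\rho\times\rho\times\rho\), with the labelled denominator (C.6). This is weakened to \(3\eta\), and then to \(4\eta\) after a dense-subcell selection at scale \(\theta\rho\). The hypothesis \(\mu\ge\alpha^{-\eta}\) is not used. \(A_{\rm ang}\) enters only through the majority-of-separated-pairs domination (C.8), not as a volume loss. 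Since you cite the theorem this sketch is not load-bearing, but it should not be presented as the mechanism.
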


\begin{proof}
This is \cref{thm:multiset-appendix}, proved independently in
\cref{app:multiset}.  Its proof uses translated full-piece grids, the labelled
denominator in every grid box, and an ordered-pair estimate.  The buffer
(10.21b) is built into the theorem, whose filling conclusion is the input to
the transverse branch below.
\end{proof}

\begin{lemma}[Transverse plank and filling interface]
\label{lem:transverse-interface}
Assume (10.20)--(10.21),
\(\theta\ge\delta^{-\tau'}q_{\rm ang}\), and
\(\delta^{-\tau'}\ge2B_{\rm ang}\).  Fix a retained outer-ball tag \(B\),
write \(\theta_B,d_{\angle,B}\) for its actual outputs in (10.17a), and put
\(R_B=\theta_Bb\).  There are structural constants \(C_0,C_1\ge1\), a point
\(y_B\), and a source discrete radius \(r\) such that
\[
 y_B\in U(\T_r,Y_r),\qquad
 2C_0R_B\le r\le C_1\delta^{-\eta_{\rm grid}}R_B,
\]
and
\[
 \frac{|U(\T,Y^{\rm sc})\cap B(y_B,2C_0R_B)|}
 {|B(y_B,2C_0R_B)|}
 \gtrlog \delta^\eta(\delta/a)^3\alpha^{4\eta_{\rm pl}}.
\tag{10.22}
\]
For this radius,
\[
 \frac{|U(\T,Y^{\rm sc})\cap B(y_B,r)|}{|B(y_B,r)|}
 \gtrlog\delta^{E_{\rm tr}}
 \ge\delta^{-2\nu_{\rm tr}}\left(\frac{\delta}{r}\right)^{2\beta},
\tag{10.23}
\]
where
\[
 E_{\rm tr}=3\tau+\eta+4\eta_{\rm pl}+3\eta_{\rm grid},
 \qquad
 \nu_{\rm tr}=\tau'\beta-\tfrac12E_{\rm tr}>0.
\tag{10.23a}
\]
Consequently the filling-to-multiplicity implication in source equation
(89) yields the required very-non-sticky estimate in this alternative.
\end{lemma}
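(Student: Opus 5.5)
The plan has three stages. First obtain a filled plank configuration by applying \cref{lem:labelled-plank-reduction} to the angularly refined parent output of the fixed tag. Then transfer the filling back to the tubelet and fine shadings through \cref{lem:goodball-transfer}, which gives (10.22). Finally enlarge the ball to a source grid radius and compare exponents, which gives (10.23)--(10.23a).

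\emph{Step 1: verify the hypotheses of the filling lemma.} In tag \(B\), normalize by the \(r_1\)-dilation. The parents \(\widetilde\W^\angle_B\) with shading \(Z^\angle\) then become \(\alpha\times\rho_2\times1\) planks, each with its ordered frame.
\begin{itemize}
\item Density: (10.21) gives \(\lambda\ge\alpha^{\eta_{\rm pl}}\).
\item Multiplicity: (10.20) and (A1) give \(\mu\ge\alpha^{-\eta_{\rm pl}}\), with the labelled multiplicity lying in \([d_{\angle,B},2d_{\angle,B})\).
\item Angles: (A2)--(A3) together with (10.17a) give raw diameter at most \(C\theta_B\). Every subcollection of at least \(2m/A_{\rm ang}\) labels has resolved diameter above \(\theta_B/B_{\rm ang}\).
\item Buffer: the resolved diameter is genuinely raw by (7.8). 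This is where \(\theta\ge\delta^{-\tau'}q_{\rm ang}\ge 2B_{\rm ang}q_{\rm ang}\) enters, and it also supplies (10.21b) with \(\alpha/\rho_2=q_{\rm ang}\).
\end{itemize}
The filling lemma then returns a refinement with mass fraction \(\alpha^{o(1)}\). Its union has relative density at least \(\alpha^{4\eta_{\rm pl}+o(1)}\) in every \(\theta_B\rho_2\)-ball it meets, that is, in a ball of physical radius comparable to \(\theta_Bb=R_B\).

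\emph{Step 2: transfer to the fine shading.} The refinement is a subshading \(Z'\subset Z^\angle\subset Z\), so the localized form (10.11b\('\)) of \cref{lem:goodball-transfer} applies with \(R=C_0R_B\ge a\). This needs \(\theta_Bb\ge q_{\rm ang}b=a\), which holds. We obtain
\[
|U(\T_B,Y_B)\cap B_{CR}|\gtrsim\delta^\eta(\delta/a)^3\alpha^{4\eta_{\rm pl}+o(1)}|B_R|.
\]
Since \(Y_B\)-points carry positive represented fine multiplicity by (10.3)--(10.4), we have \(U(\T_B,Y_B)\subset U(\T,Y^{\rm sc})\). Tag recombination costs only \(\kappa_{\rm amb}\). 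Centring at a shaded point \(y_B\) and absorbing \(\alpha^{o(1)}\) into \(\lesssim_{\log}\) after choosing \(\delta_0\) gives (10.22).

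\emph{Step 3: pass to a source radius.} Choose the source grid radius \(r\in[2C_0R_B,C_1\delta^{-\eta_{\rm grid}}R_B]\) with \(y_B\in U(\T_r,Y_r)\), as supplied by the all-scale refinement (S1). Enlarging the ball loses at most \((R_B/r)^3\ge\delta^{3\eta_{\rm grid}}\). Using the thin condition (10.8i), \((\delta/a)^3\ge\delta^{3\tau}\), and \(\alpha^{4\eta_{\rm pl}}\ge\delta^{4\eta_{\rm pl}}\), the density is at least \(\delta^{E_{\rm tr}}\). For the comparison with \((\delta/r)^{2\beta}\), use \(r\gtrsim R_B=\theta_Bb\ge\delta^{-\tau'}a\ge\delta^{-\tau'}\delta\). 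This gives \((\delta/r)^{2\beta}\le\delta^{2\tau'\beta}\), hence
\[
\delta^{E_{\rm tr}}\ge\delta^{-2\nu_{\rm tr}}(\delta/r)^{2\beta}
\]
exactly when \(\nu_{\rm tr}=\tau'\beta-E_{\rm tr}/2\), and positivity is the parameter requirement. The final sentence follows by inserting (10.23) into the source filling-to-multiplicity comparison (89), exactly as (10.8h) was used in \cref{lem:thick-parent}.

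The main obstacle is Step 1's angular hypothesis. It has to be read with the tag's own \(\theta_B\) and \(d_{\angle,B}\) rather than the synchronized values, with the factor-two discrepancies between them absorbed into the constant \(C\). It also requires converting resolved diameters to raw ones, which is valid only above the threshold (7.8).
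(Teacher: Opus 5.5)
Your proposal is correct and follows the paper's proof essentially step for step. The shared steps are: the filling lemma in the fixed tag with \(m=d_{\angle,B}\), local scale \(\theta_B\), and buffer \(\theta_B\ge\theta\ge\delta^{-\tau'}q_{\rm ang}\ge2B_{\rm ang}q_{\rm ang}\); the localized good-ball transfer (10.11b$'$) applied to the subshading \(Z'\subset Z^\angle\subset Z\) (your explicit check \(R_B=\theta_Bb\ge q_{\rm ang}b=a\) is a useful addition); recentering at a point of \(U(\T_B,Y_B)\subset U(\T,Y^{\rm sc})\); the first source grid radius; and the same exponent bookkeeping before inserting (10.23) into source (87)--(88). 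One small precision: the \(\alpha^{o(1)}\) returned by the filling lemma contains the factor \(B_{\rm ang}^{-1}\), which is not polylogarithmic, so it is absorbed by exponent slack rather than by \(\gtrsim_{\log}\) (the proof of \cref{thm:multiset-appendix} actually yields \(2\eta_{\rm pl}+o(1)\), and \(\nu_{\rm tr}\) has ample reserve by (10.32c)); the paper glosses this point the same way.
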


\begin{proof}
Fix the tag \(B\) from the statement and scale its \(r_1\)-ball to the unit
ball.  By the permanent-parent convention (3.4d), the dimension-level
selection, and the L\"owner-rectangle transfer in \cref{cor:convexbox}, there
is one structural \(C_{\rm box}\ge1\), fixed before the scale cutoff, for
which the permanent angular parents passed to Appendix~C are
\(C_{\rm box}\)-comparable rectangular
\(\alpha\times\rho_2\times1\) planks with their inherited ordered frames.
Using the actual sum of these carrier volumes, their shading density is at
least \(\mathcal D\ge\alpha^{\eta_{\rm pl}}\) by (10.21), and their pointwise
multiplicity lies in
\([d_{\angle,B},2d_{\angle,B})\).  By (10.17a) and (10.20),
\(d_{\angle,B}\ge d_\angle\ge\alpha^{-\eta_{\rm pl}}\).
Use \(m=d_{\angle,B}\), this \(C_{\rm box}\), and the local scale \(\theta_B\) in
\cref{lem:labelled-plank-reduction}.  A2 gives raw angular diameter at most
\(2\theta_B\).  If an incident subcollection has at least
\(2m/A_{\rm ang}\) labels,
then it has at least the cell degree divided by \(A_{\rm ang}\), so A3 applies.
Finally,
\[
 \theta_B\ge\theta\ge\delta^{-\tau'}q_{\rm ang}
 \ge2B_{\rm ang}q_{\rm ang}
\]
converts the resolved lower diameter into the required raw one and verifies
the buffer.  The affine family is a labelled multiset, and its cellular
shadings are measurable finite unions of cells; these observations verify
the remaining hypotheses of the lemma.  Its conclusion supplies a
measurable parent subshading
\(Z_{\rm tr}\subset Z^\angle\subset Z\) whose union has filling fraction
\(\gtrlog\alpha^{4\eta_{\rm pl}}\) in every intersecting ball of normalized
radius \(\theta_B\rho_2\), which scales back to \(R_B=\theta_Bb\).  Choose
one such ball and call it \(B_0=B(x_B,C_0R_B)\), increasing the structural
constant \(C_0\) once to cover the fixed enlargement in (10.11b$'$).
Applying that localized estimate to \(Z_{\rm tr}\) gives
\[
 \frac{|U(\T_B,Y_B)\cap B_0|}{|B_0|}
 \gtrlog\delta^\eta(\delta/a)^3\alpha^{4\eta_{\rm pl}}.
\]
In particular this intersection has positive measure; choose
\(y_B\in U(\T_B,Y_B)\cap B_0\).  Then
\(B_0\subset B(y_B,2C_0R_B)\), and comparison of the two ball volumes loses
only a structural constant.  Every point of \(Y_B(T_B)\) has positive
represented multiplicity in the
pre-biased fine shading \(Y^0\), so
\(U(\T_B,Y_B)\subset U(\T,Y^0)\subset U(\T,\bar Y)\).  The reference
ordering (10.32f) more precisely gives
\(U(\T_B,Y_B)\subset U(\T,Y^{\rm sc})\), so the recentered-ball comparison
proves (10.22).  Choose \(r\) to be the first
source grid radius at least \(2C_0R_B\); neighbouring-grid comparability
gives \(r\le C_1\delta^{-\eta_{\rm grid}}R_B\).  By the definition of the
induced scale-
\(r\) shading, every retained point of \(Y^{\rm sc}\) lies in the shading of
its scale-
\(r\) ancestor.  Thus \(y_B\in U(\T_r,Y_r)\), verifying the centering
hypothesis in source equation (87).

The thin case gives \((\delta/a)^3\ge\delta^{3\tau}\), while
\(\alpha^{4\eta_{\rm pl}}\ge\delta^{4\eta_{\rm pl}}\).
Passing from \(R_B\) to \(r\) loses at most
\((R_B/r)^3\ge\delta^{3\eta_{\rm grid}}\), proving the first inequality in
(10.23).  Also
\[
 R_B=\theta_B b\ge\delta^{-\tau'}a\ge\delta^{1-\tau'},
 \qquad r\ge R_B,
\]
so \((\delta/r)^{2\beta}\le\delta^{2\tau'\beta}\).  The definition of
\(\nu_{\rm tr}\) proves the second inequality, and (10.32c) makes it
positive.

We now combine the two terminal source estimates.  On the fixed scale shading,
source equation (87) says
\[
 |U(\T,Y^{\rm sc})|
 \gtrsim_{\log}|U(\T_r,Y_r)|
 \frac{|U(\T,Y^{\rm sc})\cap B(y_B,r)|}{|B(y_B,r)|}.
\tag{10.23b}
\]
Source equation (88), with the generalized Katz--Tao allowance
\(\nu_{\rm tr}/9\), and the local estimate (10.23), which is the hypothesis
labelled (89) in the source, are respectively
\[
 \begin{aligned}
 |U(\T_r,Y_r)|
 &\gtrsim \delta^{\nu_{\rm tr}/9}
  \left(\frac r\delta\right)^{2\beta}|T|\,|\T|^{1-\beta},\\
 \frac{|U(\T,Y^{\rm sc})\cap B(y_B,r)|}{|B(y_B,r)|}
 &\gtrsim_{\log}\delta^{-2\nu_{\rm tr}}
  \left(\frac\delta r\right)^{2\beta}.
 \end{aligned}
\tag{10.23c}
\]
Substitution cancels the two scale ratios and gives
\[
 |U(\T,Y^{\rm sc})|
 \gtrsim_{\log}\delta^{-17\nu_{\rm tr}/9}
 |T|\,|\T|^{1-\beta}.
\tag{10.23d}
\]
Since \(M(Y^{\rm sc})\le|\T|\,|T|\) and
\(M(Y^{\rm sc})\ge c_{\rm sc}M(\bar Y)\), while
\(U(\T,Y^{\rm sc})\subset U(\T,\bar Y)\),
\[
 \mu(\T,\bar Y)
 \lesssim_{\log}c_{\rm sc}^{-1}
 \delta^{17\nu_{\rm tr}/9}|\T|^\beta
 =\delta^{17\nu_{\rm tr}/9-o(1)}|\T|^\beta.
\tag{10.23e}
\]
This is the required very-non-sticky gain and includes the previously fixed
all-scale refinement loss.
\end{proof}

\subsection{Tangential parents and the lift to fine tubes}

The transverse high-degree branch has been closed by the preceding
interface.  We now assume its complementary, tangential alternative
\[
 \theta<\delta^{-\tau'}q_{\rm ang}
        =\delta^{-\tau'}a/b.
\tag{10.23f}
\]
Equality is assigned to the transverse branch of
\cref{lem:transverse-interface}.  In particular, the weak inequality needed
below follows from this strict complementary alternative.

For each retained labelled copy \(\widetilde W\) of a geometric parent \(W\),
use the fixed label order to choose one assigned tubelet and let
\(v(\widetilde W)\) be its unoriented unit carrier direction.
Every other tubelet assigned to \(\widetilde W\) makes angle
\(O(b/r_1)=O(\rho_2)\) with \(v(\widetilde W)\), and each represented full fine tube
has an additional direction error \(O(\delta/r_1)\le O(\rho_2)\).  This is the representative direction
used in the fine-tube cap in source equation (100).  It is distinct from
\(TB_{\widetilde W}:=TB_W\): the latter is the two-long-axis plane of the source L\"owner
frame and is used for parent angles and slab normalization.  Thus neither
object is inferred from the other.

Apply the angle output through one lift.  With the same set
\(\mathcal R(T)\) of retained labelled tubelet representations used in
(10.19a), define
\[
 Y^*(T)=\widehat Y(T)\cap
 \bigcup_{(B,T_B)\in\mathcal R(T)}Y_2(T_B).
\tag{10.24}
\]
Equations (10.6)--(10.7), the factoring losses, and the one lift give the
explicit fine-refinement fraction
\[
 c_*:=\frac{\sum_T|Y^*(T)|}{\sum_T|\bar Y(T)|}
 \gtrsim
 \frac{c_{\rm setup}c_{\rm ang}}
 {64\kappa_{\rm inc}^2P_{\rm tag}P_{\rm angtag}
  J_vL_mL_{a,b}L_NL_{\rm reg}}.
\tag{10.25}
\]
For every point of \(U(\T,Y^*)\), compatibility gives the line
corresponding to source equation (100):
\[
 \T_{Y^*}(x)\subset
 \bigcup_{\widetilde W:\,x\in Z^\angle(\widetilde W)}
 \{T\in\T_{\bar Y}(x):\ang(T,v(\widetilde W))\lesssim\rho_2\}.
\tag{10.26}
\]
The outer degree is less than \(4d_\angle\) in each tag and less than
\(4\kappa_{\rm amb}d_\angle\) after physical recombination, while every
inner cap is measured in the fixed uniform input.  The overlap factor
is absorbed in \(\lesssim_{\log}\).  The refinement-multiplicity estimate
\cite[Lemma~5.5, p.~10]{GWZ2026} and (10.25) therefore give the corresponding
multiplicity line:
\[
 \mu(\T,\bar Y)
 \lesslog c_*^{-1}d_\angle
 \mu(\T[T_{\rho_2}],\bar Y).
\tag{10.27}
\]
The Katz--Tao input is
\[
 \mu(\T[T_{\rho_2}],\bar Y)
 \lesssim\delta^{-\eta_{\rm bias}}
 |\T[T_{\rho_2}]|^\beta,
\tag{10.28}
\]
and the resolved-angle statement gives
\[
 x\in Z_B^\angle(W_1)\cap Z_B^\angle(W_2)
 \quad\text{for one retained tag }B
 \Longrightarrow
 \ang_{q_{\rm ang}}(W_1,W_2)<4\theta,
 \quad \theta\le\delta^{-\tau'}a/b.
\tag{10.29}
\]
These are the lines corresponding to source equations (102) and (103).  The
tangential slab grouping uses the raw angular upper bound, while the labels
retain their given multiplicities even when their carriers are parallel.

\paragraph{Slab groups and dense mass.}
Fix one retained outer-ball tag \(B\) for the rest of the slab grouping and
conflict coloring, and suppress this tag from the notation.  The quantities
\(\theta\) and \(d_\angle\) are the common reference values in (10.17a), and
the local values remain within a factor two.  Hence a local degree estimate
also controls \(d_\angle\).  All complete-containment and Katz--Tao constants
below belong to this fixed tag.  Global fine mass was retained in (10.25), and
recombination of the resulting pointwise bound contributes the displayed
\(\kappa_{\rm amb}\).

Use a half-open parameter grid to assign every labelled angular parent to
one slab \(S\) of dimensions comparable to
\[
 q_{\rm ang}r_1\times r_1\times r_1.
\tag{10.29a}
\]
Labels are never merged.  At a fixed shaded point, (10.29) implies that the
incident labels occupy at most
\[
 L_{\rm grp}\le\delta^{-2\tau'-o(1)}
\tag{10.29b}
\]
slab groups.  The coefficient \(2\) is the dimension of the normal net:
a slab through a fixed point has \(O(1)\)
admissible offset cells, while a \(\theta\)-cap meets
\(O((\theta/q_{\rm ang})^2)\le O(\delta^{-2\tau'})\) normal cells.

Write
\[
 \lambda_S=\frac{M(Z^\angle_S)}
 {|\widetilde\W^\angle_S|\,|W|},\qquad
 \lambda=\frac{M(Z^\angle)}
 {|\widetilde\W^\angle|\,|W|}.
\]
The half-open slab assignment partitions the labelled parent indices.  On the
fixed dimension level the definitions use the same carrier-volume proxy
\(|W|\) for every label, to which the carrier volumes are structurally
comparable.  Therefore \(\lambda\) is the denominator-weighted average of
the \(\lambda_S\), with weights
\(|\widetilde\W^\angle_S||W|\).  Call \(S\) dense when
\(\lambda_S\ge\lambda/2\).  Summing
\(M(Z^\angle_S)=\lambda_S|\widetilde\W^\angle_S||W|\) over the
complementary groups shows that they carry less than one half of
\(M(Z^\angle)\).  Since
\(\sum_S|U(\widetilde\W^\angle_S,Z^\angle_S)|\le
L_{\rm grp}|U(\widetilde\W^\angle,Z^\angle)|\), summing
\(M(Z^\angle_S)\le
\mu(\widetilde\W^\angle_S,Z^\angle_S)
|U(\widetilde\W^\angle_S,Z^\angle_S)|\) gives
\[
 d_\angle\lesssim
 L_{\rm grp}\max_{S\ {\rm dense}}
 \mu(\widetilde\W^\angle_S,Z^\angle_S).
\tag{10.29c}
\]

\paragraph{Separating repeated labels.}
Fix a dense \(S\), and let \(L_S\) be its common affine normalization to the
unit ball.  Write \(P_W=L_S(W)\) for the raw image of a labelled permanent
parent.  In slab-adapted coordinates its three edge vectors have sizes
\(O(\rho_2),O(\rho_2),O(1)\), their determinant has magnitude
\(\asymp\rho_2^2\), and the longest one has size \(\asymp1\).  Here the
fixed side-ratio bounds and the \(O(q_{\rm ang})\) plane tilt give the upper
bounds, while the determinant and the first two upper bounds give the
remaining lower bounds.  Thus, after dividing the first two scale directions
by \(\rho_2\), the allowed edge matrices range in one structurally compact
subset of \(GL_3\).

Consequently there are structural constants \(h_*\in(0,1]\) and
\(C_*\ge1\), common to the whole slab group, such that for every label one
may choose a source-standard width-\(\rho_2\) tube
\(\widehat T_W\), with carrier segment of length exactly one, satisfying
\[
 P_W^*:=h_*P_W\subset\widehat T_W\subset C_*P_W^*.
\tag{10.29d}
\]
The last dilation is concentric in the raw parallelepiped coordinates and
includes the fixed endpoint caps.  Shrinking \(h_*\) once makes the first
containment and the common ambient unit ball simultaneous; compactness gives
the reverse fixed containment.  Give \(\widehat T_W\) the original parent
label and the unchanged normalized shading
\(\widehat Z(\widehat T_W):=h_*L_S(Z^\angle_S(W))\).

The vertices of \(G_S\) are these labelled exact source-standard proxies.
Join two distinct vertices precisely when the proxies are comparable, meaning
that each is contained in a fixed dilate of the other.  If
\(\widehat T_{W'}\) is adjacent to \(\widehat T_W\), then
\[
 P_{W'}^*\subset\widehat T_{W'}\subset C\widehat T_W
 \subset C'P_W^*.
\]
The raw parent volumes on the dimension level are comparable, so the
complete-containment definition of \(\Delta_{\max}\) gives
\[
 \deg_{G_S}(\widehat T_W)+1
 \le\#\{W':P_{W'}^*\subset C'P_W^*\}\lesssim D_S.
\]
Greedy coloring therefore uses
\[
 H_{\rm col}\lesssim
 \Delta_{\max}(\widetilde\W_S^\angle)\le D_S
 \lesssim\delta^{-O(\varepsilon_{\rm fact})}.
\tag{10.30}
\]
Each proxy color is source-essentially distinct.  Moreover, if a convex
\(K\) contains \(\widehat T_W\), then it contains \(P_W^*\), and
\(|\widehat T_W|\asymp|P_W^*|\); hence
\[
 \Delta_{\max}(\{\widehat T_W\})\lesssim D_S.
\tag{10.30$'$}
\]
The common affine map and homothety preserve labels and complete-containment
relations and multiply raw carriers, shadings, their union, and convex test
bodies by one Jacobian.  Passing from \(P_W^*\) to \(\widehat T_W\) leaves
every shading, its union, pointwise multiplicity, and average multiplicity
unchanged; carrier densities change by only a structural factor, and
\(\Delta_{\max}\) obeys (10.30$'$).

Let \(N_S\) and \(M_S\) be the normalized slab cardinality and labelled
shading mass, and put
\[
 \widehat\lambda_S:=\frac{M_S}{N_S|T_{\rho_2}|}\asymp\lambda_S.
\]
Let \(N_j,M_j\), and
\(\lambda_j=M_j/(N_j|T_{\rho_2}|)\) denote the cardinality, labelled
shading mass, and density of color \(j\); write
\((\mathcal F_{S,j},A_{S,j})\) for this proxy color and its unchanged
shading.  Colors with
\(\lambda_j\ge\widehat\lambda_S/2\) retain at least half of the mass of \(S\).
Indeed the proper colors partition the labelled indices, their normalized
source-standard proxies have one common volume, and
\(\widehat\lambda_S\) is the
\(N_j|T_{\rho_2}|\)-weighted average of the \(\lambda_j\); the colors below
half the average contribute less than half the denominator-weighted mass.
Since every color union is contained in the slab union, this also gives
\[
 \mu(\widetilde\W^\angle_S,Z^\angle_S)
 \le2\sum_{j\ {\rm dense}}\mu_j.
\tag{10.30a}
\]
The strict reserve in condition (K), followed by the final scale cutoff,
absorbs the fixed comparison between \(\widehat\lambda_S\) and \(\lambda_S\).
It therefore verifies the normalized density hypothesis of the generalized
Katz--Tao estimate on each such proxy color, and the preceding graph
construction verifies its source essential-distinctness and
complete-containment hypotheses.  Since the proxy and raw families carry the
same shadings, the resulting multiplicity estimate pulls back without loss:
\[
 \mu_j\lesssim
 \rho_2^{-\varepsilon_{KT}}D_S^{1-\beta}N_j^\beta.
 \tag{10.30b}
\]
For \(0<\beta\le1\), concavity and complete containment give
\[
 \sum_jN_j^\beta
 \le H_{\rm col}^{1-\beta}\Big(\sum_jN_j\Big)^\beta,
 \qquad
 \sum_jN_j\lesssim D_S\rho_2^{-2}.
 \tag{10.30c}
\]
The non-sticky count
\(\rho_2^{-2}\le\rho_2^\zeta|\T_{\rho_2}|\), followed by
(10.29c)--(10.30c), yields the full scaling of the source call:
\[
 d_\angle\lesslog
 L_{\rm grp}\rho_2^{-\varepsilon_{KT}}
 D_SH_{\rm col}^{1-\beta}
 \rho_2^{\beta\zeta}|\T_{\rho_2}|^\beta.
\tag{10.31}
\]
Equivalently,
\[
 d_\angle\lesssim
 \delta^{-\Lambda_{31}-o(1)}
 \rho_2^{\beta\zeta}|\T_{\rho_2}|^\beta,
\quad
 \Lambda_{31}
 =2\tau'
 +(1-\varepsilon_{\rm scal})\varepsilon_{KT}
 +\Lambda_D+(1-\beta)\Lambda_H.
\tag{10.31a}
\]

\paragraph{Closing the tangential branch.}
Insert (10.31a) and (10.28) into the fine lift (10.27), then use
\[
 |\T[T_{\rho_2}]|\,|\T_{\rho_2}|\lesslog|\T|.
\tag{10.31b}
\]
All cap and coarse-direction factors cancel in this product, leaving
\[
 \mu(\T,\bar Y)
 \lesssim
 \delta^{-\Lambda_*-\eta_{\rm bias}-\Lambda_{31}-o(1)}
 \rho_2^{\beta\zeta}|\T|^\beta.
\tag{10.31c}
\]
Since \(\rho_2\le\delta^{\varepsilon_{\rm scal}}\), the available exponent is
\[
 \nu_{\rm tan}
 =\varepsilon_{\rm scal}\beta\zeta
  -\Lambda_*-\eta_{\rm bias}-\Lambda_{31}-o(1).
\tag{10.32}
\]
In particular, the source leading gain is retained under the explicit
condition
\[
 \Lambda_*+\eta_{\rm bias}+2\tau'
 +(1-\varepsilon_{\rm scal})\varepsilon_{KT}
 +\Lambda_D+(1-\beta)\Lambda_H
 <\varepsilon_{\rm scal}\beta\zeta.
\tag{T}
\]
When the left side is at most half the right side, (10.32) gives
\(\nu_{\rm tan}\ge
\tfrac12\varepsilon_{\rm scal}\beta\zeta>0\) for small \(\delta\).
Set
\[
 \underline\nu_{\rm tan}
 :=\tfrac12\varepsilon_{\rm scal}\beta\zeta.
\tag{10.32$'$}
\]
Thus the tangential conclusion is available with the fixed exponent
\(\underline\nu_{\rm tan}\), independently of the remaining \(o(1)\)-term.
Replacing a quadratic density loss by \(2-\sigma\) improves only an
adjustable \(O(\eta)\) term and does not alter this terminal exponent.

\subsection{Parameter hierarchy and closure}
\label{sec:paramclose}

Choose the exponents in proof order.  First fix \(\beta,\zeta\), and take the source scale
constant \(\varepsilon_{\rm scal}(\beta)>0\) sufficiently small that
\[
 8\varepsilon_{\rm scal}
 <(1-\varepsilon_{\rm scal})\beta(2+\zeta).
\tag{10.32p}
\]
Put
\(G=\varepsilon_{\rm scal}\beta\zeta\).  Choose \(\tau'\) so that
\[
 2\tau'<G/16.
\tag{10.32a}
\]
Next choose \(\varepsilon_{KT}>0\) so that
\((1-\varepsilon_{\rm scal})\varepsilon_{KT}<G/32\), and let
\(\eta_{KT}(\varepsilon_{KT},\beta)>0\) be the resulting threshold.  After
fixing this number, choose the single source bias exponent
\(\varepsilon_{\rm fact}=\eta_{\rm bias}\) so small that
\[
 \bigl[1+2(1-\varepsilon_{\rm scal})(2-\beta)\bigr]
 \varepsilon_{\rm fact}<G/64.
\tag{10.32a0}
\]
This bounds the non-subpower contribution
\(\eta_{\rm bias}+\Lambda_D+(1-\beta)\Lambda_H\) in (T), by
(10.16f); make \(\varepsilon_{\rm fact}\) smaller once more if needed to
fit the strict reserve in (10.32s).  All remaining terms from
\(\Lambda_{\rm sync},\Lambda_{\rm pre},\Lambda_*\) are copies of the
single function \(\omega_{\rm src}(\delta)\) and will be absorbed after
the parameters are fixed.  Then choose
\(\tau,\eta_{\rm pl},\eta_{\rm grid},\xi>0\), first taking
\(\xi<\tau'\beta/1024\) and
\[
 2\xi<\eta_{\rm pl},
\tag{10.32a$''$}
\]
and then making all four numbers small enough that
\[
\begin{aligned}
 3\tau+4\eta_{\rm pl}+3\eta_{\rm grid}
   &<\tfrac18\tau'\beta,\\
 (1-\varepsilon_{\rm scal})(\eta_{\rm pl}+\xi)
   &<\tfrac14\varepsilon_{\rm scal}\beta(2+\zeta),\\
 (1-\varepsilon_{\rm scal})\xi
   &<\tfrac14\varepsilon_{\rm scal}\eta_{KT}.
\end{aligned}
\tag{10.32b}
\]
Decrease \(\eta_{\rm grid}\) further, if necessary, so that
\[
 3\eta_{\rm grid}
 <\frac1{16}\tau\varepsilon_{\rm fact}\beta.
\]
Then choose \(\varepsilon_{\rm thick}>0\) smaller than the remaining
right-hand reserve in (10.8a), and let
\(\eta_F(\varepsilon_{\rm thick},\beta)>0\) from the Frostman plank
estimate.  Let \(\eta_{KT}^{\rm cap}>0\) be the density exponent supplied by
\cite[Lemma~3.7 and Remark~3.6]{GWZ2026} for output allowance
\(\varepsilon_{\rm fact}/2\).  Finally choose \(\eps,\eta\), and all remaining
setup/refinement exponents
so small that (10.8a) holds, \(\eta<\tfrac18\tau'\beta\), and their remaining contribution
to \(E_{\rm dens}\) is less than
\(\tfrac12\varepsilon_{\rm scal}\eta_{KT}\),
\(\eta<\tau\eta_F(\varepsilon_{\rm thick},\beta)\), and
(L), (H), (K), and (T) hold with strict slack.  Using (10.32p), require
at the same time
\[
 2\bigl[q(\eta+\Lambda_{\rm pre})
 +(1-\varepsilon_{\rm scal})\eps\bigr]
 +\Lambda_D+4\varepsilon_{\rm scal}+2\eta+3\tau
 +\eta+\Lambda_{\rm sync}
 <\tfrac12(1-\varepsilon_{\rm scal})\beta(2+\zeta).
\tag{10.32s}
\]
This is possible because, after (10.32a0), the only fixed positive terms on
the left are \(4\varepsilon_{\rm scal}\) and the explicitly bounded
\(2(1-\varepsilon_{\rm scal})\varepsilon_{\rm fact}\); (10.32p) and the
extra shrinking allowed after (10.32a0) leave strict room.  In particular,
\[
 3\tau+\eta+4\eta_{\rm pl}+3\eta_{\rm grid}
   <\tfrac14\tau'\beta,
 \qquad
 \xi+\eps+\frac{q(\eta+\Lambda_{\rm pre})}
                  {\varepsilon_{\rm scal}}<\eta_{\rm pl},
 \qquad
 E_{\rm dens}<\varepsilon_{\rm scal}\eta_{KT}.
\tag{10.32c}
\]
At this stage the inequalities are imposed with
\(\omega_{\rm src}=0\) and with a fixed positive reserve.  Now choose the
source input exponent \(\eta_{\rm src}>0\) so that
\(2\eta_{\rm src}<\eta\), \(4\eta_{\rm src}\le\eta_{KT}^{\rm cap}\), and
\(2\eta_{\rm src}\) lies below every other
\(K_{KT}(\beta)\) density threshold used in
\cref{prop:source-extraction}, including the scale-\(r\) call with
\(\nu_{\rm thick}/10\) in (10.8h), and also require
\((1-\beta)\eta_{\rm src}<\varepsilon_{\rm fact}/8\).  With
\(\mathfrak p\) fixed, first decrease
\(\delta_0=\delta_0(\beta,\zeta,\mathfrak p)\) until
\(\omega_{\rm pre}(\delta)<\eta_{\rm src}\).  This cutoff is independent of
\(J_v\) and gives (10.8A0).  The final decrease using
\(\omega_{\rm src}\) is made only after (10.8A) and
\cref{lem:exact-loss} give \(J_v=O(\log(1/\delta))\); at that point require
\(\omega_{\rm src}(\delta)\) to lie below \(\eta_{\rm src}\) and the minimum
of the finitely many reserved gaps in (L), (H), (K), (T), and (10.32s).  The gap
\(\eta-2\eta_{\rm src}\) absorbs the good-ball deletion, while the source
density gaps absorb the cap and other source-call losses.  This absorbs the
fixed constants and all \(o(1)\)-terms uniformly.  Decreasing it further also enforces the raw-angle
threshold used in the transverse branch.  In fact, (10.16a) and (7.3) give
\[
 \log B_{\rm ang}=(\log\alpha^{-1})^{3/4}
 \le\bigl((1-\varepsilon_{\rm scal})\log\delta^{-1}\bigr)^{3/4}.
\]
After \(\tau'\) is fixed, choose \(\delta_0\) so that for
\(0<\delta\le\delta_0\),
\[
 \tau'\log\delta^{-1}
 \ge \bigl((1-\varepsilon_{\rm scal})\log\delta^{-1}\bigr)^{3/4}
      +\log2.
\tag{10.32d}
\]
Then \(\delta^{-\tau'}\ge2B_{\rm ang}\), hence
\(\theta\ge2B_{\rm ang}q_{\rm ang}\), and (7.8) gives genuine raw separation
even in the presence of repeated parallel labels.  This verifies the low,
high, dense-color, transverse, and tangential inequalities simultaneously.

The following definition is the input contract for the five terminal
branches: it records their common data and quantitative identities, while the
branch lemmas supply their conclusions.  The closure theorem is conditional
on \(K_{KT}(\beta)\), \(K_F(\beta)\), and the explicit source interface
(I1)--(I4).  Under these assumptions, \cref{prop:source-extraction} derives
every admissibility item from the fixed GWZ refinement chain.  The resulting
admissible data close in
\cref{thm:admissible-vns}, and \cref{thm:section9} then returns to the original
labelled input.

\subsubsection*{Admissible input}

\begin{definition}[Admissible Section 9 input]
\label{def:admissibles9}
A fixed reference family \((\T,\bar Y)\), together with a fixed loss profile
\(\mathfrak p\), with
\(0<\beta\le1\), \(\zeta>0\), and
\(0<2\eta_{\rm src}<\eta\), at the Section~9 scales is
\emph{admissible} if the following data are part of the input.
\par\smallskip\noindent\textbf{Geometric admissibility (S1).}
\begin{enumerate}[label=\textup{(S\arabic*)},leftmargin=2.8em]
\item The active fine tubes, tubelets, parents, and cells have total
polynomial complexity, and the good-ball occupancy is (10.2).  Before the
good-ball deletion every active tubelet satisfies (10.1a), and after it
\[
 \lambda(\T_B,Y_B)\gtrsim\delta^\eta.
\tag{10.32e}
\]
The all-discrete-scale refinement \(Y^{\rm sc}\subset\bar Y\) and its
scale shadings are fixed before the tubelets, satisfy (10.8h), and obey
\(M(Y^{\rm sc})\ge c_{\rm sc}M(\bar Y)\) with
\(c_{\rm sc}^{-1}=\delta^{-o(1)}\).
The objects are selected in the order (10.1)--(10.8), and the represented
fine-mass identities (10.4)--(10.7), with the displayed
\(\kappa_{\rm inc},\kappa_{\rm loc},L_m,L_{a,b}\), hold on the same object.
At the realized dimension level exactly one of the following three
alternatives is designated:
\begin{enumerate}[label=\textup{(\alph*)},leftmargin=2.2em]
\item if \(a\ge\delta^{1-\tau}\), the fixed data satisfy the hypotheses of
      \cref{lem:thick-parent}: the inherited assertions \(K_F(\beta)\) and
      \(K_{KT}(\beta)\), the bias and density inputs (10.8d)--(10.8f), the
      scale relation for \(r\), and the already fixed source comparison
      (10.8h) are available.  The conclusions (10.8b)--(10.8c) are not part
      of the definition;
\item if \(a<\delta^{1-\tau}\) and
\(b\ge\delta^{\varepsilon_{\rm scal}}r_1\), the
multiplicity-expanded family and bounded-overlap concentration estimate
(10.7a)--(10.7c), the complete-parent data (10.12)--(10.16), and the
profile denominator bound in the second inequality of (10.34d) hold on the
same labels.  At
\(\rho_-:=\delta^{1-\varepsilon_{\rm scal}}\), the source middle-range count
and bounded-ancestry comparison
\[
 |\T|\gtrsim|\T_{\rho_-}|
 \ge\rho_-^{-2-\zeta}
\]
are part of the same input;
\item if \(a<\delta^{1-\tau}\) and
\(b<\delta^{\varepsilon_{\rm scal}}r_1\), the scales obey (10.16a).
\end{enumerate}
\par\smallskip\noindent\textbf{Reference-shading admissibility (S2)--(S3).}
\item The incident-centred pointwise upper bound (10.19e0) holds on the
reference shading \(\bar Y\), uniformly under fixed dyadic dilation of
\(\rho_2\).  With the source notation \(\mu(\rho_2)\),
\[
 \mu(\rho_2)
 \lesssim\delta^{-\eta_{\rm bias}}|\T[T_{\rho_2}]|^\beta,
 \qquad
 |\T[T_{\rho_2}]|\,|\T_{\rho_2}|\lesslog|\T|,
 \qquad
 |\T_{\rho_2}|\ge\rho_2^{-2-\zeta}.
\]
The same constants and dyadic levels apply to all incident-centred direction
balls.  No fixed-cap boundary lower bound is assumed.
This condition, and (S3), are required only under the thin non-slab
alternative (S1c).
\item For each chosen \(\varepsilon_{KT}>0\), there is
\(\eta_{KT}(\varepsilon_{KT},\beta)>0\) such that every essentially
distinct normalized \(\rho_2\)-tube family \((\mathcal F,A)\) with
Katz--Tao constant \(D\) and shading density at least
\(\rho_2^{\eta_{KT}}\) satisfies
\[
 \mu(\mathcal F,A)
 \lesssim_{\varepsilon_{KT},\beta}
 \rho_2^{-\varepsilon_{KT}}
 D^{1-\beta}|\mathcal F|^\beta.
\]
\par\smallskip\noindent\textbf{Quantitative-loss admissibility (S4).}
\item Under the thin non-slab alternative (S1c), the setup fraction, bias
constant, denominator level, incidence and fixed-tag representation numbers,
dyadic level counts, angular fraction, parent Katz--Tao constant, and conflict-color number
satisfy the individual bounds (10.16d)--(10.16f).  Every selection after
the positive graph is a saturated right-cell selection, a complete-tag
synchronization from \cref{thm:tagged-sync}, or a whole-incidence selection
as in (10.11$''$) and (8.5).  Consequently the mass identities (10.19b) and
(10.25) are controlled by the conservative uniform envelope (10.16d) and the
operation map in \cref{lem:exact-loss}.
All fixed constants and convergence moduli occurring in these bounds are
controlled by the previously fixed profile \(\mathfrak p\).
Under the thin slab alternative (S1b), the required loss data
are instead (10.7b), (10.12)--(10.16), and the second inequality in
(10.34d); the relevant exponents
\(\Lambda_{\rm pre},\Lambda_D,\Lambda_{\rm den}\) obey (10.16f) and the
strict slab reserve (10.32s).  Under the thick
alternative (S1a), they are \(c_{\rm pre},c_{\rm bias}\), and
\(L_{\rm thick}\) in (10.8d)--(10.8h), after which the construction stops.
\end{enumerate}
These fixed source inputs are used on their stated reference objects; later
angular or spatial refinements enter only through the listed saturated
operations.
\end{definition}

\subsubsection*{Source verification}

\begin{lemma}[Carrier--shading commutation rule]
\label{lem:carrier-shading-commutation}
Let \(\mathcal L\) be a finite labelled carrier family and let
\(Y^B(\ell)\subset Y(\ell)\) satisfy
\[
 |Y^B(\ell)|\ge c_B|Y(\ell)|>0
 \qquad(\ell\in\mathcal L)
\tag{10.32g}
\]
for one \(c_B>0\).  Let \(A\) be a deterministic operation whose output
depends only on the complete labelled carriers and on carrier-only data
(complete containment, dimensions, a fixed label order, or a partition into
finitely many such classes), and not on the location or mass of the shading.
Then applying \(A\) before or after the restriction \(Y\mapsto Y^B\)
selects exactly the same labels, parents, and carrier classes.  For every
class \(\mathcal L_A\) selected by \(A\),
\[
 \sum_{\ell\in\mathcal L_A}|Y^B(\ell)|
 \ge c_B\sum_{\ell\in\mathcal L_A}|Y(\ell)|.
\tag{10.32h}
\]
If a carrier-only partition has at most \(L\) classes and a class is chosen
only after the restriction by shaded mass, the chosen class instead retains
at least \((c_B/L)M(Y)\); equality of its label set with a class chosen
before the restriction is not asserted.

There are three noncommutation cautions.
\begin{enumerate}[label=\textup{(C\arabic*)},leftmargin=2.8em]
\item A pointwise lower-multiplicity or angular statement remains attached
  to the shading on which it was proved.  Mere inclusion does not guarantee
  transfer; below we transfer it only when every counted incidence is kept.
\item For a mass-scored operation, (10.32h) does not identify the outputs
  before and after restriction.  Below it is rerun unless its scoring data are
  separately unchanged, and any new pigeonhole loss is recorded.
\item Arbitrary internal deletion from a positive parent--cell graph need not
  preserve its identities.  Deleting whole right cells, tags, or positive
  incidences while keeping every child incidence on each surviving edge is
  the sufficient commutation rule used below.
\end{enumerate}
\end{lemma}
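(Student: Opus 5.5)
The plan is to treat $A$ as a function of carrier-only data and note that the restriction $Y\mapsto Y^B$ alters none of those data. The proof then separates into an identity statement, a mass comparison, and a statement about mass-selected classes; the three cautions are verified by pointing to the configurations of \cref{sec:obstructions}.

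\textbf{Identity of outputs.} The restriction keeps the label set $\mathcal L$ fixed and leaves every complete carrier unchanged. The shading is changed only inside each label, and $|Y^B(\ell)|>0$ ensures that no label is turned into a zero-mass label, so it cannot be discarded by a positivity convention. Complete containment, dimensions, the fixed label order, and any finite partition built from these quantities therefore agree for the inputs $(\mathcal L,Y)$ and $(\mathcal L,Y^B)$. Since $A$ is deterministic and reads only these data, it returns identical labels, parents, and carrier classes in either order. Inequality (10.32h) follows by summing the labelwise hypothesis (10.32g) over $\mathcal L_A$.

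\textbf{Class selected after restriction.} Suppose a class is chosen by shaded mass only after the restriction. Summing (10.32g) over all labels gives $M(Y^B)\ge c_BM(Y)$. The $L$ classes partition $\mathcal L$, so the heaviest class under $Y^B$ carries at least $M(Y^B)/L\ge (c_B/L)M(Y)$. No claim is made about which class this is. The heaviest class under $Y$ and the heaviest class under $Y^B$ can differ, because (10.32g) constrains only the ratio of $Y^B$-mass to $Y$-mass and does not preserve the ordering of the class masses. A two-class example with unequal labelwise retention ratios shows this directly.

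\textbf{The three cautions.} Each caution records the failure of a monotonicity property, and each can be witnessed by a configuration already given. For (C1), use (N1) and (N4) of \cref{prop:F2}: a pointwise lower multiplicity does not survive an inclusion of shadings. For (C2), use the two-class example above, where mass-based scoring selects different outputs before and after restriction. For (C3), use (N2), (N3), (N5) and \cref{prop:noncell}. The positive rule stated in (C3) is \cref{thm:lift} together with \cref{thm:abstract-cellular}. The main point requiring care is making precise what "carrier-only" means, namely that $A$ is invariant under every change of shading that keeps all labelled shadings of positive measure. Once that formulation is fixed, the rest is bookkeeping.
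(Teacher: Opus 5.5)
Your proposal is correct and follows the paper's proof. Invariance of the label set and carrier-only data together with determinism gives identical outputs, (10.32h) is the summed hypothesis, and the post-restriction bound is a pigeonhole over at most $L$ classes. For the three cautions you cite the Section~2 obstructions (and a two-class example) as explicit witnesses, whereas the paper gives one-line reasons: deleting one incidence can destroy a pointwise lower bound, mass scores are not carrier data, and internal cutting changes edge weights or child multiplicity, cf.\ \cref{thm:joint,thm:lift}. This is the same argument.
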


\begin{proof}
The complete label set and every carrier-only relation are unchanged by
\(Y(\ell)\mapsto Y^B(\ell)\); determinism therefore gives the same output of
\(A\).  Summing (10.32g) over a selected class proves (10.32h).  If the class
is chosen afterwards, sum (10.32h) over all at most \(L\) classes and take a
largest summand.  Clause (C1) follows because deleting even one of several
incidences through a point can destroy a lower bound there.  Clause (C2) is
the distinction between carrier data and shading scores.  For (C3), the edge
identity in \cref{thm:joint,thm:lift} shows that internal cutting may change
the weight or child multiplicity; the saturated rule avoids this failure.
\end{proof}

The three noncommuting classes in
\cref{lem:carrier-shading-commutation}(C1)--(C3) govern the concrete source
order recorded next in (R1)--(R4).

\begin{lemma}[Reference objects and permitted reordering]
\label{lem:source-order}
For the fixed source refinement chain associated with a uniform input, the
source and corrected operations may be performed in the following order:
\[
 \begin{aligned}
 Y&\longrightarrow\bar Y\longrightarrow Y^{\rm sc}
 \longrightarrow\text{canonical windows}
 \longrightarrow(Y^{\rm pre},Y_B)
 \longrightarrow Y^0\longrightarrow m_{\rm fine}\\
 &\longrightarrow\text{biased label subset}
 \longrightarrow(a,b,N)\text{-levels}
 \longrightarrow\text{tagged cellular graph}.
 \end{aligned}
\tag{10.32f}
\]
Source equation numbers refer to the archived version identified in
Appendix~D.
Here \(\bar Y\) is the output of source equation (86),
\(Y^{\rm sc}\) is the all-discrete-scale output used in (87)--(88), the
canonical windows and their two levels are constructed by
\cref{lem:source-canonical}, \(Y_B\) is their good-ball restriction, and
\(m_{\rm fine}\) is the global level of \cref{lem:global-sync}.  Every arrow
after \(Y^{\rm sc}\) is an explicit induced fine-incidence restriction, a
deletion of complete canonical tubelet labels or tags, or a saturated tagged
incidence deletion.  In particular:
\begin{enumerate}[label=\textup{(R\arabic*)},leftmargin=2.8em]
\item the cap statement is read only on \(\bar Y\), not transferred to a
later shading;
\item (87)--(88) are read only on \(Y^{\rm sc}\), whose later local unions
are subsets of that fixed union;
\item the good-ball operation precedes \(m_{\rm fine}\), and
\(m_{\rm fine}\) precedes every biased or dimension subset;
\item every operation after the positive graph is saturated on right cells,
deletes complete outer-ball tags during one of the two synchronization
stages of \cref{thm:tagged-sync}, or is the one whole-incidence lift of
\cref{thm:lift}.
\end{enumerate}
By \cref{lem:carrier-shading-commutation}, the good-ball deletion commutes with
the biased-parent and dimension selections; \cref{lem:goodball-reorder}
supplies uniform per-label retention.  Every other arrow is used in the
displayed order and on its stated reference object.
The product of the retained fractions before the graph is exactly
\(c_{\rm pre}c_{\rm bias}/(\kappa_{\rm inc}L_mL_{a,b}L_N)\), up to the fixed
source comparison constants already named in those symbols.
\end{lemma}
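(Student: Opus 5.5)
The plan is to verify the chain (10.32f) arrow by arrow, classifying each operation by type, and then to multiply the retained fractions.

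\emph{Source stages.} The first two arrows, \(Y\to\bar Y\to Y^{\rm sc}\), are the source refinements (86) and (87)--(88). I will not re-prove them. I only record that the cap statement (R1) is read on \(\bar Y\), that (R2) is read on \(Y^{\rm sc}\), and that every later shading is contained in \(Y^{\rm sc}\). Containment of the later local unions in \(U(\T,Y^{\rm sc})\) then follows by construction, since every subsequent shading is defined by intersecting with \(Y^{\rm sc}\) or with its restrictions, as in (10.7X), (10.0), (10.6a), and (10.24).

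\emph{Canonical stage.} This is \cref{lem:source-canonical} with \(Y^\circ=Y^{\rm sc}\). It gives the canonical windows, the levels (10.7T)--(10.7W), and the whole-tag level (10.7W\('\)). The good-ball restriction \(Y_B\) and the shading \(Y^0\) come from \cref{lem:goodball-reorder}, which yields (10.1d). The global level \(m_{\rm fine}\) comes from \cref{lem:global-sync}. For (R3), the good-ball deletion happens before \(m_{\rm fine}\) by the order of construction in (10.0) and (10.5).

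\emph{Commutation step.} This is the main obstacle. The source performs the biased-parent and dimension selections before the good-ball deletion, so I need to show the two orders agree. The biased factoring of \cref{lem:biased-factoring} and the \((a,b,N)\)-levels depend only on carriers, through complete containment, Löwner dimensions, and label order. Hence \cref{lem:carrier-shading-commutation} applies with \(c_B\) equal to the uniform per-label retention supplied by \cref{lem:goodball-reorder}. The mass-scored class choices, namely the dyadic selection of the biased class and the \(L_{a,b}\), \(L_N\) levels, fall under caution (C2). I will rerun them after the restriction and charge a new pigeonhole loss. This gives the factor \(c_B/L\), rather than an identification of the two outputs.

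The \(m_{\rm fine}\) level is pointwise and mass-scored, so caution (C1) prevents it from commuting. I therefore keep it in the displayed position, before every biased or dimension subset. Every later label selection deletes whole labels, so it preserves the pointwise range (10.4c).

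\emph{Cellular stage and final product.} After the tagged cellular graph, (R4) is checked against the list of operations: \cref{thm:joint}; the saturated local-volume selection; the two whole-tag synchronizations of \cref{thm:tagged-sync}; the angle step \cref{thm:angle}, which is saturated on cells; and the single lift \cref{thm:lift}. All of these satisfy the sufficient rule (C3). The pre-graph fractions then multiply as follows:
\begin{itemize}
\item \(c_{\rm pre}\) from (10.1\('\)), which already contains \(c_{\rm sc}\), \(c_{\rm can}\), and the good-ball factor \(4\kappa_{\rm inc}\);
\item \(L_m^{-1}\) together with one \(\kappa_{\rm inc}^{-1}\) from the \(m_{\rm fine}\) level, via (REP);
\item \(c_{\rm bias}\) from the biased subset;
\item \((L_{a,b}L_N)^{-1}\) from the dimension and count levels.
\end{itemize}
The resulting product is \(c_{\rm pre}c_{\rm bias}/(\kappa_{\rm inc}L_mL_{a,b}L_N)\), up to named constants. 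The conversion from tubelet mass to fine mass uses the bridge (10.7\('\)).
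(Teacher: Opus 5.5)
Your proposal is correct and follows essentially the paper's own route: (R1)--(R2) come from the canonical construction; the good-ball deletion crosses the bias and dimension selections via \cref{lem:carrier-shading-commutation} with the per-label retention of \cref{lem:goodball-reorder}; (R3) comes from \cref{lem:global-sync}; (R4) comes from saturation, whole-tag deletion and the single lift; and the named factors are then multiplied. A few small precisions: the angular descent is a whole-incidence selection covered by the single lift of \cref{thm:lift} rather than a cell-saturated step; the biased class in \cref{lem:biased-factoring} is chosen by child volume (carrier data), so it commutes exactly and needs no (C2) rerun; and the tubelet unions lie in \(U(\T,Y^{\rm sc})\) because \(m(T_B)\ge1\), not because of a literal intersection with \(Y^{\rm sc}\).
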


\begin{proof}
In the v1 order, (86) precedes (87)--(88), which precede the tubelet levels.
The canonical construction (10.7U)--(10.7X) gives (R1)--(R2).  By
\cref{lem:carrier-shading-commutation,lem:goodball-reorder}, the good-ball
deletion crosses the carrier-only bias and dimension choices while retaining
(10.1c); \cref{lem:global-sync} then gives (R3).  Saturation of the tagwise
positive graph, the whole-tag deletions in \cref{thm:tagged-sync}, and the
single lift in \cref{thm:lift} give (R4).  Multiplying the named representation
and dyadic factors yields the stated retained fraction.
\end{proof}

\begin{lemma}[The reference cap and the generalized KKT estimate]
\label{lem:source-cap-kkt}
Assume \(K_{KT}(\beta)\).  Under the thin non-slab alternative, source
equation (86) on p.~36 supplies the incident-centred cap relation (10.35).
Assume also the separate coarse-average interface clause (10.35a) in (I2).
Then \cite[Definition~2.2, p.~4; Lemma~3.7, p.~7; Remark~3.6,
p.~6]{GWZ2026} and these two relations imply (S2)--(S3) on the same reference
shading \(\bar Y\).  More precisely, (10.35)--(10.37) hold for every
incident-centred direction ball, and every essentially distinct normalized
family of density at least \(\rho_2^{\eta_{KT}}\) obeys the estimate in
(S3).  Both conclusions are read on the fixed reference shading \(\bar Y\).
\end{lemma}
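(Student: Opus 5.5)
The argument has two parts, matching the two assertions: the cap relations (S2) on \(\bar Y\), and the generalized Katz--Tao estimate (S3). Both are read on the fixed reference shading \(\bar Y\). By (R1) of \cref{lem:source-order}, no transfer to a later shading is ever needed.

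\emph{Part (S2).} Start from source equation (86), the output of the angular uniformization in \cite[Definition~2.2]{GWZ2026}. It gives, at every \(x\in\bar Y(T_0)\), comparability of the number of incident tubes in a direction cap of radius \(\rho_2\) about \(T_0\) with the uniform quantity \(\mu(\rho_2)\). This is the incident-centred form (10.35). The cap is centred at an incident direction, so no boundary lower bound for a fixed cap is needed. Uniformity under a fixed dyadic change of \(\rho_2\) follows because the source uniformization is stated on a dyadic scale grid. A neighbouring dyadic radius is covered by \(O(1)\) caps of the grid radius, so \(\mu\) changes by a structural factor. The bound \(\mu(\rho_2)\lesssim\delta^{-\eta_{\rm bias}}|\T[T_{\rho_2}]|^\beta\) comes from the coarse-average clause (10.35a) together with the labelled \(K_{KT}(\beta)\) call recorded after (10.8K). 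That call is applied to the family \(\T[T_{\rho_2}]\), rescaled by the affine map of \(T_{\rho_2}\) to a unit tube family, at output allowance \(\eta_{\rm bias}\). Its density hypothesis is met because \(2\eta_{\rm src}\) was chosen below every KKT threshold, and its \(\Delta_{\max}\) hypothesis holds because the rescaled family inherits the source Katz--Tao bound. The product bound \(|\T[T_{\rho_2}]|\,|\T_{\rho_2}|\lesssim_{\log}|\T|\) and the non-sticky count \(|\T_{\rho_2}|\ge\rho_2^{-2-\zeta}\) are the source coarse-scale factoring identities at scale \(\rho_2\). They are carrier-only data, so \cref{lem:carrier-shading-commutation} shows they are unaffected by which shading is used.

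\emph{Part (S3).} Invoke \cite[Lemma~3.7]{GWZ2026} in its scale-variant form, \cite[Remark~3.6]{GWZ2026}, at scale \(\rho_2\). Given \(\varepsilon_{KT}\), \(K_{KT}(\beta)\) supplies \(\eta_{KT}\), and Lemma~3.7 converts the unit-constant Katz--Tao estimate into the form with constant \(D^{1-\beta}\) by random subsampling. The paper has already noted that this random-subset proof runs on labels. Essential distinctness and the density floor \(\rho_2^{\eta_{KT}}\) are exactly its hypotheses, so the displayed inequality follows with implicit constants depending on \(\varepsilon_{KT},\beta\).

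\emph{Main obstacle.} The delicate step is matching (10.35a) to the KKT call. The source average multiplicity \(\mu(\T[T_{\rho_2}],\bar Y)\) must be compared with the incident-centred pointwise quantity \(\mu(\rho_2)\). I would do this with a dyadic pigeonhole on incident caps, charging the logarithmic loss to \(\mathfrak p\), and then confirm that the inherited \(\Delta_{\max}\le\delta^{-\eta}\) survives the rescaling. I expect the latter to cost at most a factor \((\rho_2)^{-O(\eta)}\), which is absorbed by the \(\eta_{\rm bias}\) reserve.
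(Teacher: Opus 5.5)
\emph{Verdict: there is a genuine gap in Part (S2).} Your overall route is the paper's, and Part (S3) is fine. As formulated in (S3), the assertion is exactly the scale-variant labelled Lemma~3.7 at width \(\rho_2\). (The paper's proof also checks its hypotheses for the dense proxy colours through (10.37a0)--(10.37a) and condition (K), but the abstract statement does not need this.)

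\textbf{The gap: the density hypothesis of the Katz--Tao call.} You apply the call to ``the family \(\T[T_{\rho_2}]\)'' and justify its density by the choice of \(\eta_{\rm src}\). That choice controls only the \emph{global} density \(\lambda(\T,\bar Y)\ge c_{\rm init}\delta^{\eta_{\rm src}}\). Density is not inherited by subfamilies, and a particular coarse tube may carry almost none of the \(\bar Y\)-mass. The missing step is a selection of the coarse label:
\begin{enumerate}
\item Put \(M_S=\sum_{T\in\T[S]}|\bar Y(T)|\). Bounded representation and uniform branching give \(\sum_SM_S\asymp M(\bar Y)\) and \(\sum_S|\T[S]|\,|T|\asymp|\T|\,|T|\).
\item Hence some \(S_0\) has \(\lambda(\T[S_0],\bar Y)\gtrlog\lambda(\T,\bar Y)\), so its density is \(\gtrsim\delta^{2\eta_{\rm src}}\) after the profile cutoff.
\item Only now does (10.35a) do its work. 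It can, because it is assumed for \emph{every} coarse label, in particular for the density-selected \(S_0\), and it gives \(\mu(\T[S_0],\bar Y|_{\T[S_0]})\asymp\mu(\rho_2)\).
\item The labelled call on \(L_{S_0}(\T[S_0])\) has \(|L_{S_0}(\T[S_0])|=N_{\rho_2}:=|\T[T_{\rho_2}]|\) and uses auxiliary scale \(\tau_{\rm src}=\delta\) as in Remark~3.6. It gives \(\mu(\rho_2)\lesssim\delta^{-\varepsilon_{\rm fact}/2}\Delta_{\max}(\T)^{1-\beta}N_{\rho_2}^\beta\le\delta^{-\eta_{\rm bias}}N_{\rho_2}^\beta\), using \((1-\beta)\eta_{\rm src}<\varepsilon_{\rm fact}/8\).
\end{enumerate}
The auxiliary scale also matters for your threshold claim. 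If you measure the thresholds in the rescaled width \(\delta/\rho_2\), which can be as large as \(\delta^{\varepsilon_{\rm scal}}\) on the thin window, the density and \(\Delta_{\max}\) hypotheses would require \(\eta_{\rm src}\) to be smaller by a factor \(\varepsilon_{\rm scal}\). The parameter closure states the thresholds at the auxiliary scale \(\delta\), so your claim holds only in that form.

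\textbf{Your ``main obstacle'' is misplaced.} (10.35a) is a hypothesis of the lemma (the separate clause in (I2)), so nothing has to be derived. Moreover, the derivation you sketch points the wrong way. A pigeonhole on incident caps gives only the easy inequality: at \(x\in\bar Y(T)\) with \(T\in\T[S]\), every \(\T[S]\)-tube through \(x\) lies in an \(O(\rho_2)\) direction cap about \(T\). So (10.35) and neighbouring-scale stability give \(\mu(\T[S],\bar Y|_{\T[S]})\lesssim\mu(\rho_2)\). To turn the Katz--Tao output into a bound for \(\mu(\rho_2)\) you need the reverse inequality \(\mu(\rho_2)\lesssim\mu(\T[S_0],\bar Y|_{\T[S_0]})\), and (10.35) does not give it. The tubes counted in an incident cap at a point of \(U(\T[S_0],\bar Y)\) need not belong to \(\T[S_0]\); they may lie in neighbouring coarse tubes. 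That is exactly why the paper treats the cap count and the coarse-child average as two distinct data on \(\bar Y\).

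\textbf{The rescaling costs nothing.} \(\Delta_{\max}\) is exactly affine invariant and can only decrease under label deletion. So \(\Delta_{\max}(L_{S_0}(\T[S_0]))\le\Delta_{\max}(\T)\le\delta^{-\eta_{\rm src}}\), with no \(\rho_2^{-O(\eta)}\) loss.
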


\begin{proof}
Source equation (86) gives, for every \(x\in\bar Y(T_0)\),
\[
 \#\{T:x\in\bar Y(T),\ \ang(T,T_0)\le\rho\}\asymp\mu(\rho).
\tag{10.35}
\]
The additional interface clause (10.35a) states separately that, for every
coarse label \(S\),
\[
 \mu\bigl(\T[S],\bar Y|_{\T[S]}\bigr)\asymp\mu(\rho).
\tag{10.35a}
\]
Thus the incident-centred cap count and the coarse-child average multiplicity
are two distinct data on the same reference shading \(\bar Y\).
Neighbouring-scale stability and the common branching number give
\[
 N_\rho|\T_\rho|\lesslog|\T|.
\tag{10.36}
\]

Write \(M_S=\sum_{T\in\T[S]}|\bar Y(T)|\).  Bounded representation and
uniform branching give
\[
 \sum_SM_S\asymp M(\bar Y),\qquad
 \sum_S|\T[S]|\,|T|\asymp|\T|\,|T|.
\]
Hence some \(S_0\) has density
\(\gtrlog\lambda(\T,\bar Y)\), while (10.35a) gives its multiplicity
\(\asymp\mu(\rho)\).  Its Katz--Tao constant is at most
\(\delta^{-\eta_{\rm src}}\).  The parameter closure already gives
\((1-\beta)\eta_{\rm src}<\varepsilon_{\rm fact}/8\) and
\(\eta_{\rm src}\le\eta_{KT}^{\rm cap}/4\); reserve another quarter for
the finitely many uniformity losses.  Write
\(0\le\omega_{\rm cap}(\delta)\le\omega_{\rm src}(\delta)\to0\) for those
losses.  The profile cutoff
\(\omega_{\rm src}(\delta)<\eta_{\rm src}\) also absorbs the combined loss
in the preceding density selection.  The cap call uses the interface
\[
 \begin{gathered}
 s_\rho=\delta/\rho,\qquad \tau_{\rm src}=\delta\le s_\rho,\qquad
 \mathcal F_\rho=L_{S_0}(\T[S_0]),\\
 A_\rho=L_{S_0}(\bar Y|_{\T[S_0]}),\qquad
 \lambda(\mathcal F_\rho,A_\rho)\gtrsim\delta^{2\eta_{\rm src}},\\
 \Delta_{\max}(\mathcal F_\rho)\le\delta^{-\eta_{\rm src}},\qquad
 |\mathcal F_\rho|=N_\rho,\qquad
 \mu(\mathcal F_\rho,A_\rho)\asymp\mu(\rho).
 \end{gathered}
\tag{10.36a}
\]
No essential-distinctness assertion is needed here: the labelled form derived
after (10.8J) from \cite[Lemma~3.7]{GWZ2026} applies using the displayed density
and \(\Delta_{\max}\) bounds.
The affine normalization preserves density and average multiplicity, and
\(2\eta_{\rm src}<\eta_{KT}^{\rm cap}\) verifies the source density
threshold.  After
decreasing \(\delta_0\) so that
\(\omega_{\rm cap}(\delta)<\varepsilon_{\rm fact}/8\), Lemma~3.7 and
Remark~3.6 give
\[
 \begin{aligned}
 \mu(\rho)
 &\lesssim \delta^{-\varepsilon_{\rm fact}/2}
    \Delta_{\max}(\mathcal F_\rho)^{1-\beta}N_\rho^\beta\\
 &\le \delta^{-\varepsilon_{\rm fact}/2
        -(1-\beta)\eta_{\rm src}-\omega_{\rm cap}(\delta)}N_\rho^\beta
 \le\delta^{-\varepsilon_{\rm fact}}N_\rho^\beta,
 \end{aligned}
\tag{10.37}
\]
This proves the first part of (S2); the non-sticky hypothesis supplies its
remaining count.

The later dense-color call uses
\[
 \begin{gathered}
 s=\rho_2,\qquad \mathcal F=\mathcal F_{S,j},\qquad
 A=A_{S,j},\qquad |\mathcal F|=N_j,\\
 \mathcal F_{S,j}\text{ is one proper source-proxy conflict color},\qquad
 \Delta_{\max}(\mathcal F_{S,j})\lesssim D_S,\\
 \lambda(\mathcal F_{S,j},A_{S,j})=\lambda_j
 \ge\rho_2^{\eta_{KT}}.
 \end{gathered}
\tag{10.37a0}
\]
The proxy containment comparison preserves the required convex-test bound,
and the unchanged shading preserves multiplicity, so Lemma~3.7 gives
(10.30b) and pulls it back to the raw normalized family.  For every dense
color, condition (K) and its fixed reserve verify the last line because
\[
 \lambda_j\ge\tfrac12\widehat\lambda_S
 \gtrsim\lambda_S
 \ge\tfrac12\lambda(Z^\angle)
 \ge\tfrac14c_{\rm ang}\lambda_B^{\rm par}
 \gtrsim\mathcal D,
\tag{10.37a}
\]
the strict inequality in (K), after the final cutoff, makes the last
comparison at least \(\rho_2^{\eta_{KT}}\) despite its fixed structural
constant.
\end{proof}

\newpage
\begin{lemma}[Quantitative loss envelope and property map]
\label{lem:exact-loss}
On the chain (10.32f), the operation-level losses and the properties they
preserve are bounded by the following exchange table.
\begin{center}
\footnotesize
\begin{tabularx}{\textwidth}{@{}>{\raggedright\arraybackslash}p{2.45cm}>{\raggedright\arraybackslash}p{1.85cm}>{\raggedright\arraybackslash}p{2.05cm}>{\raggedright\arraybackslash}X@{}}
\toprule
operation & deletion type & retained factor & property kept for later use\\
\midrule
angular uniformization & fine incidences & \(c_{\rm init}\) & cap counts fixed on \(\bar Y\)\\
all-scale level & fine incidences & \(c_{\rm sc}\) & (87)--(88) fixed on \(Y^{\rm sc}\)\\
canonical window/net & labelled assignment & no deletion & (10.7P)--(10.7R), one fixed ancestry\\
canonical multiplicity/density/tag levels & induced fine incidences and whole tags & \(c_{\rm can}\) & (10.7T)--(10.7X), canonical shading density\\
good-ball level & induced fine incidences & \((4\kappa_{\rm inc})^{-1}\) & (10.2), (10.4), representation coverage\\
fine-multiplicity synchronization & whole tubelet labels & \((\kappa_{\rm inc}L_m)^{-1}\) & one \(m_{\rm fine}\), identity (10.7)\\
labelled bias, dimension, count levels & whole tubelet labels & \(c_{\rm bias}/(L_{a,b}L_N)\) & (8.0D)--(8.0G), common denominator\\
REG-1, REG-2, local volume & saturated tagged edges & \((J_vL_{\rm reg})^{-1}\) & P2--P5 in each tag\\
first whole-tag synchronization & whole outer-ball tags & \(P_{\rm tag}^{-1}\) & common \(\mu_{\rm in},\omega_0,d_0\), local P1 unchanged\\
ANGLE-CELL plus one LIFT & whole incidences inside each tag & \(c_{\rm ang}/2\) & local resolved angle, P3--P5\\
angle whole-tag synchronization & whole outer-ball tags & \(P_{\rm angtag}^{-1}\) & common \(\theta,d_\angle\), (10.25)\\
conflict coloring & partition, no deletion & estimate costs \(H_{\rm col}\) & essential distinctness in each color\\
\bottomrule
\end{tabularx}
\end{center}
The table records a conservative operation-to-factor envelope.  The products
in (10.16b) are upper bounds, and (10.16d) is the
single uniform envelope used later.  Equation (10.25) records the actual
angular lift and tag retention, while the fixed physical recombination factor
is \(\kappa_{\rm amb}\).  Hence (10.16d)--(10.16f) follow without an
additional refinement, so (S4) holds.
\end{lemma}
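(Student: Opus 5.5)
The plan is to verify the exchange table row by row, checking for each operation in (10.32f) that it is one of the permitted deletion types and that its retained factor has already been proved where the operation was introduced. The products of these factors give (10.16b). The rows split into three groups: the pre-cellular rows, the graph rows, and the angle and colouring rows.

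\emph{Pre-cellular rows.} These are uniformization, the all-scale level, the canonical window/net, the canonical levels, good balls, $m_{\rm fine}$, and the bias, dimension and count levels.
\begin{enumerate}[label=\textup{(\arabic*)},leftmargin=2.2em]
\item The factors $c_{\rm init}$ and $c_{\rm sc}$ are source data fixed on $\bar Y$ and $Y^{\rm sc}$. Rules (R1)--(R2) of \cref{lem:source-order} keep the cap and (87)--(88) attached there.
\item The canonical net deletes nothing, by (C1)--(C2) of \cref{lem:source-canonical}.
\item The canonical factor $c_{\rm can}$ is (10.1$'$), from (10.7U)--(10.7X) and (10.7W$'$).
\item The good-ball factor $(4\kappa_{\rm inc})^{-1}$ is (10.1d) of \cref{lem:goodball-reorder}.
\item The synchronization factor $(\kappa_{\rm inc}L_m)^{-1}$ and identity (10.7) come from \cref{lem:global-sync}.
\item The bias and level factor $c_{\rm bias}/(L_{a,b}L_N)$ comes from \cref{lem:biased-factoring}, with (10.8) and $L_N$.
\end{enumerate}
Because these selections delete whole labels, \cref{lem:carrier-shading-commutation} lets them commute with the good-ball restriction. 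Its caution (C2) is respected because every mass-scored level is run once, in the displayed order.

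\emph{Graph rows.} The factors $(J_vL_{\rm reg})^{-1}$ and $P_{\rm tag}^{-1}$ are (10.8D)--(10.8E) of \cref{thm:tagged-sync}. The ANGLE-CELL-plus-one-lift factor $c_{\rm ang}/2$ is \cref{thm:angle} followed by \cref{thm:lift}. The angular tag synchronization factor is (10.8G). Every one of these operations is saturated on right cells, deletes whole tags, or deletes whole incidences. Hence (C3) of \cref{lem:carrier-shading-commutation} and the invariants of \cref{thm:lift} show that P2--P5 and the resolved-angle conclusions survive. Conflict colouring is a partition and deletes nothing. Its cost $H_{\rm col}$ is bounded by (10.30) and enters only as the estimate factor $H_{\rm col}^{1-\beta}$ in (10.30c).

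\emph{Assembling the envelope.} I multiply the retained factors along (10.32f). The coarse-to-fine conversions (10.7$'$) and (10.8H) contribute the additional $\kappa_{\rm inc}$ and constant factors, which gives the displayed bound on $c_*^{-1}$ in (10.16b) and hence (10.25).

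The step I expect to be the main obstacle is then showing $\omega_{\rm src}(\delta)\to0$ uniformly. The approach is to check that every primitive factor in $\mathcal E_{\rm setup}$ is polylogarithmic or bounded.
\begin{enumerate}[label=\textup{(\arabic*)},leftmargin=2.2em,resume]
\item The level counts $L_m,L_{a,b},L_N,L_{\rm reg},P_{\rm tag},P_{\rm angtag}$ are polylogarithmic by polynomial complexity, using (8.8), (10.8) and \cref{thm:tagged-sync}.
\item $J_v=O(\log(1/\delta))$ follows from \cref{cor:tagged-volume-levels}, whose hypothesis (10.8A) is verified through (10.8A0).
\item The factors $\kappa_{\rm inc},\kappa_{\rm loc},\kappa_{\rm amb}$ are $O(1)$.
\item $c_{\rm setup}^{-1}$ is $\delta^{-o(1)}$ through $\omega_0$ and $c_{\rm pre}$.
\item $\mathcal E_{\rm ang}\le C(1+L_+)^2\exp(L_+^{3/4}+2L_+^{1/2})$ is a subpower by (7.7a).
\end{enumerate}
The delicate point is $C_0\kappa_D$. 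I would argue that $C_0$ is controlled by (8.0D) with class constants, so it is $O(1)$ up to logs. For $\kappa_D$, I would derive it from the common child-count and dimension levels, since those force comparable denominators.

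What remains is the non-subpower contribution. The bound on $D_W$ follows from (8.0G), with $w/|U|\gtrsim\alpha^2\ge\delta^{2(1-\varepsilon_{\rm scal})}$. The bound on $D_S$ follows by the same complete-containment argument after slab normalization. Finally $H_{\rm col}\le CD_S$ by the degree count preceding (10.30). Together these give (10.16e), and (10.16f) follows by reading off exponents.
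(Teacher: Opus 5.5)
Your proposal is correct and follows essentially the same route as the paper. The rows are read off from the earlier lemmas (source order, global synchronization, tagged synchronization, ANGLE-CELL with the single lift), $J_v$ comes from \cref{cor:tagged-volume-levels} via (10.8A0), $C_0$ and $\kappa_D$ come from the biased-factoring bound (10.37b) and common child counts, $D_W,D_S$ come from (8.0G) at $w/|U|\asymp\alpha\rho_2$, and greedy colouring gives $H_{\rm col}\le CD_S$. The paper additionally states what you leave implicit: complete-containment density is monotone under label deletion and affine invariant, which carries (8.0G) through the L\"owner-rectangle transfer (8.7d) to the retained and slab-normalized families, and physical recombination costs $\kappa_{\rm amb}$ by \cref{lem:tag-recombine}.
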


\begin{proof}
The rows follow respectively from
\cref{lem:source-order,lem:global-sync}, the saturated definition
(10.11$''$), (10.8E), (8.5), (10.8G), and
\cref{thm:lift,thm:angle}.  Coloring partitions labels; physical
recombination costs \(\kappa_{\rm amb}\) by \cref{lem:tag-recombine}.

For the quantitative bounds, \cref{lem:biased-factoring} and equal
canonical-child volume give, for convex \(K\subset W\),
\[
 \frac{\#\T_{B,W}[K]}{\#\T_{B,W}}
 =\frac{\Delta(\T_{B,W},K)|K|}
        {\Delta(\T_{B,W},W)|W|}
 \lesssim\delta^{-o(1)}
 \left(\frac{|K|}{|W|}\right)^{1+\varepsilon_{\rm fact}}.
\tag{10.37b}
\]
The fixed L\"owner rectangle and (8.0F)--(8.0G) give
\[
 C_0\le\delta^{-o(1)},\qquad
 \Delta_{\max}(\W_B)
 \lesssim(\alpha\rho_2)^{-\varepsilon_{\rm fact}}\delta^{-o(1)}.
\tag{10.38}
\]
Common child counts give
\(D\le D_B\le\kappa_DD\), \(\kappa_D=O(1)\); the canonical net and ball cover
bound \(\kappa_{\rm loc},\kappa_{\rm inc}\).

For the local-volume count, let \(\Omega\) and \(m_+=|E^+|\) be the
pre-REG-1 weight and edge count.  The application-level condition (10.8A),
verified from (10.8A0) before any \(J_v\)-dependent envelope, gives, after
enlarging the fixed mass exponent if necessary,
\[
 \Omega=M(Y_0^{\rm joint})
 \ge \delta^{C_{\rm mass}}.
\tag{10.38a}
\]
Polynomial complexity bounds \(m_+,\mu_{\rm in}\), and
\cref{lem:REG1} gives \(\omega_0\ge\Omega/(2m_+)\).  Hence
\cref{cor:tagged-volume-levels} yields
\(J_v=O(\log(1/\delta))\); the other levels are polylogarithmic.  On the
non-slab window, (10.17) gives
\(c_{\rm ang}^{-1}=\alpha^{-o(1)}\); angular rows are absent in the slab case.

Complete-containment density is monotone under label deletion and affine
invariant.  Thus (10.38) and
\(\alpha,\rho_2\ge\delta^{1-\varepsilon_{\rm scal}}\) give, in the applicable
thin branches,
\[
\begin{aligned}
 D_W,D_S&\le\delta^{-\Gamma_D-o(1)},\\
 \Gamma_D&:=2(1-\varepsilon_{\rm scal})\varepsilon_{\rm fact}.
\end{aligned}
\tag{10.39}
\]
The greedy conflict coloring gives \(H_{\rm col}\le CD_S\).
Substitution in the conservative products is bounded by
(10.16d)--(10.16f), proving the lemma.
\end{proof}

\begin{proposition}[Conditional verification of the GWZ Section 9 interface]
\label{prop:source-extraction}
Fix \(0<\beta\le1\), \(\zeta>0\), and a loss profile \(\mathfrak p\), and assume
\(K_{KT}(\beta)\) and \(K_F(\beta)\).  Choose the parameters in the order of
\cref{sec:paramclose}.  Then there exist
\(0<2\eta_{\rm src}<\eta\) and
\(\delta_0=\delta_0(\beta,\zeta,\mathfrak p)>0\) such that, for every
\(0<\delta\le\delta_0\), the following holds.  Let \((\T,Y)\) be a uniform
input satisfying the formal hypotheses displayed in
\cite[Lemma~9.1]{GWZ2026}:
\[
 \Delta_{\max}(\T)\le\delta^{-\eta_{\rm src}},\qquad
 \lambda(\T,Y)\ge\delta^{\eta_{\rm src}},\qquad
 |\T_\rho|\ge\rho^{-2-\zeta}
\tag{10.33}
\]
 for the source middle range of \(\rho\).  All fixed constants and moduli in
 the input below are governed by \(\mathfrak p\).  In addition, assume the
following explicit conditional interface:
\begin{enumerate}[label=\textup{(I\arabic*)},leftmargin=2.8em]
\item \(\T\) is a finite, source-essentially-distinct labelled family of
unit-length \(\delta\)-tubes in a fixed ball,
\(|\T|\le\delta^{-C_{\rm comp}}\), with bounded neighbouring-scale ancestry;
\item a reference refinement \(\bar Y\subset Y\) exists with
\(M(\bar Y)\ge c_{\rm init}M(Y)\) and
\(c_{\rm init}^{-1}\le\delta^{-\omega_0(\delta)}\) for one fixed
\(\omega_0(\delta)\to0\); it satisfies the incident-centred cap and local
average-multiplicity statements (10.35)--(10.35a) at every source scale;
\item an all-discrete-scale refinement
\(Y^{\rm sc}\subset\bar Y\) exists, in the order (10.32f), with retained
factor \(c_{\rm sc}^{-1}\le\delta^{-\omega_0(\delta)}\), and the scale-
\(r\) shadings and comparisons are exactly the source (87)--(88) interface
named in (R2); the common coarse branching numbers obey (10.36); and
\item all carrier ancestry and representation maps are labelled and have
bounded fibres.  Specifically,
\[
 \max\{\kappa_{\rm inc},\kappa_{\rm loc},\kappa_{\rm amb}\}=O(1)
\]
with the roles displayed below, and subsequent operations are restricted to the
canonical-window and saturated whole-incidence protocol in (R3)--(R4).
\end{enumerate}
Under (I1)--(I4) there is one fixed function
\(\omega_{\rm ref}(\delta)\to0\), uniform over all admissible tags and scales,
such that the source refinements followed by the corrected cellular
preprocessing equip the fixed reference refinement \((\T,\bar Y)\) with all
admissible data in the sense of \cref{def:admissibles9}, while retaining the
quantified labelled fine mass
\[
 M(\bar Y)\ge\delta^{\omega_{\rm ref}(\delta)}M(Y).
\tag{10.33R}
\]
Thus ``subpower refinement'' means the signed lower bound (10.33R).  More
precisely, the reference angular shading obeys
\[
 M(\bar Y)\ge c_{\rm init}M(Y),\qquad
 c_{\rm init}^{-1}\le\delta^{-\omega_0(\delta)}.
\tag{10.33$'$}
\]
\end{proposition}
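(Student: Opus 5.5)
The argument is a verification: walk the fixed chain (10.32f) once, and check each clause of \cref{def:admissibles9} on the reference object where it is stated. First fix the parameters in the order of \cref{sec:paramclose}. This produces \(\eta_{\rm src}\) with \(2\eta_{\rm src}<\eta\), and every density threshold (\(\eta_{KT}^{\rm cap}\), \(\eta_F\), the scale-\(r\) call) sits above \(2\eta_{\rm src}\) with strict reserve. Then take \(\bar Y\) and \(Y^{\rm sc}\) directly from (I2)--(I3). The bound (10.33$'$) is just (I2). For (10.33R), set
\[
\omega_{\rm ref}(\delta):=\max\{\omega_0(\delta),\,\omega_{\rm pre}(\delta),\,\omega_{\rm src}(\delta)\}
\]
(plus the good-ball and canonical levels), which is one fixed modulus determined by \(\mathfrak p\). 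The real work is to show that \(\omega_{\rm src}\to0\) uniformly over tags and scales.

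Next, build the admissible data in order. Apply \cref{lem:source-canonical} to \(Y^{\rm sc}\) using the bounded fibres of (I4); this gives \(\kappa_{\rm inc},\kappa_{\rm loc}=O(1)\), together with (10.7T)--(10.7X) and the whole-tag level (10.7W$'$). The input density in (10.33) and \(\omega_{\rm pre}<\eta_{\rm src}\) give (10.8A0), so \cref{lem:goodball-reorder} yields (10.1a), (10.2) and (10.32e). \cref{lem:global-sync} fixes \(m_{\rm fine}\) and the identities (10.4)--(10.7). Then apply \cref{lem:biased-factoring} tagwise with \(\gamma=\varepsilon_{\rm fact}\); its score and Löwner windows are polynomial because of \(|\T|\le\delta^{-C_{\rm comp}}\) and the density floor. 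Follow this with the \((a,b,N)\) levels and \cref{thm:tagged-sync}. Together these give (S1) and the designation into (a), (b) or (c).

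For (a), the required inputs (10.8d)--(10.8f) follow from (8.0D) and (10.37b), and (10.8h) is (I3). For (b), use (10.7a)--(10.7c) via \cref{lem:expanded-concentration} with \(\Delta_{\max}(\T)\le\delta^{-\eta_{\rm src}}\); the non-sticky count at \(\rho_-\) comes from (10.33) and (10.36). For (c), (S2)--(S3) are exactly \cref{lem:source-cap-kkt}, which uses (10.35)--(10.35a) from (I2) and \(K_{KT}(\beta)\). Finally, (S4) is \cref{lem:exact-loss}: its table and (10.37b)--(10.39) bound each loss.

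I expect the main obstacle to be uniformity: one \(\omega_{\rm src}\) must control every loss across all possible tags, scales and branches, with no circularity. In particular, the envelope (10.16d) may not depend on \(J_v\) or on later exponent choices. I would handle this with the two-stage cutoff. First choose \(\delta_0\) so that \(\omega_{\rm pre}<\eta_{\rm src}\), which only needs \(J_v\)-free data. Use this to verify (10.8A), and then apply \cref{cor:tagged-volume-levels} to get \(J_v=O(\log(1/\delta))\). Only after that impose \(\omega_{\rm src}<\eta_{\rm src}\) together with the reserved gaps in (L), (H), (K), (T) and (10.32s). Along the way I need to check that every operation after the positive graph really is saturated or whole-tag, as (R3)--(R4) require, so that \cref{lem:carrier-shading-commutation} justifies the reordering and no pointwise property is transferred off its reference shading.
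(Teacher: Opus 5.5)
Your plan is essentially the paper's proof: you walk (10.32f) in the same order, use the same lemma for each clause of \cref{def:admissibles9}, and use the same two-stage cutoff. First $\omega_{\rm pre}<\eta_{\rm src}$ verifies (10.8A) without any $J_v$; this gives $J_v=O(\log(1/\delta))$; only then comes the final decrease making $\omega_{\rm src}$ smaller than $\eta_{\rm src}$ and every reserved gap, with (10.33R) following from (10.33$'$) because your $\omega_{\rm ref}\ge\omega_0$. Two items to tighten: (S1b) also requires the denominator bound (10.34d), which you get from \cref{lem:represented-denominator} with $P=P_{\rm sync}$ and $\lambda(\T,\bar Y)\ge c_{\rm init}\delta^{\eta_{\rm src}}$; and the score and L\"owner windows for \cref{lem:biased-factoring} come from each maximizing hull containing a canonical tubelet and lying in $C_{\rm amb}B$, as in (10.34a)--(10.34c), rather than from the density floor.
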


Appendix~D records the complete source correspondence and version convention.
For the proof itself, the following matrix collects the proof-bearing analytic
calls and the reference family on which each hypothesis is verified.
\begin{center}
\scriptsize
\begin{tabularx}{\textwidth}{@{}>{\raggedright\arraybackslash}p{1.55cm}>{\raggedright\arraybackslash}p{2.5cm}>{\raggedright\arraybackslash}p{2.6cm}>{\raggedright\arraybackslash}X@{}}
\toprule
source & scale and actual family & checks on called family & output and pullback\\
\midrule
GWZ (86) & \(\rho;\ (\T,\bar Y)\) & reference shading fixed before all later deletions; incident-centred caps & (10.35), with neighbouring-scale stability\\
GWZ (87)--(88) & \(r;\ (\T,Y^{\rm sc})\),\newline \((\T_r,Y_r)\) & all-scale level fixed before tubelets; retained factor \(c_{\rm sc}\) & (10.8h), (10.23b)--(10.23d); scale ratios cancel\\
GWZ Lemma 6.4 (31) & \(s=\delta/b,t=\delta/a\);\newline \((\mathcal P,Y_{\mathcal P})\) & (10.8f$'$): \(\lambda\ge s^{\eta_F}\), \(C_F\lesssim_{\log}1\), the displayed \(M\)-bound; Definition~6.10 proxy family essentially distinct & (10.8g0); original labels retained and charged in \(M\), then pullback gives (10.8g)\\
GWZ Lemma 3.7, cap & \(s_\rho=\delta/\rho\);\newline \((\mathcal F_\rho,A_\rho)\) & (10.36a): reference density \(\gtrsim\delta^{2\eta_{\rm src}}\), \(\Delta_{\max}\le\delta^{-\eta_{\rm src}}\); derived labelled form, no essential-distinctness hypothesis & (10.37); average multiplicity is affine invariant\\
GWZ Lemma 3.7, color & \(\rho_2\);\newline exact source-tube proxy color \((\mathcal F_{S,j},A_{S,j})\) & proper proxy color, \(\lambda_j\ge\rho_2^{\eta_{KT}}\), \(\Delta_{\max}\lesssim D_S\) & (10.30b); two-sided proxy containment transfers convex tests and the unchanged shading pulls multiplicity back\\
GWZ Lemma~5.5 & \(\rho_2;\ Y^*\subset\bar Y\) & compatibility (10.26), outer degree \(d_\angle\), refinement fraction \(c_*\) & equation (10.27) on the fixed reference cap shading\\
\bottomrule
\end{tabularx}
\end{center}

\begin{proof}
\emph{Geometric input (S1).}
Follow \cref{lem:source-order}: freeze \(\bar Y\) at (86), then form
\(Y^{\rm sc}\subset\bar Y\) at (87)--(88), with retained fraction
\(c_{\rm sc}\) and \(c_{\rm sc}^{-1}=\delta^{-o(1)}\).  Clause (C3) of
\cref{lem:source-canonical}, the density level (10.7W), the tag level
(10.7W$'$), and the physical projection (10.7X) then give
\[
 M(Y^{\rm pre})\ge c_{\rm sc}c_{\rm can}M(\bar Y),
 \qquad
 c_{\rm can}^{-1}=2\kappa_{\rm inc}L_{\rm mult}L_{\rm dens}L_{\rm tag}.
\tag{10.33P}
\]
Now \cref{lem:goodball-reorder} and (10.0)--(10.1d) give (10.1) with
\(c_{\rm pre}=c_{\rm sc}c_{\rm can}/(4\kappa_{\rm inc})\).  Thus
\(c_{\rm pre}^{-1}=\delta^{-o(1)}\).  Moreover
\(\lambda(\T,Y^{\rm sc})\ge
c_{\rm sc}c_{\rm init}\delta^{\eta_{\rm src}}\), so (10.7V) and the
pre-cellular cutoff \(\omega_{\rm pre}<\eta_{\rm src}\) give (10.1a) with exponent
\(2\eta_{\rm src}\).  The gap \(\eta-2\eta_{\rm src}\) then gives
(10.1)--(10.2).

Testing \(\Delta_{\max}(\T)\) on a containing ball gives
\[
 |\T|\delta^2\lesssim\delta^{-\eta_{\rm src}},
\tag{10.34}
\]
so \(|\T|\le\delta^{-C}\).  The bounded-overlap cover and canonical net give
at most \(C|\T|r_1^{-1}\) representations; biased parents partition them,
and the tags contain \(O(a^{-3})\) cells.  Hence all later graphs have
polynomial size.

For the windows in \cref{lem:biased-factoring}, every maximizing hull
\(K_{\mathcal A}\) contains a canonical
\(\delta\times\delta\times r_1\) tubelet and lies in \(C_{\rm amb}B\).  If
\(\ell_1(K_{\mathcal A})\le\ell_2(K_{\mathcal A})\le
\ell_3(K_{\mathcal A})\) are its L\"owner semiaxes, then
\[
 c\delta\le\ell_1\le\ell_2\le\ell_3\le Cr_1,
 \qquad cr_1\le\ell_3,
\tag{10.34a}
\]
and hence
\[
 c\delta^2r_1\le |K_{\mathcal A}|\le Cr_1^3.
\tag{10.34b}
\]
The contained tubelet, ambient ball, and L\"owner--John comparison prove
(10.34a)--(10.34b).  The score numerator lies between
\(c\delta^2r_1\) and
\(C|\mathcal A|\delta^2r_1\le
C\delta^{-C_{\rm comp}}\delta^2r_1\).  Since
\(r_1=\delta^{\varepsilon_{\rm scal}}\), (10.34b) therefore gives a fixed
\(C_{\rm win}\) such that
\[
 \delta^{C_{\rm win}}
 \le \Phi_{\mathcal A}(K_{\mathcal A})
 \le \delta^{-C_{\rm win}},
 \qquad
 \delta^{C_{\rm win}}\le\ell_j(K_{\mathcal A})
 \le\delta^{-C_{\rm win}}.
\tag{10.34c}
\]
Thus the score and three axis windows have
\(L_{\rm bias}\le C\log^4(2/\delta)=\delta^{-o(1)}\) classes.

By (R3), perform (10.3)--(10.7) before biased factoring, so
\(\labmass(A)\sim m_{\rm fine}\sum|A(T_B)|\).  Apply
\cref{lem:biased-factoring} tagwise.  Its (8.0C), the density class (10.7W),
and (10.7) retain the represented fraction \(c_{\rm bias}\), including the
single physical/labelled conversion:
\[
 c_{\rm bias}^{-1}\le C\kappa_{\rm inc}L_{\rm bias}=\delta^{-o(1)},
 \qquad c_{\rm setup}=c_{\rm pre}c_{\rm bias}.
\tag{10.33Q}
\]
The separately charged \((a,b)\) and child-count levels cost
\(L_{a,b}L_N\) and delete complete labels.  The thick branch stops after
dimensions; thin branches continue through REG and saturated local volume.
Thus (10.4)--(10.7) remain on one object, and
\cref{lem:goodball-reorder} supplies (10.32e).
In the thin slab branch, (10.6) and \(P_{\rm sync}\) verify (10.7d) with
\(P=P_{\rm sync}\).  Since
\(\lambda(\T,\bar Y)\ge c_{\rm init}\delta^{\eta_{\rm src}}\),
\cref{lem:represented-denominator} gives
\[
 \sum_{B\ {\rm retained}}|\T_B^{\rm lab}|\,|T_B|
 \gtrsim c_{\rm init}P_{\rm den}^{-1}|\T|\,|T|
 \gtrsim\delta^{\Lambda_{\rm den}+\omega_0(\delta)}|\T|\,|T|.
\tag{10.34d}
\]
Here (10.7a) is used, and \(\kappa_{\rm inc}\) is already included in
\(P_{\rm sync}\); hence
\(P_{\rm den}=\delta^{-\eta_{\rm src}}P_{\rm sync}\).  At
\(\rho_-=\delta^{1-\varepsilon_{\rm scal}}\), (10.33) and (I1) give
\[
 |\T|\gtrsim|\T_{\rho_-}|
 \ge\rho_-^{-2-\zeta}.
\]
Apply \cref{lem:exact-loss}.  Its proof now gives
\(J_v=O(\log(1/\delta))\) from the independently verified condition (10.8A),
so the full envelope satisfies \(\omega_{\rm src}(\delta)\to0\).  Make the
deferred final decrease of \(\delta_0\) until \(\omega_{\rm src}\) is below
\(\eta_{\rm src}\) and every reserved gap named in the parameter closure.
Together with (10.7a)--(10.7e), (10.12)--(10.16), and the bounds from that
lemma, this is (S1b); its strict reserve is (10.32s).

If \(a\ge\delta^{1-\tau}\), the assumed \(K_F(\beta)\) and \(K_{KT}(\beta)\),
(10.32e), (10.8h), and \cref{lem:exact-loss} verify
\cref{lem:thick-parent}, hence (S1a), before cellularization.  In the thin
non-slab branch the source bounds
\(\delta\le a\le b\le\delta^{\varepsilon_{\rm scal}}r_1\), with
\(r_1=\delta^{\varepsilon_{\rm scal}}\), imply
\[
 \delta^{1-\varepsilon_{\rm scal}}\le a/r_1\le b/r_1
 \le\delta^{\varepsilon_{\rm scal}},\qquad a/b\ge a/r_1,
\]
which is (10.16a), hence (S1c).  Thus (S1) holds in all three cases;
(S2)--(S3) and the non-slab part of (S4) are needed only in (S1c).

\emph{Reference cap input (S2).}
Take \(\rho=\rho_2\) in (10.35)--(10.37) of
\cref{lem:source-cap-kkt}; (10.36) and (10.33) give (S2).

\emph{Dense-color input (S3).}
The color clause of \cref{lem:source-cap-kkt}, the conflict graph, and (K)
give respectively the estimate, essential distinctness, and density in (S3).

\emph{Loss envelope (S4).}
By \cref{lem:exact-loss}, (10.37b)--(10.39), (10.19b), and (10.25) verify
(S4) on the stated reference objects.
\end{proof}

\subsubsection*{Admissible closure}

\begin{theorem}[Very-non-sticky estimate for admissible families]
\label{thm:admissible-vns}
Fix \(0<\beta\le1\), \(\zeta>0\), and a loss profile \(\mathfrak p\), choose
the parameters as in \cref{sec:paramclose}, and let \((\T,\bar Y)\) be
admissible in the sense of \cref{def:admissibles9}, with all its losses
governed by \(\mathfrak p\).  There are
\(\nu_{\rm adm}=\nu_{\rm adm}(\beta,\zeta)>0\) and
\(\delta_0=\delta_0(\beta,\zeta,\mathfrak p)>0\) such that, whenever
\(0<\delta\le\delta_0\),
\[
 \mu(\T,\bar Y)\le\delta^{\nu_{\rm adm}}|\T|^\beta.
\tag{10.40}
\]
\end{theorem}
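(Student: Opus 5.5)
The proof will be a case analysis following the three designated alternatives (S1a)--(S1c) of \cref{def:admissibles9}, with the non-slab case split further by angular degree and transverse/tangential geometry. We then set \(\nu_{\rm adm}\) to the minimum of the five branch gains. All exponents are fixed in the order of \cref{sec:paramclose}. Only afterwards is \(\delta_0(\beta,\zeta,\mathfrak p)\) decreased, so that \(\omega_{\rm src}(\delta)\), \(\omega_{\rm pre}(\delta)\) and the \(\lesssim_{\log}\) constants fall below the fixed reserves in (L), (H), (K), (T), (10.32c) and (10.32s). The gain exponents therefore depend only on \(\beta,\zeta\) through the chosen parameters, and \(\mathfrak p\) enters only the cutoff.

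\emph{Thick case (S1a).} If \(a\ge\delta^{1-\tau}\), the hypotheses of \cref{lem:thick-parent} are part of the admissible data. Conclusion (10.8c), together with \(M(\bar Y)\le|\T|\,|T|\), gives \(\mu(\T,\bar Y)\lesssim_{\log}\delta^{\nu_{\rm thick}}|\T|^\beta\). The logarithm is absorbed into half of \(\nu_{\rm thick}=\tfrac18\tau\varepsilon_{\rm fact}\beta\).

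\emph{Thin slab case (S1b).} Here we combine the local slab inequality (10.15) with several further inputs. The first is the parent mass comparison (10.16), with (10.16\('') \) supplying \(\Delta_{\max}\). The second is the parent-density bound (10.12), raised by Jensen, and the third is the good-ball transfer (10.14). Summing over tags via \cref{lem:tag-recombine} and (10.34d) then lower-bounds \(|U(\T,\bar Y)|\) in terms of \(|\T|\,|T|\). The factor \((r_1/b)^2\le\delta^{-2\varepsilon_{\rm scal}}\) in (10.15) is paid against the non-sticky count \(|\T|\ge\rho_-^{-2-\zeta}\). I expect this branch to be the main obstacle, because it is the only one without a ready-made terminal display: every loss must be tracked explicitly and shown to fit inside the strict reserve (10.32s). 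Concretely, I would write \(\mu(\T,\bar Y)=M(\bar Y)/|U|\) and bound \(|\T|^{1-\beta}\) from below by \(\rho_-^{-(2+\zeta)(1-\beta)}\). I would then check that the exponent budget equals the left side of (10.32s), which (10.32p) makes strictly less than half of \((1-\varepsilon_{\rm scal})\beta(2+\zeta)\), leaving a positive gain.

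\emph{Thin non-slab case (S1c).} We split at the buffered threshold \(H_{\rm ang}\alpha^{-\eta_{\rm pl}}\).

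\begin{itemize}
\item \emph{Low degree.} If \(d_0<H_{\rm ang}\alpha^{-\eta_{\rm pl}}\), (10.19h) under (L) gives the gain \(\nu_{\rm low}\ge\tfrac12\varepsilon_{\rm scal}\beta(2+\zeta)\) after the cutoff. This uses (S2) for (10.19g) and (S4) for \(\Lambda_{\rm sync}\le\omega_{\rm src}\).
\item \emph{High degree, transverse.} Otherwise (10.20)--(10.21) hold by (H). If \(\theta\ge\delta^{-\tau'}q_{\rm ang}\), \cref{lem:transverse-interface} applies, since (10.32d) gives \(\delta^{-\tau'}\ge2B_{\rm ang}\). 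Its conclusion (10.23e) gives \(\nu_{\rm tr}\ge\tfrac18\tau'\beta\) by (10.32c).
\item \emph{High degree, tangential.} In the tangential case (10.23f), (S3) and (K) validate the dense-color Katz--Tao calls. Then (10.31c)--(10.32) under (T) give \(\underline\nu_{\rm tan}\).
\end{itemize}

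Taking \(\nu_{\rm adm}\) to be half the minimum of these positive constants absorbs the remaining \(\delta^{-o(1)}\) and \(\lesssim_{\log}\) factors for \(\delta\le\delta_0\), which yields (10.40).
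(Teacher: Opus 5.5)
Your proposal is correct and follows the paper's proof essentially step for step. It uses the same five exhaustive branches (thick, slab, low-degree, transverse, tangential) with the same inputs in each. The slab branch is assembled from (10.12), (10.14)--(10.16) and (10.34d) as in (10.33a)--(10.33c), and the closure is the same: take half the minimum branch gain after the profile-dependent cutoff.

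There is one slip to fix in the slab branch. The bound \(|U(\T,\bar Y)|\gtrsim\delta^{\Lambda}|\T|\,|T|\) gives \(\mu(\T,\bar Y)\lesssim\delta^{-\Lambda}\), so the count \(|\T|\ge\rho_-^{-2-\zeta}\) must be applied to \(|\T|^{\beta}\), not to \(|\T|^{1-\beta}\). The latter does not enter the estimate and would give nothing at \(\beta=1\). Your subsequent comparison with \((1-\varepsilon_{\rm scal})\beta(2+\zeta)\) is the correct one. Also, the budget \(\Lambda_{\rm slab}+\Lambda_{\rm den}\) is dominated by the left side of (10.32s) rather than equal to it.
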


\begin{proof}
Under alternative (S1a), \cref{lem:thick-parent} and
\(M(\bar Y)\le|\T|\,|T|\) give
\[
 \mu(\T,\bar Y)
 \lesssim_{\log}\delta^{\nu_{\rm thick}}|\T|^\beta.
\tag{10.8j}
\]
Thus the thick case is complete.  For the rest of the branch verification
assume the complementary thin condition (10.8i).

The source-call matrix identifies (10.2), (10.12)--(10.14),
(10.22)--(10.23), and (10.26)--(10.29) with source blocks (94)--(103).
The bridge (10.4)--(10.7), used in (10.19a)--(10.19f), converts tubelet
retention to physical fine mass without identifying the two shadings.

Under alternative (S1b), the slab estimate follows from \cref{app:slab}
and (10.15)--(10.16).  More explicitly, (10.12), the common bounds
\(\delta\le a\le b\le r_1=\delta^{\varepsilon_{\rm scal}}\), and hence
\(a/b\ge\delta^{1-\varepsilon_{\rm scal}}\), give
\[
 \lambda_B^{\rm par}\ge
 \delta^{\Lambda_{\rm par}+o(1)},\qquad
 \Lambda_{\rm par}
 :=q(\eta+\Lambda_{\rm pre})
   +(1-\varepsilon_{\rm scal})\eps.
\]
In the slab range,
\((r_1/b)^2\le\delta^{-2\varepsilon_{\rm scal}}\), so (10.15), (10.14),
and (10.16) give
\[
 |U(\T_B,Y_B)|
 \gtrsim
 \delta^{\Lambda_{\rm slab}+o(1)}|\T_B^{\rm lab}|\,|T_B|,
\quad
 \Lambda_{\rm slab}
 =2\Lambda_{\rm par}+\Lambda_D
  +4\varepsilon_{\rm scal}+2\eta+3\tau.
\tag{10.33a}
\]
Here \(\Lambda_D\) controls the pre-angle parent constant \(D_W\), as well as
its later descendants.  The \(r_1\)-ball cover has bounded overlap, and
\(U(\T_B,Y_B)\subset U(\T,Y^0)\).  Summing (10.33a) and using (10.34d)
therefore gives
\[
 |U(\T,\bar Y)|\ge |U(\T,Y^0)|
 \gtrsim\delta^{\Lambda_{\rm slab}+\Lambda_{\rm den}+o(1)}
          |\T|\,|T|.
\tag{10.33b}
\]
At \(\rho_-=\delta^{1-\varepsilon_{\rm scal}}\), the S1b count and ancestry
data in \cref{def:admissibles9} give
\[
 |\T|\gtrsim|\T_{\rho_-}|
 \ge\delta^{-(1-\varepsilon_{\rm scal})(2+\zeta)}.
\]
Consequently (10.33b) implies
\[
 \mu(\T,\bar Y)\lesssim
 \delta^{\nu_{\rm slab}-o(1)}|\T|^\beta,\qquad
 \nu_{\rm slab}:=(1-\varepsilon_{\rm scal})\beta(2+\zeta)
                 -\Lambda_{\rm slab}-\Lambda_{\rm den}>0.
\tag{10.33c}
\]
The last strict inequality follows from (10.32p) and the later small choices
in \cref{sec:paramclose}.  Thus the local slab branch has been summed and
converted to the source multiplicity target.  Under the remaining
alternative (S1c), the threshold (10.19) divides all inputs.
Below it, (10.19g) and cap uniformity give (10.19h), and condition (L)
makes its exponent positive.  Above it, (10.18) gives (10.20), while
(10.21a) and (H) prove the parent-density hypothesis (10.21).  Condition
(K) separately verifies the density of every color to which the
Katz--Tao estimate is applied.

In the transverse alternative \cref{lem:transverse-interface} gives
(10.22)--(10.23), including the filling-to-multiplicity implication.  In the tangential alternative,
(10.29a)--(10.29c) retain dense slab mass, the explicitly defined conflict
graph gives (10.30), and (10.30a)--(10.31a) prove the full scaled degree
bound.  The single fine lift (10.27), cap estimate (10.28), and uniformity
(10.31b) then give (10.31c)--(10.32).  The choices
(10.32a)--(10.32c) verify (L), (H), (K), and (T) simultaneously.  The loss
envelope (10.16b), (10.16d)--(10.16f), and (10.25) applies throughout this
chain.  By the fixed zero-profile reserves in \cref{sec:paramclose} and the
final cutoff of \(\omega_{\rm src}\), fix constants
\(\underline\nu_{\rm low},\underline\nu_{\rm slab}>0\), independent of
\(\delta\) and of the loss profile, such that uniformly for
\(0<\delta\le\delta_0(\beta,\zeta,\mathfrak p)\),
\[
 \nu_{\rm low}(\delta)\ge\underline\nu_{\rm low},
 \qquad
 \nu_{\rm slab}(\delta)\ge\underline\nu_{\rm slab}.
\tag{10.39a}
\]
Let
\[
 \nu_{\rm br}:=\min\{\nu_{\rm thick},\underline\nu_{\rm slab},
                 \underline\nu_{\rm low},
                 \nu_{\rm tr}/2,\underline\nu_{\rm tan}\}>0.
\]
The pre-cellular thick branch, the thin slab branch, and the three thin
non-slab alternatives just proved are mutually exhaustive and give
\(\mu(\T,\bar Y)\le\delta^{\nu_{\rm br}-o(1)}|\T|^\beta\).
Because \(\mathfrak p\) was fixed before the cutoff, decrease
\(\delta_0(\beta,\zeta,\mathfrak p)\) so that the remaining \(o(1)\)-loss is at most
\(\delta^{-\nu_{\rm br}/2}\).  This proves (10.40) with
\(\nu_{\rm adm}=\nu_{\rm br}/2\).
\end{proof}

\begin{corollary}[Conditional GWZ Section 9 replacement]
\label{thm:section9}
Fix \(0<\beta\le1\), \(\zeta>0\), and a loss profile \(\mathfrak p\), and assume
\(K_{KT}(\beta)\) and \(K_F(\beta)\).  There exist
\(\eta_{\rm src}=\eta_{\rm src}(\beta,\zeta)>0\),
\(\delta_0=\delta_0(\beta,\zeta,\mathfrak p)>0\), and
\(\nu=\nu(\beta,\zeta)>0\) such that, for every
\(0<\delta\le\delta_0\), each uniform labelled input
\((\T,Y)\) satisfying (10.33) and the conditional interface (I1)--(I4) of
\cref{prop:source-extraction}, with all fixed constants and moduli governed by
\(\mathfrak p\), obeys
\[
 \mu(\T,Y)\le\delta^\nu|\T|^\beta.
\tag{10.41}
\]
\end{corollary}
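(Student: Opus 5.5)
The proof will be a composition of \cref{prop:source-extraction} and \cref{thm:admissible-vns}, followed by a transfer of the estimate from the reference shading \(\bar Y\) back to the original shading \(Y\).

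\emph{Parameters.} First fix \(\beta,\zeta\) and choose all exponents in the order of \cref{sec:paramclose}. This determines \(\eta_{\rm src}=\eta_{\rm src}(\beta,\zeta)\) with \(0<2\eta_{\rm src}<\eta\). Let \(\nu_{\rm adm}(\beta,\zeta)\) be the exponent returned by \cref{thm:admissible-vns}. With the loss profile \(\mathfrak p\) fixed, take \(\delta_0\) to be the minimum of the cutoffs from \cref{prop:source-extraction} and \cref{thm:admissible-vns}, both of which depend only on \((\beta,\zeta,\mathfrak p)\). The point to check here is that the admissible exponent does not depend on \(\mathfrak p\): the profile enters only through \(\delta_0\), via the final decrease absorbing \(\omega_{\rm src}\), as the parameter hierarchy already guarantees.

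\emph{Applying the two results.} For \(0<\delta\le\delta_0\), take a uniform labelled input satisfying (10.33) and (I1)--(I4). \Cref{prop:source-extraction} then produces the reference refinement \(\bar Y\subset Y\). It satisfies (10.33\('\)), namely \(M(\bar Y)\ge c_{\rm init}M(Y)\) with \(c_{\rm init}^{-1}\le\delta^{-\omega_0(\delta)}\), and it carries all admissible data of \cref{def:admissibles9}. Applying \cref{thm:admissible-vns} to this \((\T,\bar Y)\) gives \(\mu(\T,\bar Y)\le\delta^{\nu_{\rm adm}}|\T|^\beta\).

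\emph{Transfer back to \(Y\).} Since \(\bar Y\subset Y\), we have \(U(\T,\bar Y)\subset U(\T,Y)\). Hence
\[
 \mu(\T,Y)=\frac{M(Y)}{|U(\T,Y)|}\le c_{\rm init}^{-1}\frac{M(\bar Y)}{|U(\T,\bar Y)|}\le\delta^{\nu_{\rm adm}-\omega_0(\delta)}|\T|^\beta.
\]
Set \(\nu=\nu_{\rm adm}/2\), and decrease \(\delta_0\) further, depending only on \(\mathfrak p\) since \(\omega_0\) is one of its named moduli, until \(\omega_0(\delta)\le\nu_{\rm adm}/2\). This yields (10.41). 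The same lower bound \(M(\bar Y)>0\) ensures that the union \(U(\T,\bar Y)\) is nonempty. If \(M(Y)=0\), then \(\mu(\T,Y)=0\) by convention and there is nothing to prove.

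\emph{Main obstacle.} The hard part is not the final inequality but the uniformity bookkeeping. We must confirm that the quantifier order is consistent. The exponents \(\eta_{\rm src}\) and \(\nu\) have to be chosen before \(\mathfrak p\) and may depend only on \((\beta,\zeta)\). Every source modulus (\(\omega_0\), \(\omega_{\rm br}\), \(\omega_{\rm ref}\), \(\omega_{\rm src}\)) must enter only through \(\delta_0\). The hypotheses (I1)--(I4) must be exactly those consumed by \cref{prop:source-extraction}. I would make this explicit by listing each \(o(1)\) occurrence and tracing it to the profile.
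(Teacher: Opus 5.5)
Your proposal is correct and is essentially the paper's proof. You compose \cref{prop:source-extraction} with \cref{thm:admissible-vns}, transfer the bound from \(\bar Y\) to \(Y\) using \(M(Y)\le c_{\rm init}^{-1}M(\bar Y)\) and \(U(\T,\bar Y)\subset U(\T,Y)\), and then shrink \(\delta_0(\beta,\zeta,\mathfrak p)\) until \(\omega_0(\delta)\le\nu_{\rm adm}/2\), which gives \(\nu=\nu_{\rm adm}/2\) exactly as in the paper.
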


\begin{proof}
\Cref{prop:source-extraction} supplies an admissible reference shading
\(\bar Y\) with \(M(\bar Y)\ge c_{\rm init}M(Y)\) and
\(c_{\rm init}^{-1}\le\delta^{-\omega_0(\delta)}\).  Apply
\cref{thm:admissible-vns}.  Monotonicity of the union and the definition of
average multiplicity give
\[
 \mu(\T,Y)=\frac{M(Y)}{|U(\T,Y)|}
 \le c_{\rm init}^{-1}
      \frac{M(\bar Y)}{|U(\T,\bar Y)|}
 =c_{\rm init}^{-1}\mu(\T,\bar Y).
\]
Since \(\omega_0\) is controlled by the fixed profile \(\mathfrak p\),
decrease \(\delta_0(\beta,\zeta,\mathfrak p)\) once more so that
\(\omega_0(\delta)\) is below
\(\nu_{\rm adm}/2\), and hence \(c_{\rm init}^{-1}\) costs at most
\(\delta^{-\nu_{\rm adm}/2}\).  Then (10.41) holds with
\(\nu=\nu_{\rm adm}/2\).
\end{proof}

\section{Sharpness of the rectangular density exponent}
\label{sec:sharpness}

The rectangular theorem converts the planar Furstenberg incidence exponent
of D\k{a}browski--Orponen--Villa~\cite{DOV2022} into the shading power
\(q=2-\sigma\).  The construction below shows that this power is optimal when
one asks for a bound uniform over every allowed aspect ratio.  This is a
uniform statement: a fixed parent shape may admit a stronger dependence on
the shading density.

\begin{proposition}[Uniform rectangular sharpness]
\label{prop:uniformsharp}
For every fixed \(0<\sigma\le1\), the exponent \(2-\sigma\) cannot be
replaced, uniformly over
\(\delta\le a\le b\le r\), by a smaller exponent of the shading density.
Precisely, if \(q'<2-\sigma\) and
\(0<\eps<2-\sigma-q'\), there do not exist constants \(c>0\) and
\(\delta_0>0\) such that every family satisfying (6.3) with a uniformly
bounded constant \(C_0\), at every \(0<\delta<\delta_0\), obeys
\[
 \frac{|B\cap N_a(U(\T_B,Y))|}{|B|}
 \ge c(a/b)^\eps\lambda_B^{q'}.
\tag{11.0}
\]
The constants in this negation may depend on the fixed \(\sigma,q',\eps\),
but not on \(\delta,a,b,r\).
\end{proposition}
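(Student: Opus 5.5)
The plan is to argue by contradiction. I will build, for a sequence of aspect ratios $h=a/b\to0$, explicit families satisfying (6.3) with a uniformly bounded constant $C_0$, whose shaded density $\lambda_B$ is a fixed power of $h$ and for which
\[
\frac{|B\cap N_a(U(\T_B,Y))|}{|B|}\lesssim \lambda_B^{2-\sigma}
\]
up to a constant. Choosing $\lambda_B=h^{\kappa}$ with $\kappa(2-\sigma-q'-\eps)>0$ large, the right side of (11.0), namely $c\,h^{\eps}\lambda_B^{q'}=c\,h^{\eps+\kappa q'}$, eventually exceeds $Ch^{\kappa(2-\sigma)}$. This is exactly where the hypothesis $\eps<2-\sigma-q'$ enters: it guarantees the exponent gap $\kappa(2-\sigma-q')>\eps$ for every $\kappa\ge1$, so the violation holds for arbitrarily small $h$, with $\delta\to0$ alongside.

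The construction works in a single chart of \cref{lem:finitecharts}. I take $B=[0,a]\times[0,b]\times[0,r]$, children graphs over $x_3$, and the normalized parameter plane $(v,u)$ of $S_{jk}$. In that plane the configuration is a planar Furstenberg-sharp example at resolution $h$ with dimension $t=1+\sigma$. Fix $\lambda\in[h,1]$ and a maximal $\lambda$-separated set of $\approx\lambda^{-(1+\sigma)}$ "hub" squares $Q_m$ of side $\lambda$ in the chart. Through each hub take the $\approx\lambda/h$ lines (at parameter spacing $h$) that meet $Q_m$; their parameters form an $h$-discretized segment of length $\approx\lambda$ inside the dual $\lambda$-strip of $Q_m$. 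Each such line is shaded only over $Q_m$, i.e.\ on a $u$-interval of length $\approx\lambda$. This gives $\Lambda\approx\lambda$, while the union of the $h$-thickened shadings is contained in $\bigcup_m CQ_m$, of area $\approx\lambda^{-(1+\sigma)}\lambda^2=\lambda^{1-\sigma}$. In the normalization where $\Lambda$ is a density this reads $\lambda^{2-\sigma}\cdot\lambda^{-1}$; after the fibre reconstruction of \cref{lem:fibrereconstruct}, which is two-sided up to constants for this product configuration, the relative volume is $\approx\lambda^{2-\sigma}$, to be matched against $\lambda_B\approx\lambda$.

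Each parameter line is lifted to $N_p$ parallel $\delta\times\delta\times r$ tubes stacked in the $x_\ell$ direction, filling the $a$-side, so that shading in all three dimensions corresponds exactly to the planar shading. The bound $|B\cap N_a(U)|\lesssim |B|\,|\bigcup_m CQ_m|$ then follows directly, since $N_a$ in physical coordinates corresponds to an $O(h)$ thickening in the chart and $h\le\lambda$.

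The step I expect to be the main obstacle is verifying (6.3) with $C_0=O(1)$ uniformly, in the form of a parameter Frostman bound $\nu(D(p,\rho))\lesssim\rho^{1+\sigma}$ for $h\le\rho\le1$ together with a converse to \cref{lem:paramslab}: every convex $K\subset B$ containing $n$ children has $|K|/|B|\gtrsim \rho$, where $\rho$ is the parameter spread of those children. The naive hub configuration puts mass $\lambda^{1+\sigma}$ on each segment of length $\lambda$, so a ball of radius $\rho<\lambda$ sees mass $\approx\lambda^{\sigma}\rho$. This exceeds $\rho^{1+\sigma}$ when $\rho<\lambda$, so the configuration must be modified.

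My first attempt is to replace each hub segment by a $(1+\sigma)$-regular Cantor-type subset at scales between $h$ and $\lambda$, and correspondingly thin the lines through each hub. I would then recompute: $\Lambda$ drops to $\approx\lambda\cdot(\text{line fraction})$, and the union area drops by the same fraction only at scales $\ge\lambda$. Finally I would re-optimize $\lambda$ as a power of $h$ so that the ratio union$/\Lambda^{2-\sigma}$ stays bounded. If this bookkeeping fails, the fallback is to import the standard sharpness example for the DOV exponent $3-t$, which the paper's (6.2) transfers with $q=3-t=2-\sigma$, and to lift it through \cref{lem:finitecharts}, \cref{lem:paramslab} and \cref{lem:fibrereconstruct} exactly as above. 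Either way, the converse slab estimate uses that a convex body containing complete tubes whose chart parameters are $\rho$-separated must contain their convex hull, whose relative volume is $\gtrsim\rho$ because its width in the $x_k$ direction is $\gtrsim\rho s_k$.
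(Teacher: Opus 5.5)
Your reduction is sound. It suffices to produce, with $h=a/b\to0$, families obeying (6.3) with bounded $C_0$, density $\lambda_B\asymp h$, and relative neighbourhood volume $O(\lambda_B^{2-\sigma})$. Since your construction needs $\lambda\ge h$, i.e.\ $\kappa\le1$, the exponent bookkeeping should be run at $\kappa=1$; this is the paper's choice $u=0$. Your convex-hull converse for (6.3) is also essentially the paper's \cref{lem:twoline-hull} combined with an $(h,1+\sigma)$-set bound.

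\textbf{The gap is the example itself.}

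\emph{The hub configuration has no overlap.} The $\approx\lambda/h$ lines through $Q_m$ have $h$-separated parameters in a segment of length $\lambda$. They are therefore nearly parallel, and their $h$-thickened shadings tile $Q_m$ with multiplicity $O(1)$. The union is just the total shaded mass, of normalized area $\approx\lambda^{1-\sigma}=\lambda^{-1}\lambda^{2-\sigma}$.

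\emph{Fibre reconstruction cannot remove the factor $\lambda^{-1}$.} \Cref{lem:fibrereconstruct}, and its converse for product configurations, only converts normalized planar area into relative volume up to a fixed Jacobian. So your relative volume is $\asymp\lambda^{1-\sigma}$, not $\lambda^{2-\sigma}$. For every $\lambda\in[h,1]$ this family refutes (11.0) only when $\eps+q'<1-\sigma$.

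\emph{The proposed repairs do not close it.} The Cantor-type thinning is not carried out, and in any case thinning near-parallel lines cannot create the missing overlap.

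\textbf{The missing idea is compression through bushes.}
\begin{itemize}
\item Take $\asymp h^{-\sigma}$ centres on a common transversal whose heights form an $(h,\sigma)$-set.
\item Through each centre take $\asymp h^{-1}$ lines with $h$-separated slopes. The parameter set is a bounded linear image of that $(h,\sigma)$-set times $h\mathbb Z\cap[0,1]$, hence an $(h,1+\sigma)$-set.
\item Shade each child only on the $Ch$-ball about its centre.
\end{itemize}
Then $\lambda_B\asymp h$, the multiplicity near each centre is $\asymp h^{-1}$, and the union has area $\lesssim h^{-\sigma}h^2=\lambda_B^{2-\sigma}$. Condition (6.3) follows from the $(h,1+\sigma)$-property and the hull bound $|K|/|B|\gtrsim h+\diam P_K$, where $P_K$ is the parameter set of the children contained in $K$.

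This is what your ``fallback'' would have to supply, and it is the paper's route: Proposition~5.2 of \cite{DOV2022} at $u=0$, dualized, and realized with $a=\delta$ and $b\asymp r\asymp1$.

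Keep $a=\delta$ rather than stacking $a/\delta$ parallel tubes in the short direction. When $a\gg\delta$, a single $\delta$-layer is a convex subset of $B$ of relative volume $\delta/a$ containing a $\delta/a$ fraction of the children. Then (6.3) forces $C_0\gtrsim(a/\delta)^\sigma$, which is not uniformly bounded.
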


\begin{lemma}[Two-line convex-hull volume]
\label{lem:twoline-hull}
For \(p=(p_0,p_1)\in[0,1]^2\), let
\[
 \ell_p=\{(x,(1-x)p_0+xp_1):0\le x\le1\}.
\]
If \(\ell_p(c\delta)\) denotes its \(c\delta\)-neighbourhood in a fixed
bounded rectangle, then
\[
 \left|\conv\bigl(\ell_p(c\delta)\cup
                         \ell_{p'}(c\delta)\bigr)\right|
 \gtrsim \delta+|p-p'|.
\tag{11.1a}
\]
\end{lemma}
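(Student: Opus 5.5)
The plan is to bound the hull from below by two separate mechanisms, one for each term of \(\delta+|p-p'|\), and then combine them using \(\max\{X,Y\}\ge\tfrac12(X+Y)\).

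The \(\delta\) term is immediate. The hull contains the single neighbourhood \(\ell_p(c\delta)\), intersected with the fixed bounded rectangle. Since \(\ell_p\) is a segment of length at least one, this neighbourhood has area \(\gtrsim\delta\), uniformly in \(p\in[0,1]^2\). If the clipping to the rectangle matters, we keep a one-sided half-strip of width \(c\delta/2\), in the spirit of \cref{lem:clipped1d}.

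For the \(|p-p'|\) term we may assume \(|p-p'|\ge C\delta\), since otherwise the first bound already suffices. The aim is to show that \(\conv(\ell_p\cup\ell_{p'})\) itself has area \(\gtrsim|p-p'|\).
\begin{itemize}
\item Both segments are graphs over \(x\in[0,1]\), with endpoint ordinates \(p_0,p_1\) and \(p'_0,p'_1\).
\item The vertical gap \(g(x)=(1-x)(p_0-p'_0)+x(p_1-p'_1)\) is affine in \(x\).
\item For every \(x\), the hull contains the vertical segment between the two points \((x,(1-x)p_0+xp_1)\) and \((x,(1-x)p'_0+xp'_1)\).
\item Fubini then gives \(|\conv|\ge\int_0^1|g(x)|\,dx\).
\end{itemize}

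The remaining step is a one-dimensional inequality for an affine function with endpoint values \(u=p_0-p'_0\) and \(v=p_1-p'_1\): we want \(\int_0^1|g|\gtrsim|u|+|v|\).
\begin{itemize}
\item If \(u\) and \(v\) have the same sign, the integral equals \((|u|+|v|)/2\).
\item If the signs are opposite, \(g\) vanishes at \(x_0=|u|/(|u|+|v|)\). The integral is then the area of two triangles, \(\tfrac12(u^2+v^2)/(|u|+|v|)\ge\tfrac14(|u|+|v|)\).
\end{itemize}
Since \(|u|+|v|\asymp|p-p'|\), this proves (11.1a).

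We expect the main obstacle to be the convention that \(\ell_p(c\delta)\) is the neighbourhood taken inside a fixed bounded rectangle. Two points need checking. First, the hull of the clipped neighbourhoods must still contain the unclipped segments \(\ell_p,\ell_{p'}\), which holds because \([0,1]\times[0,1]\) lies in the rectangle. Second, the \(\delta\)-term must survive clipping at a face; here we use the one-sided strip argument mentioned above. Everything else is the elementary affine computation.
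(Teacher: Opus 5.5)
Your proof is correct and follows essentially the paper's argument: slice the hull vertically, apply Fubini, and bound \(\int_0^1|g|\) below by a multiple of \(|g(0)|+|g(1)|\) by splitting at the possible zero of the affine gap. The only cosmetic difference is that the paper builds the \(c\delta\) enlargement directly into each vertical slice, whereas you get the \(\delta\) term from a single neighbourhood and combine the two bounds via \(\max\{X,Y\}\ge\tfrac12(X+Y)\).
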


\begin{proof}
At the vertical slice \(x\), the convex hull contains the interval between
the two affine graph values, enlarged by \(c\delta\).  Hence its area is at
least a fixed multiple of
\[
 \delta+\int_0^1
 |(1-x)(p_0-p'_0)+x(p_1-p'_1)|\,dx.
\]
For an affine function \(f\) on \([0,1]\),
\(\int_0^1|f|\ge c(|f(0)|+|f(1)|)\); this follows by splitting at its
possible zero.  The last display is therefore bounded below by
\(c(\delta+|p-p'|)\).
\end{proof}

\begin{proof}[Proof of \cref{prop:uniformsharp}]
\textit{Planar extremal data.}
Put \(t=1+\sigma\) and fix \(u\in[0,1]\).  Proposition~5.2 of
\cite{DOV2022} gives a \(\delta\)-separated
\((\delta,t,C)\)-set \(P\subset[0,1]^2\), with
\(|P|\asymp\delta^{-t}\), and a \(c\delta\)-separated bounded-chart line
family \(\mathcal L_{u,t}\) such that every \(p\in P\) is
\(\delta\)-incident to at least \(\delta^{-u}\) lines and
\[
 |\mathcal L_{u,t}|
 \lesssim\delta^{-2u-(1-u)(t-1)}.
\tag{11.1b}
\]
Apply bounded point--line duality.  A source line becomes a
\(\delta\)-separated point in a fixed parameter rectangle, while a source
point \(p=(p_0,p_1)\) becomes, up to one fixed affine change of coordinates,
the graph segment \(\ell_p\) in \cref{lem:twoline-hull}.  Incidence means
that the dual point lies in a fixed \(C\delta\)-neighbourhood of that graph.

For each \(p\), choose exactly \(\asymp\delta^{-u}\) incident source lines
and let \(Z_p\) be the union of fixed \(C\delta\)-squares about their dual
points.  The separation in (11.1b) gives
\[
 |Z_p|\asymp\delta^{2-u},\qquad
 \frac{|Z_p|}{|\ell_p(C\delta)|}\asymp\delta^{1-u}=:\lambda.
\tag{11.1c}
\]
Although the same dual square may occur in several labelled shadings, their
physical union is contained in the \(C\delta\)-neighbourhood of the dual
point set of \(\mathcal L_{u,t}\).  Thus
\[
 \left|\bigcup_{p\in P}Z_p\right|
 \lesssim\delta^2|\mathcal L_{u,t}|
 \lesssim\delta^{(1-u)(2-\sigma)}
 =\lambda^{2-\sigma}.
\tag{11.1d}
\]

\textit{Three-dimensional realization and the all-convex condition.}
Let
\[
 B=[-C\delta,1+C\delta]\times[-C,C]\times[-C\delta,C\delta],
\]
permuting its axes to the convention \(a=\delta\), \(b\asymp r\asymp1\).
For \(p\in P\), let \(T_p\) be a fixed solid
\(\delta\times\delta\times1\) tube about
\(\ell_p\times\{0\}\), and set
\[
 Y(T_p)=Z_p\times[-c\delta,c\delta]\subset T_p
\]
after changing the fixed tube constants if necessary.  These are labelled
children even when two geometric pieces overlap, and the enlarged endpoint
interval in \(B\) makes every capped solid tube \(T_p\) a complete child.
Set
\[
 E=U(\{T_p:p\in P\},Y)=\bigcup_{p\in P}Y(T_p).
\]
The \(\delta\)-neighbourhood of \(E\) only replaces each dual
\(\delta\)-square and the transverse interval by fixed dilates.  To make this
step literal, let \(q_\ell\) be the dual parameter point of
\(\ell\in\mathcal L_{u,t}\).  The points \(q_\ell\) are
\(c\delta\)-separated in a fixed parameter rectangle, so the fixed dilates
\(C'Q(q_\ell,\delta)\) have overlap bounded by a constant depending only on
\(C'\).  Every square occurring after the neighbourhood operation is
contained in one such \(C'Q(q_\ell,\delta)\); no new parameter point is
created.  Hence
\[
 \left|\bigcup_{\ell\in\mathcal L_{u,t}}
 C'Q(q_\ell,\delta)\right|
 \le C\delta^2|\mathcal L_{u,t}|,
\tag{11.1d$'$}
\]
and (11.1d) still controls its volume up to an absolute constant.
Equations (11.1c)--(11.1d) give \(\lambda_B\asymp\lambda\) and
\[
 \frac{|B\cap N_a(E)|}{|B|}
 \lesssim \lambda^{2-\sigma}.
\tag{11.1}
\]

It remains to verify the hypothesis for every convex \(K\subset B\).  Put
\(P_K=\{p:T_p\subset K\}\).  If \(P_K\ne\varnothing\), convexity and the
common transverse \(\delta\)-interval imply
\[
 \frac{|K|}{|B|}
 \gtrsim \delta+\diam(P_K)
\tag{11.1e}
\]
by applying \cref{lem:twoline-hull} to a diametral pair (and by the volume of
one tube when \(P_K\) is a singleton).  Since \(P\) is a
\((\delta,t,C)\)-set, \(P_K\) lies in a ball of radius
\(\diam(P_K)+\delta\), and hence
\[
 \frac{\#P_K}{\#P}
 \lesssim(\delta+\diam(P_K))^t
 \lesssim\left(\frac{|K|}{|B|}\right)^{1+\sigma}.
\tag{11.1f}
\]
This is exactly the all-convex complete-child condition (6.3), uniformly in
\(K\), rather than only a parameter-ball estimate.

Finally let \(q'<2-\sigma\), choose
\(0<\eps<2-\sigma-q'\), take \(u=0\), and let \(\delta\downarrow0\).
Then \(\lambda=\delta\), so a uniform lower bound
\(\delta^\eps\lambda^{q'}\) is asymptotically larger than the upper bound
\(\lambda^{2-\sigma}\) in (11.1), a contradiction.  Varying \(u\) gives the
whole resolution range \(\delta\le\lambda\le1\), up to harmless integer
rounding.  The proof includes \(\sigma=1\); constants are allowed to depend
on the fixed \(\sigma\).
\end{proof}

Uniform sharpness does not imply aspectwise sharpness.  If \(a=b\ll r\), one
shaded tube already gives
\[
 |B\cap N_a(Y(T))|\gtrsim \lambda |B|.
\tag{11.2}
\]
Indeed, the longitudinal projection of \(Y(T)\) has measure
\(\gtrsim\lambda r\), and each selected slice fills a fixed portion of the
\(a\times a\) parent cross-section after \(a\)-thickening.  Conversely,
consider the model in which a sufficiently dense transverse family fills the
parent cross-section over one common longitudinal interval \(I\) of length
\(|I|=\lambda r\), and take its shading to be exactly that filled portion.
Fubini's theorem, with the two endpoint layers created by \(a\)-thickening,
gives
\[
 \frac{|B\cap N_a(E)|}{|B|}\asymp \lambda+a/r.
\tag{11.3}
\]
This is only an aspect-ratio model calculation, not another family satisfying
all hypotheses of \cref{prop:uniformsharp}.  It explains why density power one
is the natural target above the \(a/r\) resolution plateau in the square
cross-section regime.
At \(a\asymp r\), a single \(a\)-ball already occupies a parent-scale
volume.

Several natural degenerations do not improve (11.1).  A fixed union of
translated bushes changes only constants.  Sparse multi-bushes produce a
planar union of size \(\lambda/N+\lambda^2\), never smaller than the
quadratic obstruction in their valid range.  Parallel stacks have relative
union size \(\lambda+\delta\).  Exact projection fibres are charged either
by the all-convex constant or by the atom weight in the weighted planar
argument.  Sharp wedges expose a boundary-localization problem, but the
elementary hidden-ray constructions violate the absolute all-convex
hypothesis before they violate the desired conclusion.

\begin{remark}
The DOV construction covers \(a=\delta,b=r\), the endpoints
\(\lambda=\delta,1\), and \(\sigma=1\).  It does not prove aspectwise
optimality for \(b=r,a\gg\delta\), nor does it describe
\(\lambda<\delta\), where the \(\delta\)-neighbourhood has a resolution
plateau.  Constants are not asserted uniform as \(\sigma\downarrow0\).
\end{remark}

\section{Conclusion: stability and the sharp-convex boundary}
\label{sec:limitations}

The method rests on a stable unit of refinement: a positive incidence between
a labelled permanent parent and a half-open cell at the shortest parent
scale.  Selecting these incidences as whole units lets one edge set control
child mass, parent mass, both multiplicities, angular selection, and the final
lift.  An arbitrary refinement inside a cell could instead retain parent mass
where no child mass remains.  The fixed sequence of whole-incidence lifts has
an explicit product of retention fractions, and labelled counting retains the
distinct child partitions and multiplicity contributions of coincident
parents.

This mechanism yields the unconditional cellular theorem, the two Section~6
applications, and the admissible very-non-sticky closure.  The weighted planar
input is the layer-cake consequence of~\cite{DOV2022}, the quadratic endpoint
is the C\'ordoba mechanism~\cite{Cordoba1977}, and the terminal gain is the one
from~\cite{GWZ2026}.  Assuming both \(K_{KT}(\beta)\) and \(K_F(\beta)\), and
under the explicit interface (I1)--(I4),
\cref{prop:source-extraction,thm:section9} construct the admissible data and
supply the corresponding conditional input to the reduction toward Main
Lemma~2.

The principal geometric boundary is the sharp-convex-parent analogue of the
rectangular theorem.  A convex parent \(W\) with a sharp boundary can
be placed inside the comparable-volume source L\"owner box \(B_W\), and complete-child
counts transfer monotonically to the box.  The induced shading does not.
For a triangular prism, a box neighbourhood may extend substantially beyond
the slanted boundary while the numerator is clipped to the prism.  The
complete-child hypothesis counts tubes contained in a convex test body, while
a quantitative comparison of the two neighbourhood measures would require
additional control of the induced shading.  This boundary interface is where
the L\"owner-box reduction stops.

The elementary wedge models considered here do not disprove the
sharp-convex statement: concentrating a fraction of the tube family in a
subwedge worsens the hypothesis constant by a reciprocal power, stronger than
the direct boundary loss.  The remaining question is whether one can replace
the permanent rectangle by an equally stable unit inside \(W\), perhaps
through boundary-sensitive cellularization, a weighted coarea argument that
retains the transverse factor, or a hypothesis controlling induced shadings
rather than complete containment alone.

\appendix

\section{Weighted planar incidence estimates}
\label{app:weighted}

This appendix supplies the analytic input isolated in \cref{sec:density}.
Starting from the unweighted superlevel estimate, two layer-cake reductions
produce the weighted statement for repeated atoms and arbitrary measurable
shadings.
Let \(1<t\le2\), \(q=3-t\), \(0<h\le1\), and \(L\ge1\), and let
\[
 \nu=\sum_i\alpha_i\delta_{p_i},\qquad
 \sum_i\alpha_i=1,\qquad
 \nu(B(p,\rho))\le L\rho^t
 \quad(h\le\rho\le1).
\tag{A.1}
\]
The points lie in a bounded planar parameter chart and may repeat.  Let
\(Y_i\) be arbitrary measurable shadings of constant-width \(h\)-tubes and
put
\[
\Lambda=h^{-1}\sum_i\alpha_i|Y_i|.
\tag{A.2}
\]

\begin{lemma}[Resolution cutoff for weighted superlevels]
\label{lem:superlevel-cutoff}
Let \(0<h\le1/2\), let \(F:(0,1]\to[0,\infty)\), and suppose
\[
 F(u)\le\min\{A/u,B\},\qquad B/A\le C h^{-C_0},
\tag{A.2a}
\]
with the convention that the second condition is void when \(A=0\).  Then
\[
 \int_0^1F(u)\,du
 \le C'(C,C_0)A\log(2/h).
\tag{A.2b}
\]
In particular the bound is independent of any smallest positive superlevel.
\end{lemma}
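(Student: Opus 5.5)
The plan is to split the integral at a resolution cutoff \(u_0\) chosen so that the bound \(B\) takes over below it and the bound \(A/u\) above it.  First dispose of the degenerate case \(A=0\): then \(F(u)\le A/u=0\) for every \(u\in(0,1]\), so the integral vanishes and (A.2b) holds trivially.  Assume henceforth \(A>0\).

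Set \(u_0:=\min\{1,h^{C_0+1}\}=h^{C_0+1}\) (since \(h\le1/2\), this lies in \((0,1)\)).  On \((0,u_0]\) use \(F\le B\), giving
\[
 \int_0^{u_0}F\le Bu_0\le C A h^{-C_0}h^{C_0+1}=CAh\le CA .
\]
On \([u_0,1]\) use \(F\le A/u\), giving
\[
 \int_{u_0}^1F\le A\log(1/u_0)=(C_0+1)A\log(1/h)\le(C_0+1)A\log(2/h).
\]
Adding the two pieces and using \(\log(2/h)\ge\log4>1\) to absorb the constant \(CA\) yields (A.2b) with \(C'=C+C_0+1\).  A slightly more economical choice is \(u_0=A/B\) when this lies in \((0,1)\), giving \(\int_0^{u_0}F\le A\) and \(\int_{u_0}^1F\le A\log(B/A)\le A(\log C+C_0\log(1/h))\); if \(A/B\ge1\), the first bound alone gives \(\int_0^1F\le B\le A\).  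Either route works; I would present the explicit \(u_0=h^{C_0+1}\) version since it avoids case splitting.

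There is no real obstacle beyond bookkeeping.  The only points to watch are measurability and integrability: \(F\) is implicitly measurable here (it is a superlevel-distribution function in applications), and the pointwise bound \(\min\{A/u,B\}\) is integrable on \((0,1]\), so the integral is finite.  Uniformity holds because \(u_0\) depends only on \(h\) and \(C_0\), and the constant \(C'\) depends only on \(C,C_0\).  This confirms the final remark: the argument never refers to the smallest positive superlevel of \(F\).
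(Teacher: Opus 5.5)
Your proof is correct and uses the same two-range split as the paper: \(F\le B\) below a cutoff \(u_0\) and \(F\le A/u\) above it. The paper splits at the adaptive cutoff \(u_0=\min\{1,A/B\}\) (your ``more economical'' alternative), gets \(A(1+\log(B/A))\), and only then applies \(B/A\le Ch^{-C_0}\); your primary fixed cutoff \(u_0=h^{C_0+1}\) uses that hypothesis up front and avoids the separate case \(B\le A\), and both give a constant depending only on \(C,C_0\).
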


\begin{proof}
If \(A=0\) there is nothing to prove.  Put \(u_0=\min\{1,A/B\}\).  If
\(B\le A\), the integral is at most \(B\le A\).  Otherwise split at
\(u_0=A/B\):
\[
 \int_0^1F(u)\,du
 \le Bu_0+A\int_{u_0}^1\frac{du}{u}
 =A\bigl(1+\log(B/A)\bigr).
\]
Condition (A.2a) bounds the last logarithm by
\(O_{C,C_0}(\log(2/h))\).  No atom weight enters the split.
\end{proof}

\begin{lemma}[Quantitative physical--parameter duality]
\label{lem:physical-parameter-duality}
Fix bounded intervals \(I_x,I_s,I_t\).  For
\(p=(s,t)\in I_s\times I_t\), let
\[
 \Gamma_p=\{(x,sx+t):x\in I_x\},
 \qquad T_p(Ah)=N_{Ah}(\Gamma_p),
\]
with endpoint truncation to a fixed enlargement of \(I_x\).  For a physical
point \(q=(x_0,y_0)\), define the dual parameter line
\[
 \ell_q:=\{(s,t):t=y_0-sx_0\}.
\tag{A.2c}
\]
For every fixed \(A\ge1\) there are constants \(A_1(A),A_2(A)\), depending
only on \(A\), the three fixed bounded intervals, and the fixed
tube-enlargement convention, such that the following hold for
\(0<h\le1\).
\begin{enumerate}[label=\textup{(D\arabic*)},leftmargin=2.8em]
\item If an \(Ah\)-tube about \(\Gamma_p\) meets the half-open physical
  \(h\)-square with centre \(q\), then
  \[
   \operatorname{dist}(p,\ell_q)\le A_1(A)h.
  \tag{A.2c$'$}
  \]
  Conversely, if this distance is at most \(Ah\), then the
  \(A_2(A)h\)-tube about \(\Gamma_p\) meets a fixed enlargement of that square.
\item The assignment \(q=(x_0,y_0)\mapsto\ell_q\) identifies the usual
  slope--intercept coordinates of \(\ell_q\) with \((-x_0,y_0)\).
  Consequently centres of distinct squares in an \(h\)-grid give an
  \(h\)-separated family of dual lines (and a bounded union of translated
  grids gives a separated family after only a bounded colouring).
\item If \(C\) is a half-open parameter \(h\)-square with centre \(p_C\),
  then every \(T_p(Ah)\), \(p\in C\), is contained in
  \(T_{p_C}(A_2(A)h)\).  Thus parameter-cell aggregation introduces only the
  fixed incidence tolerance in (D1), not a scale-dependent constant.
\end{enumerate}
All statements include physical endcaps and half-open cell boundaries.
\end{lemma}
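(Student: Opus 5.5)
The proof is elementary point--line geometry; I would verify the three clauses in order, using only boundedness of the intervals \(I_x,I_s,I_t\).

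\emph{Clause (D1).} Suppose the \(Ah\)-tube about \(\Gamma_p\), \(p=(s,t)\), meets the \(h\)-square centred at \(q=(x_0,y_0)\). Then there is a point \((x,sx+t)\), with \(x\) in the fixed enlargement of \(I_x\), lying within \(Ah+\sqrt2h\) of \(q\). Hence \(|x-x_0|\lesssim h\), and
\[
 |y_0-sx_0-t|\le|y_0-(sx+t)|+|s||x-x_0|\lesssim(A+1)h,
\]
because \(|s|\) is bounded. The distance from \(p\) to the line \(\ell_q=\{t=y_0-sx_0\}\) is \(|y_0-sx_0-t|/\sqrt{1+x_0^2}\le|y_0-sx_0-t|\), which gives (A.2c\('\)). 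For the converse, if \(\dist(p,\ell_q)\le Ah\), then \(|y_0-sx_0-t|\le Ah\sqrt{1+x_0^2}\lesssim Ah\), since \(x_0\) ranges over a bounded set once \(q\) lies near the enlarged chart. So the point \((x_0,sx_0+t)\) of \(\Gamma_p\) is within \(O(Ah)\) of \(q\), and a fixed enlargement of the square meets an \(A_2h\)-tube about \(\Gamma_p\).

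One caveat arises here. If \(x_0\) lies outside \(I_x\), the carrier point at \(x_0\) may have been removed by the endpoint truncation. In that case the chart convention must allow \(x_0\) to range only over the enlarged interval; otherwise the statement is applied only to squares meeting the enlarged physical chart. I expect this bookkeeping of endcaps and truncation to be the only delicate point of the whole lemma.

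\emph{Clause (D2).} Write \(\ell_q\) as \(t=(-x_0)s+y_0\). Its slope--intercept coordinates are then exactly \((-x_0,y_0)\), an isometry of the plane. Distinct grid centres differ by at least \(h\) in some coordinate, so their dual coordinate pairs are \(h\)-separated. For a bounded union of translated grids, coordinate pairs that are too close can occur only across distinct translates; colouring by translate index, which gives boundedly many classes, separates them.

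\emph{Clause (D3).} Take \(p\in C\), so \(|p-p_C|\le\sqrt2h\). For every \(x\) in the bounded enlarged interval, \(|(sx+t)-(s_Cx+t_C)|\le|s-s_C||x|+|t-t_C|\lesssim h\). Hence \(\Gamma_p\) lies in the \(O(h)\)-neighbourhood of \(\Gamma_{p_C}\), with matching endpoint abscissae, so the caps are handled as well. Adding the \(Ah\) radius shows \(T_p(Ah)\subset T_{p_C}((A+C)h)\). Finally, enlarge \(A_2(A)\) so that it serves both (D1) and (D3).
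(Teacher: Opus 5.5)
Your proposal is correct and follows the paper's proof essentially step by step. You use the distance formula \(|t-y_0+sx_0|/\sqrt{1+x_0^2}\) together with boundedness of \(s\) and \(x_0\) for (D1), the isometry \((x_0,y_0)\mapsto(-x_0,y_0)\) for (D2), and the bound \(|(s-s_C)x+(t-t_C)|\lesssim h\) over the common truncated abscissa interval for (D3). Your explicit caveat, that the converse in (D1) needs \(x_0\) within \(O(h)\) of the enlarged \(I_x\), is the same restriction the paper uses implicitly when it invokes ``the nearest endpoint when \(x_0\) lies in an endpoint square.''
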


\begin{proof}
Write \(p=(s,t)\).  The Euclidean distance from \(p\) to \(\ell_q\) is
\[
 \frac{|t-y_0+sx_0|}{\sqrt{1+x_0^2}}.
\tag{A.2d}
\]
If \((x,y)\) lies both in the physical square about \(q\) and within
\(Ah\) of \(\Gamma_p\), then \(|x-x_0|+|y-y_0|\le Ch\) and
\(|y-sx-t|\le C_Ah\).  Boundedness of \(s\) and (A.2d) prove the first
implication.  Conversely, (A.2d) bounds
\(|sx_0+t-y_0|\) by \(C_Ah\); the graph point with first coordinate
\(x_0\), or the nearest endpoint when \(x_0\) lies in an endpoint square,
belongs to a fixed enlargement and proves the reverse implication.

A line \(t=a s+b\) has slope--intercept coordinates \((a,b)\); hence
\(\ell_q\) has coordinates \((-x_0,y_0)\).  This signed coordinate swap is
an isometry, proving (D2).  Finally, for \(p\in C\) and \(x\in I_x\),
\[
 |(s-s_C)x+(t-t_C)|\le C(I_x)h,
\]
which proves (D3).  Closed endcaps only add the same fixed error, while the
half-open convention provides a unique cell on every grid boundary.
\end{proof}

\begin{theorem}[Weighted arbitrary-shading union bound]
\label{thm:weightedappendix}
For every \(\eps>0\),
\[
 \left|\bigcup_iY_i\right|
 \gtrsim_{\eps,t} L^{-q}h^\eps\Lambda^q.
\tag{A.3}
\]
The constant is independent of the number of atoms, the least positive
weight, repeated parameters, and the individual shading densities.
\end{theorem}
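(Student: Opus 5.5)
The plan is to reduce (A.3) to the unweighted DOV superlevel/Furstenberg estimate through the two layer-cake reductions announced in Section~6: first remove the atomic weights \(\alpha_i\), then remove the spatial occupancy weights. Throughout, \cref{lem:physical-parameter-duality} is used to move between physical \(h\)-squares and dual parameter lines. Write \(E=\bigcup_iY_i\) and aim to prove \(|E|\gtrsim L^{-q}h^{\eps}\Lambda^q\).

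\emph{Step 1: cellular discretization.} Partition the plane into half-open \(h\)-squares \(Q\). For each atom \(i\), let \(\sigma_i(Q)=|Y_i\cap Q|/h^2\in[0,1]\), so that \(h^{-1}\sum_Q\sigma_i(Q)h^2=h^{-1}|Y_i|\). Define the weighted occupancy \(F(Q)=\sum_i\alpha_i\sigma_i(Q)\). Then \(\sum_QF(Q)h^2=h\Lambda\). For squares with \(F(Q)>0\), pass to physical measure by using \(|E\cap Q|\ge\max_i\sigma_i(Q)h^2\). This is lossy, so instead I would run a layer-cake on \(\sigma\): for each dyadic \(s\in[h^{C},1]\), put \(S_s=\{(i,Q):\sigma_i(Q)\ge s\}\). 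The part with \(\sigma<h^{C}\) contributes negligibly, since the number of squares is polynomial in \(h^{-1}\). One \(s\) carries \(\gtrsim(\log 1/h)^{-1}\) of \(h\Lambda\). Within the set \(E_s\) of squares containing a pair of \(S_s\), we have \(|E|\ge s h^2\#E_s\). This reduces the problem to bounding \(\#E_s\) from below, and each additional logarithmic loss is absorbed in \(h^\eps\).

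\emph{Step 2: weighted superlevel to counting.} For the fixed \(s\), let \(G(Q)=\sum_{i:(i,Q)\in S_s}\alpha_i\). By (D1), \(G(Q)\le\nu(\{p:\operatorname{dist}(p,\ell_Q)\le A_1h\})\), a weighted dual-line incidence count. We have \(\sum_QG(Q)\gtrsim h\Lambda/(s\log(1/h))\). Now layer-cake in the value of \(G\): \(\sum_QG(Q)=\int_0^1\#\{Q:G(Q)>u\}\,du\). The unweighted DOV superlevel estimate, applied to the Frostman measure \(\nu\) via (D2) (dual lines of distinct grid squares are \(h\)-separated), bounds \(\#\{Q:G(Q)>u\}\lesssim_\eps h^{-\eps}L^{?}u^{-\kappa}\,\#E_s\)-type quantities. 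More precisely, I would use DOV in the form: the number of \(h\)-separated lines that are \(u\)-rich for \(\nu\) is at most \(C_\eps h^{-\eps}L^{q-1}\ldots u^{-(q)}\). I expect to combine this with the trivial bound \(\#\{G>u\}\le\#E_s\) and with \cref{lem:superlevel-cutoff} to control the small-\(u\) range without any least-weight dependence. Optimizing between the rich-line bound and \(\#E_s\) should give \(h\Lambda/s\lesssim h^{-\eps}L\,h^{?}(\#E_s)^{1/q}\ldots\), and after substituting \(|E|\ge sh^2\#E_s\) and using \(s\le1\), this becomes \(|E|\gtrsim L^{-q}h^{\eps}\Lambda^q\). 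Exponent bookkeeping, via a Hölder step of the form \(\sum G\le(\#E_s)^{1-1/q}(\sum G^{q'})^{1/q'}\), should give exactly \(q=3-t\).

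\emph{Main obstacle.} The hard step is Step 2: identifying the precise DOV superlevel statement for atomic Frostman measures with repeated atoms, and checking that the exponent it yields combines with \(\#E_s\) to produce \(\Lambda^q\) with \(q=3-t\) and \(L^{-q}\). I would first try to derive it from the DOV \((\delta,t)\)-Furstenberg bound by dyadically pigeonholing atom weights. Repeated parameters would be handled with (D3): atoms in one parameter cell are merged, which loses only a fixed tolerance. The Frostman hypothesis (A.1) at \(\rho=h\) then caps every merged weight by \(Lh^t\), which is what makes the weighted measure behave like an unweighted \((h,t)\)-set with \(L\)-dependent constant.
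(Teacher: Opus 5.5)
\textbf{Genuine gap: Step~2 is not a proof.} Your Step~1 reduction to a lower bound for $\#E_s$ is workable. Step~2, however, carries all of the analytic content, and you leave it as placeholders. The concrete forms you suggest for it would fail, for three reasons.

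First, the claim that the cap $m_C\le Lh^t$ makes the aggregated measure behave like an unweighted $(h,t)$-set is false. A cap on weights says nothing about how many light atoms sit in one ball. For example, put atoms of weight $h^3$ in every $h$-cell of a ball of radius $h^{1/2}$: this keeps $\nu$ $t$-Frostman with $L=O(1)$, yet the support violates the $(h,t)$ condition by a factor $h^{-(2-t)/2}$. So DOV cannot be applied to the support.

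Second, your guessed rich-line bound ($u^{-q}$, $L^{q-1}$) has the wrong exponent. What a correct incidence estimate implies is $\#\{Q:\iota_Q\ge u\}\lesssim (L/u)^{q'}h^{-\eta'}h^{(t-1)/(2-t)}$ with $q'=q/(q-1)$, where $\iota_Q$ is the aggregated weight of tubes meeting $Q$ (so $G(Q)\le\iota_Q$). This bound degenerates at $t=2$.

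Third, Hölder reads $\sum_{E_s}G\le(\#E_s)^{1/q}\|G\|_{\ell^{q'}}$. With your factor $(\#E_s)^{1-1/q}$ the argument would end with $\Lambda^{q/(q-1)}$ rather than $\Lambda^q$.

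\textbf{The missing idea.} It is the paper's first layer-cake, taken in the atom weights rather than in $G$. After merging parameters into half-open $h$-cells (legitimate by (D3)), each superlevel set $P_u=\{m_C\ge u\}$ is an $(h,t,L/(u|P_u|))$-set. DOV then gives $I_{Ah}(P_u,\mathcal L)\lesssim (L/u)h^{-\eta}|\mathcal L|^{1/q}h^{(t-1)/q}$, in which $|P_u|$ has cancelled. Write $I_{Ah}(\mu,\mathcal L)=\int_0^1 I_{Ah}(P_u,\mathcal L)\,du$ and control small $u$ by the trivial bound $h^{-2}|\mathcal L|$ via \cref{lem:superlevel-cutoff}. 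This yields (A.6): $I_{Ah}(\mu,\mathcal L)\lesssim Lh^{-\eta}\log(2/h)|\mathcal L|^{1/q}h^{(t-1)/q}$ for every $h$-separated dual family, with no dependence on the least weight. The cutoff must be applied to the incidence functional in this way; dyadically pigeonholing the mass instead would reintroduce the least weight.

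\textbf{Finishing from (A.6).} Once (A.6) is available, you need no layer-cake in $G$: apply it directly to the dual lines of the squares in $E_s$, which are separated by (D2). Use the correct normalization $\sum_Q G(Q)\gtrsim\Lambda/(sh\log(2/h))$; you wrote $h\Lambda/(s\log(1/h))$, and with that the powers of $h$ do not cancel. You then get $|E|\ge sh^2\#E_s\gtrsim s^{1-q}L^{-q}h^{q\eta}[\log(2/h)]^{-2q}\Lambda^q$. Here $s^{1-q}\ge1$ because $q\ge1$, and $2-q-(t-1)=0$.

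This is a valid variant of the paper's second reduction, but your cutoff ``$\sigma<h^C$ is negligible'' only holds against $h\Lambda$ when $\Lambda\ge h^{C'}$. You must handle this in one of two ways.
\begin{itemize}
\item Treat tiny $\Lambda$ separately. For $q>1$ the trivial bound $|E|\ge h\Lambda$ suffices; at $t=2$ the pointwise bound $\iota_Q\lesssim Lh$ gives $|E|\gtrsim L^{-1}\Lambda$.
\item Avoid selecting a single level. For instance, sum the level-$s$ mass bound $\lesssim s^{1-1/q}Lh^{1-\eta}\log(2/h)|E|^{1/q}$ over all dyadic $s$; this is geometric when $q>1$.
\end{itemize}
The paper sidesteps the issue entirely by layer-caking in the union occupancy $w_Q=|E\cap Q|/h^2$. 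From $\sum_Qw_Q\iota_Q\ge\Lambda/h$ and concavity of $x\mapsto x^{1/q}$ it gets $\int_0^1|\mathcal Q_u|^{1/q}\,du\le W^{1/q}$ with $W=h^{-2}|E|$. This costs nothing and needs no lower cutoff.
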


\begin{proof}
Let \(E=\bigcup_iY_i\).  We first dispose of the compact-scale range
\(1/2<h\le1\).  Since \(\sum_i\alpha_i=1\),
\[
 |E|\ge\sum_i\alpha_i|Y_i|=h\Lambda\ge\tfrac12\Lambda.
\tag{A.3a}
\]
All graph tubes lie in a fixed bounded physical chart, so
\(|Y_i|\le C_{\rm ch}h\) and \(\Lambda\le C_{\rm ch}\).  Because
\(1\le q<2\), \(L\ge1\), and \(h^\eps\le1\), (A.3a) implies (A.3), with
a constant depending only on the fixed chart and \(\eps,t\).  Hence assume
from now on that \(0<h\le1/2\); \cref{lem:superlevel-cutoff} is then
applicable.

Partition parameter space into half-open \(h\)-squares \(C\), write
\(m_C=\sum_{p_i\in C}\alpha_i\), and place one atom of mass \(m_C\) at the
cell center.  The aggregated measure remains \(t\)-Frostman with constant
\(O(L)\).  By \cref{lem:physical-parameter-duality}(D3), one fixed
enlargement of the graph tube at the cell centre contains every tube with
parameter in that cell.  Take the cell super-shading
\(Z_C=\bigcup_{p_i\in C}Y_i\).  Then
\[
 \sum_Cm_C|Z_C|\ge h\Lambda.
\tag{A.4}
\]

We first derive the weighted incidence estimate strictly from
\cite[Theorem~1.12]{DOV2022} (restated and proved as Theorem~4.1 there).  We
use the specialization \(d=2,n=1\).  If
\(\mu=\sum_jm_j\delta_{p_j}\) is \(h\)-separated and \(\mathcal L\) is an
\(h\)-separated family of dual lines, put
\[
 I_{Ah}(\mu,\mathcal L)
 =\sum_jm_j\#\{\ell\in\mathcal L:
      \dist(p_j,\ell)\le Ah\}.
\]
For \(u>0\), the superlevel set
\(P_u=\{p_j:m_j\ge u\}\) is an
\((h,t,L/(u|P_u|))\)-set.  That theorem with \(d=2,n=1\) gives
\[
 I_{Ah}(P_u,\mathcal L)
 \lesssim_{A,\eta,t}
 \frac{L}{u}h^{-\eta}|\mathcal L|^{1/q}
 h^{(t-1)/q}.
\tag{A.5}
\]
The exact layer-cake identity is
\[
 I_{Ah}(\mu,\mathcal L)
 =\int_0^1 I_{Ah}(P_u,\mathcal L)\,du.
\tag{A.5a}
\]
The singularity at \(u=0\) is cut off by the trivial separated-cardinality
bound \(I_{Ah}(P_u,\mathcal L)\lesssim h^{-2}|\mathcal L|\).  Apply
\cref{lem:superlevel-cutoff} with
\[
 A_0=C Lh^{-\eta}|\mathcal L|^{1/q}h^{(t-1)/q},
 \qquad B_0=Ch^{-2}|\mathcal L|.
\]
Since \(|\mathcal L|\lesssim h^{-2}\), \(L\) may be replaced by
\(\max\{1,L\}\), and \(1\le q<2\), the ratio \(B_0/A_0\) is bounded by a
fixed power of \(h^{-1}\).  Therefore
\[
 I_{Ah}(\mu,\mathcal L)
 \lesssim_{A,\eta,t}
 Lh^{-\eta}\log(2/h)|\mathcal L|^{1/q}
 h^{(t-1)/q}.
\tag{A.6}
\]
The logarithm depends on the resolution, not on the smallest atom weight;
this is exactly the conclusion of \cref{lem:superlevel-cutoff}.

Let \(E=\bigcup_CZ_C\), partition physical space into \(h\)-squares \(Q\),
and set
\[
 w_Q=|E\cap Q|/h^2,\qquad
 W=\sum_Qw_Q=h^{-2}|E|,
\]
\[
 \iota_Q=\sum_{C:\,\widetilde T_C\cap Q\ne\varnothing}m_C.
\]
Equation (A.4) implies
\(\sum_Qw_Q\iota_Q\ge\Lambda/h\).  For
\(\mathcal Q_u=\{Q:w_Q\ge u\}\),
\cref{lem:physical-parameter-duality}(D1)--(D2) sends the cell centres to an
\(h\)-separated family of dual lines and converts every condition
\(\widetilde T_C\cap Q\ne\varnothing\) into an incidence with one fixed
\(Ah\) tolerance.  Therefore (A.6) gives
\[
 \sum_{Q\in\mathcal Q_u}\iota_Q
 \lesssim Lh^{-\eta}\log(2/h)
 |\mathcal Q_u|^{1/q}h^{(t-1)/q}.
\]
Layer cake in \(w_Q\), followed by concavity of
\(x^{1/q}\), yields
\[
 \frac{\Lambda}{h}
 \lesssim Lh^{-\eta}\log(2/h)
 h^{(t-1)/q}W^{1/q}.
\]
Substituting \(W=h^{-2}|E|\) cancels the geometric powers of \(h\) and gives
\[
 |E|\gtrsim
 L^{-q}[\log(2/h)]^{-q}h^{q\eta}\Lambda^q.
\]
Taking \(\eta=\eps/(2q)\) and absorbing the logarithm proves (A.3).
When \(t=2\), \(q=1\), the concavity step is equality; thus the endpoint is
included.
\end{proof}

The ordinary one-dimensional marginal required at \(\sigma=0\) has a
different proof.  We record the exact labelled form, including atomic
repetitions and nonuniform measurable shadings.

\begin{lemma}[Capped graph-tube intersection]
\label{lem:capped-graph-intersection}
Let \(T_p,T_{p'}\) be two constant-width \(h\)-tubes whose carriers are
graphs over the same fixed bounded interval and whose slope--intercept
parameters lie in one fixed bounded chart.  Then
\[
 |T_p\cap T_{p'}|\lesssim
 \frac{h^2}{h+|p-p'|}.
\tag{A.9}
\]
The implicit constant is uniform over the chart, and the estimate includes
coincident parameters.
\end{lemma}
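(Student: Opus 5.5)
The plan is to reduce (A.9) to a one-dimensional slice estimate and then integrate over the common graph variable. Write both carriers as graphs \(y=sx+t\) and \(y=s'x+t'\) over the fixed bounded interval \(I_x\), with \(p=(s,t)\) and \(p'=(s',t')\) in the bounded chart. Since the slopes are bounded, each constant-width \(h\)-tube, including its endcaps, lies in the vertical neighbourhood \(\{(x,y):x\in\widetilde I_x,\ |y-(sx+t)|\le Ch\}\), where \(\widetilde I_x\) is a fixed enlargement of \(I_x\). This is the same observation used in \cref{lem:physical-parameter-duality}(D1). It therefore suffices to bound the intersection of the two vertical neighbourhoods.

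At a fixed abscissa \(x\), the vertical slice of the intersection is an interval of length at most \(2Ch\). It is empty unless \(|f(x)|\le 2Ch\), where \(f(x)=(s-s')x+(t-t')\) is affine. Fubini then gives
\[
 |T_p\cap T_{p'}|\lesssim h\,\bigl|\{x\in\widetilde I_x:|f(x)|\le 2Ch\}\bigr|.
\]
The key step is the sublevel bound for an affine function on a bounded interval:
\[
 \bigl|\{x\in\widetilde I_x:|f(x)|\le\epsilon\}\bigr|\lesssim\min\Bigl\{1,\frac{\epsilon}{\|f\|}\Bigr\},
 \qquad \|f\|:=|s-s'|+|t-t'|\asymp|p-p'|.
\]
To prove it, split into two cases. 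If \(|s-s'|\ge c|p-p'|\), the sublevel set is an interval of length \(2\epsilon/|s-s'|\). Otherwise \(|t-t'|\) dominates; since \(x\) is bounded, \(|f(x)|\ge|t-t'|-C|s-s'|\gtrsim|p-p'|\) once \(c\) is small, so the set is empty when \(\epsilon\ll|p-p'|\). The trivial bound \(|\widetilde I_x|\) covers the remaining range. This is the only place where the chart geometry enters.

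Combining the Fubini bound with the sublevel bound gives
\[
 |T_p\cap T_{p'}|\lesssim h\min\Bigl\{1,\frac{h}{|p-p'|}\Bigr\}\asymp\frac{h^2}{h+|p-p'|}.
\]
Coincident parameters fall in the trivial branch \(h\cdot O(1)\). The main care point is the endcap and slope-boundedness comparison that replaces the Euclidean tube by a vertical strip with uniform constants. I expect no real obstacle there, because the fixed bounded chart gives this comparison with structural constants.
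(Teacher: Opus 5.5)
Your proof is correct and follows essentially the same route as the paper. Both arguments slice the intersection and bound it by \(h\) times the measure of the set where the affine difference \((s-s')x+(t-t')\) is \(O(h)\), splitting according to whether the slope gap or the intercept gap dominates \(|p-p'|\). The difference is organizational: the paper slices parallel to the first carrier and treats the crossing/non-crossing and \(\gamma\ge h\)/\(\gamma<h\) cases separately, whereas you slice vertically and fold these cases into a single affine sublevel-set estimate. That gives the uniform \(h\min\{1,h/|p-p'|\}\) bound somewhat more cleanly.
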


\begin{proof}
Write \(\gamma\) for the slope separation and \(d\) for the intercept
separation.  If \(\gamma\ge h\) and the two carrier graphs cross in a fixed
enlargement of the graph interval, slicing parallel to the first carrier
gives intersection area \(O(h^2/\gamma)\).  If they do not cross there, the
intercept gap gives the stronger bound \(O(h^2/d)\), with empty intersection
once the gap exceeds the fixed enlargement.  In this crossing regime the
parameter distance is comparable, up to chart constants, to the larger of
\(\gamma\) and the corresponding crossing-scale separation.  If
\(\gamma<h\), either the tubes are disjoint or their intersection has area
\(O(h)\); moreover a nonempty intersection forces \(d=O(h)\), hence
\(|p-p'|=O(h)\).  Thus every case is bounded by the right-hand side of
(A.9).  When \(p=p'\), it reduces to the trivial \(O(h)\) tube-area bound.
\end{proof}

\begin{theorem}[Weighted labelled C\'ordoba endpoint]
\label{thm:cordobaappendix}
Let
\[
 \nu=\sum_i\alpha_i\delta_{p_i},\qquad
 \sum_i\alpha_i=1,
 \qquad \nu(D(p,\rho))\le L\rho
 \quad(h\le\rho\le1),
\tag{A.7}
\]
where \(L\ge1\), the parameters lie in one fixed bounded
slope--intercept chart, and repetitions are allowed.  If \(Z_i\) is an
arbitrary measurable subshading of the corresponding constant-width
\(h\)-tube and
\[
 \Lambda=h^{-1}\sum_i\alpha_i|Z_i|,
\]
then
\[
 \left|\bigcup_iZ_i\right|
 \gtrsim L^{-1}[\log(2/h)]^{-1}\Lambda^2.
\tag{A.8}
\]
The constant depends only on the fixed chart and tube-enlargement constants,
not on the number of labels or the least positive weight.
\end{theorem}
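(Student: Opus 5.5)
The plan is to run the classical C\'ordoba $L^2$ argument, with the Frostman weights and the arbitrary shadings carried along, and to let the intersection estimate \cref{lem:capped-graph-intersection} supply the geometry. Put $E=\bigcup_iZ_i$ and $f=\sum_i\alpha_i\1_{Z_i}$. Then
\[
 h\Lambda=\int f\le|E|^{1/2}\|f\|_2
\]
by Cauchy--Schwarz, so
\[
 |E|\ge h^2\Lambda^2/\|f\|_2^2 .
\]
It therefore suffices to show
\[
 \|f\|_2^2\lesssim L\,h^2\log(2/h).
\]
Since the shadings are arbitrary subsets, I will bound each $|Z_i\cap Z_j|$ above by $|T_{p_i}\cap T_{p_j}|$. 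This is the only place the shadings enter, and it is why no density condition on them is needed.

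Expanding the square gives
\[
 \|f\|_2^2\le\sum_{i,j}\alpha_i\alpha_j|T_{p_i}\cap T_{p_j}|
 \lesssim\sum_i\alpha_i\int\frac{h^2}{h+|p_i-p|}\,d\nu(p),
\]
using (A.9), which explicitly covers coincident parameters. For fixed $i$, I split the $\nu$-integral dyadically in $|p-p_i|$. The ball $|p-p_i|\le h$ has $\nu$-mass at most $Lh$ by (A.7), and it contributes $h^2\cdot Lh/h=Lh^2$. Each annulus $2^kh<|p-p_i|\le2^{k+1}h$ with $2^kh\le1$ has mass at most $L2^{k+1}h$ and contributes at most $h^2\cdot L2^{k+1}h/(2^kh)\lesssim Lh^2$. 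There are $O(\log(2/h))$ such annuli, because the chart is bounded. Points at distance $\gtrsim1$ have total mass at most $1\le L$ and contribute $\lesssim h^2$. Summing over annuli gives $\lesssim Lh^2\log(2/h)$ uniformly in $i$, and since $\sum_i\alpha_i=1$ the outer sum keeps the same bound. Every step sums over labels with weights, so repetitions and small atoms never appear as separate quantities.

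Combining the two estimates gives
\[
 |E|\gtrsim h^2\Lambda^2\big/\bigl(Lh^2\log(2/h)\bigr)=L^{-1}[\log(2/h)]^{-1}\Lambda^2,
\]
which is (A.8). The step I expect to need care is the dyadic shell sum. The Frostman hypothesis (A.7) is only stated for $h\le\rho\le1$, so the innermost ball must be handled at radius exactly $h$, where (A.7) applies; I will not use any smaller radius. Shells beyond the chart diameter are controlled by the total mass of $\nu$. For $h>1/2$ one can argue directly as in (A.3a), or note that the same computation still holds with a bounded logarithm. No other obstacle is expected, since (A.9) already absorbs the endcap and crossing geometry.
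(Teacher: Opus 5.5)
Your proposal is correct and follows the paper's proof essentially step for step. Both arguments apply Cauchy--Schwarz on the union with \(\int f=h\Lambda\), replace \(|Z_i\cap Z_j|\) by the tube intersection bound (A.9), and sum (A.7) over dyadic annuli around each \(p_i\) to obtain \(\int f^2\lesssim Lh^2\log(2/h)\). The paper takes the inner disc to have radius \(2h\) instead of \(h\), which changes only constants.
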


\begin{proof}
Put \(F=\sum_i\alpha_i\1_{Z_i}\) and \(E=\bigcup_iZ_i\).  By the preceding
estimate \cref{lem:capped-graph-intersection}, the containing tubes satisfy
(A.9).

For each fixed \(p_i\), decompose parameter space into
\(D(p_i,2h)\) and the annuli
\(2^kh<|p-p_i|\le2^{k+1}h\).  Condition (A.7) gives
\[
 \sum_j\frac{\alpha_j}{h+|p_i-p_j|}
 \lesssim L\log(2/h).
\tag{A.10}
\]
This also controls the full mass of repeated atoms at \(p_i\), since that
mass is at most \(Lh\).  Equations (A.9)--(A.10) imply
\[
 \int F^2
 \le\sum_{i,j}\alpha_i\alpha_j|T_{p_i}\cap T_{p_j}|
 \lesssim Lh^2\log(2/h).
\]
On the other hand \(\int F=h\Lambda\).  Cauchy--Schwarz on \(E\) now gives
\((h\Lambda)^2\le|E|\int F^2\), which is (A.8).  Since only tube
containment was used to replace \(Z_i\cap Z_j\) by \(T_i\cap T_j\), the
argument applies to arbitrary measurable shadings.
\end{proof}

An arbitrary representative from a parameter cell is not valid: a heavy
atom may have empty shading while a light atom carries all the mass.  The
super-shading above is exact.  Alternatively, choosing a maximum-density
representative preserves the weighted cell mass up to a factor two.  No
bounded projection fibre or positive lower atom weight is needed.

\section{A labelled slab \texorpdfstring{\(L^2\)}{L2} estimate}
\label{app:slab}

The broad-middle-axis branch needs an \(L^2\) estimate that counts repeated
geometric slabs by label and allows measurable shadings.  We prove that form
directly here.  Let \(\mathbb S\) be a finite nonempty labelled family of
\(a\times b\times c\) convex slabs in a ball \(B_R\subset\R^3\), where
\(0<a\le b\le c\le R\).  Repeated geometric slabs with different labels are
allowed.  Here ``dimensions \(a\times b\times c\)'' means that each label
comes with an ordered orthonormal frame and a centred rectangle \(R_S\) of
those side lengths such that
\(c_JR_S\subset S\subset R_S\) after a common fixed dilation convention,
for one dimensional constant \(c_J>0\).  In particular
\(|S|\asymp abc\), and all intersection and containment comparisons below
have constants depending only on \(c_J\).  Put
\[
 D:=\Delta_{\max}(\mathbb S),\qquad
 L=1+\left\lceil\log_2(R/a)\right\rceil.
\tag{B.1}
\]
For measurable \(Y(S)\subset S\), write
\[
 A=\sum_S|Y(S)|,\qquad U=\bigcup_SY(S),\qquad
 \lambda=A/\sum_S|S|.
\]
When \(A>0\), also put \(\mu=A/|U|\); then \(|U|>0\).
For every label choose the middle plane \(H_S\) of its declared rectangle,
normal to the shortest axis, and choose a unit normal \(n_S\).  The sign of
the normal is immaterial; angles below are projective and lie in
\([0,\pi/2]\).

\begin{lemma}[Two thickened affine planes in a ball]
\label{lem:two-thick-planes}
Let \(H_i=\{x:n_i\cdot x=t_i\}\), \(i=1,2\), be affine planes with projective
normal angle \(\phi\in[0,\pi/2]\).  If
\(E_i\subset B_R\cap N_{Ca}(H_i)\), where \(0<a\le R\), then
\[
 |E_1\cap E_2|
 \le C'\frac{a^2R}{\max\{a/R,\phi\}}.
\tag{B.1a}
\]
The estimate is uniform in the offsets \(t_i\), and remains valid for capped
or one-sided subsets of the two thickened planes.
\end{lemma}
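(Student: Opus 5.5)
The plan is to bound the intersection of the two slabs \(B_R\cap N_{Ca}(H_1)\) and \(B_R\cap N_{Ca}(H_2)\), which contain \(E_1\) and \(E_2\); this makes the estimate automatically uniform over arbitrary measurable, capped or one-sided subsets. We split into two regimes according to whether \(\phi\le a/R\) or \(\phi>a/R\). The right side of (B.1a) is \(C'aR^2\) in the first regime and \(C'a^2R/\phi\) in the second.

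\emph{Nearly parallel regime} (\(\phi\le a/R\)). Here it suffices to use the trivial bound \(|E_1\cap E_2|\le|B_R\cap N_{Ca}(H_1)|\). By Fubini along the normal \(n_1\), this slab has volume at most \(2Ca\cdot\pi R^2\lesssim aR^2\), which equals \(a^2R/(a/R)\).

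\emph{Transverse regime} (\(\phi>a/R\)). Since \(\phi\le\pi/2\), the normals are not parallel, and the line direction \(v=n_1\times n_2\) is nonzero with \(|v|=\sin\phi\asymp\phi\). Choose orthonormal coordinates with \(e_3=v/|v|\), and work in the plane \(e_3^\perp\), which contains both \(n_1\) and \(n_2\). In that plane the two-dimensional cross-section of \(N_{Ca}(H_1)\cap N_{Ca}(H_2)\) is the intersection of two strips of width \(2Ca\) whose normals make angle \(\phi\). This intersection is a parallelogram of area
\[
 \frac{(2Ca)^2}{\sin\phi}\lesssim\frac{a^2}{\phi}.
\]
The cross-section is the same for every value of the \(e_3\)-coordinate, and the \(e_3\)-extent inside \(B_R\) is at most \(2R\). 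Fubini then gives \(|E_1\cap E_2|\lesssim a^2R/\phi\). The offsets \(t_i\) only translate the parallelogram, so the bound is uniform in them.

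\emph{Assembling the cases.} In either regime the bound is at most \(C'a^2R/\max\{a/R,\phi\}\), since \(a^2R/\max\{a/R,\phi\}\) is exactly the bound proved in the active regime. There is no serious obstacle. The only point to check is that the transverse area formula \((2Ca)^2/\sin\phi\) is correctly normalized: two strips of widths \(w_1,w_2\) at angle \(\phi\) meet in a parallelogram of area \(w_1w_2/\sin\phi\). One should also use \(\sin\phi\ge(2/\pi)\phi\) on \([0,\pi/2]\); the projective choice of normal sign keeps \(\phi\) in this range.
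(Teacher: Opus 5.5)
Your proof is correct and follows the paper's argument: you use the same split at \(\phi=a/R\), the same trivial Fubini bound \(|B_R\cap N_{Ca}(H_1)|\lesssim aR^2\) in the nearly parallel case, and in the transverse case your parallelogram cross-section of area \((2Ca)^2/\sin\phi\), times the \(2R\) extent along \(n_1\times n_2\), is exactly the paper's change of variables \(x\mapsto(n_1\cdot x-t_1,\,n_2\cdot x-t_2,\,\tau\cdot x)\) with Jacobian \(\sin\phi\). One small wording fix: the normals are non-parallel because \(\phi>a/R>0\), not because \(\phi\le\pi/2\).
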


\begin{proof}
If \(\phi\le a/R\), then Fubini in the \(n_1\)-direction gives the trivial
bound
\(|E_1\cap E_2|\le|B_R\cap N_{Ca}(H_1)|\le C'aR^2\), which is (B.1a).
Suppose \(\phi>a/R\), and put
\(\tau=(n_1\times n_2)/|n_1\times n_2|\).  The affine coordinate map
\[
 x\longmapsto
 (n_1\cdot x-t_1,n_2\cdot x-t_2,\tau\cdot x)
\tag{B.1b}
\]
has Jacobian \(|\det(n_1,n_2,\tau)|=\sin\phi\).  On the intersection, the
first two coordinates range over intervals of length \(O(a)\), while the
third ranges over an interval of length \(O(R)\) because \(x\in B_R\).
Consequently
\[
 |E_1\cap E_2|\le C\frac{a^2R}{\sin\phi}
 \le C'\frac{a^2R}{\phi}.
\]
Offsets merely translate the first two coordinate intervals.  Endcaps and
one-sided truncations only decrease the set being measured.
\end{proof}

\begin{lemma}[Intersecting near-parallel slabs lie in one test slab]
\label{lem:near-parallel-slab-count}
Fix \(S\in\mathbb S\), let \(a/R\le\theta\le1\), and suppose
\(S'\cap S\ne\varnothing\) and
\(\ang(n_S,n_{S'})\le C_0\theta\).  Then
\[
 S'\subset K_{S,\theta}:=
 B_R\cap N_{C_1\theta R}(H_S),
 \qquad |K_{S,\theta}|\le C_2\theta R^3.
\tag{B.1c}
\]
In particular
\[
 \#\{S':S'\cap S\ne\varnothing,
       \ \ang(n_S,n_{S'})\le C_0\theta\}
 \le C_3\frac{D\theta R^3}{abc}.
\tag{B.1d}
\]
All counts are labelled counts.
\end{lemma}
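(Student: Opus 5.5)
The plan is to prove the inclusion (B.1c) first and then deduce the count (B.1d) directly from the definition of \(D=\Delta_{\max}(\mathbb S)\). For (B.1c), fix \(y\in S\cap S'\). Every point of \(S'\) lies within \(O(a)\) of the middle plane \(H_{S'}\), and \(S'\subset B_R\). Write \(x\in S'\) as \(x=y+v\) with \(|v|\le 2R\). The component of \(v\) along \(n_{S'}\) is \(O(a)\), since both \(x\) and \(y\) lie in the \(O(a)\)-slab about \(H_{S'}\). The remaining tangential part \(v_\parallel\perp n_{S'}\) has length at most \(2R\).

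Next we compute \(n_S\cdot v\). Choose the sign of \(n_{S'}\) so that \(|n_S-n_{S'}|\lesssim\ang(n_S,n_{S'})\le C_0\theta\), which is possible because the angle is projective. Then
\[
 |n_S\cdot v|\le|n_{S'}\cdot v|+|n_S-n_{S'}|\,|v|\lesssim a+\theta R.
\]
Since \(y\in S\), we have \(\dist(y,H_S)\lesssim a\). Hence \(\dist(x,H_S)\lesssim a+\theta R\lesssim\theta R\), using \(a/R\le\theta\). This gives \(S'\subset B_R\cap N_{C_1\theta R}(H_S)\). The volume bound \(|K_{S,\theta}|\le C_2\theta R^3\) follows by Fubini in the \(n_S\)-direction: the slab has thickness \(2C_1\theta R\) and cross-sections of area \(O(R^2)\).

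For (B.1d), note that \(K_{S,\theta}\) is convex, being the intersection of a ball with a slab, and has positive volume since \(\theta R\ge a>0\). So \(K_{S,\theta}\in\mathfrak K_3\) is an admissible test body in (1.2). Every counted label \(S'\) is completely contained in \(K_{S,\theta}\), and each contributes \(|S'|\gtrsim abc\) by the declared comparability \(c_JR_{S'}\subset S'\). Therefore
\[
 abc\cdot\#\{\cdots\}\lesssim\sum_{S'\subset K_{S,\theta}}|S'|\le D|K_{S,\theta}|\le C_2D\theta R^3.
\]
Since \(\Delta\) in (1.2) sums over labels, repeated geometric slabs are counted with their multiplicity automatically.

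The only delicate point is the sign and projective-angle bookkeeping that turns \(\ang\le C_0\theta\) into \(|n_S-n_{S'}|\lesssim\theta\). When the capped angle \(\ang\) saturates at \(1\), the claim is trivial: then \(\theta\gtrsim1\), and \(K_{S,\theta}\) contains \(B_R\) after enlarging \(C_1\). The rest is routine. The constants \(C_1,C_2,C_3\) depend only on \(C_0\) and \(c_J\).
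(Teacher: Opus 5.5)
Your proof is correct and follows essentially the same route as the paper. You pick a common point of \(S\cap S'\), choose the sign of \(n_{S'}\) so that \(|n_S-n_{S'}|\lesssim\theta\), and bound \(\dist(x,H_S)\) by \(O(a+\theta R)=O(\theta R)\) using the two plane equations at that point; then Fubini gives the volume bound, and \(D=\Delta_{\max}(\mathbb S)\) together with \(|S'|\gtrsim abc\) gives the labelled count. Your extra checks, that \(K_{S,\theta}\) is an admissible test body and that the saturated-angle case is harmless, only make explicit what the paper leaves implicit.
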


\begin{proof}
Choose \(z\in S\cap S'\), and choose the sign of \(n_{S'}\) so that
\(|n_S-n_{S'}|\le C\theta\).  Write
\(H_S=\{n_S\cdot x=t_S\}\) and similarly for \(S'\).  Since both declared
rectangles have shortest side comparable to \(a\),
\[
 |n_S\cdot z-t_S|+|n_{S'}\cdot z-t_{S'}|\le Ca.
\]
For \(y\in S'\), add and subtract the two affine plane equations at \(z\):
\[
\begin{aligned}
 \dist(y,H_S)
 &\le \dist(y,H_{S'})
      +|(n_S-n_{S'})\cdot(y-z)|\\
 &\qquad+|n_S\cdot z-t_S|+|n_{S'}\cdot z-t_{S'}|\\
 &\le C(a+\theta R)\le C'\theta R.
\end{aligned}
\tag{B.1e}
\]
Here \(y,z\in B_R\) and \(\theta\ge a/R\).  This proves containment in
\(K_{S,\theta}\); Fubini normal to \(H_S\) gives its volume bound.  Finally
every counted \(S'\) is wholly contained in this convex body, so (B.1) and
\(|S'|\ge c_J'abc\) give (B.1d).  Coincident carriers with different labels
are counted separately and are paid for by \(D\).
\end{proof}

\begin{theorem}[Labelled rectangular slab lemma]
\label{thm:slab}
For every \(\lambda\ge0\), one has
\[
 |U|\gtrsim
 \frac{\lambda^2|\mathbb S|abc}
 {1+LD R^4/(b^2c^2)}.
\tag{B.3}
\]
If \(\lambda>0\), this is equivalently
\[
 \mu\lesssim\lambda^{-1}
 \left(1+LD\frac{R^4}{b^2c^2}\right),
\tag{B.2}
\]
No separation or pointwise-constant multiplicity hypothesis is required.
\end{theorem}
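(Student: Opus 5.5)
The plan is the standard C\'ordoba $L^2$ argument on the labelled shadings. Put $F=\sum_S\1_{Y(S)}$, so that $\int F=A=\lambda|\mathbb S|\,|S|$ with $|S|\asymp abc$. Cauchy--Schwarz on $U$ gives $A^2\le|U|\int F^2$. Hence (B.3) reduces to the bound
\[
 \int F^2\le\sum_{S,S'}|S\cap S'|\lesssim|\mathbb S|\,abc\Bigl(1+LD\frac{R^4}{b^2c^2}\Bigr),
\]
and (B.2) is simply the same inequality rewritten with $\mu=A/|U|$. The first step is the pointwise bound $\1_{Y(S)}\1_{Y(S')}\le\1_{S\cap S'}$. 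This is the only place the shadings enter, so arbitrary measurable shadings and repeated labels cost nothing.

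For a fixed $S$, I would split the sum over $S'$ by angle. The term $S'=S$ contributes $|S|\asymp abc$, and this gives the ``$1$'' in the denominator. For the remaining $S'$ meeting $S$, pigeonhole the projective angle $\phi=\ang(n_S,n_{S'})$ into dyadic classes $\theta=2^{-k}$ with $a/R\le\theta\le1$. All angles below $a/R$ go into the bottom class. There are at most $L$ classes.

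\begin{itemize}
\item \emph{Count.} In class $\theta$, \cref{lem:near-parallel-slab-count} bounds the number of such $S'$ by $C D\theta R^3/(abc)$.
\item \emph{Overlap.} Each $S$ lies in $B_R\cap N_{Ca}(H_S)$, so \cref{lem:two-thick-planes} gives $|S\cap S'|\lesssim a^2R/\theta$.
\end{itemize}

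The product is $D a R^4/(bc)$, independent of $\theta$. Summing over the $L$ classes gives
\[
 \sum_{S'\ne S}|S\cap S'|\lesssim LDaR^4/(bc)=abc\cdot LD\,R^4/(b^2c^2).
\]
Summing over $S$ then gives the desired bound for $\int F^2$.

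The main obstacle is checking that these factors really match. The thick-plane bound uses only the short width $a$ and the ball diameter $R$; it ignores $b$ and $c$. The count lemma uses the test slab of thickness $\theta R$ and the slab volume $abc$. Their product is exactly $aR^4D/(bc)$, which is $abc\cdot R^4/(b^2c^2)$ times $D$. The bottom class $\phi\le a/R$ needs separate care. There the intersection bound is $aR^2$, and the count at $\theta=a/R$ is $DaR^2/(abc)$. The product is again $DaR^4/(bc)$ after using $R\ge a$, so this class has the same form and is absorbed. Finally, if $\lambda=0$ the statement (B.3) is trivial.
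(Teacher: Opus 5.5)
Your proposal is correct and is essentially the paper's proof: Cauchy--Schwarz with $\int F^2\le\sum_{S,S'}|S\cap S'|$, the diagonal term producing the $1$, and at most $L$ dyadic classes of $\max\{a/R,\ang(n_S,n_{S'})\}$ on which \cref{lem:two-thick-planes,lem:near-parallel-slab-count} multiply to the class-independent bound $DaR^4/(bc)$. Your separate check of the bottom class $\phi\le a/R$ is exactly what the paper encodes by putting the floor $a/R$ into its angle parameter $\vartheta(S,S')$.
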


\begin{proof}
If \(A=0\), then \(\lambda=0\) and (B.3) is immediate.  Hence assume
\(A>0\), so that \(|U|>0\) and all divisions below are legitimate.
For intersecting slabs define
\[
 \vartheta(S,S')=
 \max\{a/R,\ang(n_S,n_{S'})\}.
\]
\Cref{lem:two-thick-planes} gives
\[
 |S\cap S'|\lesssim \frac{a^2R}{\vartheta(S,S')}.
\tag{B.4}
\]
\Cref{lem:near-parallel-slab-count} gives the labelled count
\[
 \#\{S':S'\cap S\ne\varnothing,
        \ \ang(n_S,n_{S'})\lesssim\theta\}
 \lesssim \frac{D\theta R^3}{abc}.
\tag{B.5}
\]

For \(f=\sum_S\1_{Y(S)}\), divide the pairs into the \(L\) dyadic
\(\vartheta\)-classes.  Equations (B.4)--(B.5), with the diagonal added,
give
\[
 \int f^2
 \lesssim |\mathbb S|abc
 +|\mathbb S|L\frac{DaR^4}{bc}.
\tag{B.6}
\]
Cauchy--Schwarz and
\(\int f=A\asymp\lambda|\mathbb S|abc\) imply
\[
 |U|\ge\frac{A^2}{\int f^2}
 \gtrsim
 \frac{\lambda^2|\mathbb S|abc}
 {1+LD R^4/(b^2c^2)},
\]
which proves both forms.
\end{proof}

The special case of the source statement
\cite[Lemma~6.9, p.~16]{GWZ2026} follows by taking \(b=c=R\); no unlisted
textual correction of that lemma is used.
For Section~9, set \(c=R=r_1\); then (B.3) is exactly (10.15).  Notice that
the proof uses global shading density only, so it applies to the arbitrary
cellular parent shading and to fixed comparable enlargements.

\section{A labelled-multiset filling lemma}
\label{app:multiset}

The transverse branch requires a filling statement on a labelled multiset,
so the argument is formulated on the index set and never identifies
coincident carriers.  Boundary cells are the only delicate point.  We admit
only cells containing a full
\(c\alpha\times c\rho\times c\rho\) piece, and all densities are
computed with the resulting true geometric denominator.  We do not assert the
stronger thickened-proxy conclusions of \cite[Lemma~6.13, p.~18]{GWZ2026},
which are not needed for the transverse branch.

Throughout this appendix a \emph{rectangular
\(\alpha\times\rho\times1\) plank} means a labelled box
\[
 P=x_P+\{t_se_s+t_me_m+t_\ell e_\ell:
 |t_s|\le\alpha/2,\ |t_m|\le\rho/2,\ |t_\ell|\le1/2\},
\tag{C.0}
\]
with a fixed ordered orthonormal frame \((e_s,e_m,e_\ell)\).  Its
two-long-axis plane is \(\operatorname{span}\{e_m,e_\ell\}\).  The frame is
part of the label even when side lengths agree.  For \(C_{\rm box}\ge1\), a
\emph{\(C_{\rm box}\)-comparable rectangular
\(\alpha\times\rho\times1\) plank} has the same form, with ordered side
lengths \(a_P\le b_P\le \ell_P\) satisfying
\[
 C_{\rm box}^{-1}\alpha\le a_P\le C_{\rm box}\alpha,\qquad
 C_{\rm box}^{-1}\rho\le b_P\le C_{\rm box}\rho,\qquad
 C_{\rm box}^{-1}\le \ell_P\le C_{\rm box}.
\tag{C.0$'$}
\]
The exact class (C.0) is the case \(C_{\rm box}=1\).  Thus the full-piece
claims below concern rectangular boxes with controlled side lengths, not a
general convex body specified only by L\"owner dimensions.

\begin{lemma}[Full-piece translated grids for rectangular planks]
\label{lem:full-piece-grid}
Fix \(C_{\rm box}\ge1\).  Let \(S\) be a
\(\theta\times1\times1\) slab and let \(P\) be a
\(C_{\rm box}\)-comparable rectangular
\(\alpha\times\rho\times1\) plank in the sense of (C.0$'$), whose
two-long-axis plane makes angle
\(O(\theta)\) with the plane of \(S\).  Suppose \(\theta\rho\ge4\alpha\).
There is a structural finite family of translated, half-open grids, in
coordinates adapted to \(S\), whose cells have dimensions comparable to
\[
 \theta\rho\times\rho\times\rho,
\tag{C.1}
\]
with the following property.  For every \(x\in P\), at least one grid has a
cell \(Q=Q(x)\) for which
\[
 c\alpha\rho^2\le |P\cap Q|\le C\alpha\rho^2,
\tag{C.2}
\]
 and \(P\cap Q\) contains a
\(c\alpha\times c\rho\times c\rho\) subplank
having \(x\) as one of its points (possibly on its boundary).  The constants and the number of
grids are independent of \(\alpha,\rho,\theta,P,S\), with structural
dependence on \(C_{\rm box}\).
For each fixed grid \(G\), the set
\[
 \mathcal I_{\rm full}(P,G)
 :=\{x\in P:Q_G(x)\text{ has the displayed full-piece property at }x\}
\]
is measurable; here \(Q_G(x)\) is the unique half-open cell selected by the
fixed boundary order.
\end{lemma}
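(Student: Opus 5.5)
The argument works in coordinates adapted to \(S\): \(e_1\) is the normal to the plane of \(S\) and \(e_2,e_3\) span that plane. The grids are taken as translates of one base half-open grid with cell side lengths \(\theta\rho\times\rho\times\rho\), say \(G_v=\theta\rho\,v_1e_1+\rho v_2e_2+\rho v_3e_3+G_0\), with \(v\in\{0,1/4,1/2,3/4\}^3\). There are \(64\) grids, a number independent of all scales. The main geometric input is the shape of \(P\) in these coordinates. Its two-long-axis plane is within angle \(O(\theta)\) of \(\operatorname{span}(e_2,e_3)\), so its short direction \(e_s\) lies within \(O(\theta)\) of \(\pm e_1\). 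Consequently, over any region of \(e_2,e_3\)-diameter \(O(\rho)\), the plank \(P\) is the region between two nearly parallel planes at distance \(a_P\asymp\alpha\). Its \(e_1\)-coordinate varies by at most \(O(\theta\rho)+O(\alpha)\) over such a region. The hypothesis \(\theta\rho\ge4\alpha\) together with the fixed tilt constant make this variation at most a fixed multiple of \(\theta\rho\), and the cell dimensions are chosen to exploit exactly this.

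For the lower bound and the full piece, fix \(x\in P\). Let \(\Pi\subset P\) be a \(c\alpha\times c\rho\times c\rho\) subplank with \(x\) as one of its vertices. It is aligned with the frame of \(P\) and oriented into \(P\). Such a subplank exists because \(b_P\ge C_{\rm box}^{-1}\rho\), \(\ell_P\ge C_{\rm box}^{-1}\), and \(a_P\ge C_{\rm box}^{-1}\alpha\), so from any point of the box one can move a fixed fraction of each side length inward. The argument also handles \(x\) on the boundary. By the tilt estimate, \(\Pi\) lies in a coordinate box about \(x\) of size \(C'c\,\theta\rho\times c\rho\times c\rho\). Taking \(c\) small, this box has side at most one quarter of the corresponding cell side. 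Among the translates, some \(G_v\) then has the cell containing \(x\) also containing this whole box. The reason is the standard one-dimensional fact: for the four quarter-shifts, a point lies at distance at least a quarter of the period from the cell boundaries in some shift, and the coordinates can be chosen independently. This \(Q=Q_{G_v}(x)\) contains \(\Pi\), so \(|P\cap Q|\ge|\Pi|\gtrsim\alpha\rho^2\). For the upper bound, \(P\cap Q\) lies in the slab region of \(P\) over a square of side \(O(\rho)\) in the \(e_2,e_3\) variables. That region has normal thickness \(O(\alpha)\) and Jacobian comparable to one, since the tilt is \(O(\theta)\le O(1)\), which gives \(|P\cap Q|\le C\alpha\rho^2\).

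I expect the main obstacle to be making the shift argument uniform. One must control the projection of the tilted subplank \(\Pi\) onto the \(e_1\)-axis by \(O(\alpha+c\theta\rho)\), and absorb the \(\alpha\) term using \(\theta\rho\ge4\alpha\) rather than needing \(\alpha\ll\theta\rho\) with a small constant. This is handled by choosing the base cell thickness to be a fixed multiple \(K\theta\rho\) with \(K\) large in terms of \(C_{\rm box}\) and the tilt constant, which is still comparable to \(\theta\rho\) as (C.1) permits. Measurability is then routine. For a fixed grid, \(x\mapsto Q_G(x)\) is piecewise constant on the finitely many half-open cells meeting \(P\). On each such cell \(Q\), the full-piece property at \(x\) is equivalent to the existence of an admissible subplank in \(P\cap Q\) containing \(x\). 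The union of such subplanks over the compact parameter set of admissible positions is a finite union of closed-set projections, hence Borel. Intersecting with \(Q\) and taking the finite union over cells gives the measurability of \(\mathcal I_{\rm full}(P,G)\).
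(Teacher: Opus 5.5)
Your proposal is correct and follows essentially the paper's route. It uses finitely many translated half-open grids with cells comparable to \(\theta\rho\times\rho\times\rho\) (the paper takes three translates per coordinate, you take four quarter-shifts), a subplank through \(x\) taken one-sided toward the interior of \(P\) and fitted into the cell via the \(O(\theta\rho)\) tilt drift and \(\alpha\le\theta\rho/4\), and an upper bound from the \(O(\alpha)\) thickness of \(P\) over a region of diameter \(O(\rho)\). Two small repairs are needed: for the upper bound, project \(P\cap Q\) onto the middle plane of \(P\) as the paper does, which is tilt-independent, rather than invoking a Jacobian in the \(e_2,e_3\) variables (that Jacobian is not bounded when the tilt approaches \(\pi/2\)); and in the measurability step, because the cells are half-open, the admissible subplank positions form a closed-intersect-open set (hence \(\sigma\)-compact, not compact), which still makes the union an \(F_\sigma\) set.
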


\begin{proof}
Straighten the middle plane of \(S\).  In its two tangential coordinates use
mesh \(C_0\rho\), and in the normal coordinate use mesh
\(C_0\theta\rho\), where \(C_0\) is a sufficiently large structural
constant.  Take three equally spaced translates in each coordinate.  For
every point one translate places it a fixed proportion of the mesh away from
all cell faces.  Inside a tangential disk of radius \(c\rho\), the middle
plane of \(P\) drifts by at most \(C\theta\rho\) in the normal coordinate.
Choosing \(c\) small relative to \(C_0\), and using
\(\alpha\le\theta\rho/4\), leaves a
\(c\alpha\times c\rho\times c\rho\) subplank in the cell.  If \(x\) lies
on, or within \(c\rho\) of, a middle or long endpoint face of the
rectangular carrier, choose the corresponding \(c\rho\)-interval one-sided
toward the interior, shortening on the endpoint side and extending by the
same amount on the interior side.  Since both tangential side lengths are at
least \(C_{\rm box}^{-1}\rho\), choosing \(c\) below a structural multiple
of \(C_{\rm box}^{-1}\) gives these intervals even at a corner.  In the short
coordinate use a one-sided interval of length \(c\alpha\) containing \(x\);
this fits because the actual short side has length at least
\(C_{\rm box}^{-1}\alpha\).  Hence the lower volume in (C.2) is uniform at all
faces, edges, and corners.
Conversely, the projection of \(P\cap Q\) to the middle plane of \(P\) has
area \(O_{C_{\rm box}}(\rho^2)\); Fubini across its
\(O_{C_{\rm box}}(\alpha)\) short thickness gives the upper bound in
(C.2).  Half-open cell faces and a fixed order of all endpoint faces assign
all boundary points deterministically.  For a fixed grid, membership in
\(\mathcal I_{\rm full}(P,G)\) is described by finitely many weak and strict
affine inequalities in the coordinates of \(x\): the half-open inequalities
select \(Q_G(x)\), and the remaining inequalities assert that the prescribed
one-sided intervals fit in both \(P\) and that cell.  It is therefore a
finite union of Borel polyhedral pieces, hence measurable.
\end{proof}

\begin{lemma}[Bounded packing of slab parameters]
\label{lem:slab-parameter-packing}
Work in a fixed bounded ball.  For \(0<\theta\le1\), there is a finite
half-open parameter net \(\mathfrak S_\theta\) of
\(\theta\times1\times1\) slabs with the following two properties.
Every such slab is contained in a fixed enlargement of some
\(S\in\mathfrak S_\theta\) whose middle plane is at projective-plane distance
at most \(C\theta\).  Conversely, for every point \(x\) and every plane cap
\(\mathcal C\) of radius \(C_0\theta\),
\[
 \#\{S\in\mathfrak S_\theta:x\in CS,
       \ P_S\in\mathcal C\}\le C(C_0,C).
\tag{C.2a}
\]
The same estimate holds for labelled candidates after first grouping equal
net parameters; labels within one group are not discarded.
\end{lemma}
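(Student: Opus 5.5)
The plan is to build \(\mathfrak S_\theta\) as a product net over a parameter space for slabs, and to obtain (C.2a) from a volume-packing count in that space, in the spirit of the net construction in \cref{lem:source-canonical}. A \(\theta\times1\times1\) slab in the fixed ball is determined, up to a fixed enlargement, by its middle plane. That plane is described by an unoriented normal \(n\in\mathbb{RP}^2\), equivalently the plane \(P_S\) with the metric \(\ang\) of (3.4a\(''\)), together with an offset \(t=n\cdot x\) lying in a bounded interval. On this parameter space I will use the scaled metric \(d(( n,t),(n',t'))=\ang(n,n')+|t-t'|\). I then choose a maximal \(c\theta\)-separated set in the plane factor and a maximal \(c\theta\)-separated set of offsets. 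Ordering the finite product once and assigning every parameter to the first nearest net point by half-open Voronoi cells makes the assignment a genuine function, exactly as in the canonical net construction.

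For the covering property, take an arbitrary slab \(S'\) with parameters \((n',t')\), and let \(S\) be its net representative with parameters \((n,t)\). Then \(\ang(n,n')\le c\theta\) and \(|t-t'|\le c\theta\). For \(y\in S'\) inside the fixed ball, the estimate is
\[
|n\cdot y-t|\le|n'\cdot y-t'|+|(n-n')\cdot y|+|t-t'|\lesssim\theta,
\]
after choosing signs of the normals appropriately. This is the same add-and-subtract computation as (B.1e). Hence \(S'\subset CS\), and the middle planes are at distance at most \(C\theta\). The only care needed is with projective signs, which are handled by fixing the sign of \(n\) that realizes the minimum in (3.4a\(''\)).

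For the packing bound (C.2a), fix \(x\) and a cap \(\mathcal C\) of radius \(C_0\theta\). First, the net planes lying in \(\mathcal C\) are \(c\theta\)-separated in \(\mathbb{RP}^2\), which has dimension two, so there are at most \(C(C_0)\) of them. Second, for a fixed net normal \(n\), the condition \(x\in CS\) forces \(|n\cdot x-t|\le C\theta\). The admissible offsets therefore lie in an interval of length \(O(\theta)\), and by separation this interval contains at most \(C(C)\) net offsets. Multiplying the two counts gives (C.2a).

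The labelled version needs no further work. I group candidates by their net parameter, count groups by (C.2a), and keep every label inside each group, so no label is discarded.

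The main obstacle is only bookkeeping: ensuring that the enlargement constant \(C\) in the covering step and the constant in the packing condition \(x\in CS\) are fixed structurally, before \(c\) is chosen. The separation scale \(c\theta\) has to be small relative to these constants and the packing bound is allowed to depend on them; fixing them in that order keeps the argument uniform.
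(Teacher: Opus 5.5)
Your proposal is correct and follows essentially the same route as the paper: a product net on \(\mathbb{RP}^2\times[-C_1,C_1]\) with half-open nearest-net assignment gives the covering assertion, and (C.2a) comes from multiplying a two-dimensional packing count of net normals in the cap by a one-dimensional count of net offsets in an \(O(\theta)\) interval, with labels grouped by net parameter. Your explicit add-and-subtract check of the covering step, in the manner of (B.1e), spells out what the paper states in one line.
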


\begin{proof}
Parameterize an unoriented affine plane by
\((n,t)\in\mathbb{RP}^2\times[-C_1,C_1]\), where its equation is
\(n\cdot y=t\).  Take a \(c\theta\)-separated net in
\(\mathbb{RP}^2\) and, for every net normal, a half-open
\(c\theta\)-mesh in \(t\).  Associate to a parameter pair the portion, in
the fixed ambient ball, of the \(C\theta\)-neighbourhood of that plane.
Nearest-net assignment, with the half-open order on Voronoi cells and on the
offset intervals, gives the covering assertion.

If \(P_S\in\mathcal C\), separation and two-dimensional packing on
\(\mathbb{RP}^2\) leave \(O_{C_0,C}(1)\) possible normals.  For one normal,
the condition \(x\in CS\) restricts \(t\) to an interval of length
\(O_C(\theta)\), which contains only \(O_C(1)\) offset-net points.  Their
product proves (C.2a).  Grouping labels by their net parameter changes
neither the physical union nor this count of geometric parameter groups.
\end{proof}

\begin{lemma}[Capped two-plank intersection inside a cell]
\label{lem:two-plank-cell-intersection}
Let \(Q\subset\R^3\) have diameter at most \(C_Q\rho\), and let
\(P_i\subset N_{C_P\alpha}(H_i)\), \(i=1,2\), where \(H_i\) are affine planes
whose projective normal angle is \(\phi\in(0,\pi/2]\).  Then
\[
 |P_1\cap P_2\cap Q|
 \le C\min\left\{\alpha\rho^2,
                   \frac{\alpha^2\rho}{\sin\phi}\right\}
 \le C\min\left\{\alpha\rho^2,
                   \frac{\alpha^2\rho}{\phi}\right\}.
\tag{C.2f}
\]
The estimate is uniform in the affine offsets and in all endpoint-cap or
one-sided truncations.  Consequently, if
\(\phi\ge\theta/(2B_{\rm ang})\) and
\(\theta\rho\ge2B_{\rm ang}\alpha\), then
\[
 |P_1\cap P_2\cap Q|
 \le CB_{\rm ang}\frac{\alpha^2\rho}{\theta}
 \le C\alpha\rho^2.
\tag{C.2g}
\]
\end{lemma}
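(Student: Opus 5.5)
The plan is to bound the intersection measure in two ways and take the minimum. For the first term, I will use only $P_1$ and $Q$. Since $P_1\subset N_{C_P\alpha}(H_1)$ and $\operatorname{diam}Q\le C_Q\rho$, the set $P_1\cap Q$ lies in the slab $N_{C_P\alpha}(H_1)$ intersected with a ball of radius $C_Q\rho$. Fubini in the $n_1$ direction then gives normal thickness $O(\alpha)$ times tangential cross-section $O(\rho^2)$. Hence $|P_1\cap P_2\cap Q|\le C\alpha\rho^2$.

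For the second term I will repeat the change of variables from the proof of \cref{lem:two-thick-planes}, with $R$ replaced by $\rho$. Set $\tau=(n_1\times n_2)/|n_1\times n_2|$, which is legitimate because $\phi>0$. The affine map $x\mapsto(n_1\cdot x-t_1,\,n_2\cdot x-t_2,\,\tau\cdot x)$ has Jacobian $\sin\phi$. On $P_1\cap P_2\cap Q$, the first two coordinates range over intervals of length $O(\alpha)$. The third ranges over an interval of length at most $\operatorname{diam}Q\le C_Q\rho$. This gives the bound $C\alpha^2\rho/\sin\phi$.

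The offsets $t_i$ only translate these coordinate intervals, so the bound is uniform in them. Endcaps and one-sided truncations only shrink the set being measured, so the bound survives them as well. The second inequality in (C.2f) follows from $\sin\phi\ge(2/\pi)\phi$ on $[0,\pi/2]$.

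For (C.2g), I substitute $\phi\ge\theta/(2B_{\rm ang})$ into the second term. This gives $\alpha^2\rho/\phi\le2B_{\rm ang}\alpha^2\rho/\theta$, which is the first inequality. The buffer $\theta\rho\ge2B_{\rm ang}\alpha$ rewrites as $B_{\rm ang}\alpha/\theta\le\rho/2$. Multiplying by $\alpha\rho$ yields $B_{\rm ang}\alpha^2\rho/\theta\le\alpha\rho^2/2$, which is the second inequality.

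No step is a real obstacle. The only point needing care is that the third coordinate must be bounded by $\operatorname{diam}Q$ rather than by the length of the planks. This is why the cell restriction matters: it lets $\rho$ replace the unit length in the estimate.
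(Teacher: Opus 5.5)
Your proposal is correct and follows the paper's proof essentially line by line. Both use the trivial bound from slicing the \(C_P\alpha\)-neighbourhood of \(H_1\) normal to \(H_1\) inside a ball of radius \(C_Q\rho\), and both use the affine map \(x\mapsto(n_1\cdot x-t_1,\,n_2\cdot x-t_2,\,\tau\cdot x)\) with Jacobian \(\sin\phi\), bounding the third coordinate by \(\operatorname{diam}Q\); the deduction of (C.2g) from \(\phi\ge\theta/(2B_{\rm ang})\) and the buffer is also the same, and the paper's centring of the third coordinate at a point \(x_Q\in Q\) is only cosmetic.
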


\begin{proof}
The first, trivial bound follows by enclosing \(Q\) in a ball of radius
\(C_Q\rho\) and slicing \(N_{C_P\alpha}(H_1)\) normal to \(H_1\).  For the
transverse bound choose unit normals \(n_i\), write
\(H_i=\{x:n_i\cdot x=t_i\}\), and put
\(\tau=(n_1\times n_2)/|n_1\times n_2|\).  The affine map
\[
 x\longmapsto
 (n_1\cdot x-t_1,n_2\cdot x-t_2,
  \tau\cdot(x-x_Q))
\tag{C.2h}
\]
has Jacobian \(\sin\phi\), where \(x_Q\in Q\) is arbitrary.  On
\(P_1\cap P_2\cap Q\), the first two coordinates range over intervals of
length \(O(\alpha)\), and the last over an interval of length \(O(\rho)\).
The change-of-variables formula gives
\(C\alpha^2\rho/\sin\phi\).  Since
\(\sin\phi\ge(2/\pi)\phi\) on \([0,\pi/2]\), (C.2f) follows.  Under the
two assumptions in (C.2g), the transverse bound is at most
\(CB_{\rm ang}\alpha^2\rho/\theta\), and the buffer makes this at most the
trivial scale \(C\alpha\rho^2\).  Changing an offset translates a coordinate
interval, while caps or cell faces only remove points.
\end{proof}

\begin{theorem}[Self-contained labelled rectangular-plank filling]
\label{thm:multiset-appendix}
Fix \(\eta>0\) and constants \(C_{\rm comp},C_{\rm box}\ge1\).  Let
\(0<\alpha\le\rho\le1\) be sufficiently small, and let
\((\mathcal P,Y)\) be a finite labelled
multiset of \(C_{\rm box}\)-comparable rectangular
\(\alpha\times\rho\times1\) planks in the sense of
(C.0$'$), contained in a fixed unit ball and equipped with measurable shadings,
and allow coincident geometric
carriers or coincident carriers with different ordered frames.  Assume
\[
 |\mathcal P|\le\alpha^{-C_{\rm comp}},\qquad
 \lambda(\mathcal P,Y)\ge\alpha^\eta,\qquad
 \mu(\mathcal P,Y)\ge\alpha^{-\eta},
\tag{C.2b}
\]
Let \(m>0\), and suppose the labelled multiplicity belongs to \([m,2m)\)
at every point of the union.  Let \(0<\theta\le1\), and let
\(A_{\rm ang}=A_{\rm ang}(\alpha),B_{\rm ang}=B_{\rm ang}(\alpha)\ge2\)
satisfy
\[
 \log A_{\rm ang}+\log B_{\rm ang}=o(\log\alpha^{-1}),
\qquad
 \frac{A_{\rm ang}}{\log^C(2/\alpha)}\longrightarrow\infty
 \quad\hbox{for every fixed }C.
\tag{C.2c}
\]
Suppose that at each shaded point the incident two-long-axis planes have raw
angular diameter at most \(C\theta\), while every incident subcollection of
at least \(2m/A_{\rm ang}\) labels has raw angular diameter greater than
\(\theta/B_{\rm ang}\).  Finally assume
\[
 \theta\rho\ge2B_{\rm ang}\alpha.
\tag{C.2d}
\]
Then there is a labelled refinement \((\mathcal P',Y')\) satisfying
\[
 M(Y')\ge \alpha^{o(1)}M(Y).
\tag{C.2d$'$}
\]
In this occurrence the uniform modulus required by the convention after
(1.3) is
\[
 \omega_{\rm fill}(\alpha):=
 \frac{\log(1/c_{\rm fill})}{\log(1/\alpha)}\longrightarrow0,
\]
where \(c_{\rm fill}\in(0,1]\) is the fixed structural product of the
retention fractions in the proof.
This notation always refers to retained labelled shading mass; it does not
mean a fraction of distinct carriers or a fraction of label indices.  Whenever
its union meets a ball \(B_0\) of
radius \(\theta\rho\),
\[
 \frac{|U(\mathcal P',Y')\cap C B_0|}{|C B_0|}
 \ge\alpha^{4\eta+o(1)}.
\tag{C.2e}
\]
Passing to a labelled subfamily cannot increase labelled
\(\Delta_{\max}\).  Every deletion below is a labelled deletion or a
pointwise shading restriction, and every occurrence of cardinality,
multiplicity, and mass counts labels.
\end{theorem}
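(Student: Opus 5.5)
The plan is to prove \cref{thm:multiset-appendix} by a Córdoba-type $L^2$ argument localized to the full-piece cells of \cref{lem:full-piece-grid}. All counting is done on the label index set.

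\textbf{Step 1: localize to full-piece cells.} For each label $P$, apply \cref{lem:full-piece-grid} with $\theta$ and the slab parameter of $P$ supplied by \cref{lem:slab-parameter-packing}. Since the grid family is finite, we pigeonhole one translated grid $G$ and one slab-parameter group for which the shading restricted to $\mathcal I_{\rm full}(P,G)$ retains a fixed fraction of $M(Y)$. This defines $Y^{(1)}(P)=Y(P)\cap\mathcal I_{\rm full}(P,G)$. The buffer \eqref{C.2d} gives $\theta\rho\ge4\alpha$ (because $B_{\rm ang}\ge2$), so the grid lemma applies.

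\textbf{Step 2: regularize within cells.} For every cell $Q$ and label $P$ meeting it, set $\omega_{PQ}=|Y^{(1)}(P)\cap Q|$ and run the graph regularization of \cref{thm:abstract-cellular} (edge weights, then saturated cell degrees). The atoms are the cells, which have comparable volume. Every selection deletes whole incidences, and each deletion costs a factor of $\log(1/\alpha)$. In addition, keep only cells whose local density $\sum_P\omega_{PQ}/\sum_P|P\cap Q|$ is at least half the global density; the denominator here is the true full-piece volume $\asymp\alpha\rho^2$ by \eqref{C.2}. This retains half the mass and gives local density $\gtrsim\alpha^{\eta}$. All of these losses are structural or polylogarithmic, and together they form $c_{\rm fill}$, so $\omega_{\rm fill}\to0$.

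\textbf{Step 3: the $L^2$ bound in a retained cell.} Fix a retained cell $Q$. Put $f=\sum_P\1_{Y'(P)\cap Q}$ and $n_Q=\#\{P:(P,Q)\text{ retained}\}$. Then $\int f\gtrsim\alpha^{\eta}n_Q\alpha\rho^2$. We bound $\int f^2$ by summing over ordered pairs.
\begin{itemize}
\item Pairs at raw angle at least $\theta/(2B_{\rm ang})$ contribute $\lesssim B_{\rm ang}\alpha^2\rho/\theta$ each, by \eqref{C.2g} of \cref{lem:two-plank-cell-intersection}.
\item For near-parallel pairs, we use pointwise multiplicity $<2m$ together with the separation hypothesis: at each shaded point, any $2m/A_{\rm ang}$ incident labels have diameter $>\theta/B_{\rm ang}$. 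So fewer than $2m/A_{\rm ang}$ of the incident labels lie within angle $\theta/(2B_{\rm ang})$ of a fixed incident label. Integrating over $Y'(P)\cap Q$ bounds the near-parallel contribution by $(2m/A_{\rm ang})\int f$.
\end{itemize}
The far pairs are counted through the degree constraint: each $P$ meets at most $n_Q$ others in $Q$. Cauchy--Schwarz then gives
\[
|U\cap Q|\ge\frac{(\int f)^2}{\int f^2}\gtrsim\min\Bigl\{\frac{\int f}{m/A_{\rm ang}},\ \frac{\alpha^{2\eta}n_Q^2\alpha^2\rho^4}{n_Q^2B_{\rm ang}\alpha^2\rho/\theta}\Bigr\}.
\]
The second term equals $\alpha^{2\eta}\theta\rho^3/B_{\rm ang}\asymp\alpha^{2\eta+o(1)}|Q|$ by \eqref{C.2c}.

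For the first term, we need $\int f\gtrsim(m/A_{\rm ang})|Q|\alpha^{O(\eta)}$. This follows because $\int f\ge m|U\cap Q|$ by the pointwise lower multiplicity bound. Since $A_{\rm ang}\gg1$, the first alternative is then self-improving and forces the second. This is the step I expect to be the obstacle. My first attempt would be to use the global hypotheses $\lambda\ge\alpha^\eta$ and $\mu\ge\alpha^{-\eta}$ to run the dichotomy with the $\alpha^{4\eta}$ budget, accepting $\alpha^{4\eta+o(1)}$ rather than $\alpha^{2\eta}$, so that the $A_{\rm ang}$ gain beats the logarithmic losses.

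\textbf{Step 4: pass to balls.} A ball $B_0$ of radius $\theta\rho$ that meets $U(\mathcal P',Y')$ contains a point of some retained cell $Q$. Since $Q$ has dimensions $\theta\rho\times\rho\times\rho$, a concentric enlargement $CB_0$ meets a $\theta\rho$-subcube of $Q$. Repeating Step 3 on the sub-cell grid (taking $\rho$-cells subdivided into $\theta\rho$-cubes, with the same pigeonhole) gives the relative density \eqref{C.2e}. Finally, monotonicity of $\Delta_{\max}$ under deletion of labels is immediate, since the numerators only decrease.
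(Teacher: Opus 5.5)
Your skeleton (full pieces, a whole-cell density selection, the far-pair bound (C.2g), and Cauchy--Schwarz in each cell) is the paper's. The step you flag as the obstacle is exactly where the argument breaks, and the global hypotheses do not repair it.

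\textbf{Step 3: the lower multiplicity bound is missing.} Your self-improvement is correct \emph{provided} \(f\ge c\,m\) on the support of \(f\) in \(Q\), with \(cA_{\rm ang}\ge4\). Then each incident label has fewer than \(2m/A_{\rm ang}\le\frac12f(x)\) near-parallel partners at \(x\). So pointwise \(f^2\le2\sum_{\rm far}\1_{Y_i}\1_{Y_j}\), and \(\int_Qf^2\lesssim n_Q^2B_{\rm ang}\alpha^2\rho/\theta\). The hypothesis \(\mu\ge\alpha^{-\eta}\) plays no role.

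No such lower bound is available when Step~3 begins. The restriction to \(\mathcal I_{\rm full}(P,G)\) and the edge-weight regularization both delete incidences inside a cell. Of ``multiplicity in \([m,2m)\)'' only the upper bound survives, so \(\int f\ge m|U\cap Q|\) is unjustified.

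The paper restores a lower bound after each such restriction by an integral comparison. Suppose the restriction keeps a fraction \(g\) of the mass \(M_{\rm pre}\), and let \(E_0\) be the set where the multiplicity has fallen from at least \(m'\) to below \(c_1m\le m'/2\). The discarded incidences on \(E_0\) have mass at least \(\frac12m'|E_0|\), which is at most \(M_{\rm pre}\). Hence the retained mass on \(E_0\) is below \(c_1m|E_0|\le(2c_1m/m')M_{\rm pre}\le\frac g2M_{\rm pre}\) once \(c_1\le gm'/(4m)\), and \(E_0\) can be deleted.

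With fixed structural fractions this gives \(f\ge c_2m\). With your polylogarithmic losses it gives only \(f\gtrsim m/\log^C(2/\alpha)\). That still beats \(m/A_{\rm ang}\) by (C.2c), but then \(c_{\rm fill}\) is not the fixed structural constant the statement asserts. In any case the regularization of \cref{thm:abstract-cellular} is unnecessary, since Step~3 uses neither comparable edge weights nor regular cell degrees.

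\textbf{Step 1: a single slab-parameter group does not suffice.} One group cannot in general retain a fixed fraction of \(M(Y)\). The net of \cref{lem:slab-parameter-packing} has \(\sim\theta^{-3}\) parameters, and \(\theta\) may be a power of \(\alpha\). The grids of \cref{lem:full-piece-grid} are adapted to the slab, so no single grid serves every label. Every label must keep its own group and grid.

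Since cells of different groups overlap physically, the paper assigns each shaded point to one of the at most \(C_1\) groups incident there; this bound follows from the angular-diameter upper bound and (C.2a). It chooses a group of maximal incident multiplicity and discards the other incidences at that point. This costs \(C_1^{-1}\), leaves multiplicity in \([m/C_1,2m)\), and makes the tagged cells partition the labelled mass.

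\textbf{Step 4: rerunning the \(L^2\) bound at scale \(\theta\rho\) loses a factor \(\theta\).} A bound \(|E_Q|\ge\alpha^{3\eta+o(1)}|Q|\) for \(E_Q:=U\cap Q\) says nothing about an individual \(\theta\rho\)-subcube. At that scale the Jacobian bound for a far pair is \(\alpha^2\theta\rho/\phi\lesssim B_{\rm ang}\alpha^2\rho\), against full pieces of volume \(\alpha\theta^2\rho^2\). So Cauchy--Schwarz gives at best \(|E_R|\gtrsim B_{\rm ang}^{-1}\lambda^2\theta|R|\).

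What is needed is a selection rather than a new estimate. Partition each retained \(Q\) into \(\theta\rho\)-subcells \(R\), and keep those with \(|E_Q\cap R|\ge c\,\alpha^{3\eta+o(1)}|R|\) for a small structural \(c\). The others contain at most an \(\epsilon\)-fraction of \(E_Q\). Using \(c_2m\le f\le2m\) on \(E_Q\), they carry at most \((2\epsilon/c_2)\int_Qf\le\frac12\int_Qf\). A radius-\(\theta\rho\) ball meeting the refined union then meets a retained dense subcell, which lies in \(CB_0\).

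Thus the two-sided multiplicity bound you never restored is needed twice.
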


\begin{proof}
Write the multiset as a finite index set \(I\), a carrier map
\(i\mapsto P_i\), and shadings \(Y_i\subset P_i\).  All planks have volume
comparable to \(\alpha\rho\), and all sums below run over indices.

Choose the half-open slab net of \cref{lem:slab-parameter-packing}.  Assign
each \(P_i\) to the first
slab \(S(i)\) whose fixed enlargement
contains it and whose middle plane is \(O(\theta)\)-parallel to that of
\(P_i\).  At any shaded point, the upper angular-diameter hypothesis and
(C.2a) show that the incident labels belong to only \(C_1\) slab-parameter
groups.  Coincident labels remain together inside their group and therefore
do not invalidate the geometric packing count.  Assign the point, with a
fixed tie rule, to a
group having maximal incident labelled multiplicity, and retain at that point
only the labels in this group.  This pointwise restriction retains at least
\(C_1^{-1}M(Y)\), and its multiplicity is between \(m/C_1\) and \(2m\).

For each group use \cref{lem:full-piece-grid}.  Averaging over the structural
finite set of grids, choose one translated grid per group for which the
labelled mass of points lying in a full piece is at least a fixed fraction of
the group mass.  Write this structural fraction as \(g_0>0\), and restrict to
those points.  Let \(E_0\) be the set on which the retained grid multiplicity
is below \(c_1m\).  Before this restriction the multiplicity was at least
\(m/C_1\), so the discarded incidences contribute at least
\((C_1^{-1}-c_1)m|E_0|\), whereas the retained mass on \(E_0\) is below
\(c_1m|E_0|\).  Since the total discarded mass is at most the pre-grid mass,
\[
 M_{\rm grid}(E_0)
 \le \frac{c_1}{C_1^{-1}-c_1}M_{\rm pre-grid}.
\]
Choose \(c_1\) so that the factor on the right is at most \(g_0/2\).
Deleting \(E_0\) then leaves at least \(g_0/2\) of the pre-grid labelled
mass and gives the asserted lower multiplicity.

At this stage define the present per-label average exactly by
\[
 \overline m_Y:=\frac{M(Y)}{|I|}
 =\frac1{|I|}\sum_{i\in I}|Y_i|.
\tag{C.2e$'$}
\]
We next discard precisely the labels for which
\(|Y_i|<\overline m_Y/2\).  Their total mass is strictly less than
\(|I|\overline m_Y/2=M(Y)/2\).  This
label deletion can again be followed by deleting the points of multiplicity
below \(c_2m\), with \(c_2>0\) structural.  Here is the integral comparison.
Immediately before label deletion the pointwise multiplicity is at least
\(c_1m\).  If \(E_{\rm bad}\) is the set on which the surviving label
multiplicity is below \(c_2m\), then the deleted labels contribute at least
\((c_1-c_2)m|E_{\rm bad}|\).  They contribute less than half of the
pre-deletion mass.  The surviving mass on \(E_{\rm bad}\) is therefore at
most
\[
 c_2m|E_{\rm bad}|
 \le \frac{c_2}{2(c_1-c_2)}M_{\rm pre}.
\]
Taking, for example, \(c_2\le c_1/5\) leaves a fixed positive fraction after
both operations.  Denote the resulting family again by
\((I,Y)\).  It satisfies
\[
 M(Y)\ge c_3M(Y_{\rm in}),\qquad
 f(x):=\sum_{i\in I}\mathbf1_{Y_i}(x)\ge c_2m
 \quad(x\in U(I,Y)),
\tag{C.3}
\]
for a structural constant \(c_3>0\).  Since the two pointwise deletions and
the label deletion together retain
a fixed fraction of labelled mass while the surviving label denominator can
only decrease,
\[
 \frac{M(Y)}{\sum_{i\in I}|P_i|}
 \ge \alpha^{\eta+o(1)}.
\tag{C.4}
\]
The displayed \(o(1)\) in (C.4) uses the uniform modulus obtained by writing
the finitely many structural constants as powers of \(\alpha\).  The assumed
growth of \(A_{\rm ang}\) ensures
\(c_2m>4m/A_{\rm ang}\) for small \(\alpha\).

From this point on, a grid cell is a tagged pair consisting of its slab group
and one half-open cell in the grid selected for that group; we continue to
write it as \(Q\).  Different tagged cells may overlap as physical boxes, but
the preceding pointwise group assignment places every surviving shaded
incidence in exactly one tag and then in exactly one half-open cell.  Thus the
tagged shading pieces, unlike the underlying physical boxes, form a measurable
partition of the labelled mass.  For such a tagged cell, let
 \[
 \begin{aligned}
 I_Q&:=\{i:Y_i\cap Q\ne\varnothing\},
 &D_Q&:=\sum_{i\in I_Q}|P_i\cap Q|,\\
 M_Q&:=\sum_{i\in I_Q}|Y_i\cap Q|,
 &E_Q&:=\bigcup_{i\in I_Q}(Y_i\cap Q),\\
 \lambda_Q&:=M_Q/D_Q.
 \end{aligned}
\tag{C.5}
\]
Only full pieces survived, so \cref{lem:full-piece-grid} gives
\[
 D_Q\asymp |I_Q|\alpha\rho^2.
\tag{C.6}
\]
This is the denominator that is lost if ``nonempty'' intersections are
silently treated as full pieces.  Because the cells are half-open,
\(\sum_QM_Q=M(Y)\); because their geometric intersections are disjoint,
\(\sum_QD_Q\le\sum_i|P_i|\).  Thus (C.4) implies that the
\(D_Q\)-weighted average of \(\lambda_Q\) is at least
\(\alpha^{\eta+o(1)}\).  Retaining all cells with
\[
 \lambda_Q\ge\tfrac12\alpha^{\eta+o(1)}
\tag{C.7}
\]
removes at most half the labelled mass.  This is a whole-cell restriction, so
the pointwise lower multiplicity in (C.3) is unchanged on retained cells.

Fix such a cell and put
\(f_Q=\sum_{i\in I_Q}\mathbf1_{Y_i\cap Q}\).  If \(x\) belongs to its
support and \(n=f_Q(x)\), then for each incident label \(i\) fewer than
\(2m/A_{\rm ang}\) incident labels can have raw plane angle at most
\(\theta/(2B_{\rm ang})\) from \(i\); otherwise those labels would form a
subcollection of angular diameter at most \(\theta/B_{\rm ang}\), contrary to
the hypothesis.  Since \(n\ge c_2m\) and \(A_{\rm ang}\to\infty\), fewer than half of
the ordered pairs are close.  Hence, pointwise,
\[
 f_Q(x)^2\le
 2\!\sum_{\substack{i,j\in I_Q\\
  \ang(P_i,P_j)>\theta/(2B_{\rm ang})}}
 \mathbf1_{Y_i\cap Q}(x)\mathbf1_{Y_j\cap Q}(x).
\tag{C.8}
\]
This direct domination includes the diagonal issue automatically: diagonal
pairs are close and are paid for by the strict majority of separated ordered
pairs.

Each \(Q\) has diameter \(O(\rho)\), and every capped plank lies in an
\(O(\alpha)\)-neighbourhood of its two-long-axis plane.  Therefore
\cref{lem:two-plank-cell-intersection}, with
\(\phi>\theta/(2B_{\rm ang})\), gives
\[
 |P_i\cap P_j\cap Q|
 \le CB_{\rm ang}\frac{\alpha^2\rho}{\theta}.
\]
The buffer (C.2d) verifies the second inequality in (C.2g), so this estimate
is uniform down to the smallest admitted angle and is compatible with the
trivial full-piece volume \(O(\alpha\rho^2)\).  Endcaps and the cell
truncation only decrease the intersection.  Integrating (C.8) therefore gives
\[
 \int_Q f_Q^2
 \le CB_{\rm ang}|I_Q|^2\frac{\alpha^2\rho}{\theta}.
\tag{C.9}
\]
By (C.5)--(C.7), Cauchy--Schwarz, and
\(|Q|\asymp\theta\rho^3\),
\[
\begin{aligned}
 |E_Q|
 &\ge \frac{M_Q^2}{\int_Q f_Q^2}
 &\gtrsim B_{\rm ang}^{-1}\lambda_Q^2|Q|\\
 &\ge \alpha^{3\eta+o(1)}|Q|.
\end{aligned}
\tag{C.10}
\]
Here \(B_{\rm ang}=\alpha^{-o(1)}\); the displayed exponent is a convenient
weakening of the resulting \(2\eta+o(1)\).
Fix the uniform exponent function in this occurrence and write
\[
 \Xi(\alpha):=\alpha^{3\eta+\omega_C(\alpha)},
 \qquad \omega_C(\alpha)\longrightarrow0,
\]
so that (C.10) reads \(|E_Q|\ge c_0\Xi(\alpha)|Q|\) for one
structural \(c_0>0\).

Finally partition every retained \(Q\) into half-open subcells \(R\) of
diameter \(O(\theta\rho)\) and volume comparable to
\((\theta\rho)^3\).  Call \(R\) dense for the tagged cell \(Q\) if
\[
 |E_Q\cap R|\ge c\Xi(\alpha)|R|,
\tag{C.11}
 \]
 with \(c\) below the constant in (C.10).  The tag-specific union \(E_Q\) in
non-dense subcells has the following quantified bound.  Let \(c_0>0\) be the
structural constant implicit in (C.10), fix
\(0<\epsilon_{\rm sub}\le c_2/4\), and choose the constant in (C.11) so that
\(c\le c_0\epsilon_{\rm sub}\).  Since the subcells partition \(Q\),
\[
 \left|E_Q\cap\bigcup_{R\ {\rm non\mbox{-}dense}}R\right|
 \le c\Xi(\alpha)|Q|
 \le\epsilon_{\rm sub}|E_Q|.
\tag{C.11a}
\]
On the support of \(f_Q\), one has
\(c_2m\le f_Q\le2m\).  Hence the labelled mass deleted in (C.11a) is at most
\[
 2m\epsilon_{\rm sub}|E_Q|
 \le\frac{2\epsilon_{\rm sub}}{c_2}M_Q
 \le\tfrac12M_Q.
\tag{C.11b}
\]
Thus dense-subcell selection retains at least one half of the labelled mass
in every retained tagged cell.  Define \(Y'_i\) by retaining, for every tagged \(Q\),
all incidences \(Y_i\cap Q\cap R\) in its dense subcells \(R\), and call the
resulting refinement \((I',Y')\).  In particular
\[
 E_Q\cap R\subset U(I',Y')
 \quad\text{for every dense pair }(Q,R).
\tag{C.11$'$}
\]

Let \(B_0\) be any radius-\(\theta\rho\) ball meeting
\(U(I',Y')\), and choose one retained incidence at a meeting point \(x\).
Its unique tagged cell \(Q\) and dense subcell \(R\) determine the relevant
tag-specific support.  The subcell \(R\) lies in a fixed enlargement of
\(B_0\), and its volume is comparable to \((\theta\rho)^3\).  Equations
(C.11)--(C.11$'$), weakened once more, give
\[
 \frac{|U(I',Y')\cap C B_0|}{|C B_0|}
 \ge \alpha^{4\eta+o(1)}.
\tag{C.12}
\]
All choices were made on the label index set, so coincident carriers were
counted with their full multiplicity.  Label deletion can only decrease
labelled \(\Delta_{\max}\).  The fixed fractions retained at the grouping,
grid, multiplicity, density-level, and dense-subcell steps have a fixed
product \(c_{\rm fill}>0\).  With
\(\omega_{\rm fill}(\alpha)=
\log(1/c_{\rm fill})/\log(1/\alpha)\), this product is
\(\alpha^{\omega_{\rm fill}(\alpha)}\), so the final refinement satisfies
(C.2d$'$) with the displayed uniform modulus.  This proves the theorem.
\end{proof}

\section{Quantitative constants, source map, and version convention}
\label{app:ledger}

This appendix gathers the quantitative hierarchy, the exact interfaces with
the cited sources, and the archival convention used for those comparisons.
Keeping these records together separates provenance from the mathematical
narrative while fixing the versions on which every source-dependent statement
is based.

\paragraph{Version convention and reproducibility.}
The frozen source identifier for this release line is
\texttt{CMDF-R7-20260827}.  The manuscript is self-contained at the TeX level:
the bibliography is embedded, and three clean \texttt{pdflatex} runs resolve
the contents and cross-references.  The external files checked for the
source-dependent comparisons have the following fixed records.
\begin{center}
\scriptsize
\begin{tabularx}{\textwidth}{@{}>{\raggedright\arraybackslash}X@{}}
GWZ arXiv:2601.14411v1 (20 January 2026); PDF byte object at
\url{https://arxiv.org/pdf/2601.14411v1}; \texttt{application/pdf};
566167 bytes; retrieved 2026-09-01T09:12:57Z; SHA-256\\
\quad\texttt{1bc7df83c394cb74038af673c4a25e98}\\
\quad\texttt{b3e018039c75f3b26501ed496028dce9}\\[2pt]
Cohen, 18-page notes dated 7 April 2026; object
\nolinkurl{KakeyaRigorous_4-7-26.pdf}; \texttt{application/pdf};
384469 bytes; retrieved 2026-09-01T09:12:58Z; SHA-256\\
\quad\texttt{071cadd886f9730f458bcc49caad160a1}\\
\quad\texttt{02d1d85b50594fa9f1614cafc1b1379}\\[2pt]
Zeraoulia Rafik, Zenodo record 22067052, version 1.0 (23 August 2026);
record file \nolinkurl{Zeraoulia_Kakeya_Corrections_2026_Public_Revision.pdf};
\texttt{application/pdf}; 491215 bytes; retrieved 2026-09-01T09:13:06Z;
SHA-256\\
\quad\texttt{41619b18aaaed0dd225eff4963e8bdfd}\\
\quad\texttt{de7e087b4b690b3b8fbf055552d78229}\\[2pt]
Zeraoulia Rafik, Zenodo record 22122679, version 1.0 (27 August 2026);
record file \nolinkurl{Hua_Yang_v3_Correction_Note.pdf};
\texttt{application/pdf}; 288054 bytes; retrieved 2026-09-01T09:13:14Z;
SHA-256\\
\quad\texttt{1c16cd0b5a574d1e7dc6251c7db5fb49}\\
\quad\texttt{e505063d8c2fa6a0d76d885a356c6656}
\end{tabularx}
\end{center}
These archival identifiers are intentionally separate from the conventional
bibliography.

All source-specific comparisons in the paper use these dated versions.  In
particular, ``the archived GWZ manuscript'' means arXiv:2601.14411v1 (20
January 2026).  The symbol \(\gtrapprox\) is reserved for a literal source
quotation and is not a third asymptotic convention in the proofs.  The GWZ
phrase ``outer John ellipsoid'' denotes the minimum-volume containing
L\"owner ellipsoid used in \cref{lem:ordered-john}, rather than the
maximum-volume inscribed John ellipsoid; the repeated-eigenvalue tie rule in
this paper fixes the labelled frame and is not attributed to the source.

\paragraph{Section~6 version comparison.}
Branching comparability is used in (6.6e)--(6.6f),
(9.1b$'$)--(9.1b$''$), and (9.5), and the arguments allow highly concentrated
fine descendants.  The pinned v1 proof of Proposition~6.6 prints three
negative density exponents incompatible with densities bounded by one and an
unintroduced left member in equation~(49).  The present derivation instead
obtains (9.2d)--(9.2e) from Lemma~6.4 through
\cref{prop:corrected-s6-interface} and (9.5c)--(9.5d) from the labelled
Lemma~6.1 argument at (9.1c).  These documentary discrepancies therefore do
not enter the logical interface.

The hierarchy is read from left to right: fixed structural exponents and the
terminal positive gain are chosen before the small factoring, grid, shading,
and discretization exponents.  The notation below records logical roles,
not a new optimization of the source parameters.

\begin{table}[!htbp]
\centering
\small
\begin{tabularx}{\textwidth}{@{}cX X@{}}
\toprule
GWZ equations & formulation in this paper & mathematical role\\
\midrule
(94) & \(Y_B\), (10.2) & removal of low-occupancy outer balls\\
(95) & \(Z\), (10.12) & the scale \(a/b\) and Jensen exponent \(q\)\\
(96) & \(Y_1,Z\), (10.13) & compatibility on positive parent--cell incidences\\
(97) & \(Y_B\), (10.14) & the reference shading and good-ball factor\\
(98)--(99) & \(\bar Y\), (10.22)--(10.23) & buffered degree and separate scale losses\\
(100)--(101) & \(Y^*,Z^\angle,\bar Y\), (10.26)--(10.27) & synchronized fine mass and \(c_*^{-1}\)\\
(102)--(103) & \(\bar Y,Z^\angle\), (10.28)--(10.29) & fixed cap input and resolved angle\\
\bottomrule
\end{tabularx}
\caption{Correspondence with the Section~9 equations of \cite{GWZ2026}.}
\label{tab:gwz-correspondence}
\end{table}

\begin{table}[!htbp]
\centering
\small
\begin{tabularx}{\textwidth}{@{}p{2.5cm}p{3.0cm}X@{}}
\toprule
quantity & size or definition & role\\
\midrule
\(h\) & \(c_0a\) & cellular grid scale\\
\(q\) & \(2-\sigma\) & rectangular density power\\
\(L_m,L_{a,b},L_N\) & dyadic level counts & fine multiplicity, dimensions, child count\\
\(L_{\rm reg}\) & \(L_{J1}L_{R1}L_{R2}\) & positive edge and two degree levels\\
\(J_v\) & (10.11) & physical child volume per selected cell\\
\(P_{\rm tag}\) & \(L_\mu L_\omega L_d\) & first whole-tag synchronization\\
\(P_{\rm angtag}\) & \(L_\theta^{\rm tag}L_{d_\angle}^{\rm tag}\) & angle/degree whole-tag synchronization\\
\(P_{\rm sync}\) & (10.16b) & full represented-mass bridge in the thin branches\\
\(c_{\rm ang}\) & \(A_{\rm ang}^{-K}/(J_\theta J_d)\) & lower bound for typical-angle parent mass\\
\(H_{\rm ang}\) & \(4A_{\rm ang}^K\) & converts input degree to output threshold\\
\(c_*\) & (10.25) & final fine-tube refinement fraction\\
\(\kappa_{\rm inc}\) & fine-incidence representation & bounded across tags at one point\\
\(\kappa_{\rm loc}\) & fixed-tag label representation & concentration and cap comparison inside one \(B\)\\
\(\kappa_{\rm amb}\) & bounded overlap & tagged-to-physical recombination\\
\(H_{\rm col}\) & (10.30) & repeated-label conflict coloring\\
\(\omega_{\rm src}(\delta)\) & (10.16d), tending to zero & uniform envelope for the subpower factors\\
\bottomrule
\end{tabularx}
\caption{Quantitative factors used in the cellular construction.}
\label{tab:losses}
\end{table}

\begin{table}[!htbp]
\centering
\footnotesize
\begin{tabularx}{\textwidth}{@{}>{\raggedright\arraybackslash}p{2.25cm}>{\raggedright\arraybackslash}X>{\raggedright\arraybackslash}p{2.8cm}@{}}
\toprule
source & use and local verification & status\\
\midrule
DOV Theorem~1.12/4.1 \cite{DOV2022} &
unweighted superlevels in (A.5); \(t=1+\sigma>1\), parameter separation, the
\((\delta,t)\)-condition, and the cutoff are checked in (A.4)--(A.6) &
published input; weighted form proved here\\
C\'ordoba \(L^2\) method \cite{Cordoba1977} &
plain-Frostman endpoint and \cref{app:slab}; labelled atom mass, coincident
parameters, angular layers, and measurable shadings are handled in
Theorems~A.5 and B.3 &
historical source; both labelled estimates are proved here\\
streamlined reduction \cite{GWZ2026} &
Sections 6 and 9, cap uniformity, and the non-sticky count;
\cref{prop:corrected-s6-interface,prop:s6B}, (10.8f$'$), and (I1)--(I4)
record the local interfaces &
conditional source interface; all subsequent closure steps are proved locally\\
L\"owner--John inclusion \cite{Henk2012,John1948} &
containing ellipsoid and factor-three inner copy; ordered axes and the
repeated-eigenvalue rule are checked in \cref{lem:ordered-john} &
Henk equation~(2); frame transfer proved locally\\
convex factoring framework \cite{WZ2025} &
parent geometry; \cref{lem:biased-factoring} proves the labelled form &
contextual; no theorem imported\\
Wang--Zahl background \cite{WZSticky2026,WZAssouad2025} &
multiscale and non-sticky structure; background for the surrounding scales &
contextual\\
expository reconstructions \cite{Cohen2026,Guth2026Survey} &
surrounding reduction; used for comparison & contextual\\
independent correction note \cite{Zeraoulia2026} &
comparison and shaded-slab calculation; Theorem~B.3 is proved locally &
contextual\\
\bottomrule
\end{tabularx}
\caption{Proof-bearing and contextual source map.}
\label{tab:sourcemap}
\label{tab:sourcemap-context}
\end{table}

For comparison with the source notation, the complete interface chain is
\[
 \text{fixed (94)}
 \longrightarrow \text{global fine level}
 \longrightarrow \text{joint cellular factoring}
 \longrightarrow (95\text{--}97)_{\rm new}
\]
\[
 \longrightarrow \text{resolved ANGLE-CELL}
 \longrightarrow \text{one LIFT and }d_\angle
 \longrightarrow (98\text{--}103)_{\rm new}.
\]
The chain terminates at rectangular carriers.  Its sharp-parent extension is
the open problem isolated in \cref{sec:limitations}.


\clearpage
\begin{thebibliography}{GWZ26}

\bibitem[Coh26]{Cohen2026}
Alex Cohen.
\newblock Proof of the {Kakeya} in {$\mathbb{R}^3$} following
  {Guth--Wang--Zahl}.
\newblock Unpublished notes, 2026.
\newblock Notes dated 7 April 2026, available at
  \url{https://cims.nyu.edu/~ac6074/KakeyaRigorous_4-7-26.pdf}.

\bibitem[C{\'o}r77]{Cordoba1977}
Antonio C{\'o}rdoba.
\newblock The {Kakeya} maximal function and the spherical summation
  multipliers.
\newblock {\em American Journal of Mathematics}, 99(1):1--22, 1977.
\newblock DOI: \url{https://doi.org/10.2307/2374006}.

\bibitem[DOV22]{DOV2022}
Damian D{\k a}browski, Tuomas Orponen, and Michele Villa.
\newblock Integrability of orthogonal projections, and applications to
  {Furstenberg} sets.
\newblock {\em Advances in Mathematics}, 407:108567, 2022.
\newblock arXiv:2107.04471v3; DOI:
  \url{https://doi.org/10.1016/j.aim.2022.108567}.

\bibitem[Gut26]{Guth2026Survey}
Larry Guth.
\newblock The {Kakeya} conjecture, after {Wang} and {Zahl}, 2026.
\newblock S{\'e}minaire Bourbaki survey, arXiv:2604.03416v1, 3 April 2026.

\bibitem[GWZ26]{GWZ2026}
Larry Guth, Hong Wang, and Joshua Zahl.
\newblock A streamlined proof of the {Kakeya} set conjecture in
  {$\mathbb{R}^3$}, 2026.
\newblock arXiv:2601.14411v1, 20 January 2026.

\bibitem[Hen12]{Henk2012}
Martin Henk.
\newblock L{\"o}wner--{John} ellipsoids.
\newblock In Martin Gr{\"o}tschel, editor, {\em Optimization Stories},
  Documenta Mathematica, Extra Volume ISMP, pages 95--106. European
  Mathematical Society, 2012.
\newblock DOI: \url{https://doi.org/10.4171/DMS/6/15}.

\bibitem[Joh48]{John1948}
Fritz John.
\newblock Extremum problems with inequalities as subsidiary conditions.
\newblock In {\em Studies and Essays Presented to R. Courant on His 60th
  Birthday, January 8, 1948}, pages 187--204. Interscience Publishers, New
  York, 1948.

\bibitem[Sch14]{Schneider2014}
Rolf Schneider.
\newblock {\em Convex Bodies: The {Brunn--Minkowski} Theory}, volume 151 of
  {\em Encyclopedia of Mathematics and its Applications}.
\newblock Cambridge University Press, Cambridge, second expanded edition,
  2014.
\newblock DOI: \url{https://doi.org/10.1017/CBO9781139003858}.

\bibitem[WZ25a]{WZAssouad2025}
Hong Wang and Joshua Zahl.
\newblock The {Assouad} dimension of {Kakeya} sets in {$\mathbb{R}^3$}.
\newblock {\em Inventiones Mathematicae}, 241(1):153--206, 2025.
\newblock DOI: \url{https://doi.org/10.1007/s00222-025-01336-x}.

\bibitem[WZ25b]{WZ2025}
Hong Wang and Joshua Zahl.
\newblock Volume estimates for unions of convex sets, and the {Kakeya} set
  conjecture in three dimensions, 2025.
\newblock arXiv:2502.17655v1, 24 February 2025.

\bibitem[WZ26]{WZSticky2026}
Hong Wang and Joshua Zahl.
\newblock Sticky {Kakeya} sets and the sticky {Kakeya} conjecture.
\newblock {\em Journal of the American Mathematical Society}, 39(2):515--585,
  2026.
\newblock DOI: \url{https://doi.org/10.1090/jams/1067}.

\bibitem[Zer26]{Zeraoulia2026}
Zeraoulia Rafik.
\newblock Technical corrections and missing justifications in a streamlined
  proof of the three-dimensional {Kakeya} set conjecture.
\newblock Zenodo preprint, version 1.0, August 2026.
\newblock Zenodo record 22067052, version 1.0; DOI:
  \url{https://doi.org/10.5281/zenodo.22067052}; CC BY 4.0.

\bibitem[Zer26b]{Zeraoulia2026Local}
Zeraoulia Rafik.
\newblock A local concentration correction in Hua and Yang's cellular Kakeya
  reduction.
\newblock Zenodo preprint, version 1.0, 27 August 2026.
\newblock DOI: \url{https://doi.org/10.5281/zenodo.22122679}.

\end{thebibliography}
\end{document}